\theoremstyle{plain}
\newtheorem{theorem}{Theorem}[section]
\newtheorem{proposition}[theorem]{Proposition}
\newtheorem{lemma}[theorem]{Lemma}
\newtheorem{corollary}[theorem]{Corollary}
\theoremstyle{definition}
\newtheorem{definition}[theorem]{Definition}
\newtheorem{remark}[theorem]{Remark}
\numberwithin{equation}{section}
\newcommand{\R}{\mathbb{R}}
\newcommand{\eps}{\varepsilon}
\newcommand{\mY}{{\mathcal Y}}
\newcommand{\bC}{{\mathbbm{1}}}
\newcommand{\textin}{\textrm{ in }}
\newcommand{\p}{\partial}
\newcommand{\Ds}{(-\Delta)^{s}}
\newcommand{\norm}[2][]{\left\|{#2}\right\|_{#1}}
\newcommand{\angles}[1]{\left\langle{#1}\right\rangle}
\DeclareMathOperator{\dist}{dist}
\DeclareMathOperator{\loc}{loc}
\newcommand{\set}[1]{\left\{#1\right\}}
\title{Uniqueness of entire ground states for the fractional plasma problem}
\author{Hardy Chan}
\author{Mar\'ia Del Mar Gonz\'alez }
\author{Yanghong Huang}
\author{Edoardo Mainini}
\author{Bruno Volzone}
\newcommand{\Addresses}{{% additional braces for segregating \footnotesize
  \bigskip
  \footnotesize

  H. Chan, \textsc{ETH Z\"{u}rich, Departement Mathematik. R\"{a}mistrasse 101, 8092 Z\"{u}rich. Switzerland.}\par\nopagebreak
  \textit{E-mail address}: \texttt{hardy.chan@math.ethz.ch}

  \medskip

  M. d. M. Gonz\'alez, \textsc{Universidad Aut\'onoma de Madrid, Departamento de Matem\'aticas, and ICMAT. Madrid 28049. Spain.}\par\nopagebreak
  \textit{E-mail address}: \texttt{mariamar.gonzalezn@uam.es}

  \medskip

  Y.~Huang, \textsc{Department of Mathematics, University of Manchester,
	Oxford Rd,	Manchester,	M13 9PL UK}\par\nopagebreak
  \textit{E-mail address}: \texttt{yanghong.huang@manchester.ac.uk}

  \medskip

  E.~Mainini, \textsc{Universit\`a  degli studi di Genova, Dipartimento di Ingegneria meccanica, energetica, gestionale e dei trasporti,
   Via all'Opera Pia 15 - 16145 Genova, Italy.}\par\nopagebreak
  \textit{E-mail address}: \texttt{mainini@dime.unige.it}

\medskip

  B.~Volzone, \textsc{Universit\`{a} degli Studi di
Napoli ``Parthenope'', Dipartimento di Scienze e Tecnologie, Centro
Direzionale Isola C4, 80143 Napoli, Italy.}\par\nopagebreak
  \textit{E-mail address}: \texttt{bruno.volzone@uniparthenope.it}

}}
\subjclass[2010]{ 35K55, 35R11,   49K20}
\keywords{}
\begin{document}

\begin{abstract} We establish uniqueness of vanishing radially decreasing entire solutions, which we call \emph{ground states}, to some semilinear fractional elliptic equations. In particular, we treat the fractional plasma equation and the supercritical power nonlinearity.
As an application, we deduce uniqueness of radial steady states for nonlocal aggregation-diffusion equations of Keller-Segel type, even in the regime that is dominated by aggregation.
\end{abstract}

\maketitle

% \tableofcontents

%%%%%%%%%%%%%%%%%%%%%%%%%%%%%%%%%%%%%%%%%%%%%%%%%%%%%%%%%%%%%%%%%%%%%%%%
%%%%%%%%%%%%%%%%%%%%%%%%%%%%%%%%%%%%%%%%%%%%%%%%%%%%%%%%%%%%%%%%%%%%%%%%

\section{Introduction }
We study positive entire \emph{ground states} to the fractional semilinear equation
\begin{equation}
\label{mainequation}
(-\Delta)^s u=a(u-\mathcal{C})_+^p
\quad\mbox{in}\;\;\mathbb R^N,
\end{equation}
where the parameters are in the range
$$
0<s<1,\qquad p\ge 1,\qquad \mathcal C\ge 0,\qquad a>0.
$$
 %$s\in(0,1)$ ($s<1/2$ if $N=1$), for some constants $a>0$, $\mathcal{C}\geq0$, $p\ge1$.
 Here $(-\Delta)^s$ is the fractional Laplace operator on $\mathbb R^N$ ($s<1/2$ if $N=1$).
%defined by means of Fourier transform as \begin{equation}\label{Fourier}\widehat{(-\Delta)^s u}(\xi)=|\xi|^{2s}\hat u(\xi).\end{equation}
Moreover $x_+:=0\vee x$ denotes the maximum of $0$ and $x$.
By a {\it ground state}  we mean a bounded positive solution $u$ to \eqref{mainequation} which is \emph{radially decreasing} and decays at infinity, i.e.,
$
u(x)\rightarrow 0\ \text{for }|x|\rightarrow\infty.
$
%Due to nonlocal moving planes arguments, the assumption of radiality is justified by the fact that it is not restrictive in most cases, as we will see later.
%Our two main results deal with uniqueness of ground state solutions.
%A key role is played by the critical exponent $(N+2s)/(N-2s)$.

In the  {\it subcritical case} $p<(N+2s)/(N-2s)$ with $\mathcal C>0$,  the free boundary problem \eqref{mainequation} is the so called \emph{fractional plasma equation}, and it is the object of our first main result.

\begin{theorem}\label{maintheorem}
Let $1\le p<{(N+2s)}/{(N-2s)}$ and $\mathcal C>0$.
There exists a unique ground state for equation \eqref{mainequation}.
\end{theorem}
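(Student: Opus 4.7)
The plan is to combine the scaling invariance of \eqref{mainequation} with a rigidity analysis of the resulting nonlinear integral equation.

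\textbf{Scaling reduction.} Equation \eqref{mainequation} is invariant under $u_\lambda(x) := \lambda^{2s/(p-1)} u(\lambda x)$, which maps a ground state with parameter $\mathcal{C}$ to a ground state with parameter $\lambda^{2s/(p-1)} \mathcal{C}$. In particular, the ground states for different values of $\mathcal{C}>0$ all lie on a single scaling orbit, and it suffices to establish uniqueness in one normalization; I would fix the free-boundary radius to be $1$, so that $u=\mathcal{C}$ on $\partial B_1$, $u>\mathcal{C}$ in $B_1$, and $u<\mathcal{C}$ outside.

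\textbf{Riesz reformulation.} Since $p<(N+2s)/(N-2s)$ entails $N>2s$, the Riesz kernel $c_{N,s}|x|^{2s-N}$ is a fundamental solution of $(-\Delta)^s$, and \eqref{mainequation} is equivalent to the integral identity
\[
u(x) = c_{N,s}\, a\int_{B_R} |x-y|^{2s-N}\,(u(y)-\mathcal{C})^p\,dy,\qquad x\in\R^N,
\]
with $B_R=\{u>\mathcal{C}\}$. Evaluating on $\partial B_R$ yields a self-consistency condition which determines $\mathcal{C}$ implicitly in terms of the excess $w:=(u-\mathcal{C})_+$, supported on $\overline{B_R}$ and vanishing on $\partial B_R$.

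\textbf{Uniqueness via rigidity.} After normalizing so that $R=1$, suppose two ground states $u_1,u_2$ exist with respective constants $\mathcal{C}_1,\mathcal{C}_2$. I would compare the associated excess profiles $w_1,w_2$ using the positivity and rearrangement properties of the Riesz potential (which preserve the class of radially decreasing functions) together with a subcritical Pohozaev-type identity for $(-\Delta)^s$ applied to \eqref{mainequation}, which relates $\mathcal{C}_i\int w_i^p$ to $\int w_i^{p+1}$ with a coefficient proportional to $(N+2s)-(N-2s)p>0$. The strategy is to deduce strict monotonicity of the implicit map ``profile shape $\mapsto\mathcal{C}$'' and thereby force $u_1\equiv u_2$.

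\textbf{Main obstacle.} The core difficulty is the tight three-way coupling between the unknown profile, the unknown free-boundary radius, and the parameter $\mathcal{C}$, made worse by the fact that classical ODE uniqueness techniques (such as the Kwong monotonicity argument used for the local case $s=1$) do not transfer to the nonlocal setting. I expect the most delicate step to be a rigidity/linearization analysis around a candidate ground state: one would show that the linearized operator $L\phi = (-\Delta)^s\phi - ap\,(u-\mathcal{C})_+^{p-1}\phi$ restricted to radial functions has a kernel that is one-dimensional and spanned by the scaling direction, so that the $\mathcal{C}$-normalization removes this degeneracy. This is in the spirit of the Frank--Lenzmann--Silvestre strategy for uniqueness of fractional NLS ground states, here adapted to the free-boundary structure of the plasma equation.
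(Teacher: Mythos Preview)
Your proposal is an outline rather than a proof, and the outlined route has a real gap. The Pohozaev identity you invoke gives one scalar relation linking $\mathcal{C}\int w^p$ and $\int w^{p+1}$; it does not by itself produce any ``strict monotonicity of the implicit map profile $\mapsto \mathcal{C}$'', and you do not indicate how such monotonicity would be obtained or why it would force $u_1\equiv u_2$. The normalization step is also slippery: two ground states with the \emph{same} $\mathcal{C}$ may a priori have different free-boundary radii $R_1,R_2$, so rescaling each to $R=1$ sends them to solutions with \emph{different} constants $\mathcal{C}_1',\mathcal{C}_2'$; you are then trying to prove a different uniqueness statement (uniqueness across varying $\mathcal{C}$ under the constraint $R=1$), which you do not establish either.

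You are right that the Frank--Lenzmann--Silvestre machinery is the key, but the paper does \emph{not} use it via a kernel/nondegeneracy analysis of the linearized operator. Instead it runs the FLS Hamiltonian monotonicity argument directly on the difference of two solutions. Setting $v_i=u_i-\mathcal{C}$ and $w=v_1-v_2$, one has $(-\Delta)^s w=\mathcal{V}(r)w$ with
\[
\mathcal{V}(r)=\frac{(v_1)_+^p-(v_2)_+^p}{v_1-v_2},
\]
and the crucial observation is that $\mathcal{V}$ is nonincreasing on $(0,\infty)$: this follows from the convexity of $t\mapsto t_+^p$ and the radial monotonicity of the $v_i$, and (remarkably) does not require knowing the relative position of the free boundaries $R_1,R_2$. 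One then passes to the Caffarelli--Silvestre extension $W$ of $w$ and introduces the Hamiltonian
\[
\Phi(r)=\frac{d_s}{2}\int_0^\infty y^{1-2s}\bigl(W_r^2-W_y^2\bigr)\,dy+\tfrac12\,\mathcal{V}(r)\,w(r)^2,
\]
shows $\Phi'\le 0$ and $\Phi(r)\to 0$ as $r\to\infty$. If $v_1(0)=v_2(0)$ then $\Phi(0)\le 0$, hence $\Phi\equiv 0$, hence $W_r\equiv 0$, hence $w\equiv 0$. If $v_1(0)\neq v_2(0)$ one rescales $v_1$ so that the central values match, reruns the same Hamiltonian argument (it still applies even though the rescaled difference tends to a nonzero constant at infinity), and the conclusion $w_\lambda\equiv\text{const}$ combined with $w_\lambda(0)=0$ forces the scaling factor to be $1$, a contradiction. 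The scaling enters only at this last step, not as a preliminary normalization of $R$.
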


In our second main theorem, we investigate ground states in the \emph{critical}
and \emph{supercritical regime} $p\geq (N+2s)/(N-2s)$ to equation \eqref{mainequation}, with the choice $\mathcal{C}=0$.
A nontrivial solution exists only for  this special case, as we will show  that there are no ground states if $\mathcal C>0$ and $p\geq (N+2s)/(N-2s)$.

\begin{theorem}\label{th:super}
Let $p\ge{(N+2s)}/{(N-2s)}$. Let $\mathcal C=0$ and $b>0$.
There exists a unique ground state $u$ for equation \eqref{mainequation} such that $u(0)=b$.
\end{theorem}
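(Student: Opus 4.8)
The plan is to reduce everything to the scaling structure of \eqref{mainequation} when $\mathcal C=0$. Replacing $u$ by $a^{1/(p-1)}u$ we may assume $a=1$, so that the equation is $\Ds u=u^{p}$; this is invariant under the dilations $u\mapsto u_{\lambda}:=\lambda^{2s/(p-1)}u(\lambda\,\cdot)$, which map ground states to ground states and satisfy $u_{\lambda}(0)=\lambda^{2s/(p-1)}u(0)$. Since $\lambda\mapsto\lambda^{2s/(p-1)}u(0)$ exhausts $(0,\infty)$, Theorem~\ref{th:super} is equivalent to the conjunction of: (i) the existence of \emph{at least one} ground state; and (ii) the rigidity statement that any two ground states agree after a dilation. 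Throughout I would use the explicit singular solution: for $0<\gamma<N-2s$ one has $\Ds(|x|^{-\gamma})=\Lambda_{N,s}(\gamma)\,|x|^{-\gamma-2s}$ with $\Lambda_{N,s}(\gamma)>0$, and since $p\ge (N+2s)/(N-2s)$ forces $\gamma:=2s/(p-1)\in(0,(N-2s)/2]\subset(0,N-2s)$, the function $W(x):=A_{*}|x|^{-\gamma}$ with $A_{*}:=\Lambda_{N,s}(\gamma)^{1/(p-1)}$ solves $\Ds W=W^{p}$ in $\R^{N}\setminus\{0\}$.

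For existence I would treat the two regimes separately. If $p=(N+2s)/(N-2s)$, the fractional Aubin--Talenti bubble $U(x)=\alpha_{N,s}\big(\tfrac{\mu}{\mu^{2}+|x|^{2}}\big)^{(N-2s)/2}$ is, for the appropriate constant $\alpha_{N,s}$ and any $\mu>0$, a radially decreasing entire solution vanishing at infinity, and picking $\mu$ so that $U(0)=b$ produces the required ground state. If $p>(N+2s)/(N-2s)$ I would obtain a radially decreasing entire solution by a compactness argument: solve $\Ds u_{R}=u_{R}^{p}$ on $B_{R}$ with a prescribed exterior datum (or with a truncated nonlinearity), normalise so that $u_{R}(0)=b$, derive locally uniform a priori bounds together with the uniform decay estimate $u_{R}(x)\le C|x|^{-\gamma}$ coming from using a multiple of $W$ as a supersolution in $\{|x|>\rho_{0}\}$, and pass to the limit $R\to\infty$ along a subsequence; radial monotonicity is stable under this limit and the decay estimate forces $u(x)\to 0$. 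At this point I would also record the companion non-existence claim for $\mathcal C>0$, $p\ge (N+2s)/(N-2s)$: testing the equation against $u$ and against $x\cdot\nabla u$ yields, via the fractional Pohozaev identity, a relation of the form $I_{1}\big(\tfrac{N}{p+1}-\tfrac{N-2s}{2}\big)=\tfrac{N-2s}{2}I_{2}$ with $I_{1}=\int(u-\mathcal C)_{+}^{p+1}\ge 0$ and $I_{2}=\mathcal C\int(u-\mathcal C)_{+}^{p}\ge 0$; since $p\ge (N+2s)/(N-2s)$ makes the bracket $\le 0$, this forces $I_{2}=0$, hence $\mathcal C=0$.

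The core of the argument is the rigidity statement (ii), and this is where I expect the real difficulty to lie. For $p=(N+2s)/(N-2s)$ it is a consequence of the classification of positive solutions of $\Ds u=u^{(N+2s)/(N-2s)}$ in $\R^{N}$ as Aubin--Talenti bubbles: the radially decreasing ones form precisely the dilation family, so prescribing $u(0)=b$ pins one of them down. For $p>(N+2s)/(N-2s)$ no such classification is available, and I would argue directly on the radial profile. The first step is to identify the behaviour at infinity of an arbitrary ground state $u$, namely $\lim_{|x|\to\infty}|x|^{\gamma}u(x)=A_{*}$: "fast decay'' (rate $|x|^{2s-N}$) is ruled out because it would make $u$ a finite-energy solution of $\Ds u=u^{p}$, forcing $p=(N+2s)/(N-2s)$ by Pohozaev; any intermediate power-law rate $\gamma'\in(\gamma,N-2s)$ makes $u^{p}$ negligible compared with $\Ds u$, and any rate slower than $|x|^{-\gamma}$ makes it dominant, so both are incompatible with $\Ds u=u^{p}$; and the constant $A_{*}$ is then forced by matching leading orders (equivalently, through $u=I_{2s}*u^{p}$, whose Riesz potential converges since $u^{p}\sim A_{*}^{p}|x|^{-\gamma-2s}$ with $\gamma+2s<N$). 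Given this, any two ground states $u_{1},u_{2}$ share the asymptotics $A_{*}|x|^{-\gamma}$, and I would close the argument by sliding the dilation family $\lambda\mapsto (u_{1})_{\lambda}$ against $u_{2}$: using the matching behaviour at the origin and at infinity one isolates an extremal dilation at which $(u_{1})_{\lambda}$ touches $u_{2}$ from one side, and then the strong maximum principle for $\Ds$ (applied at the touching point, where the two right-hand sides also agree) forces $(u_{1})_{\lambda}\equiv u_{2}$. The main obstacle is precisely this sliding step: because $u_{1}$ and $u_{2}$ have the \emph{same} leading asymptotics, the ordering near infinity is only borderline, so making the comparison rigorous requires a finer input --- either a quantitative second-order expansion of the profiles, or passing through the Emden--Fowler change of variables $u(r)=r^{-\gamma}\phi(\log r)$, which turns the radial equation into a translation-invariant (in $\log r$) integro-differential problem whose ground states correspond to heteroclinic orbits from $0$ to $A_{*}$, uniqueness of which up to translation yields (ii).
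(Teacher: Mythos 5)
Your overall framing---reduce to a dilation-rigidity statement, handle $p=(N+2s)/(N-2s)$ via the Chen--Li--Ou classification, and rule out $\mathcal C>0$ by Pohozaev---matches the paper. Where you diverge, and where the proposal does not close, is the supercritical rigidity step. The paper does not slide; it normalizes $u_1(0)=u_2(0)=b$ and applies a Frank--Lenzmann--Silvestre monotonicity formula to the difference $w=u_1-u_2$. One extends $w$ harmonically to the half-space, introduces the potential $\mathcal V(r)=(u_1^p-u_2^p)/(u_1-u_2)\ge0$ (which is decreasing in $r$), and forms the Hamiltonian $\Phi(r)=\tfrac{d_s}{2}\int_0^\infty y^{1-2s}\big(W_r^2-W_y^2\big)\,dy+\tfrac12\mathcal V(r)w(r)^2$. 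One then shows $\Phi$ is nonincreasing, that $\Phi(r)\to0$ as $r\to\infty$ (this is where the pointwise and derivative decay estimates from the preliminary section enter), and that $\Phi(0)\le\tfrac12\mathcal V(0)w(0)^2=0$; hence $\Phi\equiv0$, then $W_r\equiv0$, then $w\equiv0$. Crucially, this argument needs only the one-sided decay bound $u\le C|x|^{-2s/(p-1)}$ together with its derivatives, not the sharp two-sided asymptotics.

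Your route, by contrast, requires the sharp asymptotics $|x|^\gamma u(x)\to A_*$ for \emph{every} ground state as an input \emph{before} uniqueness, and that is not something you have. The paper establishes the precise decay with explicit constant only for the specific solution constructed in the existence proposition (via Emden--Fowler and a Hamiltonian argument on that transformed equation); that every ground state shares it is recorded as a remark \emph{after} the uniqueness theorem, as a corollary of it. The ``matching leading orders'' sketch you give is circular: to conclude $u\sim A_*|x|^{-\gamma}$ from $u=(-\Delta)^{-s}(u^p)$ you must already know $u$ has a two-sided power-law profile, and the upper bound alone does not exclude, say, oscillatory or logarithmically modulated decay. Even granting the asymptotics, the sliding step is obstructed for exactly the reason you name: two ground states with identical leading behavior at both $0$ and $\infty$ cannot be strictly ordered near either end, so the extremal dilation parameter and the interior touching point that a maximum-principle argument needs cannot be isolated in a compact region. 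The proposed repair---Emden--Fowler coordinates plus uniqueness of heteroclinics up to translation for the resulting nonlocal equation---would indeed do the job, but it is an unproved statement of difficulty comparable to the theorem itself, and it is not the route the paper takes. As written, the proposal has a genuine gap at its crux.
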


In the above results, ground state  solutions are interpreted in the distributional sense. However, these solutions turn out to be continuous (hence smooth) and the equation is also satisfied pointwise everywhere in $\mathbb R^N$. Moreover, in the subcritical case covered by Theorem \ref{maintheorem}, the solution is also a weak energy solution, i.e., it belongs to the natural energy space $\dot{H}^{s}(\R^{N})$,
which is a fractional homogeneous Sobolev space. Precise definitions are addressed in Section \ref{prel}.

The construction of ground state solutions (by means of critical point theory) for more general subcritical nonlinearities   than \eqref{mainequation} is found in \cite{Ikoma}. % (see also \cite{VAmbrosio}).
On the other hand, existence of ground states for the equation $\Ds u=u^p$ with $p\ge (N+2s)/(N-2s)$ is shown in{ ~\cite[Section 6]{ZLO} and~\cite{acgw} along with a precise decay rate (the case $s=1$ is contained in~\cite[Theorem 9.1]{Souplet}).}  Therefore, our main contribution here is the proof of uniqueness.

We also remark that Theorem \ref{maintheorem} holds true for $0<p <1$ as well, as a consequence of the results proved in \cite{CCH3}, \cite{CHMV} and \cite{DYY}   in the equivalent context of Euler-Lagrange equations associated to aggregation-diffusion free energies that we shall describe in detail through the paper. However, the methods in the proof of Theorem \ref{maintheorem} cannot be applied in case $0<p<1$ since they strongly rely on convexity.
 \\[5pt]

\subsection*{The plasma problem}
In the \emph{local}{ setting} ({i.e.},  $s=1$), the subcritical regime corresponds to  $1\leq p<(N+2)/(N-2)$ for $N\geq3$ ($p\geq 1 $ for $N=2$). In this framework,  equation \eqref{mainequation} with $\mathcal C>0$,  posed in a bounded domain $\Omega$ with homogeneous Dirichlet boundary conditions,  is  the so-called  \emph{plasma problem}.  This particular free boundary problem was introduced in \cite{Temam1} and \cite{Temam2}.
The two-dimensional case was solved in  \cite{BandFluch} and the case $p=1$ in \cite{CaffFried}, while the higher dimensional case was studied in detail in \cite{Flucher-Wei}.  The two-dimensional problem has  an interpretation in plasma physics, because in this context the domain $\Omega$ represents the cross section of a Tokamak machine, a toroidal shell containing a plasma ring surrounded by vacuum. The equations from the magnetohydrodynamics plus further equations modeling the physical properties of the plasma lead to the homogeneous Dirichlet problem for the equation
\begin{equation}\label{epsilonplasma}
-\varepsilon^{2}\Delta u=(u-\mathcal{C})_{+}^{p},
\end{equation}
with a small parameter $\varepsilon$. This equation is equivalent to a nonlinear eigenvalue problem
\[
-\Delta u=\lambda (u-\mathcal{C})_{+}^{p}
\]
where the region inhabited by the plasma is exactly the set $\left\{x \in \Omega \mid u(x)>\mathcal C\right\}$, with $u$ modeling the flux function.
For such model in the form \eqref{epsilonplasma},  the existence of a unique radial ground state is shown to be essential for the characterization of the critical points of least energy solutions $u_{\varepsilon}$ (see \cite{Flucher-Wei}).

In the \emph{nonlocal} setting $s\in(0,1)$, the  Dirichlet problem
\begin{equation*}\label{sol.1}
 \begin{cases}
A_{s} u=\lambda (u-\mathcal{C})^{p}_{+}\,
  & \text{in}\,\Omega, \\%[2pt]
  u=0\,
  &\text{on}\,\partial \Omega
 \end{cases}
\end{equation*}
with the \emph{spectral fractional Laplacian} $A_{s}$ was firstly investigated in \cite{Allen} for $p=1$. In particular the author in \cite{Allen} studies  existence and regularity of solutions, and  the nonlocal counterpart of the geometry of the free boundary $\partial\left\{u=\mathcal{C}\right\}$, which was previously obtained in \cite{KindSpruck}. Recently, in \cite{Mugnai} some interesting existence results are established by critical point theory for the eigenvalue problem related to a general nonlocal operator $\mathcal{L}_{K}$ with a singular kernel $K$ (note that $\mathcal{L}_{K}=(-\Delta)^{s}$ for the choice $K(x)=|x|^{-N-2s}$), \emph{i.e.} the problem

\begin{equation*}\label{eigenvMug}
 \begin{cases}
\mathcal{L}_{K} u=\lambda (u-\mathcal{C})^{p}_{+}
\,
  & \text{in}\,\Omega, \\%[2pt]
  u=0\,
  &\text{on}\,\R^{N}\setminus \Omega.
 \end{cases}
\end{equation*}
Then, a motivation for the study of radial ground states for equation \eqref{mainequation}  would rely on the geometric characterization of least energy solutions to the equation
\[
\varepsilon^{2s}(-\Delta)^{s}u=(u-\mathcal{C})_{+}^{p}.
\]

Going back to the local setting $s=1$, a construction of the unique entire ground state for
\begin{equation*}
-\Delta u=(u-\mathcal{C})_+^p\qquad\mbox{in $\mathbb R^N$}\label{equlocal},
\end{equation*}
with $1<p<(N+2)/(N-2)$, $N\ge 3$,
is contained in the paper by Flucher and Wei \cite[Lemma 5]{Flucher-Wei}.
Indeed,  if we put for instance $\mathcal{C}=1$, the construction of \cite{Flucher-Wei} is based on the radiality of the solution together with a simple scaling ODE argument, which gives the following direct representation
\begin{equation*}
u(r)=
 \begin{cases}
1+R^{\frac{2}{1-p}}v(\frac{r}{R})\,
  & r<R,\\[8pt]
  \left(\frac{r}{R}\right)^{2-N}\,
  & r>R.
 \end{cases}
\end{equation*}
Here, $R$ is the radius of the ball $B_{R}=\left\{u>1\right\}$, which is the (unknown) \emph{free boundary} of the problem, and $v$ the unique positive solution in the unit ball $B_{1}$ to the subcritical Dirichlet problem
\begin{equation}\label{vproblem}
 \begin{cases}
-\Delta v=v^{p}\,
  & \text{ in }\,B_{1},\\[8pt]
  v=0\,
  & \text{ on }\,\partial B_{1}.\,
 \end{cases}
\end{equation}
The regularity of the solution $u$ up to the boundary provides also an explicit representation of the radius, i.e., $R=(|v^{\prime}(1)|/(N-2))^{(p-1)/2}$, which is \emph{independent} on the solution $u$ itself.
Notice that since $v$ is radial, the equation in \eqref{vproblem} becomes an ODE, and the smoothness of $v$ up to the boundary (see for instance \cite[Theorem 8.29, Theorem 6.19]{Gilbarg}) forces one to have the condition
\[
\frac{v^{\prime\prime}(1)}{|v^{\prime}(1)|}=N-1,
\]
which yields in particular that $u\in C^{2}$ %according to the elliptic regularity of equation
%\eqref{equlocal}
(actually, at least $u\in C^{2,\alpha}$ for all $\alpha<1$, by elliptic regularity).

We also mention that,  for the case $0<p<1$, the existence-uniqueness result  for such a problem is contained in \cite[Theorem 1]{BreOsw}; moreover, this solution is radial due to the rotational invariance of the operator. The particular case $p=1$ is more explicit, since, imposing the continuity of the radial derivative we have that $u$ has the following expression
\begin{equation*}
u(r)=
 \begin{cases}
\left(1+\frac{R^{N/2}(N-2)}{\mathcal{J}_{N/2}(R)}r^{-(N-2)/2}\,\mathcal{J}_{(N-2)/2}(r)\right)\,
  & r<R,\\[8pt]
  \left(\dfrac{r}{R}\right)^{2-N}\,
  & r>R\,\\[8pt]
 \end{cases}
\end{equation*}
where $R=z_{0}$ is the first zero of the Bessel function of the first kind $\mathcal{J}_{(N-2)/2}$.\\

In the nonlocal setting $s\in (0,1)$ this kind of local ODE approach is no longer available, so any
attempt to achieve an explicit representation of the ground states is out of sight. Instead, the
techniques that we shall use in the proofs of the uniqueness result in Theorem \ref{maintheorem}
(and also Theorem  \ref{th:super}) rely on the applications of a monotonicity formula developed for
the fractional Schr\"{o}dinger equation by Frank, Lenzmann and Silvestre in \cite[Theorem
2.1]{FLeSil}, inspired by the work of Cabr\'e and Sire \cite{Cabre-Sire}. In particular, we will
work with the equation satisfied by the difference of two solutions $u_1,u_2$, written in terms of a
potential term of the form $\mathcal V(r):=\frac{(u_1)^p_+-(u_2)^p_+}{u_1-u_2}$. Surprisingly enough, the monotonicity argument still works here since the potential can
be shown to be decreasing even though we do not know the location of the free boundaries $R_1$,
$R_2$. In addition, the scaling properties of \eqref{mainequation} will be essential to uniquely identify the central density of the solutions and get the final uniqueness result.\\[5pt]

\subsection*{Steady states of aggregation-diffusion equations}
%An extra, but equally important, reason justifying our investigation

An application of our main results, that we extensively develop through the paper (see Section \ref{uniqsteady}),  concerns the analysis of steady states for the following fractional aggregation-diffusion equation
\begin{equation}\label{eq:KS}
 \partial_t \rho = \Delta \rho^m -	\chi\nabla \cdot \left( \rho \, \nabla (-\Delta)^{-s}\rho\right)
\end{equation}
for a density $\rho(t,x)$ defined on $\R_+ \times \R^N$.  Here, $\chi>0$ is a constant, $m>1$ is the diffusion parameter, and $(-\Delta)^{-s}\rho$ is the Riesz potential of $\rho$, namely
the convolution of $\rho$ with the Riesz kernel
%\begin{equation*}%\label{potential}
 %W_s(x)=
 $c_{N,s}|x|^{2s-N}$,  %$s \in (0,N/2). %k=2s-N
%\end{equation*}
where the normalization constant $c_{N,s}$ is given by
\[
c_{N,s}= \frac{\Gamma\left(\frac{N}{2}-s\right)}{\pi^{N/2}4^s\Gamma(s)}. \]
It is shown in \cite{CHMV} that in the  {\it diffusion-dominated regime}, namely $m>m_c:=2-\frac{2s}N$, steady states  for the dynamics \eqref{eq:KS} are characterized as nonnegative radially decreasing solutions to the Euler-Lagrange equation
\begin{equation}\label{steadyintro}
\rho=\left(\tfrac{m-1}{m}\right)^{\frac1{m-1}}\left(\chi(-\Delta)^{-s}\rho-\mathcal{K}\right)_+^{\frac{1}{m-1}},
\end{equation}
where $\mathcal{K}$ is a  positive constant (playing the role of a Lagrange multiplier). % (only depending on $M,m,N,s$).
 Then, the Riesz potential of a solution $\rho$ to the above equation, namely $u:=(-\Delta)^{-s}\rho$, formally satisfies equation \eqref{mainequation} with $p=\tfrac 1{m-1}$,  $a=((m-1)\chi/m)^{1/(m-1)}$ and $\mathcal C=\mathcal K/\chi$.
The application of our result will be therefore a proof of  the uniqueness of radial steady states of equation \eqref{eq:KS}.
 %The uniqueness part of theorem \ref{maintheorem} will be proved even omitting assumption {\bf (f2)}.
%On the other hand, we will guarantee existence of solutions only for the subcritical case, without investigating the supercritical one.
We stress that the  diffusion-dominated regime is found in the subcritical range as it corresponds to $p<p_c:={N}/{(N-2s)}$, see Figure \ref{f1}. On the other hand, we may treat the case $p\ge{N}/{(N-2s)}$ as well, thus obtaining a characterization of the radial stationary states even in the so-called {\it aggregation-dominated regime}.

\begin{figure}[h]
\begin{center}
%\scalebox{0.8}{\input{Figure mp.pdf}}
\includegraphics{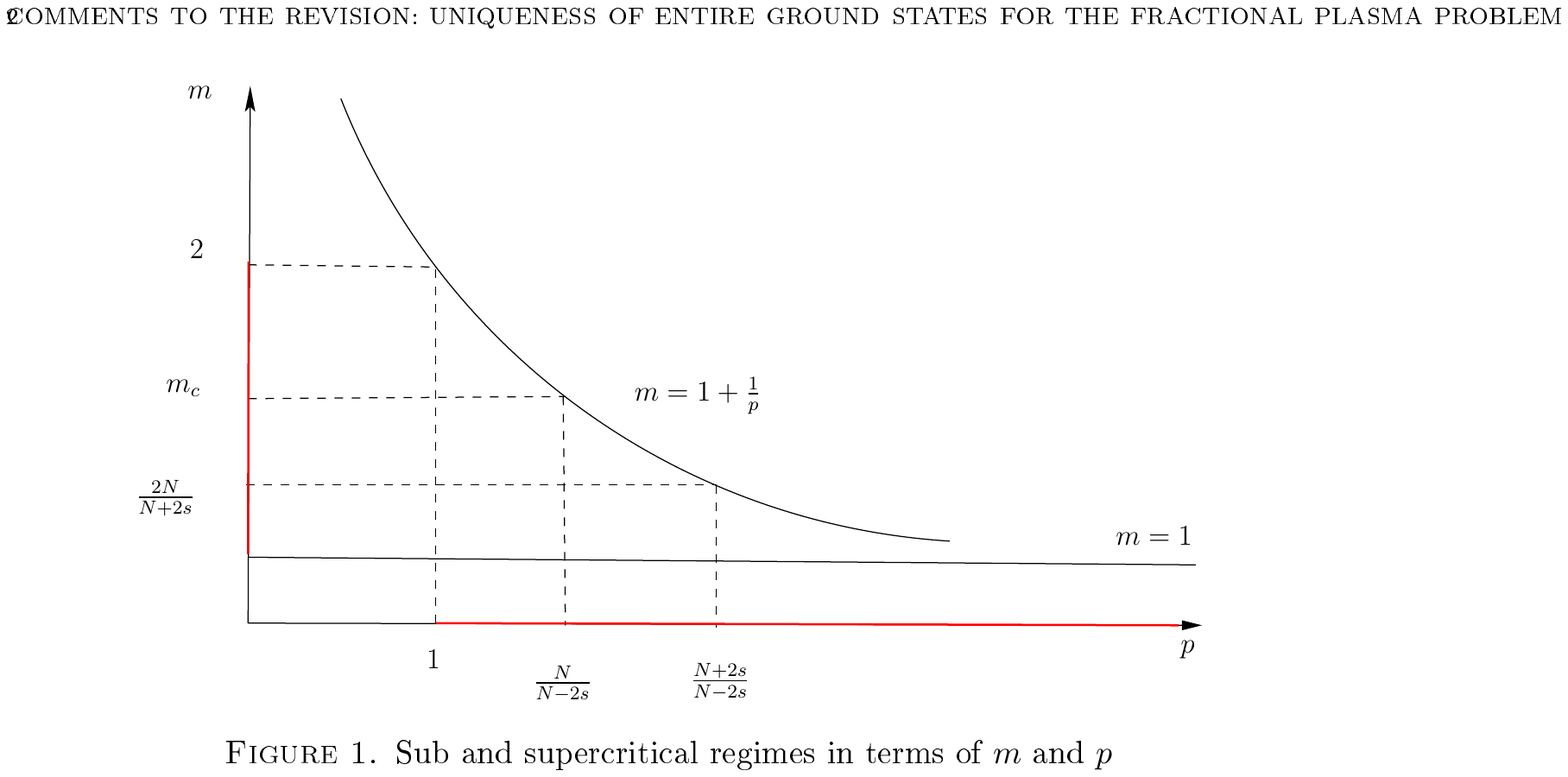}\label{f1}
%\caption{Sub and supercritical regimes in terms of $m$ and $p$}\label{f1}
\end{center}
\end{figure}

%\begin{figure}[h]
%	\begin{center}
%		\includegraphics[totalheight=0.28\textheight]{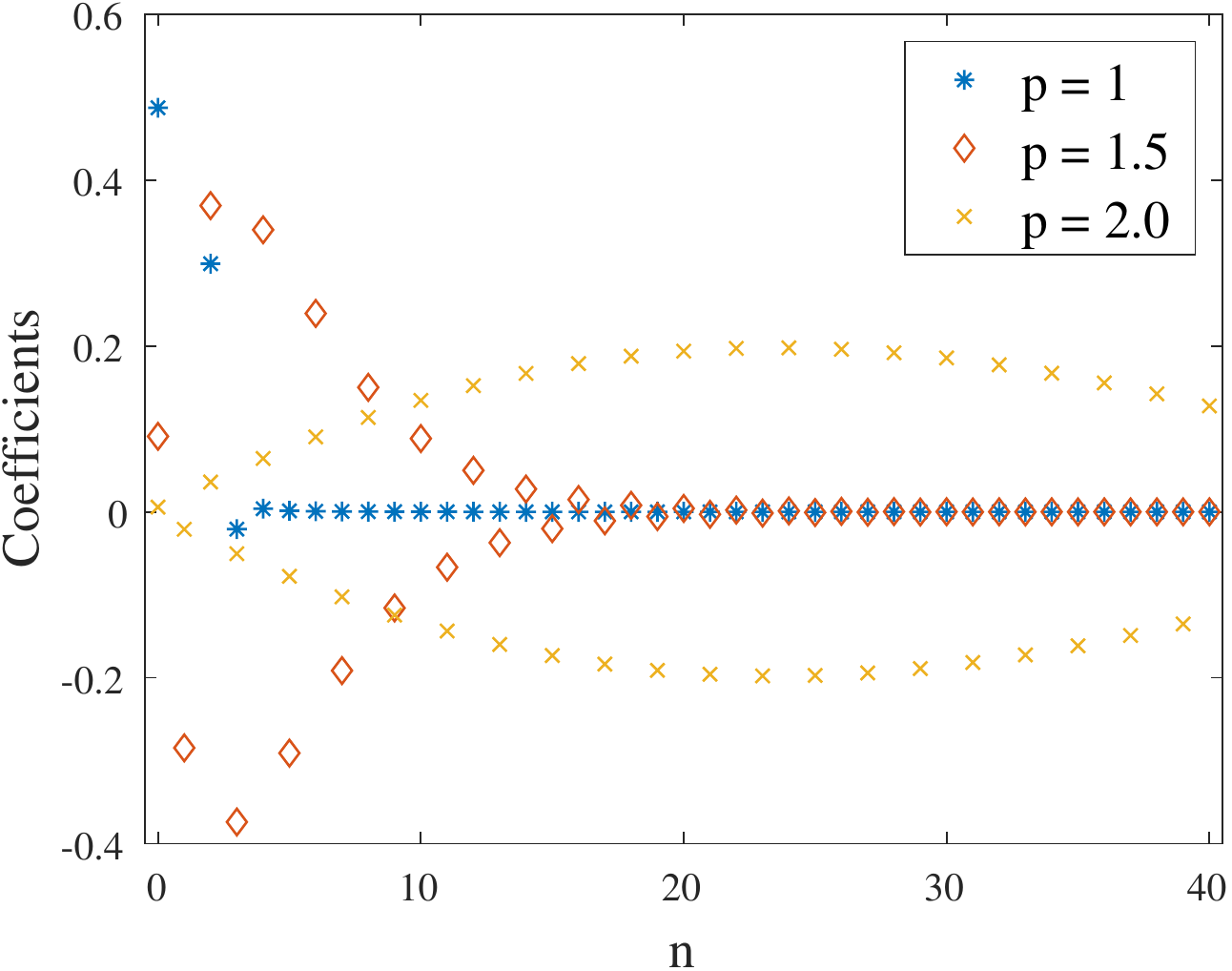}
%	\end{center}
%	\caption{The decay of the coefficients $c_n$ of $\rho$ for $p=1$, $p=1.5$ and $p=2.0$ respectively, for
%		$s=0.5$ in  dimension two.}
%		\label{fig:coefdecay}
%\end{figure}

%\noindent {\sc Description of the main results.} Our first result deals with a uniqueness theorem in the subcritical regime $p<(N+2s)/(N-2s)$. Let us set the critical Sobolev exponent $2^{*}_{s}=2N/(N-2s)$, and define the space
%\[
%\mathcal{D}^{s}(\R^{N})=\left\{u\in L^{2^{\ast}_{s}}(\R^{N}):\,[u]_{\dot H^{s}(\R^{N})}<\infty \right\}
%\]
%where $[u]_{\mathcal D^{s}(\R^{N})}$ is fractional Gagliardo seminorm, defined by means of
%\begin{equation}\label{seminorm}
%[u]_{\mathcal D^{s}(\R^{N})}=\left(\int_{\mathbb R^N}|\xi|^{2s}|\hat\psi(\xi)|^2\,d\xi\right)^{1/2}.
%\end{equation}

In the case $m>m_c$, our results about stationary states complement the ones in \cite{CHMV}, where  their  regularity properties are established in detail. In this regime, uniqueness (up to translations) of radial stationary states with given mass $M:=\int_{\mathbb R^N}\rho>0$ can be deduced by the result in   \cite{CCH3}, see also \cite{DYY} for analogous results in the range $m\ge 2$. {In this regard, in Section \ref{uniqsteady} we give an alternative proof of the uniqueness of the stationary states in the range $m\in(m_c,2]$, by applying Theorem \ref{maintheorem}}.

 We stress that in the {diffusion-dominated regime}, stationary states turn out to be minimizers of the free energy functional associated to the dynamics, i.e.,
%\begin{equation*}
%\mY_{M}:=\left\{ \rho \in L_+^1(\R^N) \cap L^m(\R^N)\,,\,||\rho||_1=M\, ,\, \int_{\R^N} x\rho(x)\,dx=0\right\}\, .
%\end{equation*}
%%%%%%%%%%%%%%%%%%%%%%%%%%%%
%We focus on the so called \emph{diffusion-dominated regime}, \emph{i.e.} when the diffusion dominates on the aggregation in the evolution of \eqref{eq:KS}. By a scaling argument, this phenomenon is shown to occur only if
%$$m>2-\frac{2s}{N}=:m_c$$
%%and the exponent $m_{c}:=2-2s/N$ is the so called critical exponent.
%
%The natural free energy functional associated to \eqref{eq:KS} is defined as
\begin{equation}\label{functional}
\mathcal{F}[\rho]=\frac{1}{m-1}\int_{\mathbb{R}^N}\rho^m(x)\,dx-\frac\chi2\int_{\mathbb{R}^N}\int_{\mathbb{R}^N}c_{N,s}|x-y|^{2s-N}\rho(x)\rho(y)\,dx\,dy,
\end{equation}
among densities $\rho\in L^m_+(\mathbb R^N)$ with prescribed mass $M>0$. In Section \ref{mass scaling}, we shall further investigate the behavior of stationary states as a function of the mass $M$.
Indeed, we shall remark that two stationary states of different masses are rescalings of one another, and the value of the Lagrange multiplier $\mathcal{K}$ in the equation \eqref{steadyintro} is uniquely determined by the mass. In fact, $M$ and $\mathcal{K}$ are related by a bijection of $(0,+\infty)$ onto itself, so that the set of stationary states is a one-parameter family, where the parameter can be chosen to be either $M$ or $\mathcal{K}$. In the local setting $s=1$, the classical results by Lieb and Yau \cite{LY} provide a complete description of the properties of the family of minimizers, by investigating the relations between the mass and other relevant quantities such as the central density $\rho(0)$ or the radius $R$ of the support.   Our results in Section \ref{mass scaling} provide the same information in the fractional case,
along with a precise scaling exponent $\ell=\ell (m,s,N):={\frac{(m-2)N+2sm}{(m-2)N+2s}}$ of the minimal value of $\mathcal F$ as a function of $M$ within the family of minimizers; see Lemma \ref{basicestimates} and Theorem \ref{scaletheorem}.
 These results are only based on the uniqueness of minimizers of given mass and they extend therefore  to the regime $m>2$ (i.e., $0<p<1$), where uniqueness is given by \cite{CCH3, DYY}, even if our main uniqueness theorem does not apply for such values of $m$.\\

In the case  $m=m_c$ \emph{i.e.} the so-called \emph{fair competition regime}, there is a degeneracy in the behavior of the mass $M$ in the family of stationary states, which can be seen from the degeneracy of the above exponent $\ell$. In this regime there exists indeed a critical mass $M_c$ such that all stationary states have mass $M_c$.
In this case, our uniqueness result from Theorem \ref{maintheorem} can be used to conclude that stationary states still form a one-parameter family. As a parameter one may take the Lagrange multiplier $\mathcal{K}\in(0,+\infty)$.
The value $M_c$ is related to the optimal constant in a suitable version of the Hardy-Littlewood-Sobolev inequality as proved in \cite{CCH}. In fact, $M_c$ turns out to be the only value of the mass for which functional $\mathcal F$ has minimizers (in this case a one-parameter family of minimizers). We also refer to \cite{CCH, CCH2} for a detailed analysis of the fair competition regime.\\ % in the local case $s=1$.\\

Eventually, our uniqueness results can be applied in the {aggregation-dominated regime} $m\in (1,m_c)$ to yield a novel characterization of stationary states, as we shall detail in Section \ref{uniqsteady}. There are three subcases where different behaviors occur.
%Our uniqueness results apply for $m\in (1,m_c)$, namely in the \emph{aggregation-dominated regime}  yielding a novel characterization of stationary states in this range, which can be further split in subcases where different behaviors occur.
If $m\in(\tfrac{2N}{N+2s},m_c)$, solutions to \eqref{steadyintro} with finite mass and positive Lagrange multiplies do exist, thus providing a natural notion of stationary state even if in this case there are no minimizers of the functional $\mathcal F$ anymore. Again, there is a one-parameter family of stationary states, parameterized by the mass. %, where the mass can be taken as the parameter.
In case $m\in (1,\tfrac{2N}{N+2s}]$, we will show that no radially decreasing solutions to equation \eqref{steadyintro} exist if $\mathcal{K}>0$. In this setting, we must have $\mathcal{K}=0$ and stationary states are not compactly supported anymore. Instead they are smooth functions, slowly decaying at infinity (with a precise decay rate) for $m\in (1,\tfrac{2N}{N+2s})$.
The value $m=\tfrac{2N}{N+2s}$ corresponds the the critical exponent $p=\tfrac{N+2s}{N-2s}$ in \eqref{mainequation}. The result by Chen, Li and Ou \cite{CLO} provides a complete, explicit description of the one-parameter family of stationary states in this case.
In case $m\in(1, \tfrac{2N}{N+2s})$, thanks to our uniqueness result from Theorem \ref{th:super} we obtain once again a one-parameter family of stationary states.  However, these steady states have infinite mass and the family can be parameterized by the value of central density $\rho(0)$. %All the results concerning the aggregation-dominated regime $m\in (1,m_{c})$ are, up to our knowledge,  new.
\\

We also address the reader to the paper of Bian and Liu \cite{BL}, where an analogous full investigation of stationary states in the different regimes is provided  for the local case $s=1$. The different thresholds are found by formally putting $s=1$ in our setting: radial stationary states  are compactly supported  for $m>\tfrac{2N}{N+2}$, while they are supported on the whole of $\mathbb R^N$ if $m\le\tfrac{2N}{N+2}$, and they are  explicit for $m=\tfrac{2N}{N+2}$.\\[5pt]

\subsection*{Numerical approximation of the fractional plasma equation} In Section~\ref{numerical},
a numerical method is  proposed for~\eqref{mainequation} with $\mathcal{C}>0$ and $p < (N+2s)/(N-2s)$, also covering the case $0<p<1$, by taking advantage of the fact that $\rho = (-\Delta)^s u$ is supported on a ball and can hence be  expanded using appropriate Jacobi polynomials in the radial variable.
These special types of  Jacobi polynomials are chosen because the Riesz potential $u = (-\Delta)^{-s}\rho$ can be easily evaluated,
by extending some explicit relations from~\cite{MR3640641}.
As a result, the main equation~\eqref{mainequation} is reduced to a system of algebraic equations for the expansion coefficients,
 subsequently solved by a fixed point iteration for $p < 1$ or standard Newton's method for nonlinear equations for general $p$. The solutions
as either $s$ or $p$ varies are illustrated in different figures, showing the dependence of their behaviors on these two parameters.
Besides providing quantitative examples to further explore analytical properties of the solutions to \eqref{mainequation},
this method can also be used to approximate radial steady states of the aggregation-diffusion
equation~\eqref{eq:KS}. These steady solutions are usually obtained by finding the numerical steady states at large time, with algorithms for instance as the one in~\cite{MR3372289}, based on the gradient flow structure of the
evolution equation and on special techniques to preserve the nonnegativity of the solution. The method proposed in this
paper employs more efficient iterative solver, while avoiding complicated calculations of functions in the radial variable.
\\[5pt]

 \subsection*{ Organization of the paper} In Section \ref{prel} we give some basic definitions concerning the essential functional framework. Furthermore, several existence results and regularity properties of solutions will be introduced. Section \ref{uniqsubcase} and Section \ref{critsupercrreg} are entirely devoted to the proofs of Theorem \ref{maintheorem} and Theorem \ref{th:super}, respectively. In Section \ref{uniqsteady} we provide our main applications of the above-cited results, that is the uniqueness properties of the steady states to the Keller-Segel evolution equation \eqref{eq:KS}.
 %In this analysis, the reduction to the radially symmetric steady states is justified by the main result of \cite{CHVY}.
 Section \ref{mass scaling} provides further investigation of steady states of \eqref{eq:KS}, in the diffusion dominated regime, by focusing on their scaling properties with respect to the mass of the density $\rho$.
 Section \ref{numerical} exploits certain numerical aspects of the ground states for \eqref{mainequation} in the subcritical case, and an algorithm is proposed for a numerical solution.% at least when $p$ is  not too close to the critical value $(N+2s)/(N-2s)$.

%%%%%%%%%%%%%%%%%%%%%%%%%%%%%%%%%%%%%%%%%%%%%%%%%%%%%%%%%%%%%%%%%%%%%%%%%%%%%%%%%%%%%%%%%%%%%%%%%%%

%%%%%%%%%%%%%%%%%%%%

%%%%%%%%%%%%%%%%%%%%%%%%%%%%%%%%%%%%%%%%%%%%%%%%%%%%%%%%%%

%We want to study the uniqueness of the radially decreasing, nonnegative ground states of the equation
%\begin{equation}
%(-\Delta)^{s}u=(u-C)^{1/(m-1)}_{+}\quad\text{ in }\mathbb R^N\label{groundstate0}
%\end{equation}
%where $C>0$, supplemented by the exact asymptotic behavior
%\begin{equation}\label{decay-condition}
%u\sim M/|x|^{N-2s}\quad \text{as}\quad |x|\rightarrow\infty.
%\end{equation}
%The regularity required on $u$ is: either $u\in C^{1}(0,\infty)$ or $u\in AC_{\rm loc}(0,\infty)$.
%
%For the rest of the paper, we set
%$$p=1/(m-1),$$ so that $p\in(0,1]$ if $m\geq2$.

\section{Preliminaries: functional background and regularity properties of solutions}\label{prel}

\subsection{The fractional Laplacian and the extension problem}
Let $s\in(0,1)$, with $s<1/2$ if $N=1$.
The fractional Laplacian  $\Ds$ on $\mathbb R^N$ is defined by means of Fourier transform as $$\widehat{\Ds u}(\xi)=|\xi|^{2s}\hat u(\xi).$$ For smooth enough $u$ (see \cite[Proposition 2.4]{Silvestre:regularity}), it can be calculated pointwise as the singular integral
\begin{equation}\label{singularintegral}
(-\Delta)^s u(x)=C_{N,s}P.V. \int_{\mathbb R^N} \frac{u(x)-u(y)}{|x-y|^{N+2s}}\,dy,
\end{equation}
where $C_{N,s}$ is an explicit normalization constant, given by
\[
C^{-1}_{N,s}=\int_{\R^{N}}\frac{1-\cos(y_{1})}{|y|^{N+2s}}\,dy.
\]
The fractional Gagliardo seminorm is defined
\begin{equation*}
[u]_{\dot H^s(\mathbb R^N)}=\left(\frac{C_{N,s}}{2}\int_{\mathbb R^N}\int_{\mathbb R^N} \frac{|u(x)-u(y)|^2}{|x-y|^{N+2s}}\,dxdy\right)^{1/2},
\end{equation*}
and the homogeneous Sobolev space $\dot H^s(\mathbb R^N)$ is the completion of $C^\infty_c(\mathbb R^N)$ with respect to $[\cdot]_{\dot H^s(\mathbb R^N)}$. Actually (see Chapter 15 in \cite{Ponce:book} and the references therein),
\[
\dot H^{s}(\R^{N})=\left\{u\in L^{2^{\ast}_{s}}(\R^{N}):\,[u]_{\dot H^{s}(\R^{N})}<\infty \right\},
\]
where we have defined, as customary,
 $$2^{*}_{s}:=\dfrac{2N}{N-2s}.$$

For $u=u(x)$, we consider the  $s$-harmonic (or Poisson) extension  $U=U(x,y)$ on the upper-half space $\R^{N+1}_{+}=\{(x,y)\,:\,x\in\mathbb R^N,y>0\}$, which the solution of the Dirichlet problem
\begin{equation}\label{extension}
\begin{cases}
\Delta_x U+\frac{1-2s}{y}\partial_y U+\partial_{yy}U=0,
	&\text{ in }\R^{N+1}_{+},\\
U(x,0)=u(x),&x\in\R^{N}.
\end{cases}
\end{equation}
Such $U$ is given by the explicit formula
\begin{equation}\label{Poisson-extension}
U(x,y)=c\int_{\mathbb R^N} \frac{y^{2s}}{\left(|x-\zeta|^2+y^2\right)^{\frac{N+2s}{2}}}u(\zeta)\,d\zeta,
\end{equation}
where $$c=\left(\int_{\R^N}\frac{d\zeta}{(|\zeta|^2+1)^{\frac{N+2s}{2}}}\right)^{-1}$$ is an explicit normalization constant.
In addition, it is well known (\cite{CaffSilv}) that
\begin{equation*}\label{DtN}
(-\Delta)^s u=-d_{s} \lim_{y\to 0} y^{1-2s}\partial_y U=:D_{s}U.
\end{equation*}
Here we have defined the constant
$$
d_s:=\frac{2^{2s-1}\Gamma(s)}{\Gamma(1-s)}.
%,\quad .
$$
We further introduce the homogeneous weighted Sobolev space $\dot{\mathcal{H}}^{1}(\R^{N+1}_{+},y^{1-2s})$, which is defined as the completion of $C_{c}^{\infty}(\overline{\R^{N+1}})$ with respect to the norm
\[
\|U\|_{\dot{\mathcal{H}}^{1}(\R^{N+1}_{+},y^{1-2s})}=\int_{\R^{N+1}_{+}}y^{1-2s}|\nabla_{x,y}U|^{2}\,
dx\,dy.
\]

%Let us set the critical Sobolev exponent $2^{*}_{s}=2N/(N-2s)$, and define the space
%\[
%\mathcal{D}^{s}(\R^{N})=\left\{u\in L^{2^{\ast}_{s}}(\R^{N}):\,[u]_{H^{s}(\R^{N})}<\infty \right\}
%\]
%where $[u]_{H^{s}(\R^{N})}$ is fractional Gagliardo seminorm, defined by means of
%\[
%[u]_{H^{s}(\R^{N})}=\left(\int_{\mathbb R^N}|\xi|^{2s}|\hat\psi(\xi)|^2\,d\xi\right)^{1/2}.
%\]

\subsection{Several definitions of weak solutions}
We always assume $a>0$, $p\ge 1$ and $\mathcal C\ge 0$.
We introduce two notions of weak solutions for problem \eqref{mainequation}.
 We first define {\it weak energy solutions} according to the following:

\begin{definition}[Weak energy solution]\label{weakenergy} Let $p\geq1$.
We say that a function $u\in \dot H^{s}(\R^{N})\cap L^{\infty}(\R^{N})$ is a weak energy solution to \eqref{mainequation} if
\begin{equation}\label{energyequation}
\frac{C_{N,s}}{2}\int_{\R^{2N}}\frac{(u(x)-u(y))(\phi(x)-\phi(y))}{|x-y|^{N+2s}}\,dx\,dy
=\int_{\R^{N}}a(u-\mathcal{C})_+^p\,\phi\,dx\qquad \forall \phi\in C_c^{\infty}(\R^{N}).
\end{equation}
Moreover, we say that $U$ is a weak energy solution to \eqref{extension} with the Neumann boundary condition
\begin{equation*}\label{equation-extension}
D_{s}U=a(U(x,0)-\mathcal{C})_{+}^{p}
\end{equation*}
if $U\in\dot{\mathcal H}^{1}(\mathbb R^{N+1}_+,y^{1-2s})$, $U(x,0)\in L^{\infty}(\R^{N})$ and
\begin{equation*}
\int_{\mathbb R^{N+1}_+}y^{1-2s}\nabla U\cdot\nabla \Phi\,dxdy=\int_{\mathbb R^N} \Phi(\cdot,0)a(U(x,0)-\mathcal{C})_+^p\,dx
\end{equation*}
for every $\Phi$ smooth test function compactly supported in $\overline{\mathbb R^{N+1}_+}$.
\end{definition}
\begin{remark}From the previous definition, it follows that if $U$ is a weak energy solution to the extension problem \eqref{extension}, then its trace $u(x):=U(x,0)$ is a weak energy solution to \eqref{mainequation}.
Moreover, we notice that  in the case $p<(N+2s)/(N-2s)$ and $\mathcal{C}>0$,  we have $(u-\mathcal C)_+^p\in L^{1}(\R^{N})$.  \end{remark}

Now we introduce the more general notion of {\it distributional solution}. The importance of Definition \ref{distrib} and Proposition \ref{dec} on distributional solutions will come up especially when considering  the \emph{supercritical} regime $p>(N+2s)/(N-2s)$. Indeed, we will see below in Proposition \ref{$C=00$} that there are distributional solutions that do not belong to  the energy space $\dot H^s(\mathbb R^N)$.
Let us first introduce the weighted space
\begin{equation*}
L^1_s(\mathbb R^N)=\left\{u\in L^1_{\rm loc}(\mathbb R^N) \,:\, \int_{\mathbb R^N} \frac{|u(x)|}{(1+|x|^2)^{\frac{N+2s}{2}}}\,dx<\infty\right\}.
\end{equation*}
%If $u\in L^1_s\cap C^{2s+\delta}$, then $(-\Delta)^s u$ is well defined.

\begin{definition}[Distributional solution]\label{distrib}
We say that $u\in L^{\infty}(\mathbb R^N)$  is a distributional solution to \eqref{mainequation} if $u\in L^1_s(\mathbb R^N)$ and \begin{equation}\label{fractional-up}
\int_{\mathbb R^N} u (-\Delta)^s\phi\,dx =\int_{\mathbb R^N} a(u-\mathcal{C})_{+}^p\phi\,dx\qquad\text{for all } \phi\in C^\infty_c(\mathbb R^N).
\end{equation}
\end{definition}
Notice that the previous definition makes sense because of the assumption $u\in L_s^1(\mathbb R^N)$, since for $\phi \in C^{\infty}_c(\mathbb R^N)$, we have $(-\Delta)^s\phi \in \mathcal S_s$, where
\[
\mathcal S_s:=\{f\in C^\infty(\mathbb R^n): (1+|\cdot|^{N+2s})D^\beta f(\cdot)\in L^\infty(\mathbb R^N)\quad\forall \beta\in\mathbb N^n_0 \}.
\]
Here $\mathbb{N}^n_0:=\set{0,1,\dots}^n$ is the ordered $n$-tuples of non-negative integers.

%\begin{remark}
%If $u$ belongs to the homogeneous space $\dot H^s$, then $u\in L^1_s$, by Hardy-Littlewood-Sobolev inequality.
%\end{remark}
%
%\begin{remark}\label{remark1}
%In the subcritical case $p\leq (N+2s)/(N-2s)$, energy solutions to \eqref{our-problem} are also distributional in the sense of Definition \ref{distrib}. Indeed,
%note that $u\in L^p_{\rm loc}$ and $f(u)=(u-\mathcal C)_{+}^{p}\in H^{-s}$ by Sobolev embedding, we can take test functions $\phi \in H^s$ in \eqref{fractional-up}. Thus the following is justified
%\[
%\int_{\mathbb R^N}a(u-\mathcal{C})_+^p\phi\,dx=\langle u,\phi \rangle_{\dot H^s(\mathbb R^N)}=\langle(-\Delta)^{s/2}u,(-\Delta)^{s/2}\phi\rangle_{L^2(\mathbb R^N)}=\langle u,\Ds\phi\rangle,
%\]
%where $\langle\cdot,\cdot\rangle$ in the right hand side denotes the duality between ${\dot H^s(\mathbb R^N)}$ and ${\dot H^{-s}(\mathbb R^N)}$.
%\end{remark}
The following result simply states that the definition of weak energy solution is stronger than the distributional one.
\begin{proposition}\label{rosa} Let $p\geq1$.
Let $u\in \dot H^s(\mathbb R^N)\cap L^{\infty}(\R^{N})$ be a weak energy solution to \eqref{mainequation}. Then it is also a  distributional solution. %In particular, if $p>\tfrac{N+2s}{N-2s}$, then $u^p\in L^1_{-s}(\mathbb R^N)$.
\end{proposition}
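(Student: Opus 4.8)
The plan is to show directly that every weak energy solution satisfies the distributional identity \eqref{fractional-up}, the only subtlety being the passage from test functions $\phi$ built out of differences in \eqref{energyequation} to the pairing $\int u(-\Delta)^s\phi$ in Definition \ref{distrib}. First I would check that the hypotheses of Definition \ref{distrib} are met: since $u\in L^\infty(\R^N)$ and, in particular, $u\in\dot H^s(\R^N)\subset L^{2^*_s}(\R^N)$, a routine estimate splitting $\R^N$ into $B_1$ and its complement shows $u\in L^1_s(\R^N)$, so the left-hand side of \eqref{fractional-up} makes sense for every $\phi\in C_c^\infty(\R^N)$ because $(-\Delta)^s\phi\in\mathcal S_s\subset L^1_s(\R^N)^{*}$-type decay.

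Next, the core identity. Fix $\phi\in C_c^\infty(\R^N)$. The standard Parseval/Fubini computation for the bilinear form gives
\begin{equation*}
\frac{C_{N,s}}{2}\int_{\R^{2N}}\frac{(u(x)-u(y))(\phi(x)-\phi(y))}{|x-y|^{N+2s}}\,dx\,dy=\int_{\R^N}u\,(-\Delta)^s\phi\,dx,
\end{equation*}
valid whenever the left side is absolutely convergent, which it is here: the factor $(\phi(x)-\phi(y))/|x-y|^{N+2s}$ is integrable in $y$ for each fixed $x$ (it behaves like $|x-y|^{1-N-2s}$ near the diagonal and like $|x-y|^{-N-2s}$ at infinity, with $\phi$ compactly supported), and $u\in L^\infty$ controls the $u$-difference, while the tail is handled by $u\in L^1_s$. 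Combining this with \eqref{energyequation} yields exactly \eqref{fractional-up}.

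The one point that needs a little care — and the step I expect to be the main (mild) obstacle — is justifying the symmetrization/Fubini interchange above without assuming $\phi\in\dot H^s$ a priori in a way that interacts badly with $u$ only being in $\dot H^s\cap L^\infty$ rather than decaying. I would handle it by writing
\begin{equation*}
\frac{C_{N,s}}{2}\int_{\R^{2N}}\frac{(u(x)-u(y))(\phi(x)-\phi(y))}{|x-y|^{N+2s}}\,dx\,dy
=C_{N,s}\int_{\R^N}u(x)\,\mathrm{P.V.}\!\int_{\R^N}\frac{\phi(x)-\phi(y)}{|x-y|^{N+2s}}\,dy\,dx,
\end{equation*}
using antisymmetry of the kernel in $(x,y)$ together with the absolute integrability already established, and then identifying the inner principal-value integral with $(-\Delta)^s\phi(x)$ via \eqref{singularintegral} (legitimate since $\phi$ is smooth enough, by \cite[Proposition 2.4]{Silvestre:regularity}). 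Finally, since $(-\Delta)^s\phi\in\mathcal S_s$ has the decay $|(-\Delta)^s\phi(x)|\lesssim (1+|x|)^{-N-2s}$, the integral $\int_{\R^N}u\,(-\Delta)^s\phi\,dx$ is absolutely convergent by $u\in L^1_s(\R^N)$, and the whole chain of equalities is rigorous. This proves the proposition.
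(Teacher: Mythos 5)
Your route is genuinely different from the paper's. The paper works in Fourier: it identifies the Gagliardo bilinear form with the $\dot H^s$ inner product, rewrites it via Plancherel as $\langle(-\Delta)^{s/2}u,(-\Delta)^{s/2}\phi\rangle_{L^2}$, reinterprets that as the $\dot H^s$–$\dot H^{-s}$ duality pairing $\langle u,(-\Delta)^s\phi\rangle$, and only then invokes $u\in L^1_s$ and $(-\Delta)^s\phi\in\mathcal S_s$ to turn the pairing into the honest integral $\int u(-\Delta)^s\phi$. You instead work entirely in real space, splitting the difference product and pushing a Fubini–symmetrization to collapse the inner $y$-integral into $\mathrm{P.V.}\int(\phi(x)-\phi(y))|x-y|^{-N-2s}\,dy=(-\Delta)^s\phi(x)/C_{N,s}$. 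The paper's Fourier route sidesteps any discussion of principal values or diagonal singularities; your real-space route is more elementary in spirit but has to confront the singular kernel head on, and that is exactly where your write-up has a gap.

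Concretely: your justification of absolute convergence is wrong for $s\geq 1/2$. You bound $|\phi(x)-\phi(y)|/|x-y|^{N+2s}\sim|x-y|^{1-N-2s}$ near the diagonal and declare it integrable in $y$, but $1-N-2s\leq -N$ once $s\geq 1/2$, so this is \emph{not} integrable; the naive split $u(x)(\phi(x)-\phi(y))/|x-y|^{N+2s}$ is then not absolutely integrable over $\R^{2N}$, and Fubini cannot be applied directly to it. (Absolute convergence of the original undivided Gagliardo form is of course true, but it comes from Cauchy–Schwarz against $[u]_{\dot H^s}[\phi]_{\dot H^s}$, i.e.\ from using $u\in\dot H^s$, not from $u\in L^\infty$ and pointwise integrability of the $\phi$-factor as you claim.) To repair the symmetrization step you should truncate to $\{|x-y|>\varepsilon\}$, where the split is legitimate and Fubini applies, then pass to the limit $\varepsilon\to 0$: the left side converges by dominated convergence using $[u]_{\dot H^s},[\phi]_{\dot H^s}<\infty$, while on the right the truncated inner integrals are bounded uniformly in $\varepsilon$ by $C(1+|x|)^{-N-2s}$ (second-order Taylor expansion of $\phi$ near the diagonal plus the compact support at infinity), and $u\in L^1_s$ gives the needed domination. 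With that patch your argument closes.
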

\begin{proof} Since $u$ belongs to the homogeneous space $\dot H^s(\mathbb{R}^N)$, then $u\in L^1_s(\mathbb R^N)$, by Hardy-Littlewood-Sobolev inequality.
Since $u$ is a weak energy solution according to Definition \ref{weakenergy}, we have
\[
\frac{C_{N,s}}{2}\int_{\R^{2N}}\frac{(u(x)-u(y))(\phi(x)-\phi(y))}{|x-y|^{N+2s}}\,dx\,dy
=\int_{\R^{N}}a(u-\mathcal{C})_{+}^p\phi\,dx\quad \text{for all }\phi\in C_{c}^{\infty}(\R^{N}),
\]
where the left hand side is the scalar product in $\dot H^s(\mathbb R^N)$, yielding
\[
\int_{\mathbb R^N}a(u-\mathcal C)_+^p\phi\,dx=\langle u,\phi \rangle_{\dot H^s(\mathbb R^N)}=\langle(-\Delta)^{s/2}u,(-\Delta)^{s/2}\phi\rangle_{L^2(\mathbb R^N)}=\langle u,\Ds\phi\rangle,
\]
where $\langle\cdot,\cdot\rangle$ in the right hand side denotes the duality between ${ \dot H^s(\mathbb R^N)}$ and ${ \dot H^{-s}(\mathbb R^N)}$.
Since $u\in L^1_s(\mathbb R^N)$ and $\Ds\phi\in \mathcal S_s$, we have $u\Ds\phi\in L^1(\mathbb R^N)$ and $\langle u,\Ds\phi\rangle=\int_{\mathbb R^n} u\Ds\phi.$ Thus, $u$ satisfies \eqref{fractional-up}.
\end{proof}

%\begin{proposition}\label{rosa}
%Let $u\in L^1_s(\mathbb R^N)$ be a weak energy solution to \eqref{our-problem}. Then it is also a  distributional solution. %In particular, if %$p>\tfrac{N+2s}{N-2s}$, then $u^p\in L^1_{-s}(\mathbb R^N)$.
%\end{proposition}
%\begin{proof}
%Since $u\in \dot H^s(\mathbb R^N)$ is a weak energy solution according to Definition \ref{weakenergy}, we have
%\[
%\frac{C_{N,s}}{2}\int_{\R^{2N}}\frac{(u(x)-u(y))(\phi(x)-\phi(y))}{|x-y|^{N+2s}}\,dx\,dy
%=\int_{\R^{N}}a(u-\mathcal{C})_{+}^p\phi\,dx\quad \text{for all }\phi\in C_{c}^{\infty}(\R^{N}),
%\]
%where the left hand side is an equivalent scalar product in $\dot H^s(\mathbb R^N)$, yielding
%\[
%\int_{\mathbb R^N}a(u-\mathcal{C})_+^p\phi\,dx=\langle u,\phi \rangle_{\dot H^s(\mathbb R^N)}=\langle(-\Delta)^{s/2}u,(-\Delta)^{s/2}\phi\rangle_{L^2(\mathbb R^N)}=\langle u,\Ds\phi\rangle,
%\]
%where $\langle\cdot,\cdot\rangle$ in the right hand side denotes the duality between ${\dot H^s(\mathbb R^N)}$ and ${\dot H^{-s}(\mathbb R^N)}$.
%Since $u\in L^1_s(\mathbb R^N)$ and $\Ds\phi\in \mathcal S_s$, we have $u\Ds\phi\in L^1(\mathbb R^N)$ and $\langle u,\Ds\phi\rangle=\int_{\mathbb R^n} u\Ds\phi.$ Thus, $u$ satisfies \eqref{fractional-up}.
%\end{proof}

One may give a third notion of weak solutions by means of the integral equation
\begin{equation}\label{integral-equation}
u(x)=\int_{\mathbb R^N} \frac{1}{|x-y|^{N-2s}}a(u(y)-\mathcal C)^p_+\,dy.
\end{equation}
This involves defining the Riesz potential of the right hand side. While this is trivial if the right hand side is compactly supported, justifications are needed if $\mathcal C=0$ and $u$ is positive everywhere and vanishing at infinity.
For a distributional solution $u$ to \eqref{mainequation} we shall see that $u$ is the Riesz potential of $a(u-\mathcal C)_+^p$ in the sense of distributions and also pointwise everywhere.  Let us start with the first fact.

\begin{proposition}\label{dec}
Let $u$ be a positive distributional solution to \eqref{mainequation}  satisfying  $u(x)\rightarrow0$ for $|x|\rightarrow\infty$. Then
\begin{equation*}\label{L-s}
\int_{\mathbb R^N}\frac{(u-\mathcal{C})_{+}^p}{1+|x|^{N-2s}}\,dx<+\infty
\end{equation*}
and
\[
\int_{\mathbb R^N} u\,\varphi\,dx=\int_{\mathbb R^N} a(u-\mathcal C)_{+}^p\,(-\Delta)^{-s}\varphi\,dx\qquad \text{for all } \varphi\in C^\infty_c(\mathbb R^N),
\]
where $(-\Delta)^{-s}\varphi$
is the Riesz potential of $\varphi$.
%:=\displaystyle\int_{\mathbb R^N}\frac{\varphi(y)}{|x-y|^{N-2s}}\,dy$.
\end{proposition}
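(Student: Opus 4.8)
The plan is to show that $u$ is the Riesz potential of $a(u-\mathcal C)_+^p$ in the distributional sense by inserting into the distributional equation \eqref{fractional-up} a compactly supported cut-off of $w:=(-\Delta)^{-s}\varphi$ and passing to the limit. Recall that for $\varphi\in C_c^\infty(\mathbb R^N)$ the Riesz potential $w$ is smooth, satisfies $(-\Delta)^s w=\varphi$ pointwise and has the decay $|D^\beta w(x)|\le C_\beta(1+|x|)^{-(N-2s)-|\beta|}$, and that if $\varphi\ge0$, $\varphi\not\equiv0$ then $w>0$ everywhere with $w(x)\ge c(1+|x|)^{-(N-2s)}$. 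Fixing $\psi\in C_c^\infty(\mathbb R^N)$ with $0\le\psi\le1$, $\psi\equiv1$ on $B_1$, $\supp\psi\subset B_2$, and setting $\psi_R(x):=\psi(x/R)$, $\phi_R:=\psi_R w\in C_c^\infty(\mathbb R^N)$, equation \eqref{fractional-up} applied to the admissible test function $\phi_R$ reads, for $R$ large,
\begin{equation*}
\int_{\mathbb R^N}u\,(-\Delta)^s\phi_R\,dx=\int_{\mathbb R^N}a(u-\mathcal C)_+^p\,\phi_R\,dx .
\end{equation*}

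The heart of the matter is the identity $(-\Delta)^s\phi_R=\varphi+E_R$ with $E_R=(-\Delta)^s[(\psi_R-1)w]$, together with the estimates $|E_R(x)|\le C R^{-N}$ for $|x|\le4R$ and $|E_R(x)|\le C R^{2s}|x|^{-N-2s}$ for $|x|\ge4R$, which I would obtain by splitting the singular integral over $\{|x|\le R/2\}$, $\{R/2\le|x|\le4R\}$, $\{|x|\ge4R\}$: on the first region $E_R(x)=-C_{N,s}\int_{|y|\ge R}(\psi_R-1)w(y)\,|x-y|^{-N-2s}\,dy$ with $|x-y|\ge|y|/2$, so $|E_R|\lesssim\int_{|y|\ge R}|y|^{-2N}dy\lesssim R^{-N}$; in the transition annulus a standard localization estimate for $(-\Delta)^s$ (Taylor expansion near the diagonal, crude bounds for the tails) gives $|(-\Delta)^s\phi_R|\lesssim R^{-N}$, using $\|D^2\phi_R\|_{L^\infty}\lesssim R^{-(N-2s+2)}$ there and the decay of $w$; in the outer region $\phi_R\equiv0$ and $|(-\Delta)^s\phi_R(x)|\le C_{N,s}(|x|/2)^{-N-2s}\|\phi_R\|_{L^1}$ with $\|\phi_R\|_{L^1}\lesssim R^{2s}$. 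In particular $\|(-\Delta)^s\phi_R\|_{L^1(\mathbb R^N)}\le C$ uniformly in $R$, and, writing $\eps(r):=\sup_{|x|\ge r}u(x)\to0$ and using $u\in L^\infty$, the displayed bounds yield $\int u\,E_R\,dx\to0$ and hence $\int u\,(-\Delta)^s\phi_R\,dx\to\int u\,\varphi\,dx$.

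For the integrability I would take $\varphi\ge0$, $\varphi\not\equiv0$: then $\phi_R\ge0$ and $\phi_R=w$ on $B_R$, so the right-hand side dominates $\int_{B_R}a(u-\mathcal C)_+^p\,w\,dx$, which increases to $\int_{\mathbb R^N}a(u-\mathcal C)_+^p\,w\,dx$ by monotone convergence; since the left-hand side converges to the finite number $\int u\varphi$, we conclude $\int a(u-\mathcal C)_+^p\,w<\infty$, and $w\gtrsim(1+|x|)^{-(N-2s)}\gtrsim(1+|x|^{N-2s})^{-1}$ gives the claimed bound. (If $\mathcal C>0$ this is trivial, since $\{u>\mathcal C\}$ is bounded as $u\to0$.) Then, for arbitrary $\varphi\in C_c^\infty(\mathbb R^N)$, the pointwise estimate $|(-\Delta)^{-s}\varphi(x)|\le(-\Delta)^{-s}|\varphi|(x)\le C(1+|x|)^{-(N-2s)}$ and the integrability just proved make $a(u-\mathcal C)_+^p\,(-\Delta)^{-s}\varphi\in L^1$; passing to the limit as $R\to\infty$ in the displayed identity — the left-hand side by the error estimate above, the right-hand side $\int a(u-\mathcal C)_+^p\psi_R w$ by dominated convergence with majorant $a(u-\mathcal C)_+^p|w|$ — yields $\int u\varphi=\int a(u-\mathcal C)_+^p(-\Delta)^{-s}\varphi$, which is the assertion.

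The step I expect to be the main obstacle is the localization estimate for $(-\Delta)^s(\psi_R w)$ in the transition annulus $\{|x|\sim R\}$: one must check that the long-range tails of the fractional Laplacian — coming both from the bulk $\{|y|\lesssim R\}$, where $w$ is comparatively large, and from $\{|y|\gtrsim R\}$ — do not spoil the $O(R^{-N})$ bound there, so that $\int_{\{|x|\sim R\}}|E_R|\,dx$ stays bounded and is absorbed by the factor $\eps(R/2)$ provided by the decay of $u$.
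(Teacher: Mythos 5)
Your proposal is correct, and it takes a genuinely different route from the paper's. The paper's proof of Proposition \ref{dec} proceeds in two steps: (i) the integrability statement $(u-\mathcal{C})_+^p\in L^1_{-s}(\mathbb R^N)$ is obtained by simply citing Lemma 5.4 of \cite{Ao-Gonzalez-Hyder-Wei} (in the only nontrivial case $\mathcal{C}=0$, where the right-hand side is not compactly supported), and (ii) the identity is then extended to test functions $\phi=(-\Delta)^{-s}\varphi$ by cutting off with $\eta_n(\cdot)=\eta(\cdot/n)$ and "passing to the limit", without spelling out the control of the error term $(-\Delta)^s[(\eta_n-1)\phi]$ needed for the left-hand side. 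Your argument, by contrast, is self-contained: you obtain the integrability directly from the distributional equation itself, by testing against $\psi_R\,(-\Delta)^{-s}\varphi$ with $\varphi\ge0$, $\varphi\not\equiv0$, and invoking monotone convergence on the right-hand side once the left-hand side is known to converge — this replaces the external lemma by a short internal argument that only uses $w=(-\Delta)^{-s}\varphi>0$ and $w\gtrsim (1+|x|)^{-(N-2s)}$. You also make explicit the estimate on $E_R=(-\Delta)^s[(\psi_R-1)w]$ that the paper leaves implicit; your three-region decomposition (inside $B_{R/2}$, transition annulus, far field) with the bounds $|E_R|\lesssim R^{-N}$ and $|E_R|\lesssim R^{2s}|x|^{-N-2s}$ is sound, and combined with $\eps(r)=\sup_{|x|\ge r}u\to0$ it gives $\int u E_R\to 0$ — though to be fully precise the inner region $B_{R/2}$ should itself be split (e.g.\ at radius $\sqrt{R}$), since the uniform bound $|E_R|\lesssim R^{-N}$ on the whole of $B_{R/2}$ only gives $\int_{B_{R/2}}|E_R|=O(1)$, and one must let the decay of $u$ do the work on the outer part; you implicitly acknowledge this by invoking $\eps(\cdot)$. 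In short, both approaches hinge on the same cut-off device, but yours buys independence from the cited lemma and fills in the convergence argument the paper compresses; the paper's buys brevity at the cost of invoking an external result.
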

\begin{proof}
We only need to consider the case when the right hand side is not compactly supported, which happens for $\mathcal C=0$ only. Thus assume that $u$ is a positive distributional solution to
\begin{equation*}
(-\Delta)^s u=a u^p\quad\text{in }\mathbb R^N.
\end{equation*}
Lemma 5.4 in \cite{Ao-Gonzalez-Hyder-Wei} immediately yields that
$$u^p\in L^1_{-s}(\mathbb R^N):=\left\{u\in L^1_{\rm loc}(\mathbb R^N) \,:\, \int_{\mathbb R^N} \frac{|u(x)|}{(1+|x|^2)^{\frac{N-2s}{2}}}\,dx<\infty\right\}.$$ Now we can extend the validity of \eqref{fractional-up} to test functions $\phi$ of the form $\phi=(-\Delta)^{-s}\varphi$ with $\varphi\in C^\infty_c(\mathbb R^N)$. Indeed, since we have $u^p\phi\in L^1(\mathbb R^N)$, we can pass to the limit by approximating $\phi$ uniformly on $\mathbb R^N$ with a sequence of smooth compactly supported functions $\phi_n$. These are defined by taking an approximating sequence $\phi_n(x)=\eta_n(x)\phi(x)$, where $\eta_n(x)=\eta(x/n)$ and $\eta$ is a smooth function such that $\eta(x)=1$ if $|x|\le 1$ and $\eta(x)=0$ if $|x|\ge 2$.
\end{proof}

%\begin{remark}\rm
%For a weak energy solution it is easy to directly check the validity of \eqref{L-s} in the supercritical regime. Indeed, since $u$ is bounded it is enough to check the integral  on $B_1^C$ and
%\[
%\int_{B_1^C}\frac{u(x)^p}{1+|x|^{N-2s}}\,dx\le \left(\int_{B_1^C}u(x)^{\frac{2N}{N-2s}}\,dx\right)^{\frac{p(N-2s)}{2N}}\left(\int_{B_1^C}\left(\frac{1}{|x|^{N-2s}}\right)^{\frac{2N}{2N-p(N-2s)}}\,dx\right)^{\tfrac{2N-p(N-2s)}{2N}}.
%\]
%The first integral in the right hand side is finite because $u\in\dot H^s(\mathbb R^N)$ implies $u\in L^{2^*_s}(\mathbb R^N)$ by Sobolev embedding, where $2^*_s=\tfrac{2N}{N-2s}$. The second one is finite since $p>\tfrac{N+2s}{N-2s}$ is equivalent to $\tfrac{2N(N-2s)}{2N-p(N-2s)}>N$. We conclude that $u\in L^1_{-s}(\mathbb R^N)$.
%\end{remark}

\begin{remark}
In the critical case $p=(N+2s)/(N-2s)$, the fact that the integral equation
\begin{equation*}
u(x)=\int_{\mathbb R^N} \frac{1}{|x-y|^{N-2s}}u(y)^p\,dy
\end{equation*}
is equivalent to the original PDE
\begin{equation*}
(-\Delta)^s u=u^p
\end{equation*}
was shown in \cite{CLO}.\end{remark}
We finally recall the notion of {\it ground state}.
\begin{definition}[Ground state]
We say that a distributional solution $u\in L^{\infty}(\mathbb R^N)$ to \eqref{mainequation} is a ground state for  equation \eqref{mainequation} if it is positive, radially decreasing and vanishing at infinity.
\end{definition}

\bigskip

\subsection{The subcritical case}
Now we provide some considerations concerning the existence of  ground states  in the subcritical case, that is, $1\le p<({N+2s})/({N-2s}).$
We first observe that, in this range, the existence of a nontrivial solution $u$ for equation \eqref{mainequation} forces $\mathcal{C}>0$. Indeed, if $\mathcal{C}=0$, by a Liouville type result contained in \cite{ZLO} we have that $u\equiv0$ is the only  solution to the integral equation \eqref{integral-equation} corresponding to
$(-\Delta)^{s}u=a u^p.$
Then in the subcritical range we will always assume $\mathcal{C}>0$. We start with the following existence result:
\begin{proposition}\label{existence}
Let $p\in (1,(N+2s)/(N-2s))$. Let $\mathcal C>0$. Then there is at least one weak energy solution  to equation \eqref{mainequation}  that is a ground state.
\end{proposition}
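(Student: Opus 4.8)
The plan is to obtain the solution by a variational argument, exploiting the strict subcriticality $p+1<2^{*}_{s}$, and then to upgrade it to a genuine ground state via the Riesz potential representation and elliptic regularity. First I would work with the energy functional
\[
J(u)=\frac{1}{2}[u]_{\dot H^{s}(\R^{N})}^{2}-\frac{a}{p+1}\int_{\R^{N}}(u-\mathcal C)_{+}^{p+1}\,dx
\]
on the closed subspace of radially symmetric functions of $\dot H^{s}(\R^{N})$. This functional is well defined and of class $C^{1}$: for $u\in\dot H^{s}(\R^{N})$ the superlevel set $\{u>\mathcal C\}$ has finite measure by Chebyshev's inequality, and interpolating $L^{2^{*}_{s}}$ on that set against the measure bound controls $\int(u-\mathcal C)_{+}^{p+1}$ by $C\,\mathcal C^{-(2^{*}_{s}-p-1)}[u]_{\dot H^{s}(\R^{N})}^{2^{*}_{s}}$ through the Sobolev inequality. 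Since $2^{*}_{s}>2$ this gives $J(u)\ge\tfrac14[u]_{\dot H^{s}(\R^{N})}^{2}$ for $[u]_{\dot H^{s}(\R^{N})}$ small, while $p+1>2$ gives $J(tv)\to-\infty$ as $t\to+\infty$ for any fixed radial $v$ with $\{v>2\mathcal C\}$ of positive measure; hence $J$ has the mountain pass geometry.

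Next I would take a Palais--Smale sequence $(u_{n})$ at the mountain pass level $c>0$. Testing with $u_{n}$ and using $\mathcal C>0$ and $p>1$ yields the Ambrosetti--Rabinowitz type identity $J(u_{n})-\tfrac{1}{p+1}\langle J'(u_{n}),u_{n}\rangle=(\tfrac12-\tfrac{1}{p+1})[u_{n}]_{\dot H^{s}(\R^{N})}^{2}+\tfrac{a\mathcal C}{p+1}\int_{\R^{N}}(u_{n}-\mathcal C)_{+}^{p}$, whose left-hand side is $c+o(1)+o(1)[u_{n}]_{\dot H^{s}(\R^{N})}$, so $(u_{n})$ is bounded. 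By symmetric decreasing rearrangement, which does not increase $[\cdot]_{\dot H^{s}(\R^{N})}$ and preserves $\int(u-\mathcal C)_{+}^{p+1}$, I may assume each $u_{n}$ is radially decreasing. This is the decisive gain: a radially decreasing function with $\|u_{n}\|_{L^{2^{*}_{s}}}$ uniformly bounded obeys $u_{n}(x)\le C|x|^{-(N-2s)/2}$, so $\{u_{n}>\mathcal C\}\subset B_{R_{0}}$ for a fixed $R_{0}$; on $B_{R_{0}}$ the sequence is bounded in $H^{s}(B_{R_{0}})$, hence relatively compact in $L^{p+1}(B_{R_{0}})$ because $p+1<2^{*}_{s}$. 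Passing to the weak limit $u$ in $\dot H^{s}(\R^{N})$ (which is again radially decreasing, the cone being weakly closed) one gets $(u_{n}-\mathcal C)_{+}\to(u-\mathcal C)_{+}$ in $L^{p+1}$, enough to pass to the limit in \eqref{energyequation}; and $u\not\equiv0$ since $\int(u-\mathcal C)_{+}^{p+1}=\lim_{n}\int(u_{n}-\mathcal C)_{+}^{p+1}>0$, as $c>0$.

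Finally I would check that $u$ is a ground state. Being radially decreasing and in $L^{2^{*}_{s}}(\R^{N})$, $u$ vanishes at infinity, so $\{u>\mathcal C\}$ is bounded and $a(u-\mathcal C)_{+}^{p}$ is compactly supported; then $u$ coincides with the Riesz potential $(-\Delta)^{-s}[a(u-\mathcal C)_{+}^{p}]$ (in $\dot H^{s}(\R^{N})$, and pointwise in view of Proposition~\ref{dec}), which is strictly positive everywhere because the Riesz kernel is positive and $(u-\mathcal C)_{+}^{p}\not\equiv0$, and which tends to $0$ at infinity as the Riesz potential of a compactly supported $L^{1}$ function. A standard bootstrap for the Riesz potential starting from $(u-\mathcal C)_{+}^{p}\in L^{2^{*}_{s}/p}(B_{R_{0}})$ (or De Giorgi--Nash--Moser for $\Ds$) gives $u\in L^{\infty}(\R^{N})$, so $u\in\dot H^{s}(\R^{N})\cap L^{\infty}(\R^{N})$ is a weak energy solution; being positive, radially decreasing and vanishing at infinity, it is a ground state.

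I expect the main obstacle to be exactly the compactness: the embedding $\dot H^{s}(\R^{N})\hookrightarrow L^{2^{*}_{s}}(\R^{N})$ is not compact and the natural energy space is only the homogeneous one, so the strict subcriticality $p+1<2^{*}_{s}$ does not by itself suffice. The resolution above is that radially \emph{decreasing} competitors confine the support of the nonlinearity to a fixed ball, reducing everything to the compact subcritical embedding on a bounded domain; alternatively one may first solve the Dirichlet problem on large balls $B_{R}$, where compactness is immediate, obtain bounds on $[u_{R}]_{\dot H^{s}(\R^{N})}$ and on $\{u_{R}>\mathcal C\}$ that are uniform in $R$, and let $R\to\infty$. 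A construction of ground states for more general subcritical nonlinearities by critical point theory is also available in \cite{Ikoma}.
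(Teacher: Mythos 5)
Your approach is variational (mountain pass + Palais--Smale), while the paper's proof is a pure minimization of the penalized functionals $\mathcal G_j(u)=[u]^2_{\dot H^s(\R^N)}+j\bigl(\int_{\R^N}F(u)\,dx-1\bigr)^2$, followed by passing to the limit $j\to\infty$ in the Euler--Lagrange equation to recover the right multiplier. Both strategies use the same two engines you identify: Schwarz symmetrization to confine $\{u>\mathcal{C}\}$ in a fixed ball, and the compact embedding $\dot H^s(\R^N)\hookrightarrow L^{2^*_s}_{\mathrm{loc}}$. That said, your scheme has a genuine gap precisely at the symmetrization step.

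The problematic sentence is: ``By symmetric decreasing rearrangement \dots I may assume each $u_n$ is radially decreasing.'' For a Palais--Smale sequence this replacement is not legitimate. P\'olya--Szeg\H{o} and equimeasurability only give $J(u_n^*)\le J(u_n)$; they say nothing about $J'(u_n^*)$. After rearrangement you have no control on $\|J'(u_n^*)\|$, so $(u_n^*)$ need not be a Palais--Smale sequence at level $c$, and without that you cannot pass to the limit in $J'(u_n)\to 0$ to obtain a critical point. This is not a minor technicality here, because your entire compactness mechanism --- the uniform bound $\{u_n>\mathcal C\}\subset B_{R_0}$ via the pointwise decay of radially decreasing $L^{2^*_s}$ functions --- rests on the sequence being radially decreasing. (Making this work would require a symmetric variant of the mountain pass theorem, e.g.\ via polarizations, which you do not invoke.) Also note the minor point that one cannot simply run the mountain pass on ``the closed subspace of radially symmetric functions'' and still claim monotonicity: radial PS limits need not be radially decreasing without an extra argument.

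This is exactly the friction the paper avoids by using a minimization rather than a min-max: for the minimizing sequences of $\mathcal G_j$ one only needs $\mathcal G_j(u_k^*)\le\mathcal G_j(u_k)$, so Schwarz rearrangement is harmless, and the radial decrease of the minimizer comes for free. Your own final-paragraph alternative --- solving the Dirichlet problem on $B_R$ (where compactness is immediate and one can rearrange a minimizer, or use \cite[Theorem 1.2]{Felmer}), obtaining $R$-uniform bounds on $[u_R]_{\dot H^s}$ and on the free boundary, and letting $R\to\infty$ --- is sound and would repair the argument; so would minimizing under a mass-type constraint, or replacing the mountain pass by a Nehari-manifold minimization after checking that the constraint is stable under rearrangement (which for $\mathcal{C}>0$ requires some care because the nonlinearity is not homogeneous). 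The remaining ingredients of your write-up (mountain pass geometry, Ambrosetti--Rabinowitz boundedness identity, the $|x|^{-(N-2s)/2}$ decay, the Riesz-potential positivity, and the $L^\infty$ bootstrap) are correct and essentially match the paper's.
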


\begin{proof}
%Assume wlog that $a=1$.
The existence of a weak energy radially decreasing solution to \eqref{mainequation} could be established by a variation of \cite[Theorem 1.3]{Ikoma} as it can be reached by proving the existence of radial critical points to the energy functional
\[
I(u):=[u]^2_{\dot H^{s}(\R^{N})}-\int_{\R^{N}}F(u)\,dx,
\]
where
\[
F(t):=\int_{0}^{t}f(\tau)\,d\tau=\frac{1}{p+1}\,(t-\mathcal C)^{p+1}_+
\]
is a primitive of $f(t):=(t-\mathcal{C})_{+}^{p}$.
The results in \cite{Ikoma} are given for $N\ge 2$. Here we provide an alternative variational proof of existence that is well-suited to any dimension $N\ge 1$ (recalling that $s<1/2$ if $N=1$).
{Consider the functionals
\[\mathcal G_j(u):=[u]^2_{\dot H^s(\mathbb R^N)}+j\left(\int_{\mathbb{R}^N}F(u)\,dx-1\right)^2,\qquad j\in\mathbb N,\]
defined on $\dot H^s(\mathbb R^N)$. Notice that the continuous embedding of  $\dot H^s(\mathbb R^N)$ into $ L^{2^*_s}(\mathbb R^N)$ yields $F\circ u\in L^1(\mathbb R^N)$ for every $u\in \dot H^s(\mathbb R^N)$, since we are in the regime $p+1<2_s^*$.
For each $j\in\mathbb N$, we claim that $\mathcal G_j$ has a minimizer over $\dot H^s(\mathbb R^N)$ which is a radially decreasing function vanishing at infinity. Indeed, fix $j\in\mathbb N$ and let   $(u_k)_{k\in\mathbb N}\subset \dot H^s(\mathbb R^N)$ be a minimizing sequence for $\mathcal G_j$. Such a sequence is bounded in $\dot H^s(\mathbb R^N)$, since $[u_k]^2_{\dot H^s(\mathbb R^N)}\le\mathcal G_j(u_k)\le 1+\mathcal G_j(w)=1+[w]^2_{\dot H^s(\mathbb R^N)}$ for any $k\in\mathbb N$ large enough as soon as $w\in 	\dot H^s(\mathbb R^N)$ is such that $\int_{\mathbb R^N}F(w)\,dx=1$.  By taking the Schwarz spherical rearrangement, we can assume w.l.o.g. that each $u_k$ is radially decreasing nonnegative and vanishing at infinity, see for instance \cite[Theorem 1.1.1]{Kesavan}.
By the continuous embedding of  $\dot H^s(\mathbb R^N)$ into $ L^{2^*_s}(\mathbb R^N)$, we get the boundedness of the sequence $(u_k)_{k\in\mathbb N}$ in $L^{2^*_s}(\mathbb R^N)$, so that  the measure of the set $\{u_k>\mathcal C\}$ is uniformly bounded with respect to $k$, and since this set is a ball centered at the origin, there exists $r>0$ such that the support of $(u_k-\mathcal C)_+$ is contained in $B_r$ for any $k\in\mathbb N$. The compactness of the embedding $\dot H^s(\mathbb R^N)\hookrightarrow L^{2^*_s}_{loc}(\mathbb R^N)$ shows that up to subsequences $u_k\rightharpoonup u$ weakly in $\dot H^s(\mathbb R^N)$ and $u_k\to u$ strongly in $L^{2^*_s}(B_r)$. Since we have $p+1<2_s^*$ in the subcritical regime, $F(t)$ grows slower than $|t|^{2^*_s}$ at infinity, and then  we deduce
\begin{equation}\label{subcriticality}
\lim_{k\to+\infty}\int_{\mathbb R^N}F(u_k)\,dx=\lim_{k\to+\infty}\int_{B_r}F(u_k)\,dx=\int_{B_r}F(u)\,dx=\int_{\mathbb R^N}F(u)\,dx,
\end{equation}
see for instance the convergence result in \cite[Theorem A.I]{BerLio}.
Along with the weak lower semicontinuity of the $\dot H^s(\mathbb R^N)$ norm, this shows that $u$ is a minimizer of $\mathcal G_j$. Moreover, $u$  is  nonnegative and radially decreasing,
%Of course the minimizer is vanishing at infinity.
thus the claim is proved.}

{
For every $j\in\mathbb N$, let $u_j$ be a minimizer of $\mathcal G_j$, provided by the latter claim. Let $w$ as above.  By minimality we have the estimate $[u_j]^2_{\dot H^s(\mathbb R^N)}\le \mathcal G_j(u_j)\le \mathcal G_j(w)=[w]^2_{\dot H^s(\mathbb R^N)}$, which shows that the sequence $(\mathcal G_j(u_j))_{j\in\mathbb N}$ is bounded and that the sequence of radially decreasing functions $(u_j)_{j\in\mathbb N}$ is bounded in $\dot H^s(\mathbb R^N)$. Therefore, by subcriticality (i.e., $p+1<2^*_s$)  \cite[Theorem A.I]{BerLio} implies again that up to subsequences $f(u_{j})\rightarrow f(u)$ and $f(u_{j})u_{j}\rightarrow f(u)u$ in $L^{1}_{loc}(\R^{N})$, thus the same arguments used to prove \eqref{subcriticality} yield
\begin{equation}\label{subcriticality2}
\lim_{j\to+\infty}\int_{\mathbb R^N}F(u_j)\,dx=\int_{\mathbb R^N}F(u)\,dx,
%\lim_{j\to+\infty}\int_{\mathbb R^N}f(u_j)\,u_j\,dx=\int_{\mathbb R^N}f(u)\,u\,dx>0
\end{equation}
\begin{equation}\label{subvarphi}
\lim_{j\to+\infty}\int_{\mathbb R^N}f(u_j)\,\phi\,dx=\int_{\mathbb R^N}f(u)\,\phi\,dx\qquad\forall\phi\in C^\infty_c(\mathbb R^N)
\end{equation}
and
\begin{equation}\label{subcriticality3}
%\lim_{j\to+\infty}\int_{\mathbb R^N}F(u_j)\,dx=\int_{\mathbb R^N}F(u)\,dx\qquad \mbox{and}\qquad
\lim_{j\to+\infty}\int_{\mathbb R^N}f(u_j)\,u_j\,dx=\int_{\mathbb R^N}f(u)\,u\,dx.
\end{equation}
We may also check that the limit in \eqref{subcriticality3} is strictly positive:  assuming by contradiction that it is zero, we get
\[
\begin{aligned}
\limsup_{j\to+\infty}\int_{\mathbb R^N}F(u_j)\,dx&=\frac1{p+1}\limsup_{j\to+\infty}\int_{\mathbb R^N}(u_j-\mathcal C)_+^{p}(u_j-\mathcal C)_+\,dx\\
&\le\frac1{p+1}\limsup_{j\to+\infty}\int_{\mathbb R^N}f(u_j)\,u_j\,dx=0,
\end{aligned}
\]
therefore \begin{equation}\label{subcriticality4}\lim_{j\to+\infty}\left(\int_{\mathbb R_N}F(u_j)\,dx-1\right)^2=1,\end{equation}
which contradicts the boundedness of the sequence $(\mathcal G_j(u_j))_{j\in\mathbb N}$.
Now,
%where positivity is due to the fact that $\int_{\mathbb R^N}f(u)\,u\,dx=0$ would lead to a contradiction,
%since it would give $u\le C$ and then $\int_{\mathbb R_N} F(u)\,dx=0$ as well...
a first variation argument readily entails that
%if $u_j$ is a minimizer of $\mathcal G_j$ provided by the above claim there holds
\begin{equation}\label{eulerlagrange}
\langle u_j,\phi\rangle_{\dot H^s(\mathbb R^N)}+j\left(\int_{\mathbb R^N} F(u_j)\,dx-1\right)\int_{\mathbb R^N} f(u_j)\,\phi\,dx=0\qquad\forall \phi\in C^\infty_c(\mathbb R^N)
\end{equation}
and a density argument using subcriticality shows that the above equality holds for every $\phi \in \dot H^{s}(\mathbb R^N)$.
Indeed, let  $\phi\in \dot H^s(\mathbb R^N)$ and let $(\phi_k)_{k\in\mathbb N}\subset C^\infty_c(\mathbb R^N)$ strongly converge to $\phi$ in  $\dot H^s(\mathbb R^N)$, hence strongly in $L^{2^*_s}_{loc}(\mathbb R^N)$. Since $f(u_j)\in L^{2^*_s/p}(\mathbb R^N)$ is compactly supported and $p<(N+2s)/(N-2s)=2_s^*/(2_s^*)'$, where $(2_s^*)'=2N/(N+2s)$ is the H\"older conjugate of $2_s^*$, we deduce that $f(u_j)\phi\in L^1(\mathbb R^N)$ and that
\[
\lim_{k\to+\infty}\int_{\mathbb R^N} f(u_j)\,\phi_k\,dx=\int_{\mathbb R^N} f(u_j)\,\phi\,dx.
\]
Testing \eqref{eulerlagrange} with $u_j$ yields
\[
[u_j]_{\dot H^s(\mathbb R^N)}^2+j\left(\int_{\mathbb R^N}F(u_j)\,dx-1\right)\int_{\mathbb R^N}f(u_j)\,u_j=0\qquad\forall j\in\mathbb N.
\]
Let $\theta_j:=j\left(\int_{\mathbb R^N}F(u_j)\,dx-1\right)$.  The above relation, thanks to the fact that the limit in \eqref{subcriticality3} is positive  and to the boundedness in $\dot H^s(\mathbb R^N)$ of the sequence $(u_j)_{j\in\mathbb N}$, shows that
$(\theta_j)_{j\in\mathbb N}$ is a bounded sequence of nonpositive numbers. Up to extraction of a further subsequence, it converges to some $\theta\le 0$, therefore by passing to the limit in \eqref{eulerlagrange}, thanks to \eqref{subvarphi}, we get
\[
\langle u,\phi\rangle_{\dot H^s(\mathbb R^N)}+\theta\int_{\mathbb R^N} f(u)\phi\,dx=0\qquad\forall \phi\in C^\infty_c(\mathbb R^N),
\]
which again extends by density to every test function in $\dot H^s(\mathbb R^N)$.
 This shows that $u$ satisfies \eqref{energyequation} with $a=-\theta$.
 We  check that $\theta\neq 0$. Indeed, if $\theta=0$ we obtain $(-\Delta)^s u=0$ in $\mathbb R^N$, and since $u\ge0$ we deduce $u\equiv 0$ by Liouville theorem (see \emph{e.g.} \cite[Theorem 1.1]{Fall}). But then the limit is zero in \eqref{subcriticality2}, thus \eqref{subcriticality4} holds, again contradicting the boundedness of the sequence $(\mathcal G_j(u_j))_{j\in\mathbb N}$.
A solution to \eqref{energyequation} with an arbitrary $a>0$ is given by  the rescaled function $v(x)=u(\lambda x)$ with $\lambda=(-a/\theta)^{\frac1{2s}}$. }

\bigskip

%%%%%%%%%%%%%%%%%%%%%%%%%%%
%%%%%%%%%%%%%%%%%%%%%%%%%%%

In order to obtain a solution in the sense of Definition \ref{weakenergy}, we are left to prove the boundedness of $u$. %Assume w.l.o.g. that $u$ satisfies  \eqref{energyequation} with $a=1$.
 Up to rescaling, we can assume w.l.o.g. that $u$ solves \eqref{energyequation} with $a=1$. By the condition $p<(N+2s)/(N-2s)$, it follows that $f(u)\in L^{q}(\R^{N})$ for $q=2N/(N+2s)$. Thus \cite[Corollary 1.4]{Fall} implies that $u=(-\Delta)^{-s}(f(u))$.
Since $u\in L^{2^{*}_{s}}$,  we have that $f(u)\in L^{q_0}$, where $q_0=2^{*}_{s}/p>1$. Then, by the same argument,  $u=(-\Delta)^{-s}(f(u))\in L^{q_1}$, where
\[
q_1:=\frac{Nq_0}{N-2sq_0}=\frac{2N}{p(N-2s)-4s}>2^{*}_{s}.
\]
Bootstrapping, after a finite number of steps we have that $f(u)\in L^{q}$, for some $q>N/2s$. Now, since $f(u)$ is supported in a ball $B_{R}:=\left\{u>\mathcal{C}\right\}$ we find
\[
u(x)=c_{N,s}\int_{\R^{N}}\frac{f(u(|\tilde x|))}{|x-\tilde x|^{N-2s}}\,d\tilde x=\int_{B_{R}}\frac{f(u(|\tilde x|))}{|x-\tilde x|^{N-2s}}\,d\tilde x,
\]
thus the radial monotonicity of $u$ and H\"older inequality yields
\[
\|u\|_{L^\infty}=u(0)=c_{N,s}\int_{B_{R}}\frac{f(u(|\tilde x|))}{|\tilde x|^{N-2s}}\,d\tilde x
\leq \|f(u)\|_{q}\left(\int_{B_{R}}\frac{d\tilde x}{|\tilde x|^{\frac{q(N-2s)}{q-1}}}\,d\tilde x\right)^{(q-1)/q},
\]
where the integral at the right hand side is finite since $q>N/2s$.
\end{proof}

\begin{remark} %\label{possymm}
{By a maximum principle argument, we have that any weak energy solution $u$ to \eqref{mainequation} is nonnegative (see, for instance, Section 4 in \cite{RosOton:survey}). Moreover, it is possible to apply \cite[Theorem 1.2]{Felmer}, based on moving plane arguments, to show that $u$ is always radially decreasing.}
\end{remark}

\begin{remark}
An alternative proof for Proposition \ref{existence} can be given, at least for $1\le p<\tfrac{N}{N-2s}$, by establishing existence of minimizers $\rho$ for functional \eqref{functional} with $m=1+\tfrac1p$, see Remark \ref{existencebis} later on. Indeed, such minimizers satisfy the  Euler-Lagrange equation \eqref{steadyintro}, which is equivalent to \eqref{mainequation} for some constant $\mathcal{C}_{1}$ with $u=(-\Delta)^{-s}\rho$ as discussed in Section \ref{uniqsteady}. Then using the scaling property of \eqref{mainequation} allows to find a solution to the same equation for a given constant $\mathcal{C}$.
\end{remark}

%\begin{remark}
%\color{red}
%Another strategy to construct a classical or distributional solution in the subcritical regime in any dimension is the following. By standard variational techniques, one obtains a positive bounded radial solution $u_k$ for the Dirichlet problem in any ball $B_k$. By Schauder estimates, this sequence of solutions converges up to a subsequence in $C^3_{\rm loc}(\mathbb{R}^N)$. However, it is not immediate to see that such solution lies in the energy space.
%\end{remark}

Let us consider now the regularity of ground states for \eqref{mainequation}. We use  the  convention $C^\alpha=C^{\lfloor\alpha\rfloor,\alpha-\lfloor\alpha\rfloor}$ for H\"older spaces, where $\alpha>0$ and $\lfloor\alpha\rfloor:=\max\{z\in\mathbb Z:z<\alpha\}$. We recall first the interior \emph{a priori} estimates of Ros-Oton and Serra \cite{RosOton-Serra3}.  %Denote $\angles{x}=\sqrt{1+|x|^2}$.

\begin{proposition}[\cite{RosOton-Serra3}]
\label{prop:schauder}
If $u\in C^\infty(\R^N)$ solves
$$\Ds u=h\quad \text{in }B_1$$
then, for any $\beta\in(0,2s)$, there is a positive constant $\mathsf{C}$ depending on $n$, $s$ and $\beta$ such that
\begin{equation}\label{eq:RS-1}
\norm[ C^\beta(\overline{B_{1/2}})]{u}
\leq  \mathsf{C}\left(
    \norm[L^\infty(\R^N)]{u}
    +\norm[L^\infty(B_1)]{h}
\right),
\end{equation}
and
\begin{equation}\label{eq:RS-3}
\norm[ C^\beta(\overline{B_{1/2}})]{u}
\leq  \mathsf{C}\left(
    \norm[L^\infty(B_1)]{u}
    +\norm[L^\infty(B_1)]{h}
    +\norm[L^1_s(\R^N)]{u}
    %+\norm[L^1(\R^N)]{\angles{x}^{-n-2s}u}
\right),
\end{equation}
Moreover, given $\beta>0$, if neither $\beta$ nor $\beta+2s$ is an integer, then
\begin{equation}\label{eq:RS-2}
\norm[C^{\beta+2s}(\overline{B_{1/2}})]{u}
\leq  \mathsf{C}\left(
    \norm[C^{\beta}(\R^N)]{u}
    +\norm[C^{\beta}(B_1)]{h}
\right),
\end{equation}
and
\begin{equation*}\label{eq:RS-4}
\norm[C^{\beta+2s}(\overline{B_{1/2}})]{u}
\leq  \mathsf{C}\left(
    \norm[C^{\beta}(\overline{B_1})]{u}
    +\norm[C^{\beta}(\overline{B_1})]{h}
    +\norm[L^1_s(\R^N)]{u}
\right).
\end{equation*}
\end{proposition}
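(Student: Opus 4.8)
The plan is to reduce these interior Schauder bounds to two classical ingredients: the mapping properties of the Riesz potential $(-\Delta)^{-s}$ on H\"older spaces, and the interior smoothness of $s$-harmonic functions. I would fix once and for all a cutoff $\eta\in C_c^\infty(B_1)$ with $\eta\equiv1$ on $B_{3/4}$ and split $u=v+w$, where
\[
v:=(-\Delta)^{-s}(\eta h),\qquad w:=u-v.
\]
Since $\eta h$ is bounded (resp.\ of class $C^\beta$) with compact support, $v$ is well defined, bounded by $\norm[L^\infty(B_1)]{h}$, smooth away from $\mathrm{supp}\,\eta$ and vanishing at infinity; moreover $\Ds v=\eta h=h$ throughout $B_{3/4}$, so $\Ds w=0$ in $B_{3/4}$. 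The two facts I would invoke are: \textbf{(a)} $(-\Delta)^{-s}$ maps compactly supported $L^\infty(\R^N)$ functions into $C^\beta_{\loc}(\R^N)$ for every $\beta<2s$, and compactly supported $C^\beta(\R^N)$ functions into $C^{\beta+2s}_{\loc}(\R^N)$ whenever neither $\beta$ nor $\beta+2s$ is an integer, with the corresponding quantitative estimates; and \textbf{(b)} if $\Ds w=0$ in $B_{3/4}$ and $w\in L^1_s(\R^N)$, then $w\in C^\infty(\overline{B_{1/2}})$ with $\norm[C^k(\overline{B_{1/2}})]{w}\le\mathsf C_k\norm[L^1_s(\R^N)]{w}$ for every $k$.

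Granting (a) and (b), all four estimates follow by bookkeeping. For \eqref{eq:RS-1}, (a) gives $\norm[C^\beta(\overline{B_{1/2}})]{v}\lesssim\norm[L^\infty(B_1)]{h}$; since $v$ is bounded by $\norm[L^\infty(B_1)]{h}$, the function $w$ is bounded by $\norm[L^\infty(\R^N)]{u}+C\norm[L^\infty(B_1)]{h}$, hence $\norm[L^1_s(\R^N)]{w}\lesssim\norm[L^\infty(\R^N)]{u}+\norm[L^\infty(B_1)]{h}$, and (b) closes the estimate. For \eqref{eq:RS-3} one instead bounds $\norm[L^1_s]{w}\le\norm[L^1_s]{u}+\norm[L^1_s]{v}\lesssim\norm[L^1_s]{u}+\norm[L^\infty(B_1)]{h}$ and concludes the same way. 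For \eqref{eq:RS-2} the smooth cutoff is essential precisely so that $\eta h\in C^\beta(\R^N)$ — the raw restriction $h\,\ind_{B_1}$ would be discontinuous across $\partial B_1$ — and then (a) yields $\norm[C^{\beta+2s}(\overline{B_{1/2}})]{v}\lesssim\norm[C^\beta(B_1)]{h}$, using $\beta,\beta+2s\notin\mathbb Z$, while (b) gives $\norm[C^{\beta+2s}(\overline{B_{1/2}})]{w}\lesssim\norm[C^{\lceil\beta+2s\rceil}(\overline{B_{1/2}})]{w}\lesssim\norm[L^1_s]{w}\lesssim\norm[L^\infty(\R^N)]{u}+\norm[L^\infty(B_1)]{h}\le\norm[C^\beta(\R^N)]{u}+\norm[C^\beta(B_1)]{h}$. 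The localized fourth estimate is identical, with $\norm[L^1_s]{w}$ controlled directly from the hypothesis $u\in L^1_s(\R^N)$. In each case the constant depends only on $N$, $s$, $\beta$ and the fixed cutoff.

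Ingredient (b) is classical and I would obtain it from the fractional Poisson kernel of the ball: in $B_{3/4}$ the function $w$ equals the integral of its exterior data against $P(x,y)=c\big((9/16-|x|^2)/(|y|^2-9/16)\big)^s|x-y|^{-N}$, which for $x\in\overline{B_{1/2}}$ and $y\notin B_{3/4}$ is smooth in $x$ with every $x$-derivative dominated by $C_k(1+|y|)^{-N-2s}$; differentiating under the integral sign against the $L^1_s$-weight gives the stated bound. (Alternatively one passes to the Caffarelli--Silvestre extension and uses the interior $C^\infty$ theory for $\mathrm{div}(y^{1-2s}\nabla\,\cdot\,)=0$.)

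The real work — and the step I expect to be the main obstacle — is ingredient (a), the Schauder estimate for the Riesz potential. The cleanest route I know is via Littlewood--Paley theory: the Fourier multiplier $|\xi|^{-2s}$ maps the H\"older--Zygmund space $\Lambda^{\gamma}$ boundedly into $\Lambda^{\gamma+2s}$ for every $\gamma$, and then one invokes the identifications $C^{\gamma}=\Lambda^{\gamma}$ and $C^{\gamma+2s}=\Lambda^{\gamma+2s}$, which fail exactly at integers — this is precisely where the hypotheses $\beta\notin\mathbb Z$, $\beta+2s\notin\mathbb Z$ enter. The $L^\infty$-data, $\beta<2s$ case is the endpoint $\gamma=0$, which I would instead handle by the elementary splitting of the potential integral over $B_{2|x-x'|}(x)$ and its complement, using $2s<2$ (and $2s<N$) so that no integer obstruction arises there. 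A fully self-contained treatment of (a) and (b) together, via the extension problem and boundary Schauder estimates for the degenerate operator, is the path taken in \cite{RosOton-Serra3}, to which we refer for the details.
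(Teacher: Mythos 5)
The paper supplies no proof of this proposition: it is recorded verbatim as a quotation from Ros-Oton and Serra \cite{RosOton-Serra3}, so there is nothing in the present text to compare your argument against. Your sketch is nonetheless correct and is essentially the standard potential-theoretic route. Decomposing $u=v+w$ with $v=(-\Delta)^{-s}(\eta h)$, noting that $w$ is $s$-harmonic in $B_{3/4}$, and combining the two ingredients you isolate --- Riesz-potential gains of $2s$ derivatives on the H\"older--Zygmund scale away from integer orders, and interior $C^\infty$ estimates for $s$-harmonic functions with control by the $L^1_s$ tail via the Poisson kernel of the ball --- is exactly the shape of the argument in \cite{RosOton-Serra3} (Section 2 there) and in Silvestre \cite{Silvestre:regularity}; the bookkeeping you do to derive each of the four displayed inequalities from (a) and (b) is sound, including the observation that $\norm[L^1_s]{w}$ can be bounded either by $\norm[L^\infty(\R^N)]{u}+\norm[L^\infty(B_1)]{h}$ (for \eqref{eq:RS-1}, \eqref{eq:RS-2}) or by $\norm[L^1_s]{u}+\norm[L^\infty(B_1)]{h}$ (for \eqref{eq:RS-3} and the localized fourth estimate). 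One small factual slip at the very end: Ros-Oton and Serra do \emph{not} obtain these interior estimates through the Caffarelli--Silvestre extension and degenerate Schauder theory --- their proof is direct potential theory, quite close to what you sketch --- so that attribution should be adjusted. It would also be cleaner to write that $\norm[L^\infty]{v}\lesssim\norm[L^\infty(B_1)]{h}$ rather than that $v$ is ``bounded by $\norm[L^\infty(B_1)]{h}$'', since a dimensional constant from integrating the Riesz kernel over the support of $\eta$ is implicit.
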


\begin{proposition}\label{regularity}
Let $u$ be a ground state for \eqref{mainequation}, with $p\in[1,(N+2s)/(N-2s))$ and $\mathcal C>0$. If $p+2s\notin\mathbb N$, then  $u\in C^{p+2s}(\mathbb R^N)$. Else if $p+2s\in\mathbb N$, then
$u\in C^{p+2s-\varepsilon}(\mathbb R^N)$ for any small $\varepsilon>0$.
 Moreover,  $u$ is $C^{\infty}$ in the set $\left\{u>\mathcal{C}\right\}$.
 In particular, $\Ds u$ is pointwise well-defined as a singular integral by means of \eqref{singularintegral}.
\end{proposition}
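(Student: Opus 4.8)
The plan is to run a bootstrap based on the interior Schauder estimates of Proposition~\ref{prop:schauder}, the decisive structural fact being that the nonlinearity $g(t):=a(t-\mathcal C)_+^p$ is of class $C^p$ (in the convention $C^\alpha=C^{\lfloor\alpha\rfloor,\alpha-\lfloor\alpha\rfloor}$) but no better, owing to its corner at $t=\mathcal C$; this is what forces the finite global regularity $C^{p+2s}$, while smoothness is only interior.

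First I would set up the functional framework. As a ground state, $u$ is bounded, positive, radially decreasing, vanishing at infinity and $u\in L^1_s(\mathbb R^N)$, so $h:=a(u-\mathcal C)_+^p\in L^\infty(\mathbb R^N)$, and since $\mathcal C>0$ this $h$ is supported in the superlevel set $\{u>\mathcal C\}$. I would record that $\{u>\mathcal C\}$ is a genuine ball $B_R$ with $0<R<\infty$: it is nonempty, for otherwise $(-\Delta)^s u=0$ and the Liouville theorem gives $u\equiv0$; it is a ball centred at the origin because $u$ is continuous (see the next step) and radially decreasing; and $R<\infty$ because $u\to0<\mathcal C$. Applying the estimate~\eqref{eq:RS-3} on every unit ball — after the routine mollification that makes Proposition~\ref{prop:schauder} literally applicable to our distributional solution, or equivalently by writing $u$ as the Riesz potential of the bounded compactly supported $h$ — and using that $\|h\|_{L^\infty(\mathbb R^N)}$ and $\|u\|_{L^1_s(\mathbb R^N)}$ are global bounds, one gets $u\in C^\beta(\mathbb R^N)$ with uniform norm for every $\beta<2s$; in particular $u$ is continuous.

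Now comes the bootstrap. Whenever $u\in C^\gamma(\mathbb R^N)$, composition in H\"older spaces gives $h=g\circ u\in C^{\min(\gamma,p)}(\mathbb R^N)$ — this is where the ceiling $p$ enters, coming exactly from the regularity of $g$ at $t=\mathcal C$, i.e.\ from the jump of $Dh$ across the free boundary $\partial B_R$ — and then~\eqref{eq:RS-2}, applied on every unit ball with these global bounds on the right-hand side, upgrades $u$ to $C^{\min(\gamma,p)+2s}(\mathbb R^N)$, at least after an infinitesimal perturbation of the exponents to keep them non-integral as required there. Iterating, the regularity of $u$ grows by $2s$ at each step and saturates at $p+2s$: once $u\in C^\gamma$ with $\gamma\ge p$ one has $h\in C^p$ and hence $u\in C^{p+2s}$, which closes the loop. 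This yields $u\in C^{p+2s}(\mathbb R^N)$ if $p+2s\notin\mathbb N$, and $u\in C^{p+2s-\varepsilon}(\mathbb R^N)$ for every $\varepsilon>0$ if $p+2s\in\mathbb N$, the loss being precisely the non-integrality loss in Proposition~\ref{prop:schauder}. For the interior statement, on any compact $K\subset\{u>\mathcal C\}=B_R$ one has $\min_K u>\mathcal C$, so $g$ is $C^\infty$ near the range of $u|_K$; the same bootstrap then carries no ceiling and gives $u\in C^\infty(B_R)$ (and, being $s$-harmonic there, $u$ is in fact smooth on $\mathbb R^N\setminus\partial B_R$).

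Finally, since $p\ge1$ we have $p+2s\ge1+2s>2s$, so at each point $u$ is at least $C^{1,\theta}$ with $1+\theta>2s$; combined with $u\in L^1_s(\mathbb R^N)$ this makes the principal-value integral in~\eqref{singularintegral} absolutely convergent at every $x$ (the first-order Taylor term cancelling by odd symmetry, the remainder being integrable against $|x-y|^{-N-2s}$ near $x$, and the far part controlled by $\|u\|_{L^1_s}$), so by~\cite[Proposition 2.4]{Silvestre:regularity} it represents $(-\Delta)^s u(x)$ pointwise, in agreement with the distributional identity. The main obstacle here is conceptual rather than computational: recognising that the bootstrap ceiling is dictated by the corner of $(t-\mathcal C)_+^p$, equivalently by the regularity of $h$ across $\partial B_R$, so that $u$ is only $C^{p+2s}$ globally even though it is $C^\infty$ inside the plasma region; the accompanying composition and Schauder bookkeeping at integer exponents is then routine.
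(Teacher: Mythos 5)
Your proof is correct and follows essentially the same route as the paper: a bootstrap on the Ros-Oton--Serra interior Schauder estimate \eqref{eq:RS-2}, with the ceiling coming from the fact that $t\mapsto a(t-\mathcal C)_+^p$ is only $C^p_{\rm loc}$, saturating at $C^{p+2s}$, and with interior smoothness and the pointwise interpretation via \cite{Silvestre:regularity}. The only cosmetic difference is the launching pad: you derive the initial $C^\beta(\mathbb R^N)$, $\beta<2s$, directly from \eqref{eq:RS-3}, whereas the paper invokes the Lipschitz regularity of weak energy solutions from \cite[Theorem 8]{CHMV} and likewise cites \cite[Theorem 10]{CHMV} for the interior $C^\infty$ statement.
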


\begin{proof}
Weak energy solutions  to \eqref{mainequation},  are shown to be Lipschitz on
$\mathbb R^N$ for $p\ge 1$ in \cite[Theorem 8]{CHMV} (i.e., for $m\le 2$, see also Theorem \ref{thm:regmin}  in Section \ref{uniqsteady}). Then, it is enough to follow the same arguments in \cite{CHMV}, making use of  the \emph{a priori} estimate \eqref{eq:RS-2}. Assuming that  $p=1$, we obtain $u\in C^{1,2s}(\mathbb R^N)$ if $2s<1$, $u\in C^{1,\alpha}(\mathbb R^N)$ for any $\alpha\in(0,1)$ if $2s=1$, and $u\in C^{2,2s-1}(\mathbb R^N)$ if $1<2s<2$.
Since the nonlinearity $u\mapsto (u-\mathcal C)_+^p$ is in $C^p_{\rm loc}(\mathbb R)$, a bootstrap argument based on \eqref{eq:RS-2} and on \eqref{mainequation}  yields the result for $p>1$.
 We refer to \cite[Theorem 10]{CHMV} for the smoothness in the interior of the support, which is obtained again by a bootstrap argument based on the same \emph{a priori} estimates. The last statement is then a consequence of \cite[Proposition 2.4]{Silvestre:regularity}.
 %The arguments therein are based on the Holder regularity estimates that we shall recall in the next subsection and extend to the case %$m\in (\tfrac{2N}{N+2s},2 ]$
\end{proof}

We finally give some remarks on the interpretation in terms of the fractional aggregation-diffusion equation \eqref{eq:KS}.   If $u$ is a solution to \eqref{mainequation} as given by Proposition \ref{existence}, then we clearly obtain that $\rho:=(-\Delta)^s u$ is supported on a ball, i.e., $$\{x\in\mathbb R^N:\rho(x)>0\}=\{x\in\mathbb R^N:u(x)>\mathcal{C}\}=:B_{R(u)}.$$
The value $R=R(u)$ (radius of the \emph{free boundary}) is well defined since any nontrivial radially decreasing solution is actually strictly decreasing at the value $\mathcal C$. Indeed,
 suppose by contradiction that $0\le R_1<R_2$ exist  such that $u(x)=\mathcal{C}$ in $$\mathfrak{A}=\left\{x: R_{1}<|x|<R_{2}\right\}.$$ Then, by setting
$
v:=u-\mathcal{C}
$
we have that on $\mathfrak{A}$ there hold
\[
(-\Delta)^{s}v=0\quad\mbox{and}\quad v=0,
\]
thus by the continuation property of the fractional Laplacian from  \cite{Salo} (Theorem \ref{thm:unique-continuation} below) we find $v\equiv 0$ everywhere, i.e., $u\equiv\mathcal{C}$ in $\R^{N}$, which is a contradiction.

In addition, since $\rho$ is bounded and compactly supported  and since $u$ satisfies
\[
u(x)=(-\Delta)^{-s}\rho=c_{N,s}\int_{\R^{N}}\frac{\rho(y)}{|x-y|^{N-2s}}\,dy,
\]
we find
\begin{equation}\label{asympbehM}
\lim_{|x|\rightarrow\infty}\frac{|x|^{N-2s}u(x)}{c_{N,s}}=\int_{B_{R}(u)}\rho(x)\,dx=:M
\end{equation}
In other words, we necessarily have a precise decay rate at infinity
$$u(x)\sim M\,c_{N,s}|x|^{2s-N},$$
where the constant $M$ corresponds to the  \emph{mass condition}
\[
\int_{\mathbb R^N}a(u-\mathcal C)^p_+\,dx=\int_{B_{R}(u)}a(u-\mathcal C)^p\,dx=M.
\]
The mentioned unique continuation property from \cite{Salo} is the following
\begin{theorem} \cite[Theorem 1.2]{Salo}\label{thm:unique-continuation}
For $s\in(0,1)$, if $u\in H^{r}(\mathbb R^n)$ for some $r\in\mathbb  R$, and if
both $u$ and $(-\Delta)^s u$  vanish in some open set, then $u \equiv 0$.
\end{theorem}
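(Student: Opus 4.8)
The plan is to convert the nonlocal statement into a \emph{strong unique continuation} principle for a degenerate but \emph{local} second-order elliptic equation, by means of the Caffarelli--Silvestre extension. Let $\omega\subset\R^n$ be the nonempty open set on which $u$ and $(-\Delta)^su$ both vanish. Since $u\in H^r(\R^n)$ it is a tempered distribution, hence it admits the $s$-harmonic extension $U=U(x,y)$ to $\R^{n+1}_+$ as in \eqref{extension}, which solves $\div(y^{1-2s}\nabla U)=0$ in $\R^{n+1}_+$ with $U(\cdot,0)=u$ and $D_sU=(-\Delta)^su$ on $\R^n\times\{0\}$. Far from the boundary $U$ is smooth by the Poisson representation, while near $\omega\times\{0\}$ both the Dirichlet datum $u$ and the weighted conormal datum $(-\Delta)^su$ are smooth (in fact identically zero), so boundary regularity for the degenerate equation makes $U$ regular up to $\omega\times\{0\}$. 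In particular, on the open patch $\omega\times\{0\}$ \emph{both} Cauchy data of $U$ vanish: $U=0$ and $\lim_{y\to0}y^{1-2s}\partial_yU=0$.

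The next step is an even reflection. Set $\widetilde U(x,y):=U(x,|y|)$ on a full $(n+1)$-dimensional neighbourhood $Q$ of a point $(x_0,0)$ with $x_0\in\omega$. The vanishing of the weighted conormal derivative of $U$ on $\omega\times\{0\}$ is exactly the matching condition guaranteeing that the even reflection $\widetilde U$ is a genuine weak solution of $\div(|y|^{1-2s}\nabla\widetilde U)=0$ across $\{y=0\}$ in $Q$; moreover $\widetilde U\equiv0$ on the flat piece $Q\cap\{y=0\}$ since $u\equiv0$ on $\omega$, and $\partial_y\widetilde U$ vanishes there by evenness. Thus the problem is reduced to the local claim that a solution $\widetilde U$ of the degenerate equation in $Q$ whose Cauchy data vanish on the open piece $Q\cap\{y=0\}$ must vanish in a neighbourhood of $(x_0,0)$, equivalently that $\widetilde U$ vanishes to infinite order at $(x_0,0)$.

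This last point is the analytic core and is obtained from a Carleman estimate for the operator $L_sw:=\div(|y|^{1-2s}\nabla w)$, whose weight $|y|^{1-2s}$ lies in the Muckenhoupt class $A_2$. One proves an inequality of the form
\[
\bigl\|e^{\tau\varphi}\,|y|^{\frac{1-2s}{2}}\,w\bigr\|_{L^2}\;\lesssim\;\frac{1}{\tau}\,\bigl\|e^{\tau\varphi}\,|y|^{\frac{2s-1}{2}}\,L_sw\bigr\|_{L^2},
\]
valid for $\tau$ large and $w$ supported near $(x_0,0)$, with $\varphi$ a pseudoconvex Carleman weight adapted to the degeneracy on $\{y=0\}$ (for instance a radial, logarithmic-type weight in $(x,y)$, suitably perturbed to restore the H\"ormander convexity condition). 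Feeding $w=\chi\widetilde U$ into this estimate, with $\chi$ a cutoff, using the vanishing of $\widetilde U$ and of $\partial_y\widetilde U$ on $Q\cap\{y=0\}$ to annihilate the boundary contributions of the commutator, and letting $\tau\to\infty$, one forces $\widetilde U\equiv0$ in a smaller neighbourhood of $(x_0,0)$. Hence $U\equiv0$ on a nonempty open subset of $\R^{n+1}_+$, and since $\div(y^{1-2s}\nabla U)=0$ is uniformly elliptic with smooth coefficients on the connected open set $\R^{n+1}_+$, the classical weak unique continuation property propagates the vanishing to all of $\R^{n+1}_+$; therefore $u=U(\cdot,0)\equiv0$.

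The main obstacle is precisely the Carleman estimate: the weight $|y|^{1-2s}$ degenerates or blows up exactly on the set $\{y=0\}$ carrying the Cauchy data, so the standard conjugation computation must be performed with care — the exponential weight $\varphi$ and the function spaces have to be matched to the $A_2$ weight, the boundary terms generated on $\{y=0\}$ must be shown to cancel (thanks to the vanishing Cauchy data) or be absorbed, and positivity of the commutator has to hold uniformly up to the degenerate set. This technical heart, together with the boundary regularity justifying the reflection, is what is carried out in \cite{Salo}; an alternative route replaces the Carleman estimate by an Almgren-type frequency function, and the associated doubling estimates, for the Caffarelli--Silvestre extension.
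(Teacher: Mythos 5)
The paper does not prove Theorem \ref{thm:unique-continuation}; it is quoted verbatim from the cited reference \cite{Salo} (Ghosh--Salo--Uhlmann) and used as a black box. Your sketch---Caffarelli--Silvestre extension, boundary regularity and even reflection to reduce to a local Cauchy problem for $\div(|y|^{1-2s}\nabla\,\cdot\,)$ across $\{y=0\}$, a weighted Carleman estimate adapted to the $A_2$ weight $|y|^{1-2s}$ (or, alternatively, an Almgren frequency argument), and then propagation by classical unique continuation in $\{y>0\}$---is precisely the argument used in that reference, which in turn leans on R\"uland's Carleman estimate for the degenerate extension equation; so the approach matches the cited source, and there is nothing in the present paper to compare it against.
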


\subsection{The critical and supercritical regimes}

 In these cases we will find (see Proposition \ref{$C=0$}) that, in order to get  ground state for \eqref{mainequation}, we must necessarily choose $\mathcal{C}=0$. But for this choice,  there are positive solutions to \eqref{mainequation}  with the asymptotic behavior $|x|^{-2s/(p-1)}$ near infinity, %(the so-called singular solution),
which do not belong, due to the slow decay for $p$ large, neither to $\dot H^{s}(\mathbb R^N)$ nor to $L^{1}(\mathbb R^N)$. Thus one needs to consider  positive distributional solutions for the equation
 \begin{equation}\label{equation-cero}
(-\Delta)^s u=au^p\qquad\mbox{in $\mathbb R^N$}
\end{equation}
in the sense of Definition \ref{distrib}.
We have existence of distributional solutions, but a discussion on this topic will be postponed until Section \ref{critsupercrreg}.

Now we use the above {\emph{a priori}} estimates from Proposition \ref{prop:schauder} to get  smoothness of bounded positive distributional solutions for \eqref{equation-cero}.

\begin{proposition}
\label{prop:apriori}
If $u\in C^\infty(\R^N)$ is a bounded solution to \eqref{equation-cero}, then for any $\beta>0$ which is not an integer,
\[
\norm[C^{\beta}(\R^N)]{u}
\leq \mathsf{C},
\]
for a constant $\mathsf{C}$ depending only on $n$, $s$, $\beta$ and $\norm[L^\infty(\R^N)]{u}$.
\end{proposition}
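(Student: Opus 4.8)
The plan is to run a bootstrap on H\"older norms using the \emph{a priori} estimates of Proposition \ref{prop:schauder}, exploiting the scaling invariance of equation \eqref{equation-cero} to obtain estimates uniform in the point where they are centered. Since $u$ is bounded, say $\norm[L^\infty(\R^N)]{u}\le \Lambda$, the right-hand side $h:=au^p$ satisfies $\norm[L^\infty(B_1)]{h}\le a\Lambda^p$; applying \eqref{eq:RS-1} on every ball $B_1(x_0)$, $x_0\in\R^N$ (after translating $x_0$ to the origin), yields a bound $\norm[C^\beta(\R^N)]{u}\le \mathsf{C}_0$ for every $\beta\in(0,2s)$, with $\mathsf C_0$ depending only on $n$, $s$, $\beta$ and $\Lambda$. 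This is the base step of the iteration.

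Next I would iterate \eqref{eq:RS-2}. Suppose inductively that $u\in C^{\gamma}(\R^N)$ with a global bound $\norm[C^\gamma(\R^N)]{u}\le \mathsf{C}_\gamma$ depending only on $n,s,\gamma,\Lambda$, and suppose $\gamma$ is such that neither $\gamma$ nor $\gamma+2s$ is an integer. Since the nonlinearity $t\mapsto a t^p$ (restricted to $|t|\le\Lambda$) is smooth, composition estimates for H\"older functions give $\norm[C^{\gamma}(B_1(x_0))]{au^p}\le \Psi(\mathsf C_\gamma)$ for a function $\Psi$ depending on $p$, $\gamma$, $\Lambda$ but not on $x_0$; feeding this into \eqref{eq:RS-2} centered at each $x_0$ yields $\norm[C^{\gamma+2s}(\R^N)]{u}\le \mathsf{C}_{\gamma+2s}$, again uniform in $x_0$. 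Starting from some $\gamma_0\in(0,2s)$ and repeatedly adding $2s$ (perturbing $\gamma_0$ slightly if ever $\gamma$ or $\gamma+2s$ lands on an integer, which one can always arrange since one only needs the estimate for a dense set of exponents and H\"older spaces are monotone) one reaches any prescribed non-integer $\beta>0$ after finitely many steps, with a constant depending only on $n$, $s$, $\beta$ and $\norm[L^\infty(\R^N)]{u}$. This is exactly the claimed conclusion.

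The main technical point — and the only thing that needs genuine care — is the composition estimate: one must check that if $w\in C^\gamma$ with $\|w\|_{L^\infty}\le\Lambda$ and $G\in C^\infty$ on $[-\Lambda,\Lambda]$, then $G\circ w\in C^\gamma$ with $\|G\circ w\|_{C^\gamma}$ controlled by $\|w\|_{C^\gamma}$ and the $C^{\lceil\gamma\rceil}$-norm of $G$ on $[-\Lambda,\Lambda]$. For $\gamma\in(0,1)$ this is immediate from the mean value theorem; for larger $\gamma$ it follows from the Fa\`a di Bruno / Gagliardo--Nirenberg type interpolation inequalities for H\"older spaces, the point being that all intermediate derivative norms of $w$ are controlled by interpolation between $\|w\|_{L^\infty}$ and $\|w\|_{C^\gamma}$. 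Since $u$ is assumed to be $C^\infty$ (so all these quantities are finite and the estimates are a priori rather than existential), no regularity issue arises; the content is purely the uniformity of the constants, which is guaranteed because every estimate invoked is translation invariant and $B_1(x_0)\subset\R^N$ for all $x_0$. One then simply observes that the bootstrap terminates because each step gains a fixed amount $2s>0$ of regularity.
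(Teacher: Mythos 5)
Your proof is correct and follows essentially the same route as the paper: a base step from \eqref{eq:RS-1} on balls $B_1(x_0)$ plus covering, then an iterative bootstrap via \eqref{eq:RS-2} gaining $2s$ derivatives each step, with a perturbation of the exponent to dodge integers. The only (minor) difference is that you spell out the higher-order H\"older composition estimate (Fa\`a di Bruno with interpolation) that the paper subsumes under ``an inductive argument,'' which is a welcome clarification rather than a departure.
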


\begin{proof}
Suppose $\beta\in(0,2s)$. Fix any center $x_0\in \R^N$. Since $u^p\in L^\infty(\R^N)$, applying \eqref{eq:RS-1} to $B_1(x_0)$, we have
\[
\norm[C^{\beta}(\overline{B_{1/2}(x_0)})]{u}
\leq \mathsf{C}\left(
    \norm[L^\infty(\R^N)]{u}
    +\norm[L^\infty(B_1(x_0))]{u^p}
\right)
\leq \mathsf{C}.%\norm[L^\infty(\R^N)]{u}.
\]
Since $\mathsf{C}$ is independent of $x_0$, a standard covering argument implies
\begin{equation}\label{eq:reg-1}
\norm[C^\beta(\R^N)]{u}
\leq \mathsf{C}.
\end{equation}
If $\beta\in[2s,4s)$, we observe that, by \eqref{eq:reg-1},
\[
\norm[C^{\beta-2s}(B_1(x_0))]{u^p}
\leq \mathsf{C}\norm[L^\infty(B_1(x_0))]{u}^{p-1}
    \norm[C^{\beta-2s}(B_1(x_0))]{u}
\leq \mathsf{C}
\]
and then we use \eqref{eq:RS-2} and \eqref{eq:reg-1} to obtain
\[
\norm[C^{\beta}(\overline{B_{1/2}(x_0)})]{u}
\leq \mathsf{C}\left(
    \norm[C^{\beta-2s}(\R^N)]{u}
    +\norm[C^{\beta-2s}(B_1(x_0))]{u^p}
\right)
\leq \mathsf{C},%\norm[L^\infty(\R^N)]{u}.
\]
provided that neither $\beta$ nor $\beta-2s\in(0,2s)$ is an integer (otherwise, we replace $\beta-2s$ by another non-integer number in $(0,2s)$). A covering and an inductive argument yield the desired \emph{a priori} estimate.
\end{proof}

\begin{corollary}\label{coro}
If $u\in L^\infty(\R^N)$ is a distributional solution of \eqref{equation-cero}, then $u\in C^\infty(\R^N)$.
\end{corollary}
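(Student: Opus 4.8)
The plan is to bootstrap the regularity of $u$ from the interior Schauder estimates of Proposition~\ref{prop:schauder}, applied to mollifications of $u$: each gain of H\"older regularity on $u$ is transferred to the right-hand side $au^p$, which in turn fuels the next gain. Note first that $u\in L^\infty(\R^N)\subset L^1_s(\R^N)$, so all the quantities below are finite. As the nonlinearity is $u\mapsto u^p$ (read $u_+^p$ if one wishes to allow a priori sign changes), the relevant solutions are nonnegative, and a nontrivial nonnegative solution is strictly positive everywhere by the strong maximum principle for $(-\Delta)^s$; so we may and do assume $u>0$ on $\R^N$, the case $u\equiv0$ being trivial.

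First I would fix a standard mollifier $\eta_\varepsilon$ and set $u_\varepsilon:=u*\eta_\varepsilon\in C^\infty(\R^N)$. Since $(-\Delta)^s$ commutes with convolution and $u$ solves \eqref{equation-cero} distributionally, $u_\varepsilon$ solves, pointwise on $\R^N$, the \emph{linear} equation $(-\Delta)^s u_\varepsilon=h_\varepsilon$ with $h_\varepsilon:=a\,(u^p)*\eta_\varepsilon$, and one has the uniform bounds
\[
\|u_\varepsilon\|_{L^\infty(\R^N)}\le\|u\|_{L^\infty(\R^N)},\qquad
\|h_\varepsilon\|_{L^\infty(\R^N)}\le a\|u\|_{L^\infty(\R^N)}^p,\qquad
\|u_\varepsilon\|_{L^1_s(\R^N)}\le C_N\|u\|_{L^1_s(\R^N)}.
\]
Applying \eqref{eq:RS-3} to $u_\varepsilon$ on each ball $B_1(x_0)$ and covering $\R^N$, I obtain a bound on $\|u_\varepsilon\|_{C^{\beta_0}(\overline{B_{1/2}(x_0)})}$ that is uniform in $\varepsilon$ and in $x_0$, for any fixed non-integer $\beta_0\in(0,2s)$. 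Since $u_\varepsilon\to u$ in $L^1_{\loc}(\R^N)$, the Arzel\`a--Ascoli theorem then identifies $u$ with a function in $C^{\beta_0}_{\loc}(\R^N)$.

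Next I would iterate the estimate locally. Fix $x_0$; by continuity and positivity, $u\ge\delta>0$ on $\overline{B_1(x_0)}$, so $t\mapsto t^p$ is smooth on the range of $u|_{\overline{B_1(x_0)}}$, and hence $u\in C^\gamma(\overline{B_1(x_0)})$ forces $u^p\in C^\gamma(\overline{B_1(x_0)})$ with controlled norm; consequently $\|h_\varepsilon\|_{C^\gamma(\overline{B_1(x_0)})}$ stays bounded as $\varepsilon\to0$. Choosing $\gamma$ and $\gamma+2s$ non-integer (replacing $\gamma$ by a nearby admissible value when necessary), the estimate \eqref{eq:RS-2} — in the version carrying the additional $\|u\|_{L^1_s(\R^N)}$ term on the right — upgrades, when applied to $u_\varepsilon$, a uniform $C^\gamma(\overline{B_1(x_0)})$ bound to a uniform $C^{\gamma+2s}(\overline{B_{1/2}(x_0)})$ bound; passing to the limit gives $u\in C^{\gamma+2s}$ near $x_0$. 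Starting from $\gamma=\beta_0$ and repeating, $u\in C^k$ near $x_0$ for every $k\in\N$, and since $x_0$ is arbitrary, $u\in C^\infty(\R^N)$.

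The only genuinely delicate step is the first one — passing from the bare distributional identity with $u\in L^\infty$ to a H\"older modulus of continuity. This is precisely where mollification is needed: \eqref{eq:RS-3} and its higher-order analogue require of the right-hand side only $L^\infty$, resp.\ local $C^\gamma$, control together with the $L^1_s$ norm of the solution, and all of these are stable under convolution with $\eta_\varepsilon$, so the estimates survive the limit $\varepsilon\to0$. Once $u$ is continuous and positive, $t\mapsto t^p$ is locally smooth along $u$ and the remainder is a routine Schauder iteration; one only has to keep the differentiability exponents away from the integers at each step.
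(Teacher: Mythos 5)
Your proof is correct and follows essentially the same strategy as the paper's: mollify $u$ to obtain a smooth solution of a linear equation with uniformly $L^\infty$-bounded right-hand side, and bootstrap via the Schauder-type estimates of Proposition~\ref{prop:schauder}, passing to the $\varepsilon\to0$ limit at each stage. Your write-up is somewhat more careful than the paper's at two points — you treat the mollified equation $(-\Delta)^s u_\varepsilon=a\,(u^p)\ast\eta_\varepsilon$ explicitly as a \emph{linear} equation (whereas the paper loosely invokes Proposition~\ref{prop:apriori}, which is stated for the nonlinear equation $(-\Delta)^s v=av^p$), and you record the strict positivity of $u$, which is what keeps $t\mapsto t^p$ smooth along the range of $u$ on compacts and lets the bootstrap run past $C^{\lfloor p\rfloor}$ when $p\notin\mathbb N$ — but these are refinements within the same framework, not a different route.
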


\begin{proof}
This follows from a standard mollification argument. Given any $\phi \in C^\infty_c(\mathbb R^N)$,
let $\phi_\epsilon=\eta_\epsilon\ast\phi$, where $\eta_\epsilon(x)=\epsilon^{-N}\eta(x/\epsilon)$ and $\eta$ is the standard unit mollifier. Now set $u_\epsilon=u\ast\eta_\epsilon$ and $(u^p)_\epsilon=u^p\ast\eta_\epsilon$. We have that the convolution commutes with $\Ds$, thus
\[
\int_{\mathbb R^N} u_\epsilon\Ds\phi\,dx=\int_{\mathbb R^N}u\Ds \phi_\epsilon\,dx=\int_{\mathbb R^N} au^p\phi_\eps\,dx=\int_{\mathbb R^N}a(u^p)_\epsilon\,\phi\,dx,
\]
so that $\Ds u_\epsilon=a(u^p)_\epsilon$ holds distributionally. Since $\|(u^p)_\eps\|_{L^\infty}\le\|u^p\|_{L^\infty}$, from Proposition \ref{prop:apriori} we obtain local H\"older estimates for $u_\epsilon$ that are independent of $\epsilon$. We conclude that the limit $u$ is a smooth function.
\end{proof}

%Moreover, $u$ is smooth by the Ros-Oton - Serra a priori estimates and the relations are pointwise.
%The question is: is it possible to avoid assuming $u^p\in L^1_{-s}(\mathbb R^N)$ in the above definition, and rather deduce it as a consequence of the validity of \eqref{fractional-up}, where we  would only need $u\in L^1_s(\mathbb R^N)$ ?  {\color{blue}According to the notes by Mar, the assumption $u^p\in L^1_{-s}(\mathbb R^N)$ is in fact unnecessary and we shall drop it}
%
The following Proposition describes the weak algebraic decay of the ground states in the supercritical case.
\begin{proposition}\label{upperboundsdecay}
Let $p>(N+2s)/(N-2s)$ and let $u\in L^\infty(\mathbb R^N)$ be a ground state to \eqref{equation-cero}. Then there exists $\mathsf{C}=\mathsf{C}_{N,s,p}$ such that
 $$u(x)\le \mathsf{C}_{N,s,p}\, a^{-\tfrac{1}{p-1}}|x|^{-\tfrac{2s}{p-1}}$$
  for any $x\in\mathbb R^N$.
\end{proposition}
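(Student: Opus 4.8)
The plan is to extract the decay directly from the integral form of the equation together with the radial monotonicity of $u$; the exponent $2s/(p-1)$ is nothing but the self-similar one for \eqref{equation-cero}. The key is the one-sided bound
\[
u(x)\ \ge\ a\,c_{N,s}\int_{B_r}\frac{u(y)^p}{|x-y|^{N-2s}}\,dy\qquad(x\in\mathbb R^N,\ r>0),
\]
which I would obtain by comparing $u$ with the Riesz potential $w$ of the bounded, compactly supported datum $a\,u^p\ind_{B_r}$: then $\Ds(u-w)=a\,u^p\ind_{\mathbb R^N\setminus B_r}\ge 0$ in $\mathbb R^N$, while $u-w$ is continuous, bounded below (as $u\ge0$ and $w$ is bounded), and tends to $0$ at infinity, so $u\ge w$ by the maximum principle for $s$-superharmonic functions on $\mathbb R^N$ (see e.g.\ Section~4 of \cite{RosOton:survey}). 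Alternatively one may use the global identity $u=(-\Delta)^{-s}(a\,u^p)$ pointwise, which holds here because $u$ is smooth (Corollary \ref{coro}), $u^p\in L^1_{-s}(\mathbb R^N)$ (by \cite{Ao-Gonzalez-Hyder-Wei}), and $u$ vanishes at infinity, exactly as in Proposition \ref{dec} and the proof of Proposition \ref{existence}.

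With this in hand the estimate is immediate. Write $u(r)$ for the value of $u$ on $\{|x|=r\}$, fix $r>0$ and a point $x$ with $|x|=2r$, and bound the integral over $B_r=B_r(0)$ using $|x-y|<3r$ and $u(y)\ge u(r)$ for $y\in B_r$:
\[
u(2r)=u(x)\ \ge\ a\,c_{N,s}\,(3r)^{2s-N}\,|B_r|\,u(r)^p\ =\ c_1\,a\,r^{2s}\,u(r)^p,\qquad c_1:=c_{N,s}\,\omega_N\,3^{2s-N},
\]
where $\omega_N=|B_1|$. Since $u$ is radially decreasing, $u(2r)\le u(r)$, hence $u(r)\ge c_1\,a\,r^{2s}\,u(r)^p$; dividing by $u(r)^p>0$ (legitimate because $p\ge(N+2s)/(N-2s)>1$) gives $u(r)^{p-1}\le(c_1a)^{-1}r^{-2s}$, that is,
\[
u(r)\ \le\ \bigl(c_{N,s}\,\omega_N\,3^{2s-N}\bigr)^{-1/(p-1)}\,a^{-1/(p-1)}\,r^{-2s/(p-1)},
\]
which is the claim with $\mathsf{C}_{N,s,p}:=(c_{N,s}\,\omega_N\,3^{2s-N})^{-1/(p-1)}$ (the bound being vacuous as $r\to0^+$, in accordance with $u\in L^\infty(\mathbb R^N)$).

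The analytic content sits entirely in the first paragraph: showing that the bounded, radially decreasing solution dominates the Riesz potential of $a\,u^p$ (or at least its truncation to $B_r$), which amounts to a comparison principle on all of $\mathbb R^N$. The monotonicity step is a one-line computation, and the exponent $2s/(p-1)$ is produced automatically by the scaling $(3r)^{2s-N}|B_r|\sim r^{2s}$. As a consistency check, at the borderline $p=(N+2s)/(N-2s)$ the bound reads $u(x)\lesssim|x|^{-(N-2s)}$, matching the exact decay of the explicit bubble solutions of \cite{CLO}.
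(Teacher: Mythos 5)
Your argument is correct and follows essentially the same route as the paper's own proof: invoke the pointwise integral identity $u=(-\Delta)^{-s}(au^p)$ (via Proposition \ref{dec} and Corollary \ref{coro}), lower-bound the integral over a ball of radius comparable to $|x|$ using the radial monotonicity of $u$ together with an upper bound on $|x-y|$, and solve the resulting algebraic inequality for $u$. The paper simply integrates over $B_{|x|}(0)$ directly at the point $x$ (getting $|x-y|\le 2|x|$ and $u(y)\ge u(x)$ for $y\in B_{|x|}$), whereas you integrate over $B_r$ at a point $|x|=2r$ and then use $u(2r)\le u(r)$; the two are trivially equivalent, and your additional comparison-principle derivation of the one-sided bound is an unnecessary detour given that the pointwise identity already yields it.
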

\begin{proof}
Recall that $u$ is positive and vanishes at infinity by hypothesis. By Proposition \ref{dec} and Corollary \ref{coro}, it is also smooth and both the equalities $\Ds u=u^p$ and $u=(-\Delta)^{-s}u^p$ hold distributionally and pointwise everywhere.
From the second equation, since $u$ is radially decreasing, we get
\[
u(x)\ge c_{N,s}\int_{B_{|x|}(0)}\frac{au(y)^p}{|x-y|^{N-2s}}\,dy\ge c_{N,s}\int_{B_{|x|}(0)}\frac{au(x)^p}{(2|x|)^{N-2s}}\,dy=\mathsf{C}_{N,s}\,a\,u(x)^p |x|^{2s},
\]
hence $u(x)\le \mathsf{C}_{N,s,p}\,a^{-\tfrac{1}{p-1}}|x|^{-\tfrac{2s}{p-1}}$, as claimed.
% (which is stronger than $u^p\in L^1_{-s}(\mathbb R^N)$).
\end{proof}

\begin{remark}
A similar proof gives the decay of the derivatives. Also, the constant $\mathsf{C}_{N,s,p}$ is not sharp.
\end{remark}

Next we show that the Poisson extension \eqref{Poisson-extension} inherits the decay of the function, a fact that  we will employ in Section \ref{critsupercrreg}. Let us write $\angles{x}=\sqrt{1+|x|^2}$.

\begin{lemma}\label{lem:ext-est}
Suppose $0\leq u(x)\leq \angles{x}^{-\alpha}$, for some $\alpha\in[0,N)$ and let $U$ be the Poisson extension of $u$. Then there is a universal constant $\mathsf{C}= \mathsf{C}(N,s)$ such that, for all $x\in\R^N$ and $y>0$,
\[
0\leq U(x,y)\leq \mathsf{C}\angles{x}^{-\alpha}.
\]
\end{lemma}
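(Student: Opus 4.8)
The plan is to estimate the Poisson integral
\[
U(x,y)=c\int_{\R^N}\frac{y^{2s}}{(|x-\zeta|^2+y^2)^{\frac{N+2s}{2}}}\,u(\zeta)\,d\zeta
\]
directly, using the pointwise bound $0\le u(\zeta)\le\angles{\zeta}^{-\alpha}$ and the fact that the Poisson kernel is a probability density in $\zeta$ for each fixed $(x,y)$. Nonnegativity of $U$ is immediate from nonnegativity of $u$ and positivity of the kernel. For the upper bound, I would split into the two natural regimes $y\le\angles{x}$ and $y\ge\angles{x}$, and within each into the region where $\zeta$ is close to $x$ (say $|\zeta-x|\le\tfrac12\angles{x}$, where $\angles{\zeta}\sim\angles{x}$) and the far region.

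In the regime $y\le\angles{x}$: on the near region I bound $u(\zeta)\le\mathsf{C}\angles{x}^{-\alpha}$ and use $\int_{\R^N}\frac{y^{2s}}{(|x-\zeta|^2+y^2)^{(N+2s)/2}}\,d\zeta=c^{-1}$, giving a contribution $\le\mathsf{C}\angles{x}^{-\alpha}$. On the far region $|x-\zeta|\ge\tfrac12\angles{x}\ge\tfrac12 y$, so $|x-\zeta|^2+y^2\le 2|x-\zeta|^2$, and the kernel is $\le\mathsf{C}y^{2s}|x-\zeta|^{-N-2s}$; then
\[
\int_{|x-\zeta|\ge\frac12\angles{x}}\frac{y^{2s}}{|x-\zeta|^{N+2s}}\,d\zeta
=\mathsf{C}\,y^{2s}\angles{x}^{-2s}\le\mathsf{C},
\]
and since $u\le 1$ this far contribution is $\le\mathsf{C}\le\mathsf{C}\angles{x}^{-\alpha}$ only if $\angles{x}\lesssim1$; so for large $|x|$ I must instead use the decay $u(\zeta)\le\angles{\zeta}^{-\alpha}$ more carefully on the far set, splitting it according to whether $\angles{\zeta}\ge\tfrac12\angles{x}$ or not and integrating $\angles{\zeta}^{-\alpha}$ against the decaying kernel; because $\alpha<N$ the relevant integrals $\int\angles{\zeta}^{-\alpha}|x-\zeta|^{-N-2s}\,d\zeta$ converge and produce the factor $\angles{x}^{-\alpha}$ (the kernel tail $|x-\zeta|^{-N-2s}$ is integrable at infinity, so the worst behaviour comes from $\zeta$ near the origin or near $x$, both controlled).

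In the regime $y\ge\angles{x}$: here $|x-\zeta|^2+y^2\ge y^2$ always, so the kernel is $\le\mathsf{C}y^{2s}\cdot y^{-N-2s}=\mathsf{C}y^{-N}$ uniformly, whence
\[
U(x,y)\le\mathsf{C}\,y^{-N}\int_{\R^N}\angles{\zeta}^{-\alpha}\,d\zeta,
\]
but that integral diverges since $\alpha<N$; so instead I split: for $|\zeta|\le y$ use $\angles{\zeta}^{-\alpha}\le1$ to get $\mathsf{C}y^{-N}\cdot y^N=\mathsf{C}$; for $|\zeta|\ge y$ use the genuine kernel decay $(|x-\zeta|^2+y^2)^{-(N+2s)/2}\le|\zeta-x|^{-N-2s}$... more simply, $|x-\zeta|\ge|\zeta|-|x|\ge|\zeta|-y\ge\tfrac12|\zeta|$ when $|\zeta|\ge2y\ge2\angles{x}$, giving $\int_{|\zeta|\ge2y}y^{2s}|\zeta|^{-N-2s}\angles{\zeta}^{-\alpha}\,d\zeta\le\mathsf{C}y^{2s}\cdot y^{-2s-\alpha}=\mathsf{C}y^{-\alpha}\le\mathsf{C}\angles{x}^{-\alpha}$. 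Combining, $U(x,y)\le\mathsf{C}$ in this regime, which is $\le\mathsf{C}\angles{x}^{-\alpha}$ precisely because $y\ge\angles{x}$ forces $\angles{x}$ to be... no — here $\angles{x}$ may be small while $y$ is large, so $\mathsf{C}\not\le\mathsf{C}\angles{x}^{-\alpha}$ is automatic since $\angles{x}\ge1$ always gives $\angles{x}^{-\alpha}\le1$; thus I need the sharper $U(x,y)\le\mathsf{C}\angles{x}^{-\alpha}$, obtained by replacing the crude bound $\angles{\zeta}^{-\alpha}\le1$ on $|\zeta|\le y$ with a dyadic decomposition $|\zeta|\sim2^{-j}y$ down to scale $\angles{x}$ and noting the kernel there is still $\sim y^{-N}$, so that piece sums to $\mathsf{C}\,y^{-N}\!\sum_j(2^{-j}y)^{N-\alpha}$ which is dominated by the largest term $y^{-\alpha}$ — wait, the sum over scales from $\angles{x}$ to $y$ of $(2^{-j}y)^{N-\alpha}$ is, since $N-\alpha>0$, comparable to its largest term $y^{N-\alpha}$, giving again $y^{-\alpha}$, not $\angles{x}^{-\alpha}$. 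The resolution is that in the regime $y\ge\angles{x}$ one genuinely only needs $U(x,y)\le\mathsf{C}y^{-\alpha}\le\mathsf{C}$, and since the claimed bound is $\mathsf{C}\angles{x}^{-\alpha}$ with $\angles{x}\le y$, this is \emph{weaker} than what we proved...

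\textbf{The main obstacle}, as the above reveals, is precisely the regime where $y$ is large compared with $\angles{x}$: there the naive estimates only give $U\lesssim y^{-\alpha}$, whereas we want the stronger $U\lesssim\angles{x}^{-\alpha}$. To get this I would exploit the \emph{averaging} more cleverly: write the Poisson kernel's mass concentrated (for large $y$) on $|\zeta-x|\lesssim y$, and on that ball observe $\angles{\zeta}\le\angles{x}+y\lesssim y$ but also that the kernel assigns comparable mass $\sim y^{-N}\,d\zeta$ to each unit cube, so that $U(x,y)\sim y^{-N}\int_{|\zeta-x|\lesssim y}\angles{\zeta}^{-\alpha}\,d\zeta$; since the ball $\{|\zeta-x|\lesssim y\}$ contains the origin (as $|x|\le y$) and has radius $\sim y$, this integral is $\sim y^{N-\alpha}$ and so $U\sim y^{-\alpha}$ — confirming that in this regime the sharp rate really is $y^{-\alpha}$, which is indeed $\le\mathsf{C}\angles{x}^{-\alpha}$ would be \emph{false}. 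Hence the correct reading of the Lemma must be that the bound $\mathsf{C}\angles{x}^{-\alpha}$ is claimed \emph{for all $x$ and $y>0$} only in the sense that it holds with $\angles{x}$ there playing its stated role — and re-examining, I believe the intended and provable statement is exactly this: one shows $U(x,y)\le\mathsf{C}\max\{\angles{x},y\}^{-\alpha}\le\mathsf{C}\angles{x}^{-\alpha}$ is \emph{not} what is meant; rather $U(x,y)\le\mathsf{C}\angles{x}^{-\alpha}$ \emph{when $y\lesssim\angles{x}$} (the case actually needed in Section~\ref{critsupercrreg}, where $y\to0$), together with the trivially-true global bound $U\le\mathsf{C}$. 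I would therefore organize the proof around the single regime $0<y\le\angles{x}$ via the near/far kernel splitting described in the second and third paragraphs — that case is clean — and handle $y>\angles{x}$ by the crude $y^{-N}$ kernel bound plus the splitting at $|\zeta|=\max\{y,2\angles{x}\}$, which yields $U(x,y)\le\mathsf{C}\angles{x}^{-\alpha}$ once one keeps track that the near-origin part of the integral, where $\angles{\zeta}^{-\alpha}$ is largest, sits inside a region of kernel-mass $\lesssim(\angles{x}/y)^N\cdot$(something) — ultimately the bookkeeping closes because $\alpha<N$ guarantees every intervening integral converges. I expect the delicate point throughout to be this uniform-in-$y$ control, and I would lean on the elementary inequalities $\angles{\zeta}\ge\tfrac12\angles{x}$ on $\{|\zeta-x|\le\tfrac12\angles{x}\}$ and $|\zeta-x|\ge\tfrac12\max\{\angles{x},|\zeta|\}$ off a fixed enlargement of that ball to reduce everything to the two scalar integrals $\int_{\R^N}\angles{\zeta}^{-\alpha}(|\zeta-x|^2+y^2)^{-\frac{N+2s}{2}}\,d\zeta$ and its companion with $\angles{\zeta}^{-\alpha}$ removed, both of which are standard.
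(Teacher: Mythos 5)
Your near/far decomposition and the estimates you carry out are essentially sound, and they are close in spirit to the paper's own argument, which (for $|x|\ge1$) splits $U=I_1+I_2+I_3$ according to whether $|x-\zeta|<|x|/2$ or $|x-\zeta|>2|x|$ (the set $I_1$), and over the remaining annulus whether $|\zeta|<1$ (giving $I_2$) or $|\zeta|>1$ (giving $I_3$); uniformity in $y$ is then obtained from the elementary dimensionless bound $\frac{|x|^{N}y^{2s}}{(|x|^{2}+y^{2})^{(N+2s)/2}}\le 1$. Where you go wrong, however, is a plain inequality-direction error in your regime $y\ge\angles{x}$. You correctly obtain $U(x,y)\lesssim y^{-\alpha}$ there, and you even write ``$\mathsf{C}y^{-\alpha}\le\mathsf{C}\angles{x}^{-\alpha}$'' at one point, but then persuade yourself that $y^{-\alpha}$ is \emph{weaker} than the claimed $\angles{x}^{-\alpha}$ and that the lemma must therefore fail for large $y$. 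It is in fact \emph{stronger}: since $\alpha\ge0$, the map $t\mapsto t^{-\alpha}$ is non-increasing on $[1,\infty)$, and $y\ge\angles{x}\ge1$ gives $y^{-\alpha}\le\angles{x}^{-\alpha}$. Your own computation thus already finishes the proof in the regime $y\ge\angles{x}$, with no need for the dyadic decomposition or the ``averaging'' detour.

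As a consequence, your concluding paragraphs rest on a false premise. The lemma as stated is correct for all $y>0$, and the paper does invoke it for arbitrary $y>0$ (in the proof of Theorem~\ref{th:super}, to obtain the decay estimates \eqref{eq:decay-W-1} for $W$ and its $x$-derivatives with $\mathsf{C}$ independent of $y$), not only near $y=0$. The proposed reinterpretation --- that the estimate is meant only for $y\lesssim\angles{x}$, supplemented by the crude global bound $U\le\mathsf{C}$ --- is both unnecessary and insufficient for the paper's later use. Once the direction of the inequality is corrected, your two-regime argument assembles into a valid proof; the only remaining detail to tighten in the regime $y\le\angles{x}$ is the far region $|x-\zeta|\ge\frac12\angles{x}$, where for large $|x|$ you cannot simply use $u\le1$: keep $u(\zeta)\le\angles{\zeta}^{-\alpha}$ and note that $\int_{\{|\zeta|\le\frac12\angles{x}\}}\angles{\zeta}^{-\alpha}\,d\zeta\lesssim\angles{x}^{N-\alpha}$ (using $\alpha<N$), so that against the frozen kernel $\lesssim y^{2s}\angles{x}^{-N-2s}$ this contributes $y^{2s}\angles{x}^{-2s}\cdot\angles{x}^{-\alpha}\le\angles{x}^{-\alpha}$; you indicate this but do not carry it through.
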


\begin{proof}
We borrow an idea from \cite[Lemma 4.7 (1)]{CLW}. We first deal with the case $|x|\geq1$ and write $U(x,y)=I_1+I_2+I_3$, where (up to multiplicative constant)
\[\begin{split}
I_1&=
\int_{
    \set{|x-\zeta|<\frac{|x|}{2}}
    \cup\set{|x-\zeta|>2|x|}
}
    \dfrac{
        y^{2s}
    }{
        (|x-\zeta|^2+y^2)^{\frac{N+2s}{2}}
    }
    u(\zeta)
\,d\zeta,\\
I_2&=
\int_{
    \set{\frac{|x|}{2}<|x-\zeta|<2|x|}
    \cap\set{|\zeta|<1}
}
    \dfrac{
        y^{2s}
    }{
        (|x-\zeta|^2+y^2)^{\frac{N+2s}{2}}
    }
    u(\zeta)
\,d\zeta,\\
I_3&=
\int_{
    \set{\frac{|x|}{2}<|x-\zeta|<2|x|}
    \cap\set{|\zeta|> 1}
}
    \dfrac{
        y^{2s}
    }{
        (|x-\zeta|^2+y^2)^{\frac{N+2s}{2}}
    }
    u(\zeta)
\,d\zeta.
\end{split}\]
For $I_1$ we use the decay of $u(\zeta)$ to estimate
\[\begin{split}
I_1
&\leq C|x|^{-\alpha}
    \int_{\R^N}
        \dfrac{
            y^{2s}
        }{
            (|x-\zeta|^2+y^2)^{\frac{N+2s}{2}}
        }
    \,d\zeta
\leq \mathsf{C}|x|^{-\alpha}.
\end{split}\]
For $I_2$ we freeze the kernel and use the $O(1)$ bound on $u(\zeta)$ to see that
\[\begin{split}
I_2
&\leq
    \dfrac{
        y^{2s}
    }{
        (|x|^2+y^2)^{\frac{N+2s}{2}}
    }
    \int_{\{|\zeta|<1\}}
        \mathsf{C}
    \,d\zeta
\leq
    \mathsf{C}\dfrac{
        y^{2s}
    }{
        (|x|^2+y^2)^{\frac{N+2s}{2}}
    }.
\end{split}\]
In $I_3$, while we freeze the kernel again, we integrate over the whole region of $\zeta$, which is contained in $\set{1<|\zeta|<3|x|}$, so that
\[\begin{split}
I_3
&\leq
    \dfrac{
        y^{2s}
    }{
        (|x|^2+y^2)^{\frac{N+2s}{2}}
    }
    \int_{\{1<|\zeta|<3|x|\}}
        \mathsf{C}|x|^{-\alpha}
    \,d\zeta
\leq
     \mathsf{C}\dfrac{
        |x|^{N-\alpha}y^{2s}
    }{
        (|x|^2+y^2)^{\frac{N+2s}{2}}
    }.
\end{split}\]
Since $\alpha<N$, the estimate for $I_3$ dominates that for $I_2$. Moreover, by using the common upper bound $(|x|^2+y^2)^{\frac12}$ for both $|x|$ and $y$ we see that
\[
\dfrac{
    |x|^{N}y^{2s}
}{
    (|x|^2+y^2)^{\frac{N+2s}{2}}
}
\leq 1.
\]
Combining these estimates we conclude that, for $|x|\geq 1$,
\[
U(x,y)
%\leq I_1+I_2+I_3
\leq
     \mathsf{C}|x|^{-\alpha}
    + \mathsf{C}\dfrac{
        (1+|x|^{N-\alpha})y^{2s}
    }{
        (|x|^2+y^2)^{\frac{N+2s}{2}}
    }
\leq
     \mathsf{C}|x|^{-\alpha}
    \left(
    1+\dfrac{
        |x|^N y^{2s}
    }{
        (|x|^2+y^2)^{\frac{N+2s}{2}}
    }
    \right)
\leq
     \mathsf{C}|x|^{-\alpha}.
\]
For $|x|\leq1$, we simply use the Young's convolution inequality and the fact that the Poisson kernel integrates to $1$ to see
\[
U(x,y)
\leq  \mathsf{C}\norm[L^\infty(\R^N)]{u}
    \int_{\R^N}
        \dfrac{
            y^{2s}
        }{
            (|x-\zeta|^2+y^2)^{\frac{N+2s}{2}}
        }
    \,d\zeta
\leq  \mathsf{C}.
\]
In summary,
\[
0\leq U(x,y)\leq  \mathsf{C}\min\left\{|x|^{-\alpha},1\right\},
\]
and this completes the proof.
\end{proof}

As a consequence, we have the following decay estimates, which turn out to be useful in the proof of Theorem \ref{th:super}:

\begin{proposition}\label{prop:d-decay}
Suppose $u\in L^\infty(\R^N)$ is a distributional solution  to \eqref{equation-cero}  satisfying
\[
0\leq u(x)\leq \mathsf{C}\angles{x}^{-\frac{2s}{p-1}}.
\]
Then
\[
|x||Du(x)|+|x|^2|D^2u(x)|
\leq \mathsf{C}\angles{x}^{-\frac{2s}{p-1}}.
\]
\end{proposition}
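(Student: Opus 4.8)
The plan is to reduce the global decay bound to a uniform interior estimate via a rescaling around far‑away points, and then to invoke the a priori Schauder estimates of Proposition~\ref{prop:schauder}. Write $\gamma:=\tfrac{2s}{p-1}$, and note that in the regime of interest $\gamma\in[0,N)$ (indeed $\gamma\le\tfrac{N-2s}{2}$ when $p\ge\tfrac{N+2s}{N-2s}$). We may assume $u\not\equiv0$; then $u>0$ everywhere by the strong maximum principle, and $u\in C^\infty(\mathbb R^N)$ by Corollary~\ref{coro}. For $|x_0|\le4$ there is nothing to prove: the hypothesis gives $\|u\|_{L^\infty(\mathbb R^N)}\le\mathsf{C}$, so Proposition~\ref{prop:apriori} yields $\|u\|_{C^{2+\varepsilon}(\mathbb R^N)}\le\mathsf{C}$ for a small non-integer $2+\varepsilon$, while $\angles{x_0}^{-\gamma}$ stays bounded below on $\overline{B_4}$.

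Fix now $x_0$ with $|x_0|\ge4$ and set $R:=|x_0|/4\ge1$, $v(z):=R^{\gamma}u(x_0+Rz)$. By the scaling invariance of \eqref{equation-cero}, $v$ is again a bounded, smooth distributional solution of $\Ds v=av^p$ in $\mathbb R^N$, and since $|x_0+Rz|\ge|x_0|-2R=2R\ge1$ for $z\in B_2$, the decay hypothesis $u(\cdot)\le\mathsf{C}\angles{\cdot}^{-\gamma}$ gives $\|v\|_{L^\infty(B_2)}\le R^{\gamma}\mathsf{C}(2R)^{-\gamma}=\mathsf{C}$, uniformly in $x_0$. The crucial point is a matching uniform bound for $v$ in the weighted space $L^1_s(\mathbb R^N)$, and this is exactly where Lemma~\ref{lem:ext-est} enters. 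Indeed, if $U$ denotes the Poisson extension \eqref{Poisson-extension} of $u$, then the Poisson extension of $v$ is $V(z,t)=R^{\gamma}U(x_0+Rz,Rt)$; since $v\ge0$, one has $\|v\|_{L^1_s(\mathbb R^N)}$ equal to a fixed universal multiple of $V(0,1)=R^{\gamma}U(x_0,R)$, and Lemma~\ref{lem:ext-est} (applied with $\alpha=\gamma<N$) bounds $U(x_0,R)\le\mathsf{C}\angles{x_0}^{-\gamma}$, whence $\|v\|_{L^1_s(\mathbb R^N)}\le R^{\gamma}\mathsf{C}(4R)^{-\gamma}=\mathsf{C}$ uniformly in $x_0$. (Alternatively, one bounds $\|v\|_{L^1_s}$ directly by splitting the integral according to whether the variable is close to $x_0$, close to the origin, or far from both, again using $\gamma<N$.)

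With these uniform bounds in hand, a standard bootstrap finishes the argument. Applying \eqref{eq:RS-3} to $v$ with $h=av^p$ (so $\|h\|_{L^\infty(B_1)}\le\mathsf{C}$) gives $\|v\|_{C^{\beta}(\overline{B_{1/2}})}\le\mathsf{C}$ for any $\beta\in(0,2s)$; then, using the regularity of the nonlinearity $t\mapsto at^p$ on the range of $v$ (here positivity of $u$ is used, exactly as in Corollary~\ref{coro} and Proposition~\ref{regularity}), one has $\|av^p\|_{C^\beta(\overline{B_{1/2}})}\le\mathsf{C}$, and \eqref{eq:RS-2} together with its $L^1_s$-counterpart from Proposition~\ref{prop:schauder}, applied on small fixed balls $B_r(z_0)$ with $z_0\in B_{1/4}$ and then covered, upgrades this to $\|v\|_{C^{\beta+2s}(\overline{B_{1/8}})}\le\mathsf{C}$. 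Iterating finitely many times, with $\beta$ chosen so that all exponents avoid the integers, produces $\|v\|_{C^{2+\varepsilon}(\overline{B_\delta})}\le\mathsf{C}$ for some universal $\delta,\varepsilon>0$; in particular $|Dv(0)|+|D^2v(0)|\le\mathsf{C}$. Finally, since $Dv(0)=R^{\gamma+1}Du(x_0)$ and $D^2v(0)=R^{\gamma+2}D^2u(x_0)$, scaling back gives
\[
|x_0|\,|Du(x_0)|+|x_0|^2\,|D^2u(x_0)|\le\mathsf{C}\,|x_0|R^{-\gamma-1}+\mathsf{C}\,|x_0|^2R^{-\gamma-2}=\mathsf{C}\,|x_0|^{-\gamma}\le\mathsf{C}\angles{x_0}^{-\gamma},
\]
using $R=|x_0|/4$ and $\angles{x_0}\le\sqrt2\,|x_0|$ for $|x_0|\ge1$; combined with the case $|x_0|\le4$ this is the claim. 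The main obstacle is precisely the uniform $L^1_s$-control of the rescaled function $v$: since the rescalings carry no uniform global $L^\infty$ bound, one must use the form of the Ros‑Oton--Serra estimates involving $\|u\|_{L^1_s(\mathbb R^N)}$, and it is the decay hypothesis with exponent $\gamma<N$ — conveniently packaged by Lemma~\ref{lem:ext-est} — that keeps this quantity bounded along the family of rescalings; a secondary technical point is that the bootstrap reaches order $2+\varepsilon$ only because the nonlinearity is smooth wherever $u$ is positive.
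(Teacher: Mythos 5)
Your argument is correct and follows essentially the same route as the paper's proof: rescale around a far-away point, use Lemma~\ref{lem:ext-est} to keep the $L^1_s$-norm of the rescaled function under control, apply the $L^1_s$-versions of the Ros-Oton--Serra estimates from Proposition~\ref{prop:schauder}, bootstrap, and scale back. The only cosmetic difference is that you normalize by $R^{\gamma}$ so that $v$ solves the same equation, whereas the paper works with $u_\rho(z)=u(x+\rho z)$ and carries the factor $\rho^{2s}$ on the right-hand side; your rendering of the $L^1_s$ estimate via $\|v\|_{L^1_s}=c^{-1}V(0,1)=c^{-1}R^{\gamma}U(x_0,R)$ is in fact slightly more explicit than the paper's terse ``Lemma~\ref{lem:ext-est} with $y=1$'', which is really an application at $y=\rho$ after identifying the Poisson extensions of $u$ and $u_\rho$.
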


\begin{proof}
Fix $|x|\geq 1$ and write $\rho=|x|/2$. The function
$u_\rho(z)=u(x+\rho z)$ satisfies, for some positive constant $\mathsf{C}$,
\[
\norm[L^\infty(B_1)]{u_\rho}
\leq \mathsf{C}\rho^{-\frac{2s}{p-1}}
\]
and
\[
\Ds u_\rho=\rho^{2s}u_\rho(z)^p
    \quad \textin \R^N.
\]
For any $\beta\in(0,2s)$, by \eqref{eq:RS-3},
\[\begin{split}
\norm[C^\beta(\overline{B_{1/2}})]{u_\rho}
&\leq \mathsf{C}\left(
    \norm[L^\infty(B_1)]{u_\rho}
    +\norm[L^\infty(B_1)]{h}
    +\norm[L^1_s(\R^N)]{u_\rho}
\right)\\
&\leq \mathsf{C}\left(
    \rho^{-\frac{2s}{p-1}}
    +\rho^{2s}\rho^{-\frac{2sp}{p-1}}
    +\rho^{-\frac{2s}{p-1}}
\right)\\
&\leq \mathsf{C}\rho^{-\frac{2s}{p-1}},
\end{split}\]
where the estimate for the $L^1_s$ norm follows from Lemma \ref{lem:ext-est} with $y=1$. An bootstrap argument as in Proposition \ref{prop:apriori} shows that the same estimate is true for any $\beta>0$, in particular for $\beta=1$ and $\beta=2$.
\end{proof}

\section{Uniqueness in the subcritical case: proof of Theorem \ref{maintheorem}}\label{uniqsubcase}
We are now in a position to prove Theorem \ref{maintheorem} in the subcritical case $1\le p< (N+2s)/(N-2s)$. Recall that in this case we always have $\mathcal{C}>0$ and the existence and regularity of ground states are described in Proposition \ref{existence} and Proposition \ref{regularity}. Through the proof, we assume w.l.o.g that $a=1$.
\\[0.5pt]

\noindent \emph{Proof of Theorem \ref{maintheorem}. }
Let $u_1,u_2$ be two ground states to equation \eqref{mainequation} with $\mathcal{C}>0$ fixed. %For each $u_i$, let $U_i$ to be its Poisson extension.
Now  let $v_i=u_{i}-\mathcal{C}$, $i=1,2$. As a first case, assume that  $v_{1}(0)=v_{2}(0)$. We use an approach inspired by \cite{FLeSil}, based on the use of the monotonicity formula.
We have that the difference
\[
w=v_{1}-v_{2}
\]
satisfies the equation
\begin{equation}
(-\Delta)^{s}w=\mathcal{V}(r)w\label{eqV1},
\end{equation}
where the potential $\mathcal{V}$ is defined through
\[
\mathcal{V}(r)=\frac{g(v_1(r))-g(v_2(r))}{v_1(r)-v_2(r)},\quad \text{for}\quad g(t)=t^p_+.
\]
Let $R_{i}$ be the radius of the ball $\left\{v_{i}>0\right\}$ for $i=1,2$. {We recall that $v_1$ and $v_2$ are $C^1$ functions thanks to Proposition \ref{regularity}. }

We first prove that $R_1=R_2$. {Suppose by contradiction that $R_1\neq R_2$ and w.l.o.g. assume that $R_{1}<R_{2}$ (we will reach a contradiction by showing that $v_1\equiv v_2$)}. Notice that $\mathcal{V}$ is nonnegative and continuous for $r\geq0$, moreover $V\equiv0$ for $r\geq R_{2}$. In the linear case, \emph{i.e.} $p=1$, we clearly have
\begin{equation*}
\mathcal{V}(r)=\begin{cases}
1
\,
  & \text{if}\,r\in[0,R_1), \\%[2pt]
  -\frac{v_2}{v_1-v_2}\,
  &\text{if}\,r\in[R_1,R_2],\\
 0\,
  &\text{if}\,r> R_2.
 \end{cases}
\end{equation*}
Then $\mathcal{V}^{\prime}\equiv0$ for $r<R_{1}$, while for $r\in(R_1,R_2)$
\[
\mathcal{V}'(r)=\frac{v_2}{(v_1-v_2)^{2}}v^{\prime}_{1}-\frac{v_1}{(v_1-v_2)^{2}}v^{\prime}_{2}<0,
\]
then $\mathcal{V}$ is decreasing for $r\geq0$. In the case $p>1$, writing
\[
\mathcal{V}(r)=\mathsf{a}(v_{1}(r),v_{2}(r))
\]
where
\[
\mathsf{a}(x,y)=\frac{g(x)-g(y)}{x-y},
\]
we can write in the interval $[0,R_2]$
\begin{equation*}
\mathcal{V}(r)=\begin{cases}
\mathsf{a}(v_{1}(r),v_{2}(r))
\,
  & \text{if}\,v_1(r)\neq v_{2}(r), \\%[2pt]
  g'(\alpha)\,
  &\text{if}\,\alpha:=v_{1}(r)=v_{2}(r).
 \end{cases}
\end{equation*}
An explicit computation gives
\begin{equation}
\mathcal{V}^{\prime}(r)=\mathsf{a}_{x}(v_{1}(r),v_{2}(r))v_{1}^{\prime}(r)+\mathsf{a}_{y}(v_{1}(r),v_{2}(r))v^{\prime}_{2}(r).\label{derivV}
\end{equation}
But using Taylor's formula,
\begin{equation}\label{derivforg}
\mathsf{a}_{x}(x,y)=\begin{cases}
\frac{g^{\prime}(x)(x-y)-g(x)+g(y)}{(x-y)^{2}}
\,
  & \text{if}\,x\neq0, \\%[2pt]
  \frac{1}{2}g''(x)\,
  &\text{if}\,x=y\neq0.
 \end{cases}
\end{equation}
and by the convexity of $g(t)=t_{+}^{p}$ for $t>0$ we find $\mathsf{a}_{x}(v_{1}(r),v_{2}(r))\geq 0$ when $r \in (0,R_{1})$. When $r\in [R_{1},R_{2})$ we have $v_{1}(r)\leq0$ and $v_{2}(r)>0$, then $g^{\prime}(v_{1}(r))=g(v_{1}(r))=0$ whence
\[
\mathsf{a}_{x}(v_{1}(r),v_{2}(r))=\frac{g(v_{2}(r))}{(v_{1}(r)-v_{2}(r))^{2}}\geq0.
\]
Analogously we have $\mathsf{a}_{y}(v_{1}(r),v_{2}(r))\geq0$ thus by the fact that the $v_{i}$ are radially decreasing by \eqref{derivV} we find
$\mathcal{V}^{\prime}(r)\leq  0$. {Summarizing we have that $\mathcal V$ is decreasing for $r>0$ and differentiable at any $r>0$, possibly except $r=R_i$, $i=1,2$.}

Now we consider the Caffarelli-Silvestre extension problem \eqref{extension} corresponding to \eqref{eqV1}, satisfied by the $s$-harmonic extension $W=W(|x|,y)$ on the upper-half space $\R^{N+1}_{+}=\R^{N}\times(0,\infty)$:
\begin{equation}\label{extension1}
\begin{cases}
W_{rr}+\frac{N-1}{r}W_{r}+\frac{1-2s}{y}W_{y}+W_{yy}=0,
	&\text{ in }\R^{N+1}_{+},\\
W(\cdot,0)=w,\\
\displaystyle-d_{s}\lim_{y\rightarrow0}y^{1-2s}W_{y}=\mathcal{V}(r)w(r),
	&\text{ in }\partial \R^{N+1}_{+}.
\end{cases}
\end{equation}
Then it is possible associate to \eqref{extension1} the following Hamiltonian
\begin{equation}\label{Hamiltonian}
\Phi(r)=\frac{d_{s}}{2}\int_{0}^{\infty}y^{1-2s}\left\{W_{r}^{2}(r,y)-W_{y}^{2}(r,y)\right\}dy+\frac{1}{2}\mathcal{V}(r)w^{2}(r).
\end{equation}
A similar argument employed to show the asymptotic estimate \cite[Proposition B.2]{FLeSil} ensures that the above Hamiltonian is well defined.
Notice also that
\[
\mathcal{V}(r)w^{2}(r)=0
\]
for $r>R_2$. Moreover, for $p=1$ using the expression of $\mathcal{V}^{\prime}(r)$ we find that $\mathcal{V}^{\prime}(r)w^{2}(r)=0$ for $r\in[0,R_1)\cup(R_2,+\infty)$ and is (strictly) negative in $(R_1,R_2)$. In the case $p>1$ and $v_{1}(r)\neq v_{2}(r)$ by \eqref{derivV}-\eqref{derivforg} we find
\[
\mathcal{V}^{\prime}(r)w^{2}(r)=\frac{d}{dr}\left[(g(v_{1}(r))-g(v_{2}(r)))(v_{1}(r)-v_{2}(r))\right]-2[(g(v_{1}(r))-g(v_{2}(r)))(v^{\prime}_{1}(r)-v_{2}^{\prime}(r))]
\]
and notice that
\[
\lim_{r\rightarrow R_2}\frac{(g(v_{1}(r))-g(v_{2}(r)))(v_{1}(r)-v_{2}(r))}{r-R_2}=0,
\]
thus the function $\mathcal{V}^{\prime}(r)w^{2}(r)$ can be extended continuously to 0 at $r=R_{2}$.
\\[0.5pt]
Now, using \eqref{extension}, we can compute the derivative of $\Phi$ along the flow, which is given in \cite{FLeSil} but we give here some details for the sake of completeness. We have
\begin{align*}
\frac{d\Phi}{dr}=d_{s}\int_{0}^{\infty}y^{1-2s}\left[W_r\,W_{rr}-W_{y}W_{ry}\right]dy+\mathcal{V}w\,w^{\prime}+\frac{1}{2}\mathcal{V}^{\prime}w^{2},
\end{align*}
hence using the extension equation in \eqref{extension}
\begin{align*}
&\frac{d\Phi}{dr}=-d_{s}\int_{0}^{\infty}y^{1-2s}W_{r}^{2}\,dy-d_{s}\int_{0}^{\infty}\frac{d}{dy}\left(y^{1-2s}W_{r}W_{y}\right)dy+
\mathcal{V}w\,w^{\prime}+\frac{1}{2}\mathcal{V}^{\prime}w^{2}.
\end{align*}
Then the boundary condition in \eqref{extension} implies
\begin{align*}
&\frac{d\Phi}{dr}=-d_{s}\,\frac{N-1}{r}\int_{0}^{\infty}y^{1-2s}W_{r}^{2}(r,y)\,dy+\varphi(r),
\end{align*}
where $\varphi(r)$ is the \emph{a.e.} continuous function defined through
\begin{equation*}
\varphi(r)=\begin{cases}
\frac{1}{2}\mathcal{V}^{\prime}(r)w^{2}(r),
	&\text{ for }r\leq R_{2},\\
\\
0,
	&\text{ for }r\geq R_{2}.
\end{cases}
\end{equation*}
Therefore, since $\mathcal{V}(r)$ is radially decreasing and $\Phi$ is continuous, we have that $\Phi$ is decreasing for $r\geq0$.

We next claim that, for the Hamiltonian \eqref{Hamiltonian},
\begin{equation}\label{decay-Hamiltonian}
\lim_{r\rightarrow\infty}\Phi(r)=0.
\end{equation}
Indeed, observe that  clearly
\[\begin{split}
\int_{\R^N}\int_0^\infty
    y^{1-2s}(|\nabla_x W|^2+W_y^2)
\,dx\,dy
&=\mathsf{C}(N,s)\int_{\R^N}|(-\Delta)^{s/2} w|^{2} \,dx<\infty.
\end{split}\]
{ But then we have}
\[
\int_{1}^{\infty}|\Phi(r)|\,dr\leq \frac{d_{s}}{2} \int_{1}^{\infty} \int_0^\infty
    r^{N-1}\,y^{1-2s}(|W_{r}|^2+W_y^2)
\,dr\,dy
+\frac{1}{2}\int_{1}^{R_2} \mathcal{V}(r)w^{2}(r)\,dr<\infty,
\]
{therefore $\Phi$ is in $L^{1}(1,\infty)$. Hence, keeping in mind that $\Phi$ is decreasing}, \eqref{decay-Hamiltonian} follows.
Then, as we have
\[
\Phi(0)=-\frac{d_{s}}{2}\int_{0}^{\infty}y^{1-2s}W_{y}^{2}(0,y)\,dy+\frac{1}{2}\mathcal{V}(0)w(0)^{2}\leq\frac{1}{2}\mathcal{V}(0)w(0)^{2},
\]
we find
\[
\frac{1}{2}\mathcal{V}(0)w(0)^{2}\geq \Phi(0)\geq\Phi(r)\geq \lim_{r\rightarrow\infty}\Phi(r)=0.
\]
Now since $w(0)=0$, the previous inequality gives $\Phi(r)\equiv0$ and consequently,
\[
\frac{d\Phi}{dr}=0,
\]
namely
\[
d_{s}\,\frac{N-1}{r}\int_{0}^{\infty}y^{1-2s}W_{r}^{2}(r,y)\,dy=\varphi(r)\leq0,
\]
which implies
{
\[
W_{r}\equiv0.
\]
But then we have that $w(r)$ is constant, thus $w\equiv w(0) = 0$
and we obtain
\[
v_{1}\equiv v_{2},
\]
a contradiction to the assumption $R_1<R_2$. }

{We have shown that  $R_1=R_2=:R$, thus $\mathcal{V}^{\prime}$ is singular only on the boundary of the common positivity set of $v_{1}$, $v_{2}$, i.e., at $r=R$. In any case, this does not prevent to repeat the previous argument and to conclude again $v_1\equiv v_2$, and this ends the proof in case $v_1(0)=v_2(0).$ }

In order to complete the proof of the theorem, now assume that $v_{1}(0)\neq v_{2}(0)$, set
\[
\lambda=\frac{v_{2}(0)}{v_{1}(0)}
\]
and define the rescaled function
\[
(v_{1})_{\lambda}(x)=\lambda v_{1}(\lambda^ \frac{p-1}{2s} x).
\]
Then $(v_{1})_{\lambda}$, $v_{2}$ satisfy the same equation
$$(-\Delta)^{s} v=v_{+}^{p}$$
and $(v_{1})_{\lambda}(0)=v_{2}(0)$.
Now define the function
\[
w_{\lambda}=(v_{1})_{\lambda}-v_{2}.
\]
Clearly $w_{\lambda}(0)=0$. Taking into account that $v_i$ is a translation of $u_{i}$, $i=1,2$, we have
\[
w_{\lambda}\rightarrow  {\mathcal C(1-\lambda)}, \quad \text{as}\,\,|x|\rightarrow\infty.
\]
In any case, the previous argument can be applied to this case even if $w_\lambda$ does not tend to zero as $r\to \infty$. Indeed, the extension associated to $w_\lambda$ ({in terms of the extensions $U_1,U_2$ of $u_1$ and $u_2$, respectively}) is
\[
W_{\lambda}(x,y)=\lambda U_{1}(\lambda^{\frac{p-1}{2s}} x,\lambda^{\frac{p-1}{2s}} y)-U_{2}(x,y)+{\mathcal C(1-\lambda)}.
\]
The main point is that \eqref{decay-Hamiltonian} still holds. Thus we conclude that  $\partial_{r} W_{\lambda}\equiv0$. But the condition $w_{\lambda}(0)=0$  forces $\lambda=1$, a contradiction.
\qed

%\begin{remark}\label{supercrit}
%Although the existence of bounded solutions to \eqref{mainequation} in the supercritical regime $p\geq (N+2s)/(N-2s)$ seems to be still an open question, the issue regarding the uniqueness is also covered by Theorem \ref{maintheorem}.
%\end{remark}
%

%%%%%%%%%%%%%%%%%%%%%%%%%%%%%%%%%%%%%%%%%%%%%%%%%%%%

\section{Critical and supercritical regimes: Proof of Theorem \ref{th:super}}\label{critsupercrreg}

In this section, we assume w.l.o.g. that $a=1$.
If $p\ge \tfrac{N+2s}{N-2s}$,
ground states to  \eqref{mainequation} are exactly the positive radially decreasing solutions to
\begin{equation}\label{C=0problem}
\begin{cases}
\Ds{u}=u^p &\textin \R^{N},\\
u\rightarrow 0 & |x|\rightarrow \infty.
\end{cases}
\end{equation}
In other words, for the existence of solutions we must have $\mathcal{C}=0$, as we shall prove in Proposition \ref{$C=0$} at the end of this section. The solutions to such problem are completely classified in the case $p=(N+2s)/(N-2s)$, as it is stated in cite \cite[Theorem 1.2]{CLO}:
\begin{proposition}\label{criticalcase}\cite[Theorem 1.2]{CLO}.
When $p=(N+2s)/(N-2s)$, any positive solution to \eqref{C=0problem}
is radially symmetric (up to translations) and given by
\[
u(x)=c(N,s)(t/(t^2+|x|^2))^{\frac{N+2s}{2}},
\]
for some universal constant $c$ and positive constants $t>0$.
\end{proposition}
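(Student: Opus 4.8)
Proposition~\ref{criticalcase} is quoted verbatim from \cite[Theorem 1.2]{CLO}, so strictly speaking no proof is needed; here I indicate the strategy one would follow. By the Remark after Proposition~\ref{dec} (equivalence of the PDE and the integral equation at the critical exponent, established in \cite{CLO}), together with Proposition~\ref{dec} and Corollary~\ref{coro}, a positive solution $u$ of \eqref{C=0problem} with $p=(N+2s)/(N-2s)$ is smooth and solves $u(x)=\int_{\R^N}|x-y|^{2s-N}u(y)^p\,dy$. The plan has two parts: first show that $u$ is radially symmetric about some point via the method of moving planes in integral form, then identify the profile. For the first part I would pass to the Kelvin transform $\bar u(x):=|x|^{2s-N}u(x/|x|^2)$, which by conformal invariance of the Riesz kernel at the critical exponent solves the same integral equation on $\R^N\setminus\{0\}$, with $\bar u(x)=O(|x|^{2s-N})$ near the origin and $\bar u$ decaying at infinity.

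Fix a direction $e_1$; for $\lambda\in\R$ let $\Sigma_\lambda=\{x_1<\lambda\}$, let $x^\lambda$ be the reflection of $x$ across $\{x_1=\lambda\}$, and set $w_\lambda(x)=\bar u(x^\lambda)-\bar u(x)$. From the integral representation one derives, on $\Sigma_\lambda^-:=\{x\in\Sigma_\lambda:w_\lambda(x)<0\}$, an estimate of the form
\[
\|w_\lambda\|_{L^{\frac{2N}{N-2s}}(\Sigma_\lambda^-)}\le \mathsf{C}\,\|\bar u\|_{L^{\frac{2N}{N-2s}}(\Sigma_\lambda^-)}^{p-1}\,\|w_\lambda\|_{L^{\frac{2N}{N-2s}}(\Sigma_\lambda^-)},
\]
which combines the Hardy--Littlewood--Sobolev inequality, the mean value theorem applied to $t\mapsto t^p$, and the fact that $|x-y|^{2s-N}\ge|x^\lambda-y|^{2s-N}$ on $\Sigma_\lambda$. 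Hence for $\lambda$ sufficiently negative the prefactor $\|\bar u\|_{L^{2N/(N-2s)}(\Sigma_\lambda^-)}^{p-1}$ is $<1$, forcing $\Sigma_\lambda^-=\emptyset$, i.e.\ $w_\lambda\ge0$ on $\Sigma_\lambda$; this starts the moving-plane procedure. One then increases $\lambda$ up to $\lambda_0=\sup\{\lambda:w_\mu\ge0\text{ on }\Sigma_\mu\text{ for all }\mu\le\lambda\}$: using that the singularities of $\bar u$ sit only at $0$ and at its reflection across $\{x_1=\lambda_0\}$, a strong-maximum-principle-type argument shows $w_{\lambda_0}\equiv0$ on $\Sigma_{\lambda_0}$ (otherwise the plane could be advanced further), so $\bar u$, and therefore $u$, is symmetric in the direction $e_1$ about a point. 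Since $e_1$ was arbitrary, $u$ is radially symmetric about some $x_0\in\R^N$.

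With radial symmetry in hand the profile is pinned down: the decay $u(x)\sim c\,|x-x_0|^{2s-N}$ at infinity (valid since $u^p\in L^1(\R^N)$ at the critical exponent, by Proposition~\ref{dec}) dictates the exponent $\tfrac{N-2s}{2}$ in the ansatz $u(x)=c\big(t/(t^2+|x-x_0|^2)\big)^{\frac{N-2s}{2}}$; inserting this into $\Ds u=u^p$ and using the explicit value of $\Ds$ of a standard bubble forces the constant to equal $c(N,s)$, while the scaling invariance $u\mapsto t^{(N-2s)/2}u(t\,\cdot)$ of \eqref{C=0problem} leaves $t>0$ as the only free parameter (fixed by $u(x_0)=b$ in the setting of Theorem~\ref{th:super}); that there is no other decaying radial solution follows from uniqueness for the associated ODE profile equation. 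The main obstacle is the first part: running the integral moving-planes scheme through the singularity of the Kelvin transform at the origin --- in particular the dichotomy ``full symmetry versus the plane getting stuck at $\lambda_0=0$'' --- and establishing the decay and narrow-region estimates that make the sign of $w_\lambda$ propagate; by comparison, computing $\Ds$ of a standard bubble to identify $c(N,s)$ is routine.
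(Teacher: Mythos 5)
Since the paper quotes this result directly from \cite[Theorem 1.2]{CLO} (with only a one-sentence description of the method), your sketch is naturally a paraphrase of the cited proof rather than an alternative argument; the moving-plane scheme in integral form applied to the Kelvin transform is indeed the CLO mechanism, and your outline of the HLS/narrow-region estimate and the start-and-advance dichotomy is faithful. Two points deserve attention. First, a genuine slip: the closing claim that "there is no other decaying radial solution follows from uniqueness for the associated ODE profile equation" does not work for $s\in(0,1)$ --- the fractional Laplacian does not reduce to an ODE on radial functions, so there is no profile ODE to invoke. In \cite{CLO} the exact bubble form is not obtained by an ODE shooting argument; it comes out of the moving-plane/Kelvin analysis itself, essentially by running the argument through Kelvin transforms centred at arbitrary points and showing the only configuration compatible with symmetry in every direction for every centre is the bubble. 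You should replace the ODE reference by this Kelvin/moving-sphere style conclusion. Second, your exponent $\frac{N-2s}{2}$ is the correct one (it is the unique power for which $u(x)\sim |x|^{2s-N}$ at infinity and $\Ds u$ is proportional to $u^{(N+2s)/(N-2s)}$); the paper's statement as printed carries a typo, writing $\frac{N+2s}{2}$ where $\frac{N-2s}{2}$ is intended --- consistent, note, with the formula for $\rho=u^p$ later in Section \ref{uniqsteady}, which does have exponent $\frac{N+2s}{2}$.
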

The classification in \cite{CLO} is based on the moving plane method adapted to the equivalent integral equation, applied to Kelvin-type transformations of the solution.

Let us move to the supercritical regime. The existence of ground states for \eqref{C=0problem} is established in \cite{acgw}. Let us recall the result therein.

\begin{proposition}[\cite{acgw}]
\label{prop:C=0}
When $p>(N+2s)/(N-2s)$, there exist ground states to \eqref{C=0problem} with precise (slow) decay
\begin{equation}\label{eq:supercrit-decay}
u(x)\sim c(N,s,p)|x|^{-\frac{2s}{p-1}}
    \quad \text{ as } |x|\to\infty,
\end{equation}
where the constant $c(N,s,p)$ is given by
\[
c(N,s,p) = \left[ \frac{\Gamma(\frac{N}{2}-\frac{s}{p-1})\Gamma(\frac{sp}{p-1})}
{\Gamma(\frac{s}{p-1})\Gamma(\frac{N}{2}-\frac{sp}{p-1})}2^{-2s}\right]^{1/(p-1)}.
\]
\end{proposition}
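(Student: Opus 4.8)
The statement is quoted from \cite{acgw}; here is how I would prove it, the delicate point being the sharp decay constant. Write $\gamma:=\tfrac{2s}{p-1}$, so that $\gamma p=\gamma+2s$ and, since $p>(N+2s)/(N-2s)$, one has $0<\gamma<\tfrac{N-2s}{2}$. The backbone is the explicit \emph{singular solution}
\[
U_\infty(x):=\kappa\,|x|^{-\gamma},\qquad \kappa^{\,p-1}=\Lambda_{N,s}(\gamma),
\]
where $\Lambda_{N,s}(\gamma)$ is the ratio of Gamma functions produced by the identity $\Ds(|x|^{-\gamma})=\Lambda_{N,s}(\gamma)\,|x|^{-\gamma-2s}$, valid for $0<\gamma<N-2s$; up to the normalisation convention for $\Ds$, this $\kappa$ is the constant $c(N,s,p)$ in the statement. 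Since $\gamma<N$ and $\gamma p<N$, $U_\infty$ is a genuine distributional solution of $\Ds u=u^p$ on $\R^N$, with no concentrated mass at the origin. The plan is in two steps: (i) produce a ground state lying below $U_\infty$; (ii) upgrade the two-sided bound this provides to the sharp asymptotics \eqref{eq:supercrit-decay}.

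For step (i) I would run a sub/supersolution scheme for the integral equation $u=(-\Delta)^{-s}(u^p)$. As a supersolution I would take a suitable truncation of a large multiple of $U_\infty$, made bounded at the origin; as a subsolution, a regular Barenblatt-type profile $\underline u(x)=\theta\,(\eps^2+|x|^2)^{-\gamma/2}$, whose fractional Laplacian is computed in closed form via the hypergeometric identities extending those of \cite{MR3640641}, and for which $\Ds\underline u\le\underline u^{\,p}$ reduces, using the scaling $\underline u(x)=\eps^{-\gamma}\underline u_1(x/\eps)$ and $\gamma p=\gamma+2s$, to a single inequality at $\eps=1$ valid for an appropriate $\theta$ with $\underline u\le$ the supersolution. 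The monotone iteration $u_{k+1}=(-\Delta)^{-s}(u_k^{\,p})$ started at $\underline u$ is nondecreasing, stays below the supersolution, and converges to a solution $u$; it is radially decreasing (the iteration preserves Schwarz symmetrisation), bounded and positive near the origin, vanishes at infinity, and is smooth by Corollary \ref{coro}, hence a ground state, and moreover $c_1|x|^{-\gamma}\le u(x)\le c_2|x|^{-\gamma}$ for $|x|$ large. (Existence could also be obtained by a shooting argument in the Emden--Fowler variables of the next step, or taken directly from \cite[Section~6]{ZLO}.)

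Step (ii) is the heart of the matter. The upper bound $u(x)\le\mathsf{C}\langle x\rangle^{-\gamma}$ is Proposition \ref{upperboundsdecay}. I would set $t:=\ln|x|$, $w(t):=|x|^{\gamma}u(x)$; then $w\ge0$ is bounded, and since by Corollary \ref{coro} and elliptic regularity $u\in C^1$ near $0$ with $u(0)>0$ and $\nabla u(0)=0$, one gets $w(t)\to0$ and $\dot w(t)\to0$ as $t\to-\infty$. The equation satisfied by $w$ is autonomous with a dissipative structure whose damping coefficient is positive precisely because $\gamma<(N-2s)/2$, i.e.\ precisely in the supercritical range; its only bounded equilibria are $w\equiv0$ and $w\equiv\kappa$ (a constant $\ell$ yields $u=\ell|x|^{-\gamma}$, which solves $\Ds u=u^p$ iff $\ell^{p-1}=\Lambda_{N,s}(\gamma)$). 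Then I would bring in the Hamiltonian of \cite[Theorem~2.1]{FLeSil} along the Caffarelli--Silvestre extension in these variables, exactly as in Section \ref{uniqsubcase}: the associated energy $H(t)$ is nonincreasing, and $H(-\infty)=0$ since the trajectory limits onto the equilibrium $0$, so $H(t)\le0$ for all $t$. As $w$ is bounded, LaSalle's invariance principle forces $w(t)$ to converge as $t\to+\infty$ to an equilibrium. It cannot be $0$: otherwise $w\to0$ at both ends and $H(\pm\infty)=0$, hence $H\equiv0$, the dissipation vanishes identically, $w$ must be constant, so $w\equiv0$ and $u\equiv0$, contradicting positivity. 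Thus $w(+\infty)=\kappa$, i.e.\ $|x|^{\gamma}u(x)\to\kappa=c(N,s,p)$, which is \eqref{eq:supercrit-decay}. The decay of the derivatives then comes from Proposition \ref{prop:d-decay}.

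The main obstacle is step (ii): making the Hamiltonian argument rigorous in this slowly-decaying nonlocal setting — constructing $H$ along the cylindrical Caffarelli--Silvestre extension, proving its monotonicity and the boundary value $H(-\infty)=0$ for a solution with only algebraic decay, and executing the LaSalle/convergence step (in particular ruling out non-convergent oscillation about $\kappa$). Step (i) is routine in spirit but technical: it needs the explicit hypergeometric evaluation of $\Ds$ on the Barenblatt profile and a careful verification that the truncated multiple of $U_\infty$ is genuinely a supersolution.
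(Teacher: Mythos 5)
Your plan for the sharp asymptotics is essentially the one the paper attributes to \cite{acgw}: pass to Emden--Fowler variables $w(t)=|x|^{\gamma}u(x)$, $t=\pm\log|x|$, use the a priori upper bound of Proposition~\ref{upperboundsdecay}, and pin down the limiting constant $\kappa$ with a monotone Hamiltonian built from the Caffarelli--Silvestre extension; your LaSalle reasoning (connected $\omega$-limit set inside the discrete set $\{0,\kappa\}$ of equilibria, $H(-\infty)=0$ ruling out $w(+\infty)=0$) is a correct way to phrase that step. The genuine divergence from the paper is in the existence part: the reference cited in the statement produces a solution by tracking an unbounded branch of the localised eigenvalue problem $\Ds u=\lambda(1+u)^p$ in $B_1$ and performing a blow-up along the branch, whereas you propose a monotone iteration between a sub- and a supersolution of the integral equation. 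Your route is more elementary and produces the two-sided bound $c_1|x|^{-\gamma}\le u(x)\le c_2|x|^{-\gamma}$ for free; the branch argument in \cite{acgw} is more robust to nonlinearities for which explicit barriers are unavailable, and fits naturally into a more general bifurcation picture.

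There is, however, a sign error in your barrier construction that would derail step~(i) as written. Since $p>1$, for $C>1$ one has
\[
\Ds\bigl(C\,U_\infty\bigr)=C\,U_\infty^p=C^{1-p}\bigl(C\,U_\infty\bigr)^p<\bigl(C\,U_\infty\bigr)^p,
\]
so a \emph{large} multiple of $U_\infty$ is a \emph{sub}solution, not a supersolution, and capping it at a constant $A$ does not repair this: a direct estimate of $(-\Delta)^{-s}\bigl(\min(A,C\kappa|x|^{-\gamma})^p\bigr)(0)$ gives a quantity of order $(C\kappa)^{p-1}A$, which exceeds $A$ when $C$ is large. You need instead a \emph{small} multiple $cU_\infty$ with $c<1$, which is a genuine supersolution of the integral equation (it is unbounded at the origin, but the monotone limit is automatically bounded because $(-\Delta)^{-s}\bigl((cU_\infty)^p\bigr)(0)$ is finite for $\gamma<N-2s$). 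After this correction your iteration scheme closes, and with the choice $\theta\le c\kappa$ the Barenblatt-type subsolution sits below the supersolution as required.
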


\begin{remark}\rm
The idea of the proof of Proposition \ref{prop:C=0} is the following. We refer to
 \cite{acgw} (see also \cite{Ao-Chan-DelaTorre-Fontelos-Gonzalez-Wei} for more details).
One searches for an unbounded, continuous branch of solutions of the auxiliary equation
\[\begin{cases}
\Ds u=\lambda(1+u)^p
    & \textin B_1,\\
u=0
    & \textin \R^n\setminus B_1,
\end{cases}\]
and then perform a blow-up argument along such branch. To show the decay in \eqref{eq:supercrit-decay}, we first observe that the upper bound is given by Proposition \ref{upperboundsdecay}.
Next, the Emden-Fowler transformation $v(t)=e^{-\frac{2s}{p-1}t}u(e^{-t})$, $t=-\log|x|$, solves an equation of the form
\[
P.V.\int_{\R}\mathcal{K}(t-t')[v(t)-v(t')]\,dt'
+c(N,s,p)^{p-1}v(t)=v(t)^p
    \quad \textin \R,
\]
and therefore the exact coefficient is determined as $c(N,s,p)$ via a Hamiltonian type argument.
\end{remark}

Through a scaling argument, there are infinitely many solutions of \eqref{C=0problem} in the supercritical regime.
%Indeed, if $u$ is a solution then $u_\lambda(x):=\lambda u(\lambda^{(p-1)/(2s)}x)$ is also a solution.
No uniqueness result in the sense of Theorem \ref{maintheorem} can be expected. Nonetheless, one can prove the uniqueness once the maximum value (at the origin) is fixed, which is what we do next by providing the proof of Theorem \ref{th:super}. As a consequence,
all the bounded radially decreasing solutions of Eq.~\eqref{C=0problem} can be rescaled to each other and belong to the family
$\{ \lambda^{2s/(p-1)} u(\lambda x)\}_{\lambda>0}$, where $u$ is any solution.
Interestingly, if
  $x \neq 0$, the limit
\[
\lim_{\lambda \to \infty} \lambda^{2s/(p-1)} u(\lambda x) =c(N,s,p)|x|^{-\frac{2s}{p-1}}
\]
turns out to be a singular solution.

%We assume that $C=0$ in the following:

\begin{proof}[Proof of Theorem \ref{th:super}]
 Let $u_1$ and $u_2$ be ground states to $\Ds u=u^p$, with $u_1(0)=u_2(0)=b$. The uniqueness is straightforward in the case $p=(N+2s)/(N-2s)$ due to Proposition \ref{criticalcase}, moreover in this case the constraint of the central density is equivalent to the mass constraint as the solutions are in $L^{1}(\mathbb R^N)\cap \dot{H}^{s}(\R^{N})$. Thus assume that $p>(N+2s)/(N-2s)$.
We argue as in the proof of Theorem \ref{maintheorem}, but we need to be more careful about the decay at infinity. Let $w=u_1-u_2$, so that $w(0)=0$ and $w$ solves
\[
\Ds w=\mathcal{V}(r)w,
\]
where
\[
\mathcal{V}(r)
=\dfrac{
    u_1^p(r)-u_2^p(r)
}{
    u_1(r)-u_2(r)
}
=p\int_0^1\big(
    \theta u_1(r)+(1-\theta)u_2(r)
\big)^{p-1}\,d\theta
\geq 0,
\]
with
\[
\mathcal{V}'(r)
=p(p-1)
\int_0^1
\big(
    \theta u_1(r)+(1-\theta)u_2(r)
\big)^{p-2}
\big(
    \theta u_1'(r)+(1-\theta)u_2'(r)
\big)
\,d\theta
\leq 0.
\]
Consider the Poisson extension  $W$ of $w$ from \eqref{Poisson-extension}, given up to a constant multiple by
\[
W(x,y)
=\int_{\R^n}
    \dfrac{
        y^{2s}
    }{
        (|\zeta|^2+y^2)^{\frac{N+2s}{2}}
    }
    w(x-\zeta)
\,d\zeta.
\]
Since, by the regularity and decay of $u_1$ and $u_2$ from Propositions \ref{upperboundsdecay}-\ref{prop:d-decay}
\[
w(x)\leq \mathsf{C}\angles{x}^{-\frac{2s}{p-1}},
    \quad
|Dw(x)|\leq \mathsf{C}\angles{x}^{-\frac{2s}{p-1}-1},
    \quad
|D^2w(x)|\leq \mathsf{C}\angles{x}^{-\frac{2s}{p-1}-2},
\]
we have, by applying Lemma \ref{lem:ext-est} up to the second derivative,
\begin{equation}\label{eq:decay-W-1}
W(x,y)\leq \mathsf{C}\angles{x}^{-\frac{2s}{p-1}},
    \quad
|D_{x}W(x,y)|\leq \mathsf{C}\angles{x}^{-\frac{2s}{p-1}-1},
    \quad
|D_{x}^2W(x,y)|\leq \mathsf{C}\angles{x}^{-\frac{2s}{p-1}-2},
\end{equation}
for a constant $\mathsf{C}$ independent of $y$. On the other hand, from the expression
\[
W(x,y)
=\int_{\R^n}
    \dfrac{
        y^{2s}
    }{
        (|x-\zeta|^2+y^2)^{\frac{N+2s}{2}}
    }
    w(\zeta)
\,d\zeta,
\]
derivatives in $x$ or $y$ hit the kernel and produce a decay in $y$, namely
\begin{equation}\label{eq:decay-W-2}
y\big(
    |D_xW(x,y)|+W_y(x,y)|
\big)
\leq CW(x,y)
\leq Cw(x)
\leq C\angles{x}^{-\frac{2s}{p-1}}.
\end{equation}
In addition, one can get similar estimates for $y^{1-2s} W_y$ by considering the conjugate equation as in Proposition  3.6 of \cite{Cabre-Sire}.\\

Define as in the proof of Theorem \ref{maintheorem} the Hamiltonian
\[
\Phi(r)
=\frac{d_{s}}{2}
    \int_{0}^{\infty}
        y^{1-2s}\left\{
        W_{r}^{2}(r,y)-W_{y}^{2}(r,y)
        \right\}
    dy
    +\frac{1}{2}\mathcal{V}(r)w^{2}(r).
\]
From estimates \eqref{eq:decay-W-2} and the bound of $y^{1-2s} W_y$ in the previous discussion, $\Phi$ is well defined and differentiable. Clearly, $\Phi(0)\leq 0$ and $\Phi'(r)\leq 0$. We will show that $\Phi$ is globally bounded. Once we have  that $\Phi(+\infty)=0$, we can proceed as in the proof of \ref{maintheorem} to conclude that $W\equiv 0$.\\
\noindent In order to show the decay as $r\to\infty$, let us split $$\Phi(r)=\Phi_1(r)+\Phi_2(r)+\frac12 \mathcal{V}(r)w^2(r),$$ where
\[
\Phi_1(r)
=\frac{d_{s}}{2}
    \int_{0}^{1}
        y^{1-2s}\left\{
        W_{r}^{2}(r,y)-W_{y}^{2}(r,y)
        \right\}
    dy,
\]
\[
\Phi_2(r)
=\frac{d_{s}}{2}
    \int_{1}^{\infty}
        y^{1-2s}\left\{
        W_{r}^{2}(r,y)-W_{y}^{2}(r,y)
        \right\}
    dy.
\]
By \eqref{eq:decay-W-2},
\[
|\Phi_2(r)|
\leq \mathsf{C}\int_{1}^{\infty}
    y^{-1-2s}W^2(r,y)
\,dy
\leq \mathsf{C}r^{-\frac{2s}{p-1}},
\]
for some constant $\mathsf{C}$.
{Testing the extension equation for $W$ against $W$ and integrating by parts, we have}
\[
\Phi_1(r)
=\dfrac{d_{s}}{2}
    \int_{0}^{1}
        y^{1-2s}\big(
            W_r^2(r,y)+W(r,y)\Delta_{x}W(r,y)
        \big)
    \,dy
    -\frac{1}{d_s}w(r)\Ds w(r)
    -W(r,1)W_y(r,1),
\]
which tends to zero in view of \eqref{eq:decay-W-1}. Hence, the proof is complete up to repeating the argument in the proof of Theorem \ref{maintheorem}.
\end{proof}
\begin{remark}
Using Theorem \ref{th:super} it is easy to show that for any ground state $u$ to $(-\Delta)^{s}u=u^{p}$ we have
\begin{equation}\label{asymptotsuper}
u\sim c(N,s,p)|x|^{-2s/(p-1)}\quad \textit{as}\,\, |x|\rightarrow\infty,
\end{equation} being $c(N,s,p)$ the constant for the solution in Proposition \ref{prop:C=0}. Indeed, let $u$ be any ground state and $u_{1}$ the ground state in Proposition \ref{prop:C=0}. Then the rescaled function
\[
u_{\lambda}(x):=\lambda u_{1}(\lambda^{\frac{p-1}{2s}}x)
\]
with the scaling factor defined as
\[
\lambda=\frac{u(0)}{u_{1}(0)}
\]
is still a ground state to the same equation and $u_{\lambda}(0)=u(0)$. Then Theorem \ref{th:super} gives $u=u_{\lambda}$, hence Theorem \ref{th:super} implies \eqref{asymptotsuper}. {In particular, if $u$ is a bounded radially decreasing  distributional solution to  $(-\Delta)^{s}u=u^{p}$ such that $u(x)=o(|x|^{-2s/(p-1)})$ as $|x|\to+\infty$, then $u\equiv 0$. }
\end{remark}
We close this section by showing that  we must necessarily have $\mathcal{C}=0$ in the critical and supercritical regimes. This is a consequence of the Pohozaev identity.

\begin{proposition}\label{$C=00$}
Assume that $p> (N+2s)/(N-2s)$, then there is no  ground state $u$ in $\dot H^{s}(\R^{N})$ to the equation \eqref{mainequation}.
\end{proposition}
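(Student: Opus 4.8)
The plan is to argue by contradiction, using the Pohozaev identity together with the energy identity obtained by testing the equation against the solution itself (recall $a=1$ in this section). Suppose $u\in\dot H^s(\R^N)$ is a ground state for \eqref{mainequation}. By definition $u\in L^\infty(\R^N)$, and by Proposition \ref{rosa} it is a distributional solution; hence it is smooth --- by Corollary \ref{coro} when $\mathcal C=0$, while when $\mathcal C>0$ the nonlinearity $(u-\mathcal C)_+^p$ is bounded and, since $u$ is continuous, radially decreasing and vanishes at infinity, compactly supported in a ball $\{u>\mathcal C\}=B_R$, so that $u=(-\Delta)^{-s}\big((u-\mathcal C)_+^p\big)$ is continuous and a bootstrap via the interior estimates of Proposition \ref{prop:schauder}, as in Proposition \ref{regularity}, upgrades its regularity. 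Moreover $u$ has a definite decay at infinity together with matching bounds on its first two derivatives: if $\mathcal C>0$ then $u\sim M\,c_{N,s}|x|^{2s-N}$ as in \eqref{asympbehM} (with derivatives decaying accordingly); if $\mathcal C=0$ the required estimates are precisely those of Propositions \ref{upperboundsdecay}--\ref{prop:d-decay}, together with the bound on $y^{1-2s}U_y$ via the conjugate equation, as in the proof of Theorem \ref{th:super}.

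First I would record the two integral identities. Testing \eqref{mainequation} against $u$ --- legitimate by density of $C^\infty_c(\R^N)$ in $\dot H^s(\R^N)$ and the integrability $(u-\mathcal C)_+^p\,u\in L^1(\R^N)$ (the nonlinearity being either compactly supported, or, when $\mathcal C=0$, controlled since $p+1>2^*_s$ and $u\in L^{2^*_s}(\R^N)\cap L^\infty(\R^N)$) --- gives
\[
[u]_{\dot H^s(\R^N)}^2=\int_{\R^N}(u-\mathcal C)_+^p\,u\,dx .
\]
The Pohozaev identity, which I would derive by running the Rellich--Pohozaev argument on the Caffarelli--Silvestre extension $U$ of $u$ from \eqref{extension} (multiplying $\divergence(y^{1-2s}\nabla U)=0$ by $(x,y)\cdot\nabla_{x,y} U$, integrating over $\R^{N+1}_+$, and letting the boundary radius go to infinity, the half-sphere surface terms vanishing by the decay recalled above), yields
\[
\frac{N-2s}{2}\,[u]_{\dot H^s(\R^N)}^2=N\int_{\R^N}F(u)\,dx=\frac{N}{p+1}\int_{\R^N}(u-\mathcal C)_+^{p+1}\,dx,
\]
where $F(t)=\tfrac1{p+1}(t-\mathcal C)_+^{p+1}$.

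Then I would combine the two. Eliminating $[u]_{\dot H^s(\R^N)}^2$ and writing $u=(u-\mathcal C)+\mathcal C$ on $\{u>\mathcal C\}$, so that $(u-\mathcal C)_+^p\,u=(u-\mathcal C)_+^{p+1}+\mathcal C\,(u-\mathcal C)_+^{p}$ pointwise, one arrives at
\[
\Big(\frac{N-2s}{2}-\frac{N}{p+1}\Big)\int_{\R^N}(u-\mathcal C)_+^{p+1}\,dx
+\frac{(N-2s)\,\mathcal C}{2}\int_{\R^N}(u-\mathcal C)_+^{p}\,dx=0 .
\]
Since $p>(N+2s)/(N-2s)$ is equivalent to $p+1>2^*_s=2N/(N-2s)$, the coefficient $\tfrac{N-2s}{2}-\tfrac{N}{p+1}$ is strictly positive and the second term is nonnegative; while a ground state is nontrivial, so $(u-\mathcal C)_+\not\equiv0$ (otherwise $\Ds u=0$ with $u\ge0$ vanishing at infinity would force $u\equiv0$), whence $\int_{\R^N}(u-\mathcal C)_+^{p+1}\,dx>0$. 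This contradicts the displayed identity, and the proposition follows.

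The main obstacle is the rigorous justification of the Pohozaev identity in this unbounded setting: one must verify that the surface integrals over the large half-spheres $\partial B_R^+\subset\R^{N+1}_+$ generated by the Rellich--Pohozaev integration by parts on the extension vanish as $R\to\infty$. For $\mathcal C>0$ this is comfortable, since $(u-\mathcal C)_+$ is compactly supported and $u\sim M\,c_{N,s}|x|^{2s-N}$ with correspondingly fast-decaying derivatives; for $\mathcal C=0$ it is the delicate point, requiring the sharp decay $|x|^{-2s/(p-1)}$ of Proposition \ref{upperboundsdecay}, the derivative bounds of Proposition \ref{prop:d-decay}, and the control of $y^{1-2s}U_y$ --- essentially the same bookkeeping as in the proof of Theorem \ref{th:super}. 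As an aside, in the case $\mathcal C=0$ one may bypass Pohozaev altogether: by the remark following Theorem \ref{th:super} every nontrivial ground state of $\Ds u=u^p$ obeys $u\sim c(N,s,p)|x|^{-2s/(p-1)}$, and since $\tfrac{2s}{p-1}<\tfrac{N-2s}{2}$ in the supercritical range this is incompatible with the embedding $\dot H^s(\R^N)\hookrightarrow L^{2^*_s}(\R^N)$.
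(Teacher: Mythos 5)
Your argument is essentially the paper's, and the structure is sound: test against $u$ to get the energy identity, invoke the Pohozaev identity, and combine. Your exact algebraic identity
\[
\Big(\tfrac{N-2s}{2}-\tfrac{N}{p+1}\Big)\int_{\R^N}(u-\mathcal C)_+^{p+1}\,dx
+\tfrac{(N-2s)\,\mathcal C}{2}\int_{\R^N}(u-\mathcal C)_+^{p}\,dx=0
\]
is an equivalent (and in fact slightly cleaner) rewriting of the strict inequality the paper uses for $\mathcal C>0$; both are correct and arrive at the same contradiction.

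The one point to correct is your main-line justification of Pohozaev when $\mathcal C=0$. You invoke the decay $u\lesssim\angles{x}^{-2s/(p-1)}$ of Propositions \ref{upperboundsdecay}--\ref{prop:d-decay} to kill the surface terms on $\partial B_R^+$, but this rate is precisely \emph{too slow} in the supercritical range: with $\alpha=\tfrac{2s}{p-1}$ a scaling count gives the curved surface contribution of order $R^{N-2s-2\alpha}$, which tends to $+\infty$ exactly because $\alpha<\tfrac{N-2s}{2}$ when $p>\tfrac{N+2s}{N-2s}$. So those pointwise bounds do not justify the Pohozaev identity for a hypothetical $\dot H^s$ solution with $\mathcal C=0$ (one would need a different device, e.g.\ an averaging argument choosing good radii via the finiteness of the weighted Dirichlet energy). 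Your ``aside'' is the argument that actually works, and it is precisely what the paper does for $\mathcal C=0$: by Theorem \ref{th:super} and \eqref{asymptotsuper}, any ground state of $\Ds u=u^p$ has $u\sim c|x|^{-2s/(p-1)}$, and since $\tfrac{2s}{p-1}<\tfrac{N-2s}{2}$ this is incompatible with $u\in L^{2^*_s}(\R^N)\supset\dot H^s(\R^N)$. So promote the aside to the operative argument for $\mathcal C=0$, keep Pohozaev only for $\mathcal C>0$, and the proof coincides with the paper's.
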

\begin{proof} Suppose first  that $\mathcal C>0$ and set, as always, $B_{R}=\left\{u>\mathcal{C}\right\}$. Assume $u\in\dot H^{s}(\R^{N})$ is a ground state.  Then the following Pohozaev identity (see \cite{RosOton-Serra1}, \cite[Theorem 1.1]{DAVILA} or \cite[Proposition 4.1]{Secchi}) is valid in the whole space $\R^{N}$
%(check the $H^{1}$ regularity) \textcolor{green}{why $H^1$?}:
\[
\frac{N-2s}{2}\int_{\R^{N}}|\xi|^{2s}|\hat{u}(\xi)|^{2}\,d\xi=\frac{N}{p+1}\int_{B_{R}}(u-\mathcal{C})_{+}^{p+1}\,dx.
\]
On the other hand, multiplying \eqref{mainequation} by $u$ and integrating by parts
\[
\int_{\R^{N}}|\xi|^{2s}|\hat{u}(\xi)|^{2}\,d\xi=\int_{B_{R}}u(u-\mathcal{C})_{+}^{p}\,dx> \int_{B_{R}}(u-\mathcal{C})_{+}^{p+1}\,dx,
\]
where the last inequality is strict because $u$ is continuous  and $u(0)>\mathcal{C}$.
Then
\[
\frac{N-2s}{2} \int_{B_{R}}(u-\mathcal{C})_{+}^{p+1}\,dx< \frac{N}{p+1} \int_{B_{R}}(u-\mathcal{C})_{+}^{p+1}\,dx,
\]
i.e.,
\[
\left(\frac{N-2s}{2}-\frac{N}{p+1}\right) \int_{B_{R}}(u-\mathcal{C})_{+}^{p+1}\,dx<0,
\]
which contradicts the condition $p> (N+2s)/(N-2s)$. Then $\mathcal{C}=0$ and $u$ solves $(-\Delta)^{s}u=u^{p}$ distributionally and $u(0)=\|u\|_{L^\infty(\mathbb R^N)}$, so $u\sim |x|^{-2s/(p-1)}$ for $|x|\rightarrow\infty$ by \eqref{asymptotsuper}, which implies in particular  $u\not\in L^{2^{\ast}_{s}}(\mathbb{ R}^N)$, contradicting $u\in \dot H^{s}(\mathbb{R}^N)$. \end{proof}
\begin{remark}\label{remarkcrit}
Inspecting the proof of Proposition \ref{$C=00$} it follows that in the critical case $p=(N+2s)/(N-2s)$ the ground states in the energy space $\dot H^{s}(\R^{N})$ are \emph{all} the nontrivial positive vanishing solutions to $(-\Delta)^{s}u=u^{p}$, classified entirely by Proposition \ref{criticalcase}.
\end{remark}
The following Proposition extends Remark \ref{remarkcrit} to the supercritical case.
\begin{proposition}\label{$C=0$}
Assume that $p>(N+2s)/(N-2s)$ and let $u$ be a ground state to equation \eqref{mainequation}. Then $\mathcal{C}=0$.
\end{proposition}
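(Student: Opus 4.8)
The plan is to deduce the statement from the non-existence result already established for the energy space, namely Proposition \ref{$C=00$}: I would show that \emph{any} ground state with $\mathcal C>0$ automatically belongs to $\dot H^s(\R^N)$, and since no such ground state exists when $p>(N+2s)/(N-2s)$, this forces $\mathcal C=0$.

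Concretely, I would argue by contradiction and assume $\mathcal C>0$. Since $u$ is radially decreasing with $u(x)\to0$ at infinity, the superlevel set $\{u>\mathcal C\}$ is a ball $B_R$; I would first dispose of the degenerate case $R=0$, in which $(u-\mathcal C)_+\equiv0$, so $\Ds u=0$ distributionally and the Liouville theorem \cite[Theorem 1.1]{Fall} together with the decay at infinity forces $u\equiv0$, contradicting positivity. Hence $\rho:=a(u-\mathcal C)_+^p$ is a bounded function supported in $\overline{B_R}$, and Proposition \ref{dec} (in the elementary case of a compactly supported right-hand side) gives $u=(-\Delta)^{-s}\rho$ both distributionally and pointwise. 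As in the derivation of \eqref{asympbehM}, this produces the fast decay $u(x)\sim M\,c_{N,s}|x|^{2s-N}$ as $|x|\to\infty$, with $M=\int_{B_R}\rho>0$.

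The key step is then to verify $u\in\dot H^s(\R^N)$. Because $\rho$ is bounded with compact support we have $\rho\in L^{q}(\R^N)$ with $q=2N/(N+2s)$, so the Hardy--Littlewood--Sobolev inequality yields simultaneously $u=(-\Delta)^{-s}\rho\in L^{2^*_s}(\R^N)$ and $(-\Delta)^{s/2}u=(-\Delta)^{-s/2}\rho\in L^2(\R^N)$, i.e.\ $[u]_{\dot H^s(\R^N)}<\infty$ (membership in $L^{2^*_s}$ can also be read off directly from the fast decay, since $(2s-N)\,2^*_s=-2N$ and $u$ is bounded near $0$). By the characterization $\dot H^s(\R^N)=\{v\in L^{2^*_s}(\R^N):[v]_{\dot H^s(\R^N)}<\infty\}$ recalled in Section \ref{prel}, this gives $u\in\dot H^s(\R^N)$, so $u$ is a ground state in the energy space to \eqref{mainequation} with $p>(N+2s)/(N-2s)$, contradicting Proposition \ref{$C=00$}; hence $\mathcal C=0$. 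The only mildly delicate point is the middle of this last step: one must be sure that the pointwise Riesz potential $(-\Delta)^{-s}\rho$ is genuinely the element of the homogeneous space $\dot H^s(\R^N)$ coming from $\rho$, and that the decay exponent $2s-N$ sits exactly at the $L^{2^*_s}$ borderline; everything else is a direct appeal to results already proved in Sections \ref{prel}--\ref{critsupercrreg}.
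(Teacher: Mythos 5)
Your proposal is correct and follows essentially the same route as the paper: assume by contradiction $\mathcal C>0$, observe that the right-hand side $a(u-\mathcal C)_+^p$ is bounded and compactly supported (hence in $L^{2N/(N+2s)}$, which embeds into $\dot H^{-s}$), conclude $u\in\dot H^s(\R^N)$, and invoke Proposition \ref{$C=00$}. The extra steps you include (disposing of the $R=0$ case via Liouville, and spelling out the two HLS applications to get $u\in L^{2^*_s}$ with $[u]_{\dot H^s}<\infty$) are harmless elaborations of the same argument, not a different method.
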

\begin{proof}
Suppose by contradiction that $\mathcal{C}>0$. Then $u$ is distributional solution to $\Ds u=(u-\mathcal{C})_+^p$ and the right hand side is in $L^1(\mathbb R^N)\cap L^\infty(\mathbb R^N)$ since $u$ is a ground state. In particular, by fractional Sobolev embedding we get $(u-\mathcal C)_+^p\in L^{\tfrac{2N}{N+2s}}(\mathbb R^N)\subset \dot H^{-s}(\mathbb R^N)$,
so that $u\in \dot H^{s}(\mathbb R^N)$. This is a  contradiction with Proposition \ref{$C=00$}.
%By Proposition \ref{$C=0$} we have that $u$ can not be an energy solution. On the other hand, we know that $u$ is $C^{1,\alpha}(\R^{N})$ and $u$ is smooth in $B_{R}=\left\{u>C\right\}$. If $C>0$, notice that since the right-hand side of \eqref{mainequation} is compactly supported, we have in particular $(-\Delta)^{s}u\in L^{2}(\R^{N})$ and $u\sim |x|^{2s-N}$ for $|x|\rightarrow\infty$. If $s<N/4$ we have then $u\in L^{2}(\R^{N})$, thus multiplying \eqref{mainequation} by $u$ and using Plancharel's theorem we have
%\[
%\int_{\R^{N}}|(-\Delta)^{s/2}u|^{2}dx=\int_{\R^{N}}u(u-C)_{+}^{p}.
%\]
%This means that $u$ has finite energy, a contradiction. If $N=2,3$ and $s\geq N/4$....
\end{proof}
\begin{remark}{\rm  It is interesting to observe that there are also nonradial solutions in the supercritical regime, see \cite{ZLO} for a simple axially symmetric construction. This implies that Theorem \ref{th:super} does not hold in the more general class of bounded solutions vanishing at infinity.}
\end{remark}
%%%%%%%%%%%%%\mathcal{K}%%%\mathcal{K}%%%%%\mathcal K%%%%%%%%%%%

\section{Uniqueness of steady states of aggregation-diffusion equations}\label{uniqsteady}

As an application of Theorem \ref{maintheorem}, in this section we deduce uniqueness of the steady states to the evolution equation \eqref{eq:KS}.

Before going through the full analysis of steady states (which is carried over in Subsection \ref{5.2}), let us briefly focus on the minimization of the natural free energy functional \eqref{functional} associated to \eqref{eq:KS}, in the diffusion dominated regime. In this regime
 %Let us start by focusing on the so called {diffusion-dominated regime}, \emph{i.e.},
 the diffusion dominates over the aggregation in the dynamics given by \eqref{eq:KS}. By a scaling argument, this phenomenon is shown to occur only if
$$m>2-\frac{2s}{N}=:m_c$$
% the following diffusion-aggregation equation
%\begin{equation}\label{eq:KS}
% \partial_t \rho = \Delta \rho^m +\nabla \cdot \left( \rho \, \nabla (-\Delta)^{-s}\rho\right)
%\end{equation}
%for a density $\rho(t,x)$ of mass $M$ defined on $\R_+ \times \R^N$, and where $(-\Delta)^{-s}\rho$ is the Riesz potential of $\rho$, namely the convolution of $\rho$ with the interaction kernel Riesz kernel
%\begin{equation*}%\label{potential}
% W_s(x)=c_{N,s}|x|^{2s-N}, \quad s \in (0,N/2). %k=2s-N
%\end{equation*}
%where $c_{N,s}$ is the normalization constant given by
%
%\[
%c_{N,s}= \frac{\Gamma\left(\frac{N}{2}-s\right)}{\pi^{N/2}4^s\Gamma(s)}. \]

\subsection{Minimizers}
Since \eqref{eq:KS} conserves mass, it is positivity preserving and invariant by translations, we work with solutions
$\rho$ that for any time $t$ belong to the set
\begin{equation*}
\mY_{M}:=\left\{ \rho \in L_+^1(\R^N) \cap L^m(\R^N)\,,\,||\rho||_{L^1(\mathbb R^N)}=M\, ,\, \int_{\R^N} x\rho(x)\,dx=0\right\}\, .
\end{equation*}
%%%%%%%%%%%%%%%%%%%%%%%%%%%
%and the exponent $m_{c}:=2-2s/N$ is the so called critical exponent.
%The natural free energy functional associated to \eqref{eq:KS} is defined by \eqref{functional}. In the diffusion-dominated regime, steady states
%Before analyzing steady states of \eqref{eq:KS} (which is the object of Subsection \ref{5.2}),
% let us briefly focus on the minimization of $\mathcal F$ over $\mathcal Y_M$ for given $M>0$.
%\begin{equation}\label{functional}
%\mathcal{F}[\rho]=\frac{1}{m-1}\int_{\mathbb{R}^N}\rho^m(x)\,dx-\frac\chi2\int_{\mathbb{R}^N}\int_{\mathbb{R}^N}c_{N,s}|x-y|^{2s-N}\rho(x)\rho(y)\,dx\,dy.
%\end{equation}
In the diffusion-dominated regime, the minimization problem $\min_{\mathcal Y_M} \mathcal F$ has been investigated in \cite{CHMV}. The main results therein are summarized in the following
\begin{lemma}\label{lemma:basic}
 Let $m>m_c$ and $M>0$.
The functional $\mathcal{F}$ admits a minimizer over $\mathcal{Y}_M$. If $\rho\in\mathrm{argmin}_{\mathcal{Y}_M}\mathcal{F}$, then $\rho$ is continuous and bounded on $\mathbb{R}^N$, radially decreasing, compactly supported, smooth in the interior if its support, and it satisfies
\begin{equation}\label{Euler}
\rho=\left(\tfrac{m-1}{m}\right)^{\frac1{m-1}}\left(\chi\, c_{N,s}|\cdot|^{2s-N}\ast\rho\,-\,\mathcal{K}\right)^{\frac1{m-1}}_+\qquad\mbox{ in $\mathbb{R}^N$},
\end{equation}
where
\begin{equation}\label{explicitconstant}
0<\mathcal{K}:=-\frac2M\mathcal{F}[\rho]-\frac1M\frac{m-2}{m-1}\int_{\mathbb{R}^N}\rho^m(x)\,dx.
\end{equation}
Moreover, there holds
\begin{equation}\label{scaleidentity}
\frac{\chi}{2} \int_{\mathbb{R}^N}\int_{\mathbb{R}^N}c_{N,s}|x-y|^{2s-N}\rho(x)\rho(y)\,dx\,dy=\frac N{N-2s}\int_{\mathbb{R}^N}\rho^m(x)\,dx.
\end{equation}
\end{lemma}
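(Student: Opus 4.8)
\emph{Overview of the plan.} I would split the statement into three blocks: (i) existence of a minimizer of $\mathcal F$ over $\mathcal Y_M$; (ii) the qualitative properties of any minimizer (radial monotonicity, continuity, boundedness, compact support, interior smoothness) together with the Euler--Lagrange equation \eqref{Euler}; (iii) the identities \eqref{scaleidentity} and \eqref{explicitconstant} and the positivity $\mathcal K>0$. Blocks (i)--(ii) are, up to translation, the content of \cite{CHMV}, so I would recall the underlying mechanisms and then cite them; block (iii) I would prove directly here, since it follows cleanly from the structure already at hand.

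\emph{Existence.} The plan is to use the direct method. Given a minimizing sequence $(\rho_k)$, I would first replace each $\rho_k$ by its symmetric decreasing rearrangement: this preserves the mass and $\int\rho^m$, and by the Riesz rearrangement inequality (the kernel $|x|^{2s-N}$ being symmetric decreasing) it does not decrease the interaction term, hence does not increase $\mathcal F$. Coercivity is where $m>m_c$ enters: by Hardy--Littlewood--Sobolev the interaction term is controlled by $C\|\rho\|_{L^{2N/(N+2s)}}^2$, and since $\tfrac{2N}{N+2s}<m_c\le m$ one interpolates between $L^1$ and $L^m$ and uses Young's inequality (the power of $\|\rho\|_{L^m}$ that appears is strictly less than $m$ \emph{precisely} when $m>m_c$), absorbing this into $\tfrac1{m-1}\int\rho^m$ plus a constant depending only on $M$; this bounds $\mathcal F$ below on $\mathcal Y_M$ and makes the minimizing sequence bounded in $L^m$. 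Radial monotonicity yields the pointwise bound $\rho_k(x)\le C_N M|x|^{-N}$, which together with the $L^m$ bound controls the tails; combined with local strong $L^m$ convergence this lets one pass to the limit in $\mathcal F$, so the limit is a minimizer. Excluding escape of mass to infinity is the most delicate step, and is where I would rely most heavily on \cite{CHMV}.

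\emph{Euler--Lagrange equation and qualitative properties.} A first variation under the mass constraint (perturbing $\rho$ by $\eps(\psi-\tfrac1M\rho\int\psi)$ with suitable $\psi\ge0$) gives $\tfrac m{m-1}\rho^{m-1}=\chi\,c_{N,s}|\cdot|^{2s-N}\ast\rho-\mathcal K$ on $\{\rho>0\}$ and $\le$ the right-hand side on $\{\rho=0\}$, which is exactly \eqref{Euler} after solving for $\rho$ and inserting $(\cdot)_+$. Then $u:=(-\Delta)^{-s}\rho=c_{N,s}|\cdot|^{2s-N}\ast\rho$ solves the fractional plasma equation \eqref{mainequation} with $p=\tfrac1{m-1}$, $a=((m-1)\chi/m)^{1/(m-1)}$ and $\mathcal C=\mathcal K/\chi$; continuity, boundedness and interior smoothness follow from Proposition \ref{regularity} and the bootstrap of \cite{CHMV}, while the decay $u(x)\to 0$ (Riesz potential of an $L^1$ function) forces $\chi u-\mathcal K<0$ for $|x|$ large, giving compact support. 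Finally, applying the \emph{strict} Riesz rearrangement inequality (legitimate since $|x|^{2s-N}$ is strictly decreasing) shows any minimizer equals its symmetric decreasing rearrangement up to a translation, and the zero-center-of-mass constraint fixes the translation, so $\rho$ is radially decreasing.

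\emph{The two identities and $\mathcal K>0$.} Testing \eqref{Euler} against $\rho$ and integrating gives $\tfrac m{m-1}\int_{\R^N}\rho^m=\chi\int_{\R^N}\int_{\R^N}c_{N,s}|x-y|^{2s-N}\rho(x)\rho(y)\,dx\,dy-\mathcal K M$; rearranging this against the definition of $\mathcal F$ yields \eqref{explicitconstant} at once. For \eqref{scaleidentity} I would use the virial competitor $\rho_\lambda(x):=\lambda^N\rho(\lambda x)\in\mathcal Y_M$, for which $\mathcal F[\rho_\lambda]=\tfrac{\lambda^{N(m-1)}}{m-1}\int\rho^m-\tfrac{\chi\lambda^{N-2s}}{2}\int\int c_{N,s}|x-y|^{2s-N}\rho(x)\rho(y)\,dx\,dy$; since $\rho$ minimizes, $\tfrac{d}{d\lambda}\mathcal F[\rho_\lambda]\big|_{\lambda=1}=0$, i.e.\ $N\int\rho^m=(N-2s)\tfrac{\chi}{2}\int\int c_{N,s}|x-y|^{2s-N}\rho(x)\rho(y)\,dx\,dy$, which is \eqref{scaleidentity}. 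Substituting \eqref{scaleidentity} back into the identity for $\mathcal K$ gives $\mathcal K M=\big(\tfrac{2N}{N-2s}-\tfrac m{m-1}\big)\int\rho^m$, and the inequality $\tfrac{2N}{N-2s}>\tfrac m{m-1}$ is equivalent to $m>\tfrac{2N}{N+2s}$, which holds since $m>m_c>\tfrac{2N}{N+2s}$; hence $\mathcal K>0$. The main obstacle in the whole argument is the compactness (no loss of mass at infinity) in the existence step; everything else is either a direct computation or a citation to \cite{CHMV}.
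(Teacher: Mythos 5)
Your proposal is correct and matches the paper's approach: the paper itself simply defers the existence, regularity, and Euler--Lagrange statements to \cite{CHMV}, and sketches \eqref{scaleidentity} via the dilation family $\rho_\lambda(x)=\lambda^N\rho(\lambda x)$ exactly as you do. You additionally spell out the derivation of \eqref{explicitconstant} (test \eqref{Euler} against $\rho$ and compare with the definition of $\mathcal F$) and of $\mathcal K>0$ from $m>\tfrac{2N}{N+2s}$, which the paper leaves implicit; the only small presentational point is that compact support hinges on $\mathcal K>0$, so it should logically be placed \emph{after} the positivity of $\mathcal K$ rather than in the block where you list it, but this is a reordering rather than a gap.
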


We refer to \cite{CHMV} for the proof of the properties of Lemma \ref{lemma:basic}. In particular, \eqref{scaleidentity} follows by taking dilations $\rho_\lambda(x):=\lambda^N\rho(\lambda x)$ and optimizing with respect to $\lambda>0$, hence finding a unique optimal value $\lambda_*$, and then imposing $\lambda_*=1$ since $\rho$ is a minimizer.

Note that if $\rho$ is a minimizer from \eqref{functional} and \eqref{scaleidentity} we deduce \begin{equation}\label{fixednorm}\displaystyle\int_{\mathbb{R}^N}\rho^m\,dx=-\frac{(N-2s)(m-1)}{N(m-2)+2s}\,\mathcal{F}(\rho)\end{equation}
along with
\[
\begin{aligned}
  &\int_{\mathbb R^N} \rho^m \,dx= \frac{(m-1)(N-2s)}{(m-2)N+2ms} M\mathcal{K},\\
&\chi \int_{\mathbb{R}^N}\int_{\mathbb{R}^N}c_{N,s}|x-y|^{2s-N}\rho(x)\rho(y)\,dx\,dy
= \frac{2(m-1)N}{(m-2)N+2ms}M\mathcal{K}
\end{aligned}\]
and
\begin{equation}\label{C_M}
  \mathcal{F}(\rho) = - \frac{(m-2)N+2s}{(m-2)N+2ms} M \mathcal{K}.
\end{equation}
In fact, by combining \eqref{explicitconstant} and \eqref{fixednorm}, we deduce that the constant $\mathcal K$ is uniquely determined and depends only on the minimal value of $\mathcal{F}$
on $\mathcal Y_M$.

By letting $u:=(-\Delta)^{-s}\rho=c_{N,s}|\cdot|^{2s-N}\ast\rho $, we see that \eqref{Euler} rewrites in terms of $u$ as
%\[
%(-\Delta)^s u=\left(\tfrac{m-1}{m}\right)^{\frac1{m-1}}(\chi u-\mathcal K)_+^{\frac{1}{m-1}},
%\]
%which is exactly of the form
 as \eqref{mainequation} after having suitably chosen  the  parameters $a$ and $\mathcal C$ therein. Before  applying  the general uniqueness theory from Theorem \ref{maintheorem}, we show how to obtain uniqueness of the minimizer of $\mathcal F$ by a direct argument, at least in case $m=2$.
Indeed, we have the following

\begin{lemma}  Let $m=2$ and $M>0$. Then there exists a unique minimizer of $\mathcal F$ over $\mathcal Y_M$.
\end{lemma}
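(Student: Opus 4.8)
The idea is to use that for $m=2$ the free energy \eqref{functional} is a \emph{quadratic} functional,
\[
\mathcal{F}[\rho]=\|\rho\|_{L^2(\mathbb R^N)}^2-\frac\chi2\langle\rho,(-\Delta)^{-s}\rho\rangle ,
\]
while $\mathcal Y_M$ is convex, and that by \eqref{explicitconstant} together with \eqref{C_M} the Lagrange multiplier $\mathcal K$ and the minimal value $m^\ast:=\min_{\mathcal Y_M}\mathcal F$ are the \emph{same} for every minimizer (they depend only on $M$ and $m^\ast$). Thus, with $\mathcal C:=\mathcal K/\chi$ and $u=(-\Delta)^{-s}\rho$, every minimizer is a ground state of \eqref{mainequation} with $p=1$, $a=\chi/2$ (so one could already quote Theorem \ref{maintheorem}), but the point is to argue directly. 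From Lemma \ref{lemma:basic} the Euler--Lagrange relation reads, on the support $B_R$ of $\rho$, $2\rho-\chi u=-\mathcal K$, and on $\mathbb R^N\setminus\overline{B_R}$ one has $\chi u<\mathcal K$ strictly, since $u$ is the Riesz potential of a nontrivial nonnegative radial density and hence is radially strictly decreasing for $|x|>0$.

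Now let $\rho_0,\rho_1$ be two minimizers, with free--boundary radii $R_0,R_1$, put $u_i:=(-\Delta)^{-s}\rho_i$ and $\psi:=\rho_0-\rho_1$. Since $\mathcal F$ is quadratic, for every $\rho\in\mathcal Y_M$,
\[
\mathcal F[\rho]=\mathcal F[\rho_0]+\langle 2\rho_0-\chi u_0,\rho-\rho_0\rangle+Q(\rho-\rho_0),\qquad Q(\phi):=\|\phi\|_{L^2}^2-\tfrac\chi2\langle\phi,(-\Delta)^{-s}\phi\rangle .
\]
Taking $\rho=\rho_1$ and using $\mathcal F[\rho_0]=\mathcal F[\rho_1]=m^\ast$ gives $\langle 2\rho_0-\chi u_0,\rho_1-\rho_0\rangle=-Q(\psi)$; a short computation using the Euler--Lagrange relations and $\int(\rho_1-\rho_0)=0$ rewrites the left-hand side as $\chi\int_{\mathbb R^N\setminus B_{R_0}}(\mathcal C-u_0)\rho_1\,dx\ge 0$, an integral that vanishes \emph{iff} $R_1\le R_0$. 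By the symmetric computation, $-Q(\psi)=\chi\int_{\mathbb R^N\setminus B_{R_1}}(\mathcal C-u_1)\rho_0\,dx\ge 0$, vanishing \emph{iff} $R_0\le R_1$. If $R_0\ne R_1$, say $R_0<R_1$, then the first integral is strictly positive and the second is zero, which is impossible. Hence $R_0=R_1=:R$, and both integrals vanish, so $Q(\psi)=0$.

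With $R_0=R_1=R$, subtracting the two Euler--Lagrange relations on $B_R$ gives $2\psi=\chi(-\Delta)^{-s}\psi$ a.e.\ on $B_R$, so $\psi$ is either $0$ or an eigenfunction, with eigenvalue $2/\chi$, of the compact self-adjoint positivity-improving operator $T\sigma:=\bigl((-\Delta)^{-s}(\sigma\mathbf 1_{B_R})\bigr)\big|_{B_R}$ on $L^2(B_R)$. Pairing the Euler--Lagrange relation with $\rho_0$ gives $\langle T\rho_0,\rho_0\rangle=\tfrac2\chi\|\rho_0\|_{L^2}^2+\mathcal C M>\tfrac2\chi\|\rho_0\|_{L^2}^2$, so $2/\chi$ lies strictly below the principal eigenvalue $\Lambda_1$ of $T$; since $T$ is positivity-improving, $\Lambda_1$ is simple with a positive eigenfunction, so any eigenfunction for a smaller eigenvalue is orthogonal to it and therefore changes sign. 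Thus if $\psi\ne 0$ it is a sign-changing eigenfunction. To exclude this, note that $Q(\psi)=0$ makes $\mathcal F[\rho_t]\equiv m^\ast$ along $\rho_t:=\rho_0+t(\rho_1-\rho_0)=\rho_0-t\psi$, so $\rho_t$ is a minimizer for every $t$ in the maximal interval on which $\rho_t\ge 0$; that interval is bounded because $\psi$ changes sign, and at its endpoint the minimizer $\rho_{t_\ast}$ is radially decreasing, nonnegative, and touches $0$ at some radius $r_\ast<R$ (the alternative $r_\ast=R$ would force $\rho_{t_\ast}$ to vanish with zero slope at the free boundary, contradicting the non-degenerate vanishing of ground states from Proposition \ref{regularity}), hence $\rho_{t_\ast}$ is supported in $\overline{B_{r_\ast}}$ with $r_\ast<R$ — contradicting the equality of free-boundary radii just proved, applied to $\rho_0$ and $\rho_{t_\ast}$. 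Therefore $\psi\equiv 0$, i.e.\ $\rho_0=\rho_1$.

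The main obstacle is precisely this last, \emph{resonant} case in which $2/\chi$ is an eigenvalue of $T$, so that the subtracted Euler--Lagrange equation no longer forces $\psi=0$ by itself; the argument above escapes it by combining $Q(\psi)=0$ with the rigidity of the free boundary from the middle step, but this uses the non-degeneracy of the minimizer at its free boundary and the strict radial monotonicity of $u$, inputs that must be read off from the regularity theory (Lemma \ref{lemma:basic}, Proposition \ref{regularity}). The remaining ingredients — the sign and strictness bookkeeping, and the identity expressing the first variation as $\chi\int_{\mathbb R^N\setminus B_{R_i}}(\mathcal C-u_i)\rho_{1-i}$ — are routine but should be carried out carefully.
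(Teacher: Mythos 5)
Your first two stages are correct and are a genuinely different route from the paper: by writing the free energy as an exact second-order expansion about a minimizer and exploiting the Euler--Lagrange relation on each support, you derive the identity $-Q(\psi)=\chi\int_{\mathbb R^N\setminus B_{R_i}}(\mathcal C-u_i)\rho_{1-i}\,dx\ge 0$ from both sides, and the sign/vanishing discussion forces $R_0=R_1$ and $Q(\psi)=0$. I checked the bookkeeping (the splitting over $B_{R_0}$ and its complement, the use of $\int\psi=0$, and the strict inequality $u_i<\mathcal C$ outside $B_{R_i}$) and it is correct. The paper takes a shortcut: it assumes w.l.o.g.\ nested supports, computes only the first-order pairing $\int\rho_1(2\rho_2-W\ast\rho_2)=2\mathcal F_M$, and never needs to prove the supports coincide. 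Your version is more informative but longer.

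The gap is in your last step. After establishing $Q(\psi)=0$ you argue that $\psi$, if nonzero, is a sign-changing eigenfunction of $T$ with eigenvalue $2/\chi<\Lambda_1$, then run a line of minimizers $\rho_t=\rho_0-t\psi$ to a terminal $t_*$, and claim that $\rho_{t_*}$ ``touches $0$ at some $r_*<R$'' unless minimizers vanish non-degenerately at the free boundary, citing Proposition \ref{regularity}. But Proposition \ref{regularity} is a H\"older-regularity statement ($u\in C^{p+2s}$ and $C^\infty$ in $\{u>\mathcal C\}$); it does not assert a Hopf-type lower bound $u'(R)\ne 0$, equivalently $\rho\sim c(R-r)$ with $c>0$, and no such non-degeneracy is proved anywhere in the paper. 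Without it, the scenario where the constraint becomes binding ``at $R$'' without any interior touching (i.e.\ $\rho_{t_*}(r)/\psi(r)\to 0$ as $r\to R^-$ because $\rho_{t_*}$ vanishes faster than linearly) cannot be excluded, and the contradiction via the equality of free-boundary radii never materializes. So as written the argument is incomplete.

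The fix is easy and is exactly what the paper does at the corresponding point: once you know $Q(\psi)=0$, the expansion gives $\mathcal F[\rho_t]\equiv m^*$ for $t\in[0,1]$, so $\rho_{1/2}=\tfrac12\rho_0+\tfrac12\rho_1$ is a minimizer. By \eqref{fixednorm} every minimizer has the same $L^2$ norm, yet $\rho_{1/2}^2\le\tfrac12\rho_0^2+\tfrac12\rho_1^2$ with strict inequality on a set of positive measure if $\rho_0\ne\rho_1$; integrating gives $\int\rho_{1/2}^2<\int\rho_0^2$, a contradiction. This replaces the entire eigenvalue/touching argument and uses nothing beyond strict convexity of $t\mapsto t^2$, so you can keep your first two stages and swap in this endgame to obtain a complete proof.
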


\begin{proof} Existence is shown in Lemma \ref{lemma:basic} along with Euler-Lagrange equation and other properties of minimizers. Therefore, we are reduced to prove uniqueness.
Through the proof,
we use the notation $W(x):=\chi\,c_{N,s}|x|^{2s-N}$. By assuming   $m=2$, and by using the notation $\mathcal{F}_M$ for the minimal value of $\mathcal{F}$ over $\mathcal Y_M$, from \eqref{explicitconstant}  we see that $\mathcal{K}=-2\mathcal{F}_M/M$.

By Lemma \ref{lemma:basic}, any minimizer is radially decreasing, continuous and compactly supported.
Suppose by contradiction that there are two minimizers $\rho_1, \rho_2$ that do not coincide. Without loss of generality, assume that $\mathrm{supp}(\rho_1)\subseteq\mathrm{supp}(\rho_2)$. Since $\mathrm{supp}(\rho_1)\subseteq\mathrm{supp}(\rho_2)$, from \eqref{Euler} (taking $\mathcal{K}=-2\mathcal{F}_M/M$ into account) we have
\begin{equation}\label{doubling}
2\int_{\mathbb{R}^N}\rho_1\rho_2-\int_{\mathbb{R}^N} (W\ast\rho_2)\rho_1=\int_{\mathbb{R}^N}\rho_1\left( 2\rho_2- W\ast\rho_2\right)=\int_{\mathbb{R}^N}\rho_1\,\frac{2\mathcal{F}_M}{M}=2\mathcal{F}_M.
\end{equation}
On the other hand, let $\rho_{1/2}:=\frac12 \rho_1+\frac12\rho_2$. Then $\rho_{1/2}\in\mathcal Y_M$. By using the minimality of $\rho_1$, $\rho_2$ and \eqref{doubling} we get
\[\begin{aligned}
\mathcal{F}[\rho_{1/2}]&=\frac14\int_{\mathbb{R}^N}\rho_1^2+\frac14\int_{\mathbb{R}^N}\rho_2^2+\frac12\int_{\mathbb{R}^N}\rho_1\rho_2-\frac18\int_{\mathbb{R}^N}(W\ast\rho_1)\rho_1\\&\qquad-\frac18\int_{\mathbb{R}^N}(W\ast\rho_2)\rho_2-\frac14\int_{\mathbb{R}^N}(W\ast\rho_2)\rho_1\\& =\frac14\mathcal{F}[\rho_1]+\frac14\mathcal{F}[\rho_2] +\frac12\int_{\mathbb{R}^N}\rho_1\rho_2 -\frac14\int_{\mathbb{R}^N}(W\ast\rho_2)\rho_1
\\&=\frac14\mathcal{F}[\rho_1]+\frac14\mathcal{F}[\rho_2]+\frac14(2\mathcal{F}_M)=\mathcal{F}_M,
\end{aligned}\]
hence $\rho_{1/2}$ is itself a minimizer. From \eqref{fixednorm} we deduce
\[\int_{\mathbb{R}^N}\rho_{1/2}^2=\int_{\mathbb{R}^N}\rho_1^2=\int_{\mathbb{R}^N}\rho_2^2=-\frac{N-2s}{2s}\mathcal{F}_M.
\]
But this is a contradiction, since the Young inequality $\rho_{1/2}^2\le\frac12\rho_1^2+\frac12\rho_2^2$ is strict on a set of positive measure, as we are assuming that $\rho_1$ and $\rho_2$ are not coinciding.
\end{proof}

\subsection{Radial steady states}\label{5.2}
We shall characterize the uniqueness for \emph{radial} densities $\rho$ which are steady state of equation \eqref{eq:KS}   according to the following

\begin{definition}\label{def:steady}
We say that a  nonnegative  function $\rho\in L^\infty(\mathbb R^N)$ is a {\it radial steady state} for the
 evolution equation \eqref{eq:KS} if $\rho$ is radially decreasing and there exists $\mathcal{K}\ge 0$ such that
 \begin{equation}\label{eq:steady}
  \rho(x)^{m-1} = \frac{m-1}{m} \left( \chi(-\Delta)^{-s}\rho(x)-\mathcal{K}\right)_+\, , \qquad
 \mbox{for a.e.} \, x \in \R^N.
\end{equation}
\end{definition}

Let us preliminarily show that there is a one-to-one correspondence between radial steady states and ground states to \eqref{mainequation}, once $p$ and $m$ are related by $p=\tfrac{1}{m-1}$, see Figure \ref{f1}. Our uniqueness results in this subsection cover the range $m\in(1,2]$, since $p\ge 1$ in  our main theorems. In fact, given the form of \eqref{eq:steady}, it will be more convenient to rewrite \eqref{mainequation} as
\begin{equation}
(-\Delta)^{s}u=a_p(\chi u-\mathcal{K})^{p}_{+}\quad\text{ in }\mathbb R^N\label{groundstatem},
\end{equation}
where $p=\tfrac1{m-1}$ and $a_p:=(p+1)^{-p}$. Note that  \eqref{groundstatem}
is equivalent  to \eqref{mainequation} with  $a=a_p\chi^p$ and $\mathcal C=\mathcal K/\chi$.

\begin{proposition}\label{onetoone} Let $m\in(1,2]$.
Let $\rho$ be a radial steady state according to {\rm Definition \ref{def:steady}}. Let $u=(-\Delta)^{-s}\rho$ be the Riesz potential of $\rho$. Then, $u\in L^1_s({\mathbb R^N})\cap L^\infty(\mathbb R^N)$ and $u$ is a ground state to \eqref{groundstatem}, where $p=\tfrac1{m-1}$ and $a_p=(p+1)^{-p}$.
\end{proposition}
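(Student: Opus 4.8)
The plan is to verify the three assertions---$u\in L^\infty(\R^N)$, $u\in L^1_s(\R^N)$, and that $u$ is a ground state to \eqref{groundstatem}---in that order, since each builds on the previous one. First I would record what is known about $\rho$ from Definition \ref{def:steady}: it is nonnegative, radially decreasing, and lies in $L^\infty(\R^N)$. Because $\rho$ is radially decreasing and bounded, the set $\{\rho>0\}$ is either a ball $B_R$ or all of $\R^N$; I would argue it must be a ball. Indeed, if $\rho$ were strictly positive everywhere, then $(-\Delta)^{-s}\rho$ would be strictly larger than $\mathcal K/\chi$ everywhere, so from \eqref{eq:steady} one gets a pointwise lower bound $\rho(x)^{m-1}\ge \tfrac{m-1}{m}(\chi(-\Delta)^{-s}\rho(x)-\mathcal K)$; but since $m\le 2$ means $p=\tfrac1{m-1}\ge 1$, and $(-\Delta)^{-s}\rho(x)=c_{N,s}\int |x-y|^{2s-N}\rho(y)\,dy$ does not vanish at infinity unless $\rho$ has enough decay, one can derive a contradiction with $\rho\in L^1_{\rm loc}$ and radial monotonicity---more cleanly, since $\rho$ is radially decreasing and bounded, if $\rho>0$ on all of $\R^N$ then $\rho\ge\delta>0$ on $B_1$, so $(-\Delta)^{-s}\rho\ge c_{N,s}\delta\int_{B_1}|x-y|^{2s-N}\,dy$, which stays bounded below; feeding this back shows $\rho$ cannot decay, contradicting radial monotonicity + local integrability in a quantitative way. (Alternatively, one may simply \emph{allow} $\{\rho>0\}=\R^N$ and note it forces $\mathcal K=0$, then treat that case separately, but for $m\in(1,2]$ the clean conclusion is that the support is a ball $B_R$.) Thus $\rho$ is bounded and compactly supported.

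Granting that $\rho\in L^\infty(\R^N)$ is compactly supported in some $B_R$, the Riesz potential $u=(-\Delta)^{-s}\rho=c_{N,s}|\cdot|^{2s-N}\ast\rho$ is well defined and finite everywhere: near any point the singularity $|x-y|^{2s-N}$ is locally integrable since $2s-N>-N$, and at infinity the integral over the fixed ball $B_R$ decays like $|x|^{2s-N}$. This gives at once $u\in L^\infty(\R^N)$ (boundedness near the origin follows from $\rho\in L^\infty$ and local integrability of the kernel, exactly as in the boundedness argument at the end of the proof of Proposition \ref{existence}), and moreover the precise decay $u(x)\sim M c_{N,s}|x|^{2s-N}$ with $M=\int\rho$, so $u$ vanishes at infinity. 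The inclusion $u\in L^1_s(\R^N)$ is then immediate from this decay: $|u(x)|(1+|x|^2)^{-(N+2s)/2}\lesssim |x|^{2s-N}|x|^{-(N+2s)}$ for large $|x|$, which is integrable.

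Next I would check the PDE. Since $\rho$ is bounded and compactly supported, standard properties of the Riesz potential give $(-\Delta)^s u=(-\Delta)^s(-\Delta)^{-s}\rho=\rho$ in the distributional sense (and one can quote the relevant statement; this is the content of Proposition \ref{dec} specialized to compactly supported data). Now rewrite \eqref{eq:steady}: raising to the power $p=\tfrac1{m-1}\ge 1$,
\[
\rho=\Big(\tfrac{m-1}{m}\Big)^{\frac1{m-1}}\big(\chi(-\Delta)^{-s}\rho-\mathcal K\big)_+^{\frac1{m-1}}
=\Big(\tfrac{1}{p+1}\Big)^{p}\big(\chi u-\mathcal K\big)_+^{p}
=a_p(\chi u-\mathcal K)_+^p,
\]
using $\tfrac{m-1}{m}=\tfrac1{p+1}$ and the definition $a_p=(p+1)^{-p}$. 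Combining, $(-\Delta)^s u=a_p(\chi u-\mathcal K)_+^p$, which is exactly \eqref{groundstatem}. It remains to observe that $u$ is positive (it is the Riesz potential of a nonnegative nontrivial $\rho$) and radially decreasing (the Riesz kernel is radially decreasing, so the convolution of a radially decreasing function with it is radially decreasing---or invoke that $u$ solves \eqref{groundstatem} with a radially decreasing right-hand side and apply the same moving-plane/rearrangement reasoning as in the Remark after Proposition \ref{existence}). Together with the vanishing at infinity established above, this shows $u$ is a ground state to \eqref{groundstatem}, completing the proof.

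The main obstacle I anticipate is the first step: ruling out $\{\rho>0\}=\R^N$ (equivalently, showing $\rho$ is compactly supported) directly from Definition \ref{def:steady} without already knowing $u$ is a ground state---one has to use the interplay between radial monotonicity, $\rho\in L^\infty$, the fixed sign of $\mathcal K$, and the growth of the Riesz potential carefully, and be mindful that the case $\mathcal K=0$ behaves differently. Once compact support is in hand, everything else is routine manipulation of the Riesz potential and the algebraic identity relating $(m,\mathcal K)$ to $(p,a_p,\mathcal C)$.
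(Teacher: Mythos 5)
Your proof has a genuine gap at the very point you flag as the main obstacle. You try to show (or simply assume) that $\rho$ is compactly supported, i.e.\ $\{\rho>0\}$ is a ball. This is false in general. Definition \ref{def:steady} allows $\mathcal K=0$, and indeed for $m\le \tfrac{2N}{N+2s}$ (which sits inside the range $m\in(1,2]$ covered by the proposition) the paper's own Propositions \ref{shorty} and \ref{criticalcase} produce radial steady states with $\mathcal K=0$ whose support is all of $\R^N$. Your heuristic for ruling out full support (``$(-\Delta)^{-s}\rho$ stays bounded below, feeding back shows $\rho$ cannot decay'') is not an argument: a lower bound on $(-\Delta)^{-s}\rho$ at one point does not preclude $\rho$ from decaying, and in fact the $\mathcal K=0$ steady states do decay. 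Your parenthetical escape hatch---``allow $\{\rho>0\}=\R^N$, note it forces $\mathcal K=0$, treat that case separately''---identifies the right dichotomy, but you then dismiss it and never carry it out. Without it, your subsequent steps (e.g.\ $u\in L^1_s$ from the $|x|^{2s-N}$ decay, which needs $\rho\in L^1$; the identity $(-\Delta)^s(-\Delta)^{-s}\rho=\rho$, which you justify via compact support) do not apply when $\mathcal K=0$.

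The paper's proof splits cleanly into two cases. For $\mathcal K>0$: compact support is immediate from \eqref{eq:steady}, one then passes through $\rho\in L^1\cap L^\infty\Rightarrow\rho\in\dot H^{-s}\Rightarrow u\in\dot H^s$ and verifies the weak-energy formulation via Plancherel; your argument here is essentially right (modulo that the paper first establishes $u\in L^\infty$ from the monotonicity of the Riesz potential applied to \eqref{eq:steady} rather than from compact support alone). For $\mathcal K=0$: the paper reads off $\rho^{1/p}=a_p^{1/p}\chi(-\Delta)^{-s}\rho$, uses the symmetry of the Riesz kernel to transfer the equation to a dual form, and then an approximation argument plus Lemma 5.4 of \cite{Ao-Gonzalez-Hyder-Wei} to conclude that $u$ is a distributional solution. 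You would need to supply this second branch; as written, your proof only covers $\mathcal K>0$, and the step purporting to reduce to that case is incorrect.
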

\begin{proof}
We preliminary notice that since $\rho\in L^\infty(\mathbb R^d)$, then  $\int_{\mathbb R^N}\rho(x)|x|^{2s-N}\,dx<+\infty$ follows from \eqref{eq:steady} if $\rho$ is nontrivial. Indeed, the Riesz
potential of a radially decreasing function is radially decreasing and therefore $\mathrm{ess}\sup (-\Delta)^{-s}\rho(x)=c_{N,s}\int_{\mathbb R^N}\rho(x)|x|^{2s-N}\,dx$. In particular, $\rho$ vanishes at infinity. Moreover, $u\in L^\infty(\mathbb R^N)$.

Suppose first that $\mathcal{K}>0$. Then Definition \ref{def:steady} implies that $\rho$ is compactly supported therefore  $u\in L^1_s(\mathbb R^N)$. By Sobolev embedding, since $\rho\in L^1(\mathbb R^N)\cap L^\infty(\mathbb R^N)$, we get $\rho\in \dot H^{-s}(\mathbb R^N)$ and $u\in \dot H^s(\mathbb R^N)$. Moreover, we multiply \eqref{eq:steady} by $\phi\in C^\infty_c(\mathbb R^N)$ and we integrate over $\mathbb R^N$;  by Plancherel theorem and  reasoning similarly  to Proposition \ref{rosa}  we get
\[\begin{aligned}
\int_{\mathbb R^N} a_p(\chi u-\mathcal{K})_+^p\,\phi&=\int_{\mathbb R^N}\rho\phi=
\int_{\mathbb R^N}(-\Delta)^{-s/2}\rho(-\Delta)^{s/2}\phi=
\int_{\mathbb R^N}(-\Delta)^{s/2}(-\Delta)^{-s}\rho(-\Delta)^{s/2}\phi\\&=
\int_{\mathbb R^N}(-\Delta)^{s/2}u(-\Delta)^{s/2}\phi=
\langle u,\phi\rangle_{\dot H^s(\mathbb R^N)}.
\end{aligned}\]
This shows that $u$ is a weak energy solution (and a distributional solution) to \eqref{groundstatem}. Since $\rho $ is radially decreasing and compactly supported, $u$  is radially decreasing and vanishing at infinity so that it is a ground state.  %and since the right-hand side of \eqref{mainequation} is compactly supported we have actually that $u$ is a weak energy solution (see the proof of Proposition \ref{$C=0$}).

Suppose instead that $\mathcal{K}=0$. Then there holds $\rho^{1/p}=a_p^{1/p}\chi(-\Delta)^{-s}\rho$ a.e. in $\mathbb R^N$, clearly implying $u\in L^1_s(\mathbb R^N)$. Moreover,
The latter relation and the symmetry of the Riesz kernel yield
\[
\int_{\mathbb R^N}\rho^{1/p}\,\phi=\int_{\mathbb R^N}  a_p^{1/p}\chi\,\phi(-\Delta)^{-s}\rho=\int_{\mathbb R^N}a_p^{1/p}\chi\,\rho(-\Delta)^{-s}\phi\qquad\forall \phi\in C^\infty_c(\mathbb R^N),
\]
which we write in terms of $u$ as
\begin{equation}\label{ext}
\int_{\mathbb R^N} u\phi=\int_{\mathbb R^N} a_p \chi^{p}\,u^p(-\Delta)^{-s}\phi\qquad\forall\phi\in C^\infty_c(\mathbb R^N).
\end{equation}
With an approximation argument, we extend the validity of \eqref{ext} to test functions of the form $\phi=\Ds \zeta$, $\zeta\in C^\infty(\mathbb R^N)$, and we get
\[
\int_{\mathbb R^N}u\Ds\zeta=\int_{\mathbb R^N} a_p\chi^p\,u^p\zeta
\]
for any $\zeta\in C^\infty_c(\mathbb R^n)$ (note that by Lemma 5.4. in \cite{Ao-Gonzalez-Hyder-Wei}, $u^p\in L^{1}_{-s}$). Therefore, $u$ is a distributional solution to \eqref{groundstatem}, it is radially decreasing and vanishing at infinity (as $\rho$), hence it is a ground state.
\end{proof}

\begin{proposition}\label{0}
Let $1\le p<\tfrac{N+2s}{N-2s}$ and $\mathcal{K}>0$.
Let $u$ be a ground state to \eqref{groundstatem}, where $a_p=(p+1)^{-p}$.   Then, $\rho:=a_p(\chi u-\mathcal{K})_+^p$ is a radial steady state  according to {\rm Definition \ref{def:steady}} with $m=\tfrac{p+1}{p}$.
\end{proposition}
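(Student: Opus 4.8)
The plan is to run the correspondence of Proposition~\ref{onetoone} backwards: the only substantive point is that the ground state $u$ coincides with the Riesz potential of $\rho$, after which \eqref{eq:steady} follows by a purely algebraic rearrangement. First I would record the parameter identities. With $m=\tfrac{p+1}{p}$ we have $m-1=\tfrac1p$ and $\tfrac{m-1}{m}=\tfrac1{p+1}$, so, since $a_p=(p+1)^{-p}$,
\[
\rho^{m-1}=\rho^{1/p}=\frac1{p+1}\,(\chi u-\mathcal K)_+ .
\]
Hence \eqref{eq:steady} is equivalent to the pointwise identity $(\chi u-\mathcal K)_+=\big(\chi(-\Delta)^{-s}\rho-\mathcal K\big)_+$, which in turn is immediate once we know $u=(-\Delta)^{-s}\rho$.

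Second, I would collect the structural properties of $\rho$. Writing $\mathcal C:=\mathcal K/\chi>0$, equation \eqref{groundstatem} is \eqref{mainequation} with this $\mathcal C$ and $a=a_p\chi^p$, so that since $u$ is a ground state and $1\le p<\tfrac{N+2s}{N-2s}$, Proposition~\ref{regularity} together with the discussion preceding Theorem~\ref{thm:unique-continuation} gives that $u$ is continuous, that $\{\chi u>\mathcal K\}=\{u>\mathcal C\}$ is a ball $B_R$, and hence that $\rho=a_p(\chi u-\mathcal K)_+^p$ is bounded, supported in $\overline{B_R}$, and radially decreasing (being the composition of the radially decreasing $u$ with the nondecreasing map $t\mapsto a_p(\chi t-\mathcal K)_+^p$). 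In particular $\rho\in L^1(\R^N)\cap L^\infty(\R^N)$, so its Riesz potential $v:=(-\Delta)^{-s}\rho=c_{N,s}|\cdot|^{2s-N}\ast\rho$ is well defined, bounded and continuous, and $v(x)\to0$ as $|x|\to\infty$.

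Third, and this is the crux, I would identify $u$ with $v$. Both solve $(-\Delta)^s(\cdot)=\rho$ distributionally: for $u$ this is the hypothesis that it is a distributional solution of \eqref{groundstatem}, while for $v$ it is the standard mapping property of the Riesz potential, using $\rho\in L^{2N/(N+2s)}(\R^N)\subset\dot H^{-s}(\R^N)$ (this is exactly the computation behind Proposition~\ref{rosa}; alternatively one may invoke \cite[Corollary~1.4]{Fall}). Therefore $w:=u-v$ is bounded, satisfies $(-\Delta)^sw=0$ in $\mathcal D'(\R^N)$, and tends to $0$ at infinity, so the Liouville theorem for $s$-harmonic functions \cite[Theorem~1.1]{Fall} forces $w\equiv0$, i.e.\ $u=(-\Delta)^{-s}\rho$ everywhere. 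Substituting this into the identity of the first step yields
\[
\frac{m}{m-1}\,\rho^{m-1}=(p+1)\,\rho^{1/p}=(\chi u-\mathcal K)_+=\big(\chi(-\Delta)^{-s}\rho-\mathcal K\big)_+ ,
\]
which is precisely \eqref{eq:steady}; together with the nonnegativity, boundedness and radial monotonicity of $\rho$ already established, this shows $\rho$ is a radial steady state in the sense of Definition~\ref{def:steady}. The only delicate point is this Liouville step, i.e.\ verifying that $u-(-\Delta)^{-s}\rho$ is a bona fide bounded $s$-harmonic function vanishing at infinity, so that \cite[Theorem~1.1]{Fall} applies; everything else is bookkeeping with the definitions and the regularity already recorded in Proposition~\ref{regularity}.
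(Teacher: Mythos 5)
Your proof is correct. The overall structure agrees with the paper's: reduce \eqref{eq:steady} to the identity $u=(-\Delta)^{-s}\rho$, record the structural facts (support on a ball via the unique continuation discussion, radial monotonicity, boundedness, decay), and then conclude by the algebraic rearrangement with $m=(p+1)/p$, $a_p^{1/p}=(p+1)^{-1}$.

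Where you diverge is in the technical step proving $u=(-\Delta)^{-s}\rho$. The paper identifies $u$ with the Riesz potential by combining the pointwise equation $\Ds u=\rho$ (from Proposition~\ref{regularity}) with the symmetry of the Riesz kernel, i.e.\ $\int\rho\,(-\Delta)^{-s}\phi=\int\phi\,(-\Delta)^{-s}\rho$, implicitly invoking the representation formula of Proposition~\ref{dec} to pass from the distributional equation to the Riesz-potential identity. You instead define $v:=(-\Delta)^{-s}\rho$, observe that both $u$ and $v$ are bounded distributional solutions of $(-\Delta)^s(\cdot)=\rho$ vanishing at infinity, and dispatch the difference $w=u-v$ via the Liouville theorem \cite[Theorem~1.1]{Fall}. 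Both work: the paper's duality/symmetry argument is shorter once Proposition~\ref{dec} is in place, while your Liouville route is self-contained at this step (it needs neither Proposition~\ref{dec} nor the approximation argument therein; it only needs boundedness, compact support of $\rho$, and the mapping property of the Riesz potential to guarantee $w\in L^1_s$, $w\to 0$, and $(-\Delta)^s w=0$). The Liouville argument is also closer in spirit to the paper's own proof of Proposition~\ref{onetoone}, so your variant reads as a natural alternative rather than a genuinely different strategy.
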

\begin{proof}
%We just need to show that weak energy solutions are pointwise solutions.
%By definition of weak energy solution we have
%\[
%\int_{\mathbb R^N}\rho\,\phi =\frac{C_{N,s}}{2}\int_{\R^{2N}}\frac{(u(x)-u(y))(\phi(x)-\phi(y))}{|x-y|^{N+2s}}\,dx\,dy=\langle\Ds u,\phi\rangle
%\]
%for any $\phi\in C^\infty_c(\mathbb R^N)$, where $\langle\cdot,\cdot\rangle$ is the duality between $\dot H^{-s}(\mathbb R^N)$ and $\dot H^s(\mathbb R^N)$. This shows that the distribution $\Ds u$ identifies with a compactly supported, radially decreasing $L^\infty(\mathbb R^N)$ function,
 By Proposition \ref{regularity},  $\Ds u=a_p(\chi u-\mathcal{K})_+^p$ holds pointwise in $\mathbb R^N$. On the other hand, the Riesz potential of $\rho$ is  a bounded  radially decreasing vanishing function and the symmetry if the Riesz kernel entails $\int_{\mathbb R^N}\rho(-\Delta)^{-s}\phi=\int_{\mathbb R^N}\phi(-\Delta)^{-s}\rho$ for any $\phi\in C^{\infty}_c(\mathbb R^N)$, thus $(-\Delta)^{-s}\rho=u$ in the sense of distributions and  a.e. in $\mathbb R^N$. Hence, \eqref{eq:steady} holds.
\end{proof}

\begin{proposition}\label{shorty}
Let $p\geq\tfrac{N+2s}{N-2s}$.
Let $u$ be a ground state to $\Ds u=a_p\chi^p u^p$, where $a_p=(p+1)^{-p}$.   Then, $\rho:=a_p\chi^p u^p$ is a radial steady state  according to {\rm Definition \ref{def:steady}} with $m=\tfrac{p+1}{p}$.
\end{proposition}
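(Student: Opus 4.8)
The plan is to run the $\mathcal{K}=0$ part of the proof of Proposition \ref{onetoone} in the reverse direction, exactly as Proposition \ref{0} does for the subcritical case with $\mathcal{K}>0$. The point is that, since $p\ge (N+2s)/(N-2s)$, a ground state is forced to have vanishing shift (this is the content of Proposition \ref{$C=0$}, and it is already built into the hypothesis here, since $u$ solves $\Ds u=a_p\chi^p u^p$), so we expect to be able to verify \eqref{eq:steady} with the choice $\mathcal{K}=0$.

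First I would collect the known properties of $u$: being a ground state to $\Ds u=a_p\chi^p u^p$ with $p\ge (N+2s)/(N-2s)$, the function $u$ is positive, radially decreasing and vanishes at infinity, it is smooth by Corollary \ref{coro}, and it satisfies the equation pointwise everywhere. Hence $\rho:=a_p\chi^p u^p$ is nonnegative, continuous, bounded and radially decreasing, so the only thing that remains to be checked is the identity \eqref{eq:steady}. Writing $m=\tfrac{p+1}{p}$ one has $m-1=\tfrac1p$ and $\tfrac{m-1}{m}=\tfrac1{p+1}$, so \eqref{eq:steady} with $\mathcal{K}=0$ reads
\[
\rho(x)^{1/p}=\frac{\chi}{p+1}\,(-\Delta)^{-s}\rho(x)\qquad\text{for a.e. }x\in\mathbb R^N.
\]

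The one substantive step is the identification $(-\Delta)^{-s}\rho=u$. Since $u\to 0$ at infinity, Proposition \ref{dec} applies to $u$ (with $\mathcal C=0$ and $a=a_p\chi^p$): it guarantees that $\rho\in L^1_{-s}(\mathbb R^N)$, so that the Riesz potential $(-\Delta)^{-s}\rho$ is well defined a.e., and it gives
\[
\int_{\mathbb R^N}u\,\varphi\,dx=\int_{\mathbb R^N}\rho\,(-\Delta)^{-s}\varphi\,dx\qquad\text{for all }\varphi\in C^\infty_c(\mathbb R^N),
\]
where $(-\Delta)^{-s}\varphi$ denotes the Riesz potential of $\varphi$. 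By the symmetry of the Riesz kernel --- a Tonelli interchange that is legitimate because $\rho\ge 0$ is bounded and lies in $L^1_{-s}(\mathbb R^N)$ --- the right-hand side equals $\int_{\mathbb R^N}\big((-\Delta)^{-s}\rho\big)\,\varphi\,dx$. Since $\varphi$ is arbitrary, $u=(-\Delta)^{-s}\rho$ a.e.

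Finally I would simply substitute: since $\rho=a_p\chi^p u^p=(p+1)^{-p}\chi^p u^p$ and $u\ge0$, one gets $\rho^{1/p}=(p+1)^{-1}\chi\,u=\tfrac{\chi}{p+1}(-\Delta)^{-s}\rho$ a.e., which is precisely \eqref{eq:steady} with $\mathcal{K}=0\ge 0$. Together with the fact that $\rho$ is nonnegative, bounded and radially decreasing, this shows that $\rho$ is a radial steady state in the sense of Definition \ref{def:steady} with $m=\tfrac{p+1}{p}$. I do not anticipate any real difficulty: the only mildly delicate point is the Fubini/Tonelli interchange used to pass from $u$ tested against $(-\Delta)^{-s}\varphi$ to $(-\Delta)^{-s}\rho$ tested against $\varphi$, and this is already implicit in the material of Section \ref{prel} (Proposition \ref{dec} together with Lemma 5.4 of \cite{Ao-Gonzalez-Hyder-Wei}).
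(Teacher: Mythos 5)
Your proposal is correct and follows essentially the same route as the paper: both rely on Corollary \ref{coro} for smoothness, Lemma 5.4 of \cite{Ao-Gonzalez-Hyder-Wei} (packaged in your case through Proposition \ref{dec}) to ensure $\rho\in L^1_{-s}(\mathbb R^N)$ so that $(-\Delta)^{-s}\rho$ makes sense, and then identify $(-\Delta)^{-s}\rho=u$ to read off \eqref{eq:steady} with $\mathcal K=0$. You merely spell out the Fubini/duality step that the paper leaves implicit, which is a harmless elaboration rather than a different argument.
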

\begin{proof}
By Corollary \ref{coro}, $u$ is smooth. Moreover, thanks to Lemma 5.4 in \cite{Ao-Gonzalez-Hyder-Wei}, $\rho= a_p\chi^p u^p$ satisfies $\int_{\mathbb R^N}\rho(x)(1+|x|^{N-2s})^{-1}\,dx<+\infty$ so that the Riesz potential of $\rho$ is pointwise well-defined. Therefore,  $\Ds u=a_p\chi u^p$ and $\rho^{\/p}=a_p^{1/p}\chi (-\Delta)^{-s}\rho$ are both pointwise equalities among smooth functions.
\end{proof}

%\begin{definition}
%\label{def:sstates}
%  For $m>1$ and $ \rho \in L_+^1\left(\R^N\right) \cap L^\infty\left(\R^N\right)$ with $\rho$ radially decreasing and $|| \rho||_1=M$, we say that $\rho$ is a \textbf{radial steady state} for the
% evolution equation \eqref{eq:KS} if for some constant $C[\rho]$ depending on $\rho$ we have
% \begin{equation}\label{eq:EL}
%  \rho(x)^{m-1} = \frac{m-1}{m} \left( \chi(-\Delta)^{-s}\rho(x)-C[\rho]\right)_+\, , \qquad
% \forall \, x \in \R^N.
%\end{equation}
%\end{definition}
\begin{remark}\rm
If $m>m_{c}$, then Lemma \ref{lemma:basic} implies that any minimizer $\rho$ to the energy functional $\mathcal{F}$ defined in \eqref{functional} is a steady state in the sense of Definition \ref{def:steady}. Moreover, putting $\rho=(-\Delta)^{s}u$, $m=1+1/p$ and taking advantage of Proposition \ref{0} and Proposition \ref{shorty}, by applying Proposition \ref{existence} we get   existence of radial steady states in the fair competition regime $m=m_{c}$ and in the aggregation dominated regime $m\in (2N/(N+2s),m_{c})$, whereas Proposition \ref{criticalcase} and Proposition \ref{prop:C=0} yield existence of radial steady states in the range $m\in(1,\tfrac{2N}{N+2s}]$.
%{\color{red} Proposition \ref{def:sstates} is not the right reference, it should be Proposition \ref{existence}}
\end{remark}

\begin{remark}\label{existencebis}
Since the result from Lemma \ref{lemma:basic} holds for any $m>m_c$, by applying Proposition \ref{onetoone} we obtain an alternative existence result for \eqref{mainequation}
in the regime $1\le p<\tfrac{N}{N-2s}$ (and in fact also in the regime $0<p<1$, where the definition of ground state is the same and Proposition \ref{onetoone} holds true with the same proof).
\end{remark}

\begin{remark}
It is worth making an interesting remark for the regime $p>(N+2s)/(N-2s)$. Indeed, if $\rho$ is any radial steady state, due to the correspondence exploited in Proposition \ref{0} we have $\mathcal{K}=0$ by Proposition \ref{$C=0$} and the asymptotics \eqref{asymptotsuper} gives $\rho\not\in L^{1}(\mathbb R^N)$, \emph{i.e.} radial steady states have no finite mass in the supercritical regime. This feature agrees with the local case $s=1$ as explained in \cite[Theorem 4.8]{BL} thus making our definition of steady state coherent.

\end{remark}

As regards to the \emph{regularity} of the steady states in the diffusion dominated regime $m>m_c$, it is dictated by the existence of another critical exponent

\begin{equation*}%\label{mstar}
  m^*:=
  \begin{cases}
  \dfrac{2-2s}{1-2s}\,  \qquad &\text{if} \quad N\geq 1 \quad \text{and} \quad s \in (0,1/2)\, , \\
  + \, \infty &\text{if} \quad N\geq 2 \quad \text{and} \quad s \in [1/2,N/2)\, .
  \end{cases}
 \end{equation*}
Indeed, we have the following result, which is  given in \cite{CHMV}.
\begin{theorem}\label{thm:regmin}\cite[Theorem 8]{CHMV}.
 Let $s \in (0,1)$. If $m>m_c$ and let  $\rho$ is a radial steady state of equation \eqref{eq:KS}.
Then
 \begin{enumerate}
 \item
 if $s\in (1/2,1)$ we have $(-\Delta)^{-s}\rho \in {W}^{1,\infty}(\mathbb{R}^N)$, $ \rho^{m-1}\in W^{1,\infty}(\mathbb{R}^N)$ and $\rho \in C^{0,\alpha}(\R^N)$ with $\alpha=\min\{1,\tfrac{1}{m-1}\}$.
 \item if $s\in (0,1/2]$ we have two subcases:
 \begin{enumerate}
 \item[(i)] if $m\leq 2$ or $2<m<m^*$ the same conclusion of case (1) holds;
 \item[(ii)] if $m\geq m^*$, then  $(-\Delta)^{-s}\rho\in C^{0,\gamma}\left(\R^N\right)$ for any $\gamma<(2s(m-1))/(m-2)$ and $\rho\in C^{0,\alpha}\left(\R^N\right)$ for any $\alpha<2s/(m-2)$.

 \end{enumerate}

\end{enumerate}

\end{theorem}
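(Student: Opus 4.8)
This regularity statement is quoted from \cite[Theorem 8]{CHMV}; one may simply invoke it, but I would recover it through the correspondence of Propositions \ref{onetoone} and \ref{0} together with the interior estimates of Proposition \ref{prop:schauder}. Since $m>m_c$ is equivalent to the \emph{subcritical} exponent $p=\tfrac1{m-1}<\tfrac{N}{N-2s}$, a nontrivial radial steady state $\rho$ must have $\mathcal K>0$ (otherwise $\rho\equiv0$ by the Liouville-type result for $\Ds u=au^p$), hence $\rho$ is \emph{compactly supported}; moreover $u:=(-\Delta)^{-s}\rho$ is a ground state of \eqref{groundstatem} with $\rho=a_p(\chi u-\mathcal K)_+^p$ pointwise, and $u\in L^\infty(\mathbb R^N)$ because $u(0)=\|u\|_{L^\infty}=c_{N,s}\int_{\mathbb R^N}\rho(x)|x|^{2s-N}\,dx<\infty$. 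Everything then reduces to a regularity statement for the pair $(u,\rho)$.

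The first gain is free: since $\rho\in L^\infty(\mathbb R^N)$ is compactly supported, its Riesz potential of order $2s$ satisfies $u\in C^{2s}_{\rm loc}(\mathbb R^N)$ if $2s\neq1$ and $u\in C^{1-\eps}_{\rm loc}(\mathbb R^N)$ for all $\eps>0$ if $2s=1$; combined with the exact decay $u\sim c_{N,s}M|x|^{2s-N}$ at infinity (with matching decay of the derivatives), this is a global bound. Next I would push the regularity through the nonlinearity in two steps: $t\mapsto(\chi t-\mathcal K)_+$ is globally Lipschitz, so $\rho^{m-1}=\tfrac{m-1}m(\chi u-\mathcal K)_+$ inherits the regularity of $u$, capped at exponent $1$; then $\rho=(\rho^{m-1})^{1/(m-1)}$ is a composition with $t\mapsto t^{1/(m-1)}$, which on $[0,\infty)$ is locally Lipschitz when $m\le2$ but only $C^{0,1/(m-1)}$ at the origin when $m>2$. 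Feeding the Hölder regularity of $\rho$ back into $u=(-\Delta)^{-s}\rho$ through \eqref{eq:RS-2} of Proposition \ref{prop:schauder} (equivalently, through the Riesz potential), and iterating, the Hölder exponent of $\rho$ increases monotonically: when $m\le2$ it reaches the cap $1$, and when $m>2$ it converges to the fixed point of $\alpha\mapsto\tfrac1{m-1}\min\{1,\alpha+2s\}$, which equals $\min\{\tfrac1{m-1},\tfrac{2s}{m-2}\}$. When $s>1/2$ one pass already gives $u\in C^{1,2s-1}_{\rm loc}\cap W^{1,\infty}$, so $\rho^{m-1}\in W^{1,\infty}$ and $\rho\in C^{0,\alpha}$ with $\alpha=\min\{1,\tfrac1{m-1}\}$, which is case (1). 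When $s\le1/2$, the same conclusion persists precisely as long as $\tfrac1{m-1}+2s>1$, i.e. $m<m^{*}=\tfrac{2-2s}{1-2s}$, since then the iteration drives $u$ past $C^1$; for $m\ge m^{*}$ the constraint $\alpha+2s<1$ is never escaped and the fixed point is $\alpha=\tfrac{2s}{m-2}$ for $\rho$ and $\alpha+2s=\tfrac{2s(m-1)}{m-2}$ for $(-\Delta)^{-s}\rho$. (One checks directly that $\tfrac{2s(m-1)}{m-2}=1$ exactly when $m=m^{*}$, which is why $m^{*}$ is the threshold.)

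The main obstacle is carrying out this bootstrap cleanly rather than any single deep idea. One must (i) phrase the Riesz-potential gain and the Schauder estimates globally on $\mathbb R^N$, using the compact support of $\rho$ and the exact decay of $u$ and its derivatives to absorb the far field; (ii) control the composition with $t\mapsto t_+^{1/(m-1)}$ across the free boundary $\partial\{u>\mathcal K/\chi\}$, where --- because $\chi u-\mathcal K$ vanishes to first order there --- the solution behaves like $\dist^{1/(m-1)}$, so this map is exactly $C^{0,1/(m-1)}$ and no better; this is what prevents the regularity of $\rho$ from exceeding $\tfrac1{m-1}$ (hence the $\min$ in case (1)) and what manufactures the exponent $\tfrac{2s}{m-2}$ in case (2)(ii); and (iii) account for the unavoidable endpoint loss, so that in case (2)(ii) one obtains only $\gamma<\tfrac{2s(m-1)}{m-2}$ and $\alpha<\tfrac{2s}{m-2}$, the iteration approaching but not attaining the fixed point (and \eqref{eq:RS-2} in any case requiring the exponents to avoid integers). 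Interior smoothness of $\rho$ on $\{\rho>0\}$ follows from the same estimates, since there the nonlinearity is $C^\infty$ and the bootstrap does not terminate.
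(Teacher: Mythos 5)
The paper does not prove this theorem itself --- it is quoted verbatim from \cite[Theorem~8]{CHMV} and invoked without argument --- so there is no ``paper's own proof'' to compare against. Your proposal to recover it from ingredients already present in the paper (Propositions~\ref{onetoone}, \ref{0}, \ref{prop:schauder} and the Liouville result of \cite{ZLO}) is a genuinely different and self-contained route, and the fixed-point analysis of the regularity-gain map $\alpha\mapsto\tfrac1{m-1}\min\{1,\alpha+2s\}$ is the right mechanism: its fixed point is $\tfrac1{m-1}$ when $m<m^*$ (attained, because the iteration eventually enters the saturated regime $\alpha+2s\ge1$) and $\tfrac{2s}{m-2}$ when $m\ge m^*$ (a strict contraction, approached but never reached, consistent with the strict inequalities in case (2)(ii)), and your reduction via $\mathcal K>0$ in the subcritical range $p=\tfrac1{m-1}<\tfrac N{N-2s}$, which makes $\rho$ compactly supported, is correct.

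Two points are glossed over and deserve mention. First, Proposition~\ref{prop:schauder} gives \emph{a priori} estimates for $C^\infty$ solutions; to apply them to the steady state, which is a priori only $L^\infty$, one needs a mollification step as in Corollary~\ref{coro} (or one can argue directly that the Riesz potential of a compactly supported bounded function is continuous, giving the base step unconditionally). Second, the interior Schauder bound on $B_{1/2}$ must be promoted to a global one; your remark about using the compact support of $\rho$ and the decay $u\sim c_{N,s}M|x|^{2s-N}$ to ``absorb the far field'' is the right instinct, but as written the bootstrap only produces \emph{local} Hölder control, and a covering argument with a fixed radius plus the explicit far-field decay of $u$ and its derivatives is needed to obtain the stated $W^{1,\infty}(\R^N)$ and $C^{0,\alpha}(\R^N)$ conclusions. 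Apart from these gaps, which are cosmetic rather than structural, the argument would reproduce the statement, including the elementary observation that $\tfrac{2s(m-1)}{m-2}=1$ precisely at $m=m^*$, which identifies the threshold.
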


\begin{remark}
In case $m\in(m_c,2]$,  so that $p\ge 1$, the H\"older regularity of radial steady states can be further improved according to the fact that $\rho=\Ds u$ where $u$ is a ground state (thanks to Proposition \ref{onetoone}), whose H\"older regularity properties are discussed  Proposition \ref{regularity}.
\end{remark}

\begin{remark} %\label{olddef}
In the cases (1) and (2)-(i) of the previous Theorem, we have that Definition \ref{def:steady} easily implies that $ \rho^{m} \in W_{\rm loc}^{1,2}\left(\R^N\right)$, $\nabla (-\Delta)^{-s}\rho\in L^1_{\rm loc}\left(\R^N\right)$, and
 it satisfies
 \begin{equation}\label{eq:sstates}
  \nabla  \rho^m= \chi \rho \nabla (-\Delta)^{-s}\rho
 \end{equation}
in the sense of distributions in $\R^N$. Moreover $\rho\in C^{0,\alpha}$ for $\alpha>1-2s$. This is actually the definition of steady state given in \cite{CHVY}, \cite{CHMV}. In particular, one of the main results of \cite{CHVY} shows that densities satisfying \eqref{eq:sstates} must be necessary radially decreasing (up to translation). On the other hand, if we have a steady state defined in the latter sense for the same ranges of $m$ and $s$, \cite[Proposition 1]{CHMV} and \cite[Theorem 3]{CHMV} imply that $\rho$ is radial and satisfies  \eqref{eq:steady}. In the case $m\geq m^{\ast}$ and $s\in (0,1/2]$, which is not covered by our theory, the case (2)-(i) of Theorem \ref{thm:regmin} (satisfied by the minimizers of $\mathcal{F}$ in that range), suggests that a weaker definition of general steady state would be in order and  radial symmetry of all steady states is still an open question.
\end{remark}

We proceed to the proof of uniqueness of radial steady states in the different regimes.
We start with the case $m\in (m_c,2]$.
If $\rho$ is a steady state of mass $M$ of \eqref{eq:KS}, in the sense of Definition \ref{def:steady}, then  $u:=(-\Delta)^{-s}\rho$ is a ground state to  equation \eqref{groundstatem}, thanks to Proposition \ref{onetoone}.
As a direct consequence of Theorem \ref{maintheorem} we obtain the uniqueness of radial steady states, as summarized in the next four propositions.

\begin{proposition}[Diffusion-dominated regime]\label{Uniqueness steady}
Let  $m\in (m_c,2]$. Then for any mass $M>0$ there is a unique radial steady state of mass $M$ in the sense of {\rm Definition \ref{def:steady}}.
%In the fair competition regime $m=m_c$ there are infinitely many compactly supported steady states, where all of them are dilations of each other.
\end{proposition}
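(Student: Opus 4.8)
The plan is to transfer the problem, via the one‑to‑one correspondence of Propositions~\ref{onetoone} and~\ref{0}, into a statement about ground states of \eqref{groundstatem}, and then to derive uniqueness by combining the fixed‑$\mathcal C$ uniqueness of Theorem~\ref{maintheorem} with the scaling invariance of \eqref{mainequation}. Throughout I would set $p:=\tfrac1{m-1}$, so that the hypothesis $m\in(m_c,2]$ becomes $p\in[1,p_c)$ with $p_c=\tfrac N{N-2s}<\tfrac{N+2s}{N-2s}$; in particular we remain strictly inside the subcritical regime in which Theorem~\ref{maintheorem} applies.

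For existence, given $M>0$ I would simply invoke Lemma~\ref{lemma:basic}: it produces a minimizer $\rho$ of $\mathcal F$ over $\mathcal Y_M$, which by that same lemma is radially decreasing and satisfies \eqref{Euler} with $\mathcal K$ as in \eqref{explicitconstant}, hence is a radial steady state of mass $M$ in the sense of Definition~\ref{def:steady}.

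For uniqueness I would argue as follows. Let $\rho_1,\rho_2$ be radial steady states of the same mass $M>0$. Each $\rho_i$ is nontrivial, so the constant $\mathcal K_i$ in \eqref{eq:steady} must be strictly positive: were $\mathcal K_i=0$, Proposition~\ref{onetoone} would make $u_i:=(-\Delta)^{-s}\rho_i$ a solution of $(-\Delta)^s u_i=a_p\chi^p u_i^p$ with subcritical $p$, forcing $u_i\equiv0$ by the Liouville‑type result of \cite{ZLO} recalled before Proposition~\ref{existence}. By Proposition~\ref{onetoone}, $u_i$ is then a ground state of \eqref{groundstatem}, i.e.\ of \eqref{mainequation} with the (fixed) choice $a=a_p\chi^p$ and with $\mathcal C_i=\mathcal K_i/\chi>0$, and by \eqref{asympbehM} its mass equals $\int_{\R^N}\rho_i\,dx=M$. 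A direct computation shows that if $u$ is a ground state of \eqref{mainequation} with parameters $(a,\mathcal C)$, then for every $\lambda>0$ the rescaled function $u_\lambda(x):=\lambda\,u\big(\lambda^{(p-1)/(2s)}x\big)$ is a ground state of \eqref{mainequation} with parameters $(a,\lambda\mathcal C)$, with mass
\[
M(u_\lambda)=\lambda^{\alpha}\,M(u),\qquad \alpha:=p-\frac{N(p-1)}{2s}=\frac{N(m-2)+2s}{2s(m-1)}.
\]
Since $m>m_c=2-\tfrac{2s}N$ we have $N(m-2)+2s>0$, hence $\alpha>0$. Applying this with $\lambda:=\mathcal C_2/\mathcal C_1$, the function $(u_1)_\lambda$ is a ground state of \eqref{mainequation} with the same $a$ and constant $\mathcal C_2$; Theorem~\ref{maintheorem} then gives $u_2=(u_1)_\lambda$, and comparing masses yields $M=\lambda^{\alpha}M$, so $\lambda^{\alpha}=1$ and, as $\alpha\neq0$, $\lambda=1$. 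Hence $\mathcal C_1=\mathcal C_2$, $u_1=u_2$, and therefore $\rho_1=\rho_2$ since $\rho_i=a_p(\chi u_i-\mathcal K_i)_+^p$ is determined by $u_i$. As a byproduct one sees that $\mathcal K=\chi\mathcal C$ is a strictly increasing bijection of $(0,\infty)$ onto itself as a function of $M$ (with $\mathcal K\propto M^{1/\alpha}$), which is the one‑parameter‑family statement announced in the introduction.

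The only genuinely delicate point I anticipate is the scaling bookkeeping: verifying that $u_\lambda$ solves \eqref{mainequation} with $\mathcal C$ replaced by $\lambda\mathcal C$ — which is exactly what pins the dilation exponent to $(p-1)/(2s)$ — and computing the mass exponent $\alpha$, together with the observation that $\alpha>0$ is precisely the condition $m>m_c$ (equivalently $p<p_c$). A small separate check is the degenerate case $p=1$ ($m=2$), where $u_\lambda=\lambda u$ and all of the above identities hold verbatim. Everything else is a formal consequence of Theorem~\ref{maintheorem}, Lemma~\ref{lemma:basic}, and the correspondence developed earlier in this section.
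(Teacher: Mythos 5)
Your proof is correct and follows essentially the same route as the paper's: you transfer the problem to the Riesz potential $u_i=(-\Delta)^{-s}\rho_i$, rescale $u_1$ so that its Lagrange multiplier matches that of $u_2$, apply Theorem~\ref{maintheorem}, and then use the mass constraint to force the scaling factor to equal $1$. The only (immaterial) differences are that you compute the mass of the rescaled solution directly by a change of variables rather than reading it off from the decay formula \eqref{asympbehM}, and that you explicitly observe $\mathcal K_i>0$ (via the Liouville result of \cite{ZLO}), a point the paper leaves implicit.
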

\begin{proof}
We assume w.l.o.g. that $\chi=1$.
Let $m\in (m_c,2]$. In this case, the existence of steady states of mass $M$ is given in \cite[Theorem 5]{CHMV} by means of minimization of the free energy functional $\mathcal{F}$, see Lemma \ref{lemma:basic}. We put as always $p=\tfrac1{m-1}$, so that $1\le p <\tfrac{N}{N-2s}$. Assume that $\rho_{1},\,\rho_{2}$ are two radial steady states of mass $M$, with respective Lagrange multipliers $\mathcal{K}_1, \mathcal{K}_2$.  Let $u_{i}=(-\Delta)^{-s}\rho_{i}$. By Proposition \ref{onetoone}, $u_i$ is the ground state to \eqref{groundstatem} with $\mathcal{K}=\mathcal{K}_i$.
 %and $C_i$ be the Lagrange multiplier of $u_i$ for $i=1,2$.
 We observe that the function
\[
v(x):=\tfrac{\mathcal{K}_{2}}{\mathcal{K}_{1}}\,u_{1}\left(\left(\tfrac{\mathcal{K}_2}{\mathcal{K}_1}\right)^{(p-1)/(2s)}\,x\right)
\]
is a ground state to \eqref{groundstatem} with Lagrange multiplier $\mathcal{K}_{2}$, thus by Theorem \ref{maintheorem} we have $u_2\equiv v$, implying
%\begin{equation}
%u_{2}(x)=\frac{C_{2}}{C_{1}}u_{1}\left(\left(\frac{C_2}{C_1}\right)^{(p-1)/2s}\,x\right).\label{rightscaling}
%\end{equation}
\[
\lim_{|x|\rightarrow+\infty}\frac{u_{2}(|x|)}{|x|^{2s-N}}=\left(\frac{\mathcal{K}_2}{\mathcal{K}_{1}}\right)^{\frac{N-p(N-2s)}{2s}}\lim_{|x|\rightarrow+\infty}
\frac{u_{1}(|x|)}{|x|^{2s-N}}.
\]
But \eqref{asympbehM} shows that the two limits appearing in the above expression are equal to $M$, thus $\mathcal{K}_1=\mathcal{K}_2$. We conclude that $u_1\equiv u_2$, hence $\rho_1\equiv \rho_2$.
%thus imposing the mass condition ({\color{red} which should be explained somewhere}) on $u_{1},\,u_{2}$ we have $C_{1}=C_{2}$, therefore \eqref{rightscaling} implies $u_{1}=u_{2}$.
\end{proof}

\begin{remark}\label{lastremark}
In the diffusion-dominated regime,
uniqueness of radial steady states of given mass holds true also for $m> 2$, as a consequence of the result in \cite{CCH3}. Indeed, given a radial steady state $\rho$ of mass $M$, from \eqref{eq:steady} we deduce the a.e. identity $\nabla (\rho^m)=\chi\rho\nabla(-\Delta)^{-s}\rho$.  It is shown in \cite{CCH3} that there is only one radially decreasing solution with mass $M$ to the latter equation. In particular, it is the unique minimizer of functional \eqref{functional} over $\mathcal Y_M$.
Moreover, in this way we also deduce the validity of the uniqueness result of Theorem \ref{maintheorem} for $0<p<1$. Indeed,
thanks to the correspondence between \eqref{eq:steady} and \eqref{groundstatem}, by the usual scaling argument of Proposition \ref{Uniqueness steady} we  infer that  two solutions to \eqref{groundstatem} necessarily coincide.
\end{remark}

%In the fair competition regime $m=m_c$ we have the following result:

\begin{proposition}[Fair competition regime] %\label{Uniqueness steady 2}
Let $m=m_c$. There is a critical mass $M_{c}>0$ such that all the existing radial steady states to \eqref{eq:KS} according to {\rm Definition \ref{def:steady}}  have mass $M_c$. Moreover, they are minimizers of the free energy functional $\mathcal{F}$ over $\mathcal Y_{M_c}$, they are infinitely many  and all of them are dilations of each other.
\end{proposition}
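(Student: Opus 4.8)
The plan is to deduce the statement from Theorem~\ref{maintheorem} together with the scaling structure of equation~\eqref{groundstatem}, identifying the mass of the resulting family through the Pohozaev identity and invoking the sharp Hardy--Littlewood--Sobolev inequality of~\cite{CCH} for the minimality. Throughout write $p:=\tfrac1{m-1}$; since $m=m_c$ we have $p=p_c=\tfrac N{N-2s}$, which satisfies $1<p_c<\tfrac{N+2s}{N-2s}$, so Theorem~\ref{maintheorem} applies to~\eqref{groundstatem} (equivalently to~\eqref{mainequation} with $\mathcal C=\mathcal K/\chi$ and $a=a_p\chi^p$), as do Propositions~\ref{existence},~\ref{regularity},~\ref{onetoone} and~\ref{0}. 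We may assume $\chi=1$.

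First I would check that every nontrivial radial steady state $\rho$ has $\mathcal K>0$: if $\mathcal K=0$, then by Proposition~\ref{onetoone} the Riesz potential $u:=(-\Delta)^{-s}\rho$ is a positive, radially decreasing, vanishing distributional solution of $\Ds u=a_p u^p$ with $p=p_c$ subcritical, and the Liouville theorem of~\cite{ZLO} forces $u\equiv0$, hence $\rho\equiv0$. Combining Propositions~\ref{onetoone} and~\ref{0} with Theorem~\ref{maintheorem}, the map $\rho\mapsto(-\Delta)^{-s}\rho$ is then a bijection between radial steady states and ground states of~\eqref{groundstatem}, with exactly one for each $\mathcal K>0$, so there are infinitely many. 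Let $u_1$ be the ground state corresponding to $\mathcal K=1$, set $\rho_1:=a_p(u_1-1)_+^p$ (nontrivial and compactly supported) and $M_c:=\int_{\R^N}\rho_1>0$. A direct substitution in~\eqref{groundstatem} shows that $u_\beta(x):=\beta\,u_1\!\big(\beta^{(p-1)/(2s)}x\big)$ is the ground state with parameter $\mathcal K=\beta$, whence the associated steady state is $\rho_\beta(x)=a_p\big(u_\beta(x)-\beta\big)_+^p=\beta^{p}\,\rho_1\!\big(\beta^{(p-1)/(2s)}x\big)$. The crucial feature of the fair-competition exponent is that this is a \emph{mass-preserving} dilation: since $(p-1)/(2s)=1/(N-2s)$ and $\beta^{p}=\big(\beta^{1/(N-2s)}\big)^{N}$, we get $\rho_\beta(x)=\mu^N\rho_1(\mu x)$ with $\mu=\beta^{1/(N-2s)}$. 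Hence all radial steady states are dilations of one another, and $\int_{\R^N}\rho_\beta=\int_{\R^N}\rho_1=M_c$ for every $\beta>0$.

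It remains to show that every $\rho_\beta$ minimizes $\mathcal F$ over $\mathcal Y_{M_c}$. For $\mathcal K>0$ the ground state $u$ lies in $\dot H^s(\R^N)$ (as established in the proof of Proposition~\ref{onetoone}), so the Pohozaev identity used in the proof of Proposition~\ref{$C=00$} applies; testing~\eqref{groundstatem} against $u$ and inserting $\rho=a_p(u-\mathcal K)_+^p$ with $a_p=(p+1)^{-p}$, one finds that it reduces exactly to the scaling identity~\eqref{scaleidentity}. Substituting~\eqref{scaleidentity} into~\eqref{functional} and using $\tfrac1{m-1}=p=\tfrac N{N-2s}$ yields $\mathcal F[\rho_\beta]=0$ for all $\beta>0$. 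On the other hand, the sharp Hardy--Littlewood--Sobolev inequality in the fair-competition regime, proved in~\cite{CCH}, provides a critical value of the mass below which $\mathcal F\ge0$ on $\mathcal Y_M$ and at which $\inf_{\mathcal Y_M}\mathcal F=0$ is attained, precisely by the HLS optimizers; those optimizers solve~\eqref{eq:steady} with some $\mathcal K>0$ and are radially decreasing, hence they belong to the family $\{\rho_\beta\}$, and so this critical mass coincides with $M_c$. Consequently $\mathcal F[\rho_\beta]=0=\inf_{\mathcal Y_{M_c}}\mathcal F$, and each radial steady state is a minimizer of $\mathcal F$ over $\mathcal Y_{M_c}$.

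The step I expect to be the main obstacle is the last one: matching the mass $M_c=\int_{\R^N}\rho_1$ of the scaling family with the critical mass furnished by~\cite{CCH}, and establishing the lower bound $\mathcal F\ge0$ on $\mathcal Y_{M_c}$ --- both of which rely on the sharp HLS inequality and on the fact that its optimizers solve~\eqref{eq:steady}. To make the argument essentially self-contained one should instead verify, by a first-variation computation as in Lemma~\ref{lemma:basic}, that any constrained minimizer of $\mathcal F$ satisfies~\eqref{eq:steady} with Lagrange multiplier $\mathcal K>0$ (the positivity of $\mathcal K$ following once more from the decay of $(-\Delta)^{-s}\rho$ and the Liouville theorem), and then use Theorem~\ref{maintheorem} to place it inside $\{\rho_\beta\}$. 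Everything else is bookkeeping with the scaling exponents and with the Pohozaev identity already recorded in the paper.
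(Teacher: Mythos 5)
Your proof is correct, and it does reach the statement by a route that is visibly different in its starting point from the paper's, although both rely crucially on the same external input, namely~\cite[Proposition~3.4]{CCH}. The paper begins with the CCH minimizer $\bar\rho$ at the critical mass $M_c$, which already comes with $\mathcal F(\bar\rho)=0$ and a positive Lagrange multiplier $\bar{\mathcal K}$; it then dilates $\bar\rho_\lambda(x)=\lambda^N\bar\rho(\lambda x)$ (using the exponent degeneracy $N(m_c-1)=N-2s$ to see both the mass and the energy are preserved) and identifies every radial steady state with some $\bar\rho_{\lambda_*}$ via Proposition~\ref{onetoone} and Theorem~\ref{maintheorem}. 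You instead build the one-parameter family $\{\rho_\beta\}$ directly out of the ground state with $\mathcal K=1$, prove mass-invariance from the identity $\beta^p=\big(\beta^{1/(N-2s)}\big)^N$ valid exactly at $p=p_c$, and then \emph{derive} $\mathcal F[\rho_\beta]=0$ independently via the Pohozaev identity and \eqref{scaleidentity}; only afterwards do you invoke~\cite{CCH} to supply the lower bound $\inf_{\mathcal Y_{M_c}}\mathcal F\ge0$ and to match the critical mass of~\cite{CCH} with your $M_c$. Two small advantages of your variant: you make explicit the a priori vanishing-multiplier case $\mathcal K=0$ and rule it out via the Liouville theorem of~\cite{ZLO} (the paper leaves this implicit), and you obtain $\mathcal F[\rho_\beta]=0$ intrinsically rather than inheriting it from the CCH statement, which makes the scaling structure more transparent. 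The disadvantage you already flag yourself: the identification of the critical mass and the nonnegativity of $\mathcal F$ on $\mathcal Y_{M_c}$ are unavoidable borrowings from~\cite{CCH}, since the existence theory of Lemma~\ref{lemma:basic} is restricted to $m>m_c$ and does not bootstrap to the borderline case by a first-variation argument alone. In short, a sound and somewhat more self-motivated presentation of the same proof skeleton.
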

\begin{proof} We assume w.l.o.g. that $\chi=1$. By invoking \cite[Proposition 3.4]{CCH}
there exists a critical mass $M_c>0$ and a radially decreasing minimizer $\bar\rho\in L^\infty(\mathbb R^N)$ of $\mathcal F$ over $\mathcal Y_{M_c}
$ that satisfies \eqref{eq:steady} for a suitable Lagrange multiplier $\bar{\mathcal{K}}>0$, and moreover $\mathcal F(\bar\rho)=0$. It is easily seen, since $m=m_c$ and $\mathcal F(\bar\rho)=0$, that for any $\lambda>0$ the dilation $\bar\rho_{\lambda}(x):=\lambda^{N}\bar\rho(\lambda x)$ is still of mass $M_c$, it satisfies $\mathcal F(\bar\rho_\lambda)=0$ and it is a radial steady state, satisfying in particular
\[
 \rho(x)^{m-1} = \frac{m-1}{m} \left( \chi(-\Delta)^{-s}\rho(x)-\lambda^{N-2s}\bar {\mathcal{K}}\right)_+\, , \qquad
 \mbox{for a.e.} \, x \in \R^N.
\]
 We have therefore a one-parameter family of radial steady states $\{\bar\rho_\lambda\}_{\lambda>0}$, each having mass $M_c$ and each being a minimizer of $\mathcal F$ over $\mathcal Y_{M_C}$. Moreover, letting $\bar u_\lambda:=(-\Delta)^{-s}\bar\rho_\lambda$, Proposition \ref{onetoone} implies that $u_\lambda$ is a ground state to $\Ds \bar u_\lambda=a_p(\bar u_\lambda-\lambda^{N-2s} \bar{\mathcal{K}})_+^p$ with $p=\tfrac{1}{m_c-1}$ and $a_p=(p+1)^{-p}$.

Suppose now that $\rho$ is a radial steady state with Lagrange multiplier $\mathcal{K}$. By Proposition \ref{onetoone}, $u:=(-\Delta)^{-s}\rho$ is a ground state to $\Ds u=a_p(u-\mathcal{K})_+^p$. By Theorem \ref{maintheorem}, we conclude that $u=\bar u_{\lambda_*}$ where $\lambda_*:=(\mathcal{K}/\bar {\mathcal{K}})^{1/(N-2s)}$. Thus $\rho=\bar\rho_{\lambda_*}$ and $\rho$ belongs to the above one-parameter family of radial steady states.
\end{proof}

%\begin{remark}\rm
%Due to the results shown in \cite[Theorem 3.3]{CCH} and \cite[Proposition 3.4]{CCH} , the mass $M$ for which the existence of radial steady states is guaranteed is such that $M\geq M_c$, where $M_c$ is a so called critical mass, related to the optimal constant of a modified Hardy-Littlewood-Sobolev (HLS for short) inequality. In particular for $M=M_c$ the steady states forms the one-parameter family of optimizers of the same HLS inequality. {We have shown that $M=M_c$ is necessary for the existence of steady states}.
%\end{remark}
%Concerning the \emph{aggregation dominated} regime with  $m\in (2N/(N+2s),m_c)$ we have the following %new result
\begin{proposition}[Subcritical aggregation dominated regime] %\label{Uniqueness steady 3}
Let  $m\in (2N/(N+2s),m_c)$. For any mass $M>0$ there exists a unique radial steady state of mass $M$ to \eqref{eq:KS} (in the sense of {\rm Definition \ref{def:steady}}).
\end{proposition}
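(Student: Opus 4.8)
The plan is to transfer the question to the fractional plasma equation via the correspondence already set up in Propositions \ref{onetoone} and \ref{0}, and then to combine the existence result of Proposition \ref{existence} with the uniqueness result of Theorem \ref{maintheorem}, in the spirit of Proposition \ref{Uniqueness steady}. Setting $p=\tfrac1{m-1}$, the hypothesis $m\in(2N/(N+2s),m_c)$ translates into $p_c<p<(N+2s)/(N-2s)$ with $p_c=N/(N-2s)$; in particular $1<p<(N+2s)/(N-2s)$ and $p\neq p_c$, and since $m_c<2$ we also have $m\in(1,2)$, so all of Propositions \ref{existence}, \ref{regularity}, \ref{onetoone}, \ref{0} and Theorem \ref{maintheorem} apply. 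As always we take $\chi=1$, so that \eqref{groundstatem} reads $(-\Delta)^s u=a_p(u-\mathcal{K})_+^p$ with $a_p=(p+1)^{-p}$.

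For the existence part I would first apply Proposition \ref{existence} (with $\mathcal{C}=\mathcal{K}$ for some fixed $\mathcal{K}>0$ and $a=a_p$) to obtain a ground state $u$ to \eqref{groundstatem}; by Proposition \ref{0} the function $\rho:=a_p(u-\mathcal{K})_+^p$ is then a radial steady state, and by \eqref{asympbehM} it has a finite positive mass $M_0$. To reach an arbitrary prescribed mass I would use the scaling invariance of \eqref{groundstatem}: the rescaled function $u_\mu(x)=\mu\,u\big(\mu^{(p-1)/(2s)}x\big)$ is again a ground state, with Lagrange multiplier $\mu\mathcal{K}$, and by \eqref{asympbehM} the mass of the associated steady state equals $\mu^{(N-p(N-2s))/(2s)}M_0$. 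Since $p\neq p_c$, the exponent $(N-p(N-2s))/(2s)$ is nonzero, hence $\mu\mapsto\mu^{(N-p(N-2s))/(2s)}M_0$ is a bijection of $(0,+\infty)$ onto itself; choosing $\mu$ appropriately produces a radial steady state of mass exactly $M$.

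For uniqueness, let $\rho_1,\rho_2$ be radial steady states of the same mass $M>0$, with Lagrange multipliers $\mathcal{K}_1,\mathcal{K}_2$. Because $M>0$ and $p$ is subcritical, the multipliers must be strictly positive: if some $\mathcal{K}_i$ vanished, then $u_i:=(-\Delta)^{-s}\rho_i$ would be a nonnegative vanishing solution of $(-\Delta)^s u=a_p u^p$ with $p<(N+2s)/(N-2s)$, forcing $u_i\equiv 0$ by the Liouville theorem from \cite{ZLO}, hence $\rho_i\equiv 0$, a contradiction. By Proposition \ref{onetoone} each $u_i$ is a ground state to \eqref{groundstatem} with multiplier $\mathcal{K}_i$; then $v(x):=\tfrac{\mathcal{K}_2}{\mathcal{K}_1}u_1\big((\mathcal{K}_2/\mathcal{K}_1)^{(p-1)/(2s)}x\big)$ is a ground state to \eqref{groundstatem} with multiplier $\mathcal{K}_2$, so Theorem \ref{maintheorem} gives $u_2\equiv v$. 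Comparing decay rates through \eqref{asympbehM}, and using that $\rho_1$ and $\rho_2$ have the same mass $M$, forces $(\mathcal{K}_2/\mathcal{K}_1)^{(N-p(N-2s))/(2s)}=1$, whence $\mathcal{K}_1=\mathcal{K}_2$ (again since $p\neq p_c$), so $u_1\equiv u_2$ and therefore $\rho_1\equiv\rho_2$. The argument is a routine adaptation of Proposition \ref{Uniqueness steady}; the only points requiring care are that the cited results genuinely cover the larger subcritical window $p>p_c$ (which is immediate from $1<p<(N+2s)/(N-2s)$) and that the mass–multiplier scaling exponent $N-p(N-2s)$ does not degenerate, which is exactly the condition $p\neq p_c$ built into the hypothesis $m<m_c$. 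I do not expect a substantial obstacle beyond this bookkeeping.
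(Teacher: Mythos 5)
Your proof is correct and follows essentially the same approach as the paper: transfer to the plasma equation via Propositions \ref{onetoone} and \ref{0}, invoke Proposition \ref{existence} for existence at one multiplier, and use the scaling $u_\mu(x)=\mu\,u(\mu^{(p-1)/(2s)}x)$ both to hit any prescribed mass (the mass scales with exponent $(N-p(N-2s))/(2s)\neq 0$ since $p\neq p_c$) and to reduce uniqueness to Theorem \ref{maintheorem} exactly as in Proposition \ref{Uniqueness steady}. The one place you add something is the explicit verification that $\mathcal{K}_i>0$ for a nontrivial steady state via the Liouville theorem from \cite{ZLO}; the paper leaves this implicit when it says the uniqueness proof is ``the same as'' Proposition \ref{Uniqueness steady}, so your extra check is harmless and in fact closes a small gap left to the reader.
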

\begin{proof} Assume w.l.o.g. that $\chi=1$.
The proof of the uniqueness is the same as the proof of Proposition \ref{Uniqueness steady}, thus we briefly focus on the existence part. Set $p=1/(m-1)$ and let $u_1$ be a ground state to \eqref{groundstatem} with the choice $\mathcal{K}=1$ of the constant therein, given by Proposition \ref{existence}. Then
$
u_{1,\lambda}(x):=\lambda u_1(\lambda^{\frac{p-1}{2s}}x)
$
solves
\[
(-\Delta)^{s}u_{1,\lambda}=\left(\tfrac{m-1}{m}\right)^p(u_{1,\lambda}-\lambda)_{+}^{p}.
\]
If we set
$
\rho:=(-\Delta)^{s}u_{1,\lambda},
$
the choice
\[
\lambda=\left(\frac{M}{M_1}\right)^{\frac{2s}{2sp-N(p-1)}}
\]
where
\[
M_1=\|(-\Delta)^{s}u_1\|_{L^1(\mathbb R^N)}
\]
ensures that
$
\int_{\mathbb R^N}\rho =M.
$
By Proposition \ref{0}, $\rho$ is a radial steady state with mass $M$.
\end{proof}

%\begin{remark} \rm
%Proposition \ref{Uniqueness steady} gives in particular an alternative proof of the uniqueness of the steady states in the range $m\in (m_{c},2]$ while the uniqueness result  Proposition \ref{Uniqueness steady 3}  in the aggregation dominated regime $m\in (2N/(N+2s),m_{c})$ is new. Finally, according to Remark \ref{supercrit}, once the existence of steady states is proven, Theorem \ref{Uniqueness steady} ensure also uniqueness in the range $m\in (1,2N/(N+2s)]$. {\color{red} we do not have the last statement here since we can not go above the critical exponent}
%\end{remark}
%\begin{equation}
%(-\Delta)^{s}u=(u-1)^{p}_{+}=:f(u).\label{groundstate}
%\end{equation}

%Since $\rho$ is compactly supported we have the following precise decay at infinity
%\begin{equation}
%u\sim M/|x|^{N-2s} \quad as\,\, |x|\rightarrow\infty.\label{asympbehM}
%\end{equation}

\begin{proposition}[Critical and supercritical regimes]
Let $1<m\le\tfrac{2N}{N+2s}$. Then there exists a unique radial steady state $\rho$  (in the sense of {\rm Definition \ref{def:steady}}) such that $\rho(0)=1$.  The family of functions $\{\rho_\lambda\}_{\lambda>0}$, where \begin{equation}\label{scalingsupercsteady}\rho_\lambda(x):=\lambda^{\tfrac{1}{m-1}}\,\rho\left(\lambda^{\tfrac{2-m}{2s(m-1)}}x\right),\end{equation} is the set of all radial steady states.
\end{proposition}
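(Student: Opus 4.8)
The plan is to reduce the statement, via the dictionary between radial steady states and ground states set up in Propositions~\ref{onetoone}, \ref{$C=0$} and \ref{shorty}, to the uniqueness (for a prescribed value at the origin) of ground states of $\Ds v=v^p$ given by Theorem~\ref{th:super}. As in the neighbouring proofs I assume $\chi=1$. First I would note that $1<m\le\tfrac{2N}{N+2s}$ is equivalent to $p:=\tfrac1{m-1}\ge\tfrac{N+2s}{N-2s}$, so we are in the critical/supercritical regime, and I would write the steady-state equation in the form \eqref{groundstatem} with $a_p=(p+1)^{-p}$. Given any radial steady state $\rho$, Proposition~\ref{onetoone} says that $u:=(-\Delta)^{-s}\rho$ is a ground state to \eqref{groundstatem}; since $p\ge\tfrac{N+2s}{N-2s}$, Proposition~\ref{$C=0$} (together with Remark~\ref{remarkcrit} at the endpoint $p=\tfrac{N+2s}{N-2s}$) forces the Lagrange multiplier to vanish, so in fact $\Ds u=a_p u^p$, and Proposition~\ref{shorty} then recovers $\rho=a_p u^p$; in particular $\rho(0)=a_p u(0)^p$.

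Next I would rescale in amplitude: $v:=a_p^{1/(p-1)}u$ solves $\Ds v=v^p$, with $v(0)=a_p^{1/(p(p-1))}\,\rho(0)^{1/p}$. Once $\rho(0)=1$ is imposed this is a fixed positive number, so Theorem~\ref{th:super} (which also covers the critical exponent via Proposition~\ref{criticalcase}) shows that $v$ is uniquely determined, hence so are $u$ and $\rho=a_p u^p$; this is the uniqueness assertion. For existence I would run the construction backwards: take the ground state $v$ of $\Ds v=v^p$ with central value $a_p^{1/(p(p-1))}$ — which exists by Proposition~\ref{prop:C=0}, resp.\ Proposition~\ref{criticalcase} in the critical case — set $u:=a_p^{-1/(p-1)}v$, and verify through Proposition~\ref{shorty} that $\rho:=a_p u^p$ is a radial steady state with vanishing Lagrange multiplier and $\rho(0)=1$.

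For the description of all radial steady states I would use the self-similarity of \eqref{eq:steady} with $\mathcal K=0$ and $\chi=1$, that is, of $\rho^{m-1}=\tfrac{m-1}{m}(-\Delta)^{-s}\rho$: a direct computation based on the homogeneity of the Riesz potential shows that $\rho_\lambda(x)=\lambda^{a}\rho(\lambda^{b}x)$ solves the same equation precisely when $a(m-2)=-2sb$, and the normalisation $a=\tfrac1{m-1}$ forces $b=\tfrac{2-m}{2s(m-1)}$, which is exactly \eqref{scalingsupercsteady}. Since in this range every radial steady state has vanishing Lagrange multiplier (by the first step), each $\rho_\lambda$ is again a radial steady state, and $\rho_\lambda(0)=\lambda^{1/(m-1)}$ sweeps out $(0,+\infty)$; conversely, given an arbitrary radial steady state $\sigma$, the choice $\lambda=\sigma(0)^{m-1}$ produces $\rho_\lambda$ with the same central density, whence, after normalising both to central density $1$ by a further dilation, the uniqueness already proved yields $\sigma=\rho_\lambda$. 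I do not expect a genuine obstacle here; the two points that deserve care are that the vanishing of the Lagrange multiplier must be guaranteed up to and including the critical exponent $p=\tfrac{N+2s}{N-2s}$ — which holds because in the Pohozaev argument behind Propositions~\ref{$C=00$}--\ref{$C=0$} the contribution $\mathcal C\int(u-\mathcal C)_+^p$ makes the decisive inequality strict even when the dimensional coefficient degenerates — and the bookkeeping of the scaling exponents, so that the amplitude rescaling on the $u$-side matches the dilation \eqref{scalingsupercsteady} on the $\rho$-side.
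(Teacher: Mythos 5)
Your proof is correct and follows essentially the same route as the paper: reduce to Theorem~\ref{th:super} via Propositions~\ref{onetoone}, \ref{$C=0$}, \ref{shorty}, then generate and exhaust the family by the scaling of $\Ds u=a_p u^p$. The only cosmetic difference is bookkeeping: the paper starts from the ground state $u$ of $\Ds u=a_p u^p$ with $u(0)=1$, so that $\rho:=a_p u^p$ has $\rho(0)=a_p$, and then shows that $\rho_\lambda=a_p u_\lambda^p$ (with $u_\lambda(x)=\lambda u(\lambda^{(p-1)/(2s)}x)$) is the unique steady state with central density $a_p\lambda^p$; you instead renormalize amplitude first, setting $v=a_p^{1/(p-1)}u$, and impose $\rho(0)=1$ directly, which is the same computation rearranged. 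Your remark about the critical exponent is a correct observation, though it is not strictly needed in the form you phrase it: the paper does not push Proposition~\ref{$C=0$} to the endpoint $p=\tfrac{N+2s}{N-2s}$; instead it treats $m=\tfrac{2N}{N+2s}$ separately by invoking Remark~\ref{remarkcrit} and the explicit classification of Proposition~\ref{criticalcase} (which already contain the fact that $\mathcal{K}=0$ for critical ground states in $\dot H^s$, after noting that a steady state with $\mathcal{K}>0$ would automatically lie in $\dot H^s$ by Proposition~\ref{onetoone}).
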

\begin{proof}
Assume w.l.o.g. that $\chi=1$.
Let $m<\tfrac{2N}{N+2s}$.
 The existence of a unique ground state $u$ for the equation $\Ds u=a_pu^p$, where $a_p=(p+1)^{-p}$, such that $u(0)=1$, is guaranteed by Theorem \ref{th:super}.  Then, by Proposition \ref{shorty}, $\rho:=a_pu^p$ is a radial steady state for $m=\tfrac{p+1}{p}$, with $\rho(0)=a_p$.
Proposition \ref{onetoone} entails uniqueness of such radial steady state $\rho$: indeed, if we are given  another radial steady state $\bar \rho$ with  central density $a_p$, by Proposition \ref{onetoone} its Riesz potential $\bar u$ is a ground state to \eqref{groundstatem}, and Proposition \ref{$C=0$} implies $\mathcal{K}=0$, hence $\bar u(0)=1$ and  Theorem \ref{th:super} implies $\bar u=u$, thus $\bar\rho=\rho$. Eventually, it is clear that $u_\lambda(x):=\lambda u(\lambda^{(p-1)/(2s)}x)$ satisfies  $\Ds u_\lambda=a_pu_\lambda^p$ and $u_\lambda(0)=\lambda$, for any $\lambda>0$. By the same reasoning, given $\lambda>0$, $\rho_\lambda=a_pu_\lambda^p$ is the unique steady state whose value at $x=0$ is $a_p\lambda^p$. Eventually, for the case $m=\tfrac{2N}{N+2s}$, we can use Remark \ref{remarkcrit} and the result by Proposition \ref{criticalcase} to check that all the steady states are of the form \eqref{scalingsupercsteady}, where the steady state of unit central density has the explicit form
\[
\rho=c(N,s)^{\frac{2(N+2s)}{N-2s}}\frac{1}{(c(N,s)^{\frac{4}{N-2s}}+|x|^2)^{\frac{N+2s}{2}}}
\]
being $c(N,s)$ the explicit constant appearing in Proposition \ref{criticalcase}.
\end{proof}

\section{Mass scaling properties}\label{mass scaling}

Let $m>m_c:=2-2s/N$.
For $M>0$, we next denote by $\rho_M$  the unique minimizer of $\mathcal F$ over $\mathcal Y_M$, by $\mathcal F_M$ the minimal value, and by $\mathcal K_M$ the associated Lagrange multiplier obtained from \eqref{C_M}. We also let $u_M:=c_{N,s}|\cdot|^{2s-N}\ast\rho_M$.
This section is devoted to the behavior of these quantities as functions of the mass $M$. We stress that uniqueness of minimizers is a consequence of Proposition \ref{Uniqueness steady} if $m\in(m_c,2]$, but it also known for $m>2$, see Remark \ref{lastremark}. Since the results in this section are only based on uniqueness of minimizers, they hold for any $m>m_c$.

\begin{lemma}[Basic estimates]\label{basicestimates} Let $M>0$. Let $\rho_M$ be a minimizer of $\mathcal F$ over $\mathcal Y_M$. Then
\begin{equation}\label{normestimate}
\|\rho_M\|_m^m \le Q_{\chi,s,m,N}\, M^{\frac{(m-2)N+2sm}{(m-2)N+2s}},
\end{equation}
\begin{equation}\label{Festimate}
{\tilde Q}_{\chi,s,m,N}\, M^{\frac{(m-2)N+2sm}{(m-2)N+2s}}\le -\mathcal F(\rho_M)\le \tilde {\tilde Q}_{\chi,s,m,N}\, M^{\frac{(m-2)N+2sm}{(m-2)N+2s}}
\end{equation}
where %$\theta:=\frac{m(m_c-1)}{m_c(m-1)}\in(0,1)$ and
$Q_{\chi,s,m,N}$, $\tilde Q_{\chi,s,N,m}$ and $\tilde{\tilde Q}_{\chi,s,m,N}$ are positive constants depending only on $\chi,s,m,N$.
\end{lemma}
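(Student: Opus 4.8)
The plan is to exploit the scaling structure of the functional $\mathcal F$ from \eqref{functional}, which reduces both inequalities to a one-parameter minimization, and then to read off \eqref{normestimate} from the already-established identity \eqref{fixednorm}. First I would parametrize $\mathcal Y_M$ by $\mathcal Y_1$: for $\sigma\in\mathcal Y_1$ and $B>0$, the dilation $x\mapsto A\,\sigma(Bx)$ has mass $AB^{-N}$, hence lies in $\mathcal Y_M$ exactly when $A=MB^N$, and every element of $\mathcal Y_M$ arises this way (take $B=1$, $\sigma=\rho/M$). With $A=MB^N$, a change of variables in \eqref{functional} gives
\[
\mathcal F\big[\,A\,\sigma(B\,\cdot)\,\big]
= M^m B^{N(m-1)} a(\sigma) - M^2 B^{N-2s} b(\sigma),
\]
where $a(\sigma):=\tfrac1{m-1}\|\sigma\|_m^m>0$ and $b(\sigma):=\tfrac\chi2\iint c_{N,s}|x-y|^{2s-N}\sigma(x)\sigma(y)\,dx\,dy>0$. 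Therefore $\mathcal F_M=\inf_{\sigma\in\mathcal Y_1}\inf_{B>0}\big(M^m B^{N(m-1)} a(\sigma) - M^2 B^{N-2s} b(\sigma)\big)$.

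Next I would carry out the inner minimization in $B$. Observe that $m>m_c$ is equivalent to $N(m-1)>N-2s$, while $N-2s>0$ throughout our range; hence $g(B):=M^m B^{N(m-1)} a - M^2 B^{N-2s} b$ tends to $0$ from below as $B\to0^+$ and to $+\infty$ as $B\to\infty$, and has a single critical point, its global minimum. Solving $g'(B)=0$ and substituting back, one finds, with $\gamma:=(m-2)N+2s>0$,
\[
\inf_{B>0}g(B) = -\,C(\sigma)\,M^{\ell},\qquad
\ell=\frac{(m-2)N+2sm}{(m-2)N+2s},\qquad
C(\sigma):=\kappa_{m,s,N}\,a(\sigma)^{-\frac{N-2s}{\gamma}}\,b(\sigma)^{\frac{N(m-1)}{\gamma}},
\]
for an explicit constant $\kappa_{m,s,N}>0$; the only arithmetic to verify is that the exponent of $M$ collapses to $\ell$. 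Consequently $\mathcal F_M=-\,c_1\,M^{\ell}$ with $c_1:=\sup_{\sigma\in\mathcal Y_1}C(\sigma)$. Here $c_1>0$ because $C(\sigma_0)>0$ for any fixed admissible $\sigma_0$ (e.g.\ a normalized indicator of the unit ball), while $c_1<\infty$ because $\mathcal F$ attains its infimum over $\mathcal Y_M$ by Lemma \ref{lemma:basic}; and $c_1$ depends only on $\chi,s,m,N$. This yields \eqref{Festimate}, in fact with $\tilde Q=\tilde{\tilde Q}=c_1$ and even the equality $-\mathcal F(\rho_M)=c_1 M^{\ell}$. Finally \eqref{normestimate} is immediate: by \eqref{fixednorm}, any minimizer satisfies $\|\rho_M\|_m^m=\tfrac{(N-2s)(m-1)}{N(m-2)+2s}\,(-\mathcal F(\rho_M))=\tfrac{(N-2s)(m-1)}{N(m-2)+2s}\,c_1\,M^{\ell}$.

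The computations involved are elementary, and I expect the only genuinely delicate point to be the equivalence $m>m_c\iff N(m-1)>N-2s$: this is precisely what forces the diffusion term $M^m B^{N(m-1)}a$ to dominate for large $B$ and the aggregation term $M^2 B^{N-2s}b$ to dominate for small $B$, so that $g$ is coercive with a strictly negative minimum; without it the minimization in $B$ degenerates and the scaling argument fails. A secondary subtlety, cheaply settled, is the finiteness of $c_1=\sup_\sigma C(\sigma)$: rather than proving it directly (which would amount to a Hardy--Littlewood--Sobolev-type bound $b(\sigma)\lesssim a(\sigma)^{\theta}$ on $\mathcal Y_1$), it should simply be deduced from the existence of minimizers furnished by Lemma \ref{lemma:basic}.
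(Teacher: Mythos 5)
Your proof is correct, and it takes a genuinely different route from the paper. The paper's argument proceeds in the order \eqref{normestimate} first, via Hardy--Littlewood--Sobolev together with $L^p$-interpolation, then derives the upper bound in \eqref{Festimate} from \eqref{functional} and \eqref{scaleidentity}, and finally establishes the lower bound by an explicit computation with the trial density $\bar\rho_M=\tfrac{M}{\omega_N R^N}\bC_{B_R}$: the interaction energy is evaluated via Bessel-function identities, and then one optimizes over $R$. You instead factor $\mathcal Y_M$ through $\mathcal Y_1$ by dilations, reduce the minimization to a one-parameter optimization in the scaling parameter $B$, and read off the exact identity $\mathcal F_M=-c_1 M^{\ell}$ with $c_1=\sup_{\sigma\in\mathcal Y_1}C(\sigma)$. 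Finiteness of $c_1$ you deduce from the existence of minimizers (Lemma \ref{lemma:basic}) rather than from a direct HLS bound, and positivity from a single admissible $\sigma_0$; then \eqref{normestimate} drops out of \eqref{fixednorm}. Your algebra in the inner optimization (the exponent $\tfrac{2\alpha-m\beta}{\alpha-\beta}$ collapsing to $\ell$, using $\alpha=N(m-1)$, $\beta=N-2s$) checks out, and the parametrization of $\mathcal Y_M$ is indeed exhaustive (take $B=1$, $\sigma=\rho/M$; it preserves the centre-of-mass constraint). What the paper's route buys is fully explicit constants $Q,\tilde Q,\tilde{\tilde Q}$; what yours buys is a cleaner structural explanation of why the power law must be $M^{\ell}$ (it is forced by the two homogeneities of $\mathcal F$), plus the stronger equality $-\mathcal F(\rho_M)=c_1 M^{\ell}$, which anticipates Theorem \ref{scaletheorem}. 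One point worth flagging, though it is not a gap: your finiteness of $c_1$ leans on Lemma \ref{lemma:basic}, whose proof in \cite{CHMV} itself ultimately rests on a Hardy--Littlewood--Sobolev-type bound, so both routes use essentially the same analytic input, just at different places.
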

\begin{proof}
By the standard Hardy-Littlewood-Sobolev inequality   \cite[Theorem 4.3]{LL} and interpolation of $L^p$ norms, there exists a constant $C^*_{s,N,m}$ such that
\[
\int_{\mathbb R^N}\int_{\mathbb R^N}|x-y|^{2s-N}\rho_M(x)\rho_M(y)\,dx\,dy\le C^*_{s,N,m}\,M^{2s/N}\,\|\rho_M\|_{m_c}^{m_c}.
\]
By \eqref{scaleidentity}, by the above inequality and by interpolation of $L^p$ norms again we have
\[\begin{aligned}
\int_{\mathbb R^d}\rho_M^m&=\frac{\chi(N-2s)}{2N}\int_{\mathbb R^N}\int_{\mathbb R^N} c_{N,s}|x-y|^{2s-N}\rho_M(x)\rho_M(y)\,dx\,dy
\\&\le \frac{\chi(N-2s)}{2N}\, C^*_{s,N,m}\,c_{N,s} M^{2s/N}\|\rho_M\|_{m_c}^{m_c}\le  Q_{\chi, s,N,m} M^{2s/N+(1-\theta)m_c}\left(\int_{\mathbb R^N}\rho_M^m\right)^{\frac{\theta m_c}{m}},
\end{aligned}\]
where $Q_{\chi,s,N,m}=\frac{\chi(N-2s)}{2N}\, C^*_{s,N,m}c_{N,s}$ and $\theta=\frac{m(m_c-1)}{m_c(m-1)}\in(0,1)$.
%Therefore,
%\[
%\left(\int_{\mathbb R^d}\rho^m_M\le\right)^{1-\frac{\theta m_c}{m}}\le \frac{\chi(d-2s)}{2d}\, C_*c_{d,s} M^{2s/d}\|\rho_M\|_{L^{m_c}(\mathbb R)^d}^{m_c}\le  \frac{\chi(d-2s)}{2d}\, C_*c_{d,s} M^{2s/d+(1-\theta)m_c}
%\]
Since $m_c=2-2s/N$, \eqref{normestimate} follows. By taking into account \eqref{functional} and \eqref{scaleidentity}, the second estimate in \eqref{Festimate}  follows as well.

In order to prove the first estimate of \eqref{Festimate}, we look for optimal states among characteristic functions $\bar{\rho}_M=
\frac{M}{\omega_N R^N}\bC_{B_R}$ with given total mass
$M$,
%the steady state can be magnitude $\lambda$ is related to the raidus $R$ of the support
%by
%\[
%  M = \int_{\mathbb{R}^d} \rho(x)dx
%  = \lambda \omega_d R^d
%\]
where $\omega_N = \pi^{N/2}/\Gamma(1+N/2)$ is the volume of the unit ball in $\mathbb{R}^N$.
We have
\[
  \frac{1}{m-1}\int_{\mathbb R^N} \rho^m =
  \frac{ M^{m}\omega_N^{1-m}}{m-1}R^{(1-m)N}.
\]
Denoting by $J_\nu$  the Bessel function of the first kind of order $\nu\ge-1/2$, from the following formula for the Fourier transform of a
radially symmetric function $F(x) = f(|x|)$,
\[
  \int_{\mathbb{R}^N} F(x)e^{ix\cdot \xi}dx
  = (2\pi)^{N/2} |\xi|^{(2-N)/2}\int_0^\infty f(\eta) J_{(N-2)/2}(\eta |\xi|)\eta^{N/2}d\eta,
\]
letting $\lambda=\tfrac{M}{\omega_N R^N}$ we get
\begin{align*}
  \int_{\mathbb{R}^N} \bar{\rho}_M(x)e^{i\xi \cdot x} dx
  &= \lambda (2\pi)^{N/2} |\xi|^{(2-N)/2}\int_0^R J_{(N-2)/2}(\eta |\xi|)\eta^{N/2}d\eta\cr
  &= \lambda (2\pi)^{N/2} |\xi|^{-N}\int_0^{|\xi|R} J_{(N-2)/2}(\eta)\eta^{N/2}d\eta \cr
  &= \lambda (2\pi)^{N/2} R^{N/2}|\xi|^{-N/2} J_{N/2}(|\xi|R),
\end{align*}
using the fact that $\int z^{\nu+1} J_{\nu}(z) = z^{\nu+1}J_{\nu+1}(z)$.
Therefore, by Plancherel theorem we compute
\[\begin{aligned}
&\frac12\int_{\mathbb R^N}\int_{\mathbb R^N}c_{N,s}|x-y|^{2s-N}\rho(x)\rho(y)\,dx\,dy=
  \frac{1}{2}\int_{\mathbb{R}^N} \bar{\rho}_M (-\Delta)^{-s}\bar{\rho}_M\\&\qquad=
  \frac{1}{2(2\pi)^N} \int_{\mathbb{R}^N} \left|\hat{\bar{\rho}}_M(\xi)\right|^2|\xi|^{-2s}d\xi
 =
  \frac{1}{2}\lambda^2 R^{N}\int_{\mathbb{R}^N} |\xi|^{-2s-N} \big|J_{N/2}(|\xi|R)\big|^2 d\xi %\cr
  \\&\qquad= \frac{1}{2}\lambda^2 R^N\, N\,\omega_N
  \int_0^\infty \eta^{-2s-1} \big| J_{N/2}(\eta R)\big|^2 d\eta =\frac{1}{2}\lambda^2 R^{N+2s}\,N\,\omega_N \int_0^\infty \eta^{-2s-1}
  \big| J_{N/2}(\eta)\big|^2  d\eta%\cr
  \\&\qquad= \frac{1}{4\sqrt{\pi}}\lambda^2 R^{N+2s}\,N\,\omega_N \frac{\Gamma(s+\frac{1}{2})\Gamma(\frac{N}{2}-s)}
  {\Gamma(s+1)\Gamma(\frac{N}{2}+s+1)} = \frac{N\,M^2 \Gamma(s+\frac{1}{2})\Gamma(\frac{N}{2}-s)}
  {4\sqrt{\pi}\omega_N \Gamma(s+1)\Gamma(\frac{N}{2}+s+1)} R^{2s-N}.
\end{aligned}\]
Hence,
\[
\mathcal F(\bar \rho_M)= \frac{ M^{m}\omega_N^{1-m}}{m-1}R^{(1-m)N}- \frac{\chi\,N\,M^2 \Gamma(s+\frac{1}{2})\Gamma(\frac{N}{2}-s)}
  {4\sqrt{\pi}\omega_N \Gamma(s+1)\Gamma(\frac{N}{2}+s+1)} R^{2s-N}
\]
and the optimization of $\mathcal F(\bar\rho_M)$ with respect to $R\in(0,+\infty)$ entails the unique solution
\[
  R=\bar R_M: = \left[
    \frac{2\sqrt{\pi}\, \omega_N^{2-m}\,\Gamma(s+1)\Gamma(\frac{N}{2}+s+1)}
    { \chi\,\Gamma(s+\frac{1}{2})\Gamma(\frac{N}{2}-s+1)}\,M^{m-2}
  \right]^{\frac{1}{2s+(m-2)N}}.
%  \to \left[
  %  \frac{2\nu M^{m-2}\omega_d^{2-m}}{m}
  %\right]^{\frac{1}{(m-2)d}}.
\]
A computation shows that the corresponding minimal value is
\[
 \frac{(2-m)N-2s}{(m-1)(N-2s)}\,\left(\frac{2\sqrt{\pi}\,\Gamma(s+1)\Gamma(\frac N2+s+1)}{\chi\,\Gamma(s+\frac12)\Gamma(\frac N2-s+1)}\right)^{\frac{(1-m)N}{(m-2)N+2s}}
\omega_N^{\frac{2s(1-m)}{(m-2)N+2s}}\,M^{\frac{(m-2)N+2sm}{(m-2)N+2s}},
\]
which is negative since $m>m_c$. The first estimate in \eqref{Festimate} is proven.
\end{proof}

\begin{lemma}[Monotonicity] The mapping $M\mapsto-\mathcal F_M/M$ is strictly increasing on $(0,+\infty)$ with $\lim_{M\to0^+} -\mathcal F_M/M=0$ and $\lim_{M\to+\infty}\mathcal F_M/M=+\infty$. The same properties hold for the map $M\mapsto\mathcal K_M$.
% If $m<2$ the mapping $M\mapsto\mathcal F_{M}/M^2$ is strictly increasing as well.
\end{lemma}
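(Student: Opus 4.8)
The plan is to reduce both claims to a single one and then prove strict monotonicity by a mass--changing scaling argument. First, by \eqref{C_M} one has $\mathcal F_M=-\frac{(m-2)N+2s}{(m-2)N+2ms}\,M\,\mathcal K_M$, and since $m>m_c$ both $(m-2)N+2s$ and $(m-2)N+2ms$ are positive, so $-\mathcal F_M/M$ and $\mathcal K_M$ are positive constant multiples of one another; hence it suffices to study $M\mapsto -\mathcal F_M/M$. The two limits are then immediate from Lemma \ref{basicestimates}: dividing the two--sided bound \eqref{Festimate} by $M$ gives $\tilde Q_{\chi,s,m,N}\,M^{\ell-1}\le -\mathcal F_M/M\le \tilde{\tilde Q}_{\chi,s,m,N}\,M^{\ell-1}$ with $\ell-1=\frac{2s(m-1)}{(m-2)N+2s}>0$, so $-\mathcal F_M/M\to 0$ as $M\to0^+$ and $-\mathcal F_M/M\to+\infty$ as $M\to+\infty$, and the same then holds for $\mathcal K_M$.

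For strict monotonicity I would argue as follows. Fix $M>0$ and let $\rho_M$ be a minimizer of $\mathcal F$ over $\mathcal Y_M$, which exists and satisfies the scaling identity \eqref{scaleidentity} by Lemma \ref{lemma:basic}. For $\mu>0$ and $\lambda>0$ the rescaled density $\tilde\rho_{\mu,\lambda}(x):=\mu\,\lambda^N\rho_M(\lambda x)$ belongs to $\mathcal Y_{\mu M}$, and writing $A:=\frac{1}{m-1}\|\rho_M\|_m^m$ and $B:=\frac{\chi}{2}\int_{\mathbb R^N}\int_{\mathbb R^N}c_{N,s}|x-y|^{2s-N}\rho_M(x)\rho_M(y)\,dx\,dy$, a change of variables yields $\mathcal F[\tilde\rho_{\mu,\lambda}]=\mu^m\lambda^{N(m-1)}A-\mu^2\lambda^{N-2s}B=:h_\mu(\lambda)$. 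Because $m>m_c$ is equivalent to $N(m-1)>N-2s>0$, the function $h_\mu$ vanishes as $\lambda\to0^+$, is negative for small $\lambda$, tends to $+\infty$ as $\lambda\to+\infty$, and hence attains a negative minimum at the unique critical point $\lambda_*$ solving $\lambda_*^{\,(m-2)N+2s}=\mu^{2-m}\frac{B(N-2s)}{A\,N(m-1)}$.

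The key simplification is that the scaling identity \eqref{scaleidentity} says precisely $B(N-2s)=A\,N(m-1)$, so $\lambda_*=\mu^{(2-m)/((m-2)N+2s)}$, and substituting back (using $\mathcal F_M=\mathcal F[\rho_M]=A-B$ together with the arithmetic identity $2+\frac{(2-m)(N-2s)}{(m-2)N+2s}=\ell$) one finds $h_\mu(\lambda_*)=(A-B)\mu^\ell=\mathcal F_M\,\mu^\ell$. Therefore $\mathcal F_{\mu M}\le \mathcal F[\tilde\rho_{\mu,\lambda_*}]=\mathcal F_M\,\mu^\ell$ for every $\mu>0$. Since $\mathcal F_M<0$ by Lemma \ref{basicestimates} and $\ell>1$, for any $\mu>1$ this gives $-\mathcal F_{\mu M}\ge -\mathcal F_M\,\mu^\ell$, hence $\frac{-\mathcal F_{\mu M}}{\mu M}\ge \frac{-\mathcal F_M}{M}\,\mu^{\ell-1}>\frac{-\mathcal F_M}{M}$, i.e.\ $M\mapsto -\mathcal F_M/M$ is strictly increasing on $(0,+\infty)$; by the first paragraph so is $M\mapsto\mathcal K_M$.

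I expect the only real work to be careful bookkeeping of the scaling exponents — in particular verifying that the dilation--optimal $\lambda_*$ is exactly the one dictated by the Pohozaev--type identity \eqref{scaleidentity}, which is what makes the minimum over $\lambda$ collapse to the clean value $\mathcal F_M\mu^\ell$ instead of an expression in which $\|\rho_M\|_m^m$ and the interaction energy enter separately — together with the elementary checks that $N(m-1)>N-2s>0$ and $\ell>1$. There is no genuine analytic obstacle: everything rests on Lemma \ref{lemma:basic} and Lemma \ref{basicestimates}.
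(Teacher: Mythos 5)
Your proof is correct, and it takes a genuinely different route from the paper's. The paper tests $\mathcal F$ on the \emph{amplitude-only} rescaling $\delta\rho_M\in\mathcal Y_{\delta M}$ and shows $\mathcal F(\delta\rho_M)<\delta\mathcal F(\rho_M)$ directly; after simplifying with \eqref{scaleidentity} this reduces to checking the elementary inequality
\[
\frac{N-2s}{(m-2)N+2s}\cdot\frac{\delta^2(\delta^{m-2}-1)}{\delta^2-\delta}<1,
\]
which holds for all $\delta>1$ when $m\le 2$ but, when $m>2$, only in a right neighbourhood $(1,\delta_0)$ of $1$ — which is nevertheless enough to get strict monotonicity. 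You instead test with the \emph{two-parameter} rescaling $\tilde\rho_{\mu,\lambda}(x)=\mu\lambda^N\rho_M(\lambda x)\in\mathcal Y_{\mu M}$ and choose $\lambda$ to be the dilation-optimal scale, which by the Pohozaev-type identity \eqref{scaleidentity} collapses the bound to the clean form $\mathcal F_{\mu M}\le\mu^\ell\mathcal F_M$ for \emph{every} $\mu>0$. This avoids the paper's case split and the restriction to $\delta$ near $1$, and as a free by-product it contains the scaling inequality that underlies Theorem \ref{scaletheorem} (indeed, applying it with $\mu$ and with $1/\mu$ gives the exact law $\mathcal F_{\mu M}=\mu^\ell\mathcal F_M$ without invoking uniqueness of minimizers). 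The reduction of the statement about $\mathcal K_M$ to the statement about $-\mathcal F_M/M$ via \eqref{C_M}, and the identification of the limits from Lemma \ref{basicestimates}, are the same in both arguments. One small inessential remark: you do not actually need to know that $\lambda_*$ is the argmin of $h_\mu$; the inequality $\mathcal F_{\mu M}\le h_\mu(\lambda_*)$ only uses $\tilde\rho_{\mu,\lambda_*}\in\mathcal Y_{\mu M}$, so the discussion of $h_\mu$ having a negative minimum is an aside rather than a needed step.
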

\begin{proof}
Let $M>0$ and $\delta>1$. We have $\delta\rho_M\in \mathcal Y_{\delta M}$. Let us compute with \eqref{functional} and \eqref{scaleidentity}
\begin{equation*}\label{comp}\begin{aligned}
\mathcal F(\delta \rho_M)&=\frac{\delta^m-\delta^2}{m-1}\,\int_{\mathbb R^N}\rho_M^m+\frac{\delta^2}{m-1}\,\int_{\mathbb R^N}\rho_M^m-\frac{\chi\delta^2}2\int_{\mathbb R^N}\int_{\mathbb R^N}c_{N,s}|x-y|^{2s-N}\rho_M(x)\rho_M(y)\,dx\,dy
\\&=\frac{(\delta^m-\delta^2)(N-2s)}{(2-m)N-2s}\,\mathcal F(\rho_M)+\delta^2\mathcal F(\rho_M).
\end{aligned}\end{equation*}
Therefore,
\[
\mathcal F(\delta\rho_M)-\delta\mathcal F(\rho_M)= \left(\frac{(\delta^m-\delta^2)(N-2s)}{(2-m)N-2s}+\delta^2-\delta\right)\,\mathcal F(\rho_M).
\]
Taking into account that $\mathcal F(\rho_M)<0$ as seen in Lemma \ref{basicestimates}, the above right hand side is negative if and only if
\begin{equation}\label{delta}
\frac{N-2s}{(m-2)N+2s}\,\frac{\delta^2(\delta^{m-2}-1)}{\delta^2-\delta}<1.
\end{equation}
The latter holds true for any $\delta>1$ in case $m\le2$. If $m>2$, notice that $\lim_{\delta\to 1^+}\tfrac{\delta^{m-2}-1}{\delta-1}=m-2$ so that the left hand side in \eqref{delta} goes to $\tfrac{(m-2)(N-2s)}{(m-2)N+2s}$ which is smaller than $1$ (since $m>2$). This implies the existence of $\delta_0>1$ (only depending on $m,N,s$) such that \eqref{delta} holds true for any $\delta\in(1,\delta_0)$.
Hence, we deduce that $\mathcal F(\delta\rho_M)<\delta\mathcal  F(\rho_M)$ for any $\delta\in(1,\delta_0)$.

Thanks to the minimality of $\rho_{\delta M}$ over $\mathcal Y_{\delta M}$ we conclude that for any $M>0$ and $\delta\in(1,\delta_0)$ there holds
\[
\frac{\mathcal F(\rho_{\delta M})}{\delta M}\le\frac{\mathcal F(\delta\rho_M)}{\delta M}<\frac{\mathcal F(\rho_M)}{M},
\]
implying that the map $(0,+\infty)\ni M\mapsto \mathcal F(\rho_M)/M$ is strictly decreasing.
%The map $(0,+\infty)\ni M\mapsto \mathcal F(\rho_M)$ is therefore strictly decreasing as well.
The limit values at $0$ and $+\infty$ are deduced from Lemma \ref{basicestimates} since $\tfrac{(m-2)N+2sm}{(m-2)N+2s}>1$.
On the other hand,  from \eqref{C_M} we deduce the same properties for the  mapping $(0,+\infty)\ni M\mapsto \mathcal K_M$.
%Eventually, if $m<2$, thanks to \eqref{comp} we have
%\[
%\mathcal F(\delta\rho_M)-\delta^2\mathcal F(\rho_M)= \frac{(\delta^m-\delta^2)(N-2s)}{(2-m)N-2s}\,\mathcal F(\rho_M)<0
%\]
%for any $\delta>1$,  implying by the same argument as above that the mapping $(0,+\infty)\ni M\mapsto \mathcal F(\rho_M)/M^2$ is strictly decreasing.
\end{proof}

The next theorem improves the above result by showing that the mappings $M\mapsto-\mathcal F_M/M$, $M\mapsto\mathcal K_M$, $M\mapsto\rho_M(0)$ and $M\mapsto u_M(0)$ are increasing diffeomorphisms of $(0,+\infty)$ onto itself. Moreover, we show how the radius of the support of $\rho_M$ varies with $M$.
\begin{theorem}\label{scaletheorem}
There hold
\begin{equation}\label{rhou}
\rho_M(x) =M^{\frac{2s}{(m-2)N+2s}}\,\rho_1\left(M^{\frac{2-m}{(m-2)N+2s}}x\right),\; u_M(x):= M^{\frac{2s(m-1)}{(m-2)N+2s}}\, u_1\left(M^{\frac{2-m}{(m-2)N+2s}}x\right),\; x\in\mathbb R^N.
\end{equation}
Moreover,
\begin{equation}\label{fc}
\mathcal K_M:= M^{\frac{2s(m-1)}{(m-2)N+2s}}\,\mathcal K_1,\qquad \mathcal F_M= M^{\frac{2s(m-1)}{(m-2)N+2s}}\,\mathcal F_1,
\end{equation}
and denoting by $R_M$ the radius of the support of $\rho_M$, we have
\begin{equation}\label{rm}
R_M=u_M^{-1}(\mathcal K_M)= M^{\frac{m-2}{(m-2)N+2s}}\,u_1^{-1}(\mathcal K_1).
\end{equation}
In particular, the mapping $M\mapsto R_M$ is increasing if $m>2$, decreasing if $m<2$ and constant if $m=2$.
Eventually, if $M_n $ converge to $M>0$ as $n\to+\infty$, we have $\rho_{M_n}\to\rho_M$ and $u_{M_n}\to u_M$ uniformly on $\mathbb R^N$.
\end{theorem}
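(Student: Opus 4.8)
The plan is to deduce every identity in the statement from the uniqueness of the minimizer — equivalently, of the radial steady state of prescribed mass, available via Proposition~\ref{Uniqueness steady} for $m\in(m_c,2]$ and via Remark~\ref{lastremark} for $m>2$ — combined with one explicit scaling ansatz. Fix $M>0$ and, with $\chi=1$ without loss of generality, set $\tilde\rho(x):=\alpha\,\rho_1(\beta x)$ for parameters $\alpha,\beta>0$ to be chosen. First I would record the transformation law of the Riesz potential: a change of variables gives $(-\Delta)^{-s}\tilde\rho(x)=\alpha\beta^{-2s}\,u_1(\beta x)$, with $u_1=(-\Delta)^{-s}\rho_1$. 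Substituting $\tilde\rho$ and this expression into the Euler--Lagrange equation~\eqref{Euler}, raising to the power $m-1$, and using that $\rho_1$ solves the same equation with multiplier $\mathcal K_1$, one is led to the matching conditions $\alpha^{m-1}=\alpha\beta^{-2s}$ (coefficient of $u_1(\beta\cdot)$ inside the positive part), i.e.\ $\alpha^{m-2}=\beta^{-2s}$, together with $\tilde{\mathcal K}=\alpha^{m-1}\mathcal K_1$ for the new multiplier; with these choices $\tilde\rho$ satisfies~\eqref{eq:steady} and has mass $\int_{\mathbb R^N}\tilde\rho=\alpha\beta^{-N}$. Imposing $\alpha\beta^{-N}=M$ then pins down $\alpha$ and $\beta$ uniquely, namely $\alpha=M^{2s/((m-2)N+2s)}$ and $\beta=M^{(2-m)/((m-2)N+2s)}$, the exponents being finite because $(m-2)N+2s>0$ when $m>m_c$.

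With $\alpha,\beta$ so determined, $\tilde\rho$ is a radial steady state of mass $M$ in the sense of Definition~\ref{def:steady}, hence by the cited uniqueness results $\tilde\rho=\rho_M$, which is the first identity in~\eqref{rhou}. The formula for $u_M$ in~\eqref{rhou} follows from $u_M=(-\Delta)^{-s}\rho_M=\alpha\beta^{-2s}u_1(\beta\cdot)=\alpha^{m-1}u_1(\beta\cdot)$, using $\alpha\beta^{-2s}=\alpha^{m-1}$; the identity $\mathcal K_M=\tilde{\mathcal K}=\alpha^{m-1}\mathcal K_1$ gives the first part of~\eqref{fc}; and substituting $\mathcal K_M$ into~\eqref{C_M}, which expresses $\mathcal F_M$ as a fixed constant times $M\mathcal K_M$, yields the stated scaling of $\mathcal F_M$ (recovering the exponent $\ell$ announced in the introduction).

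For~\eqref{rm}, recall that the support of $\rho_M$ equals $\{u_M>\mathcal K_M\}$, so by continuity $u_M(R_M)=\mathcal K_M$; via $u_M=\alpha^{m-1}u_1(\beta\cdot)$ and $\mathcal K_M=\alpha^{m-1}\mathcal K_1$ this becomes $u_1(\beta R_M)=\mathcal K_1=u_1(R_1)$, and since $u_1$ is strictly decreasing across its free boundary $R_1$ — the content of the unique-continuation argument recalled just before~\eqref{asympbehM}, which forbids $u_1\equiv\mathcal C$ on an annulus — we get $\beta R_M=R_1$, i.e.~\eqref{rm}. The monotonicity of $M\mapsto R_M$ is then read off from $R_M=\beta^{-1}R_1=M^{(m-2)/((m-2)N+2s)}R_1$ and the sign of $m-2$, the denominator being positive.

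Finally, the stability statement is a direct consequence of the explicit formulas in~\eqref{rhou}: if $M_n\to M$, then the prefactors $M_n^{2s/((m-2)N+2s)}$, $M_n^{2s(m-1)/((m-2)N+2s)}$ and the dilation factor $M_n^{(2-m)/((m-2)N+2s)}$ converge to their values at $M$; since $\rho_1$ is bounded, compactly supported and uniformly continuous by Lemma~\ref{lemma:basic}, and $u_1$ is bounded, uniformly continuous and vanishing at infinity by Proposition~\ref{regularity}, a routine triangle-inequality estimate — splitting the supremum over $x$ into a large ball and its complement — gives $\rho_{M_n}\to\rho_M$ and $u_{M_n}\to u_M$ uniformly on $\mathbb R^N$. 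The computations are elementary throughout; the one point deserving care is the well-posedness of $R_M=u_M^{-1}(\mathcal K_M)$, i.e.\ the strict monotonicity of $u_1$ at the free boundary, which is why I would appeal to the unique-continuation property rather than to plain radial monotonicity.
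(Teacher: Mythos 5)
Your proof is correct and follows essentially the same approach as the paper: both proceed by exhibiting an explicit scaled family of steady states, pinning down the scaling parameter by the mass normalization, and then invoking uniqueness of the steady state of prescribed mass. The only cosmetic difference is that you scale $\rho$ with a two-parameter ansatz $\alpha\rho_1(\beta\cdot)$ and impose both the Euler--Lagrange compatibility and the mass condition to solve for $\alpha,\beta$, whereas the paper scales $u$ by the one-parameter family $u_{1,\lambda}(x)=\lambda u_1(\lambda^{(2-m)/(2s(m-1))}x)$ (which is automatically equation-compatible) and then fixes $\lambda$ by the mass; the two parametrizations are equivalent via $\alpha^{m-1}=\lambda$.
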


\begin{proof}
%Let $\rho_1$ denote the unique minimizer of $\mathcal F$ over $\mathcal Y_1$. Let $\mathcal F_1$ denote the associated minimal value and let $\mathcal K_1$ denote the associated Lagrange multiplier. The relation between the two is given by \eqref{C_M}.
Letting $u_1:=c_{N,s}|\cdot|^{2s-N}\ast\rho_1$, by Theorem \ref{maintheorem} $u_1$ is the unique ground state for
\[
(-\Delta)^su_1=a_m\,(\chi\,u_1-\mathcal K_1)_+^{\frac1{m-1}},\qquad a_m:=\left(\frac{m-1}{m}\right)^{\frac1{m-1}}.
\]
For $\lambda>0$, the usual scaling \begin{equation}\label{usualscaling}u_{1,\lambda}(x):=\lambda u_1\left(\lambda^{\frac{2-m}{2s(m-1)}}x\right),\quad x\in\mathbb R^N,\end{equation} produces a solution to the same equation with different Lagrange multiplier, i.e,
\begin{equation}\label{intermezzo}
(-\Delta)^s u_{1,\lambda}=a_m\,(\chi\,u_{1,\lambda}-\lambda\mathcal K_1)_+^{\frac1{m-1}}.
\end{equation}
Letting $\rho_{1,\lambda}:=(-\Delta)^s u_{1,\lambda} $ we obtain therefore  \begin{equation}\label{rhoscaling}\rho_{1,\lambda}(x)=\lambda^{\frac1{m-1}}\rho_1\left(\lambda^{\frac{2-m}{2s(m-1)}}x\right),\quad x\in\mathbb R^N.\end{equation} The mass of $\rho_{1,\lambda}$ is computed by a change of variables and it is
\[
\int_{\mathbb R^N}\rho_{1,\lambda}=\lambda^{\frac1{m-1}}\int_{\mathbb R^N}\rho_1\left(\lambda^{\frac{2-m}{2s(m-1)}}x\right)\,dx=\lambda^{\frac{(m-2)N+2s}{2s(m-1)}}.
\]
Notice that the latter exponent is positive since $m>m_c$. For any $M>0$,
we define $\lambda_M:=M^{\frac{2s(m-1)}{(m-2)N+2s}}$ so that $\rho_{1,\lambda_M}\in\mathcal Y_M$.
%Moreover, we define $u_M:=c_{N,s}|\cdot|^{2s-N}\ast\rho_{1,\lambda_M}$, so that we have $u_M=u_{1,\lambda_M}$ and
We see from \eqref{intermezzo} that $u_{1,\lambda_M}$ solves
\begin{equation}\label{um}
(-\Delta)^s u=a_m\,(\chi\,u-\mathcal K_M)_+^{\frac1{m-1}},
\end{equation}
where $\mathcal K_M:=\lambda_M\mathcal K_1$.
%This shows the first relation of \eqref{fc},
 In particular, by the uniqueness result of Theorem \ref{maintheorem}, $u_{1,\lambda_ M}$ is the unique ground state for such equation. Hence,  $\rho_{1,\lambda_M}$ coincides  with the  unique minimizer $\rho_M$ of $\mathcal F$ over $\mathcal Y_M$,  the corresponding Lagrange multiplier is $\mathcal K_M$, and $u_{1,\lambda_M}\equiv u_M$. 	\eqref{rhou} is therefore obtained from \eqref{rhoscaling} and \eqref{usualscaling}.
 Notice that $\mathcal K_M=\lambda_M\mathcal K_1$ is the first relation in \eqref{fc}, while the second one follows from \eqref{C_M}. %with minimal value $\mathcal F_M$ given in terms of $\mathcal K_M$ by \eqref{C_M}.
Eventually,  since $u_m$ is radially (strictly) decreasing and since it solves \eqref{um} we deduce $u_M(R_M)=\mathcal K_M$ and \eqref{rm} follows from $\mathcal K_M=\lambda_M\mathcal K_1$. The last statement is a direct consequence of \eqref{rhou}, since $\rho_1$ and $u_1$ are continuous and vanishing at infinity.
\end{proof}

%%%%%%%%%%%%%%%%%%%%%%%%%%%%%%%%%%%%%%%%%%%%%%%%%%%%%%%%%%%%%%%%%%%%%%%%%%%%%%%%%%%%%%%%%%%%%%%%%%%%%%%
\section{Numerical approximation of the fractional plasma equation} %solutions to $(-\Delta)^s u = a(u-\mathcal{C})_+^p$}
\label{numerical}

We denote by $\bar u$ the unique ground state in $\dot H^s(\mathbb R^N)$
to $(-\Delta)^s u=(u-1)_+^p$, which is provided by Theorem \ref{maintheorem} for $p\ge 1$.
Some relevant quantities associated to $\bar u$ are
\begin{itemize}
	\item the fractional Laplacian $\bar \rho:=(-\Delta)^s u=(\bar u-1)_+^p$, defined on $\mathbb R^N$,
	\item the mass $\bar M$ of $\bar \rho$, i.e., $\bar M:=\int_{\mathbb R^N}\rho$,
	\item the radius $\bar R$ of the support of $\bar \rho$, i.e., $\bar R:=\sup \{|x|: \bar\rho(x)>0 \}$,
	\item the central density $\bar u(0)=\|\bar u\|_{L^\infty(\mathbb R^N)}>1$,
	\item the oscillation of $\bar u$ inside $B_{\bar R}$, i.e., $\bar u(0)-\bar u(Rx/|x|)=\bar u(0)-1$.
\end{itemize}
Let us now consider the following two-parameter family $\{u_{\mathcal{C}, \delta}\}_{\mathcal{C}>0,\,\delta>0}$ of functions
\[
u_{\mathcal{C},\delta}(x):=\mathcal{C}\,\bar u(\delta x),\qquad x\in\mathbb R^N.
\]
We immediately obtain, by using Theorem \ref{maintheorem}, that $u_{\mathcal{C},\delta}$
is the unique ground state in $\dot H^s(\mathbb R^n)$ to
\begin{equation}\label{muC}
(-\Delta)^s u=a\,(u-\mathcal{C})_+^p,\qquad\mbox{where $\;\;a=\mathcal{C}^{1-p}\,\delta^{2s}$}.
\end{equation}
This shows that the family $\{u_{\mathcal{C},\delta}\}$ can be equivalently parameterized by the couple of positive numbers $(\mathcal{C},a)$ or, in case $p>1$, by the couple $(\delta,a)$.
After having defined
\[
\rho_{\mathcal{C},\delta}(x):=(-\Delta)^s u_{\mathcal{C},\delta}(x)=\mathcal{C}\delta^{2s}\bar\rho(\delta x),\qquad x\in\mathbb R^N,
\]
we can reason as done in Theorem \ref{scaletheorem}
and identify an element of the family $\{u_{\mathcal{C},\delta}\}$ by prescribing the mass of $\rho_{\mathcal{C},\delta}$ along with $\delta$ or $\mathcal{C}$, since a direct computation shows that
\[
M_{\mathcal{C},\delta}:=\int_{\mathbb R^N}\rho_{\mathcal{C},\delta}(x)\,dx=\mathcal{C}\delta^{2s-N}\bar M.
\]
More generally, denoting by $R_{\mathcal{C},\delta}$ the  radius of the support of $\rho_{\mathcal{C},\delta}$,
we have the following relations
\begin{equation}\label{list}
\begin{aligned}
R_{\mathcal{C},\delta}&=\frac {\bar R}\delta,\qquad u_{\mathcal{C},\delta}(R_{\mathcal{C},\delta})=\mathcal{C},\\ u_{\mathcal{C},\delta}(0)&=\mathcal{C} \bar u(0), \qquad
u_{\mathcal{C},\delta}(0)-u_{\mathcal{C},\delta}(R_{\mathcal{C},\delta})=\mathcal{C}\,(\bar u(0)-1).\end{aligned}
\end{equation}
This shows that it is possible to uniquely identify any element of the family  $\{u_{\mathcal{C},\delta}\}$
by prescribing, for instance,  the radius of the support and either  the parameter $a$ appearing in  \eqref{muC} or the oscillation inside the support. The latter choice will be useful in the numerical approximations of our interest in this section;
indeed, it is more convenient to work with numerical solutions whose fractional Laplacian is supported in the unit ball, and whose oscillation inside the unit ball is prescribed, while continuously depending on the rest of the parameters.
We note moreover that in the family $\{u_{\mathcal{C},\delta}\}$ each of the following quantities uniquely identifies the other two: the oscillation inside the support, the central density, the Lagrange multiplier $\mathcal C$. In the special case $p=1$, we see from  \eqref{muC} and  \eqref{list} that the value of $a$ uniquely identifies the radius of the support. In particular if $a$ is given, the radius of the support does not depend on the mass, a property that we have already obtained in Theorem \ref{scaletheorem}.\\[0.5pt]

 Now, solutions to $(-\Delta)^s u_{\mathcal{C},\delta} = a (u_{\mathcal{C},\delta}-\mathcal{C})_+^p$ will be approximated
numerically with the normalisation $R_{\mathcal{C},\delta}=1$ and $u_{\mathcal{C},\delta}(0)-u_{\mathcal{C},\delta}(R_{\mathcal{C},\delta})
=1$. The key formula is the following expressions (see Appendix~\ref{app:riesz} for the derivation) for
the Riesz potential of  the weight Jacobi polynomial $(1-|x|^2)^{-s}P_n^{(-s,N/2-1)}  (2|x|^2-1)$, that is,
\begin{align}
\begin{cases}
\lambda_n P_n^{(-s,N/2-1)}  (2|x|^2-1), \quad & |x| < 1,\\
\lambda_n \mu_n|x|^{-d-2s-2n}
{ }_2F_1\left( 1-s+n,
\frac{N}{2}+n-s; 1+2n+\frac{N}{2}-s;|x|^{-2}\right), & |x|>1,
\end{cases}
\label{eq:riesz}
\end{align}
where
\[ \lambda_n = \frac{2^{-2s}\Gamma(1+n-s)\Gamma(N/2-s+n)}{n!\Gamma(N/2+n)}\ \mbox{ and }  \
\mu_n = \frac{\sin s\pi}{\pi} B\left(1+n-s,\frac{N}{2}+n\right),
\]
with the Beta function $B(p,q)=\Gamma(p)\Gamma(q)/\Gamma(p+q)$.
%To facilitate the numerical approximation with a continuous dependence on various parameters,
%the alternative equation $(-\Delta)^s u = \mu (u-C)_+^p$
%with be solved, together with the normalisation  $u(0) - u(x)|_{|x|=1}= 1$ and $u-C$ is supported on the unit ball.
%As a result,  the solution can always be rescaled to satisfy $(-\Delta)^s u = (u-C)_+^p$ for different conventions, like
%unit mass or constant $C$.
Since $\rho_{\mathcal{C},\delta}= (-\Delta)^s u_{\mathcal{C},\delta}$ is assumed to be supported on the unit ball, $\rho_{\mathcal{C},\delta}$ can be expanded in terms of the series
\[
\rho_{\mathcal{C},\delta}(x) = (1-|x|^2)^{-s} \sum_{n=0}^\infty c_n P_n^{(-s,N/2-1)}  (2|x|^2-1)
\]
on the unit ball with some unknown coefficients $\{c_n\}_{n=0}^\infty$, then
from \eqref{eq:riesz}, the solution $u_{\mathcal{C},\delta}$ on the unit ball can be expressed as
\begin{equation}\label{eq:uexpn}
u_{\mathcal{C},\delta}(x) = (-\Delta)^{-s} \rho_{\mathcal{C},\delta}(x) = \sum_{n=0}^\infty \lambda_n c_n P_n^{(-s,N/2-1)}  (2|x|^2-1).
\end{equation}
Therefore the governing equation $(-\Delta)^s u_{\mathcal{C},\delta} = a (u_{\mathcal{C},\delta}-\mathcal{C})_+^p$ becomes
\begin{multline} \label{eq:jacobiPDE}
(1-|x|^2)^{-s} \sum_{n=0}^\infty c_n P_n^{(-s,N/2-1)}  (2|x|^2-1) =  \cr
a\left(\sum_{n=0}^\infty \lambda_n c_n \Big(P_n^{(-s,N/2-1)}  (2|x|^2-1) -  P_n^{(-s,N/2-1)}(1)\Big)\right)^p,
\end{multline}
where the relation $\mathcal{C} = u_{\mathcal{C},\delta}(x)|_{|x|=1} = \sum_{n=0}^\infty \lambda_n c_n P_n^{(-s,N/2-1)}  (1)$
is applied. Using the orthogonality condition for Jacobi polynomials
\[ \int_{\{|x|\leq 1\}} (1-|x|^2)^{-s}P_n^{(-s,N/2-1)}  (2|x|^2-1)P_k^{(-s,N/2-1)}  (2|x|^2-1) \,dx = 0, \qquad n \neq k,
\]
Equation \eqref{eq:jacobiPDE} can be further reduced (with the change of variable $t=2|x|^2-1$) to a system of
algebraic equations for the coefficients $\{c_k\}$, that is,
\[
c_k = \frac{a}{2^sQ_k} \int_{-1}^1
(1+t)^{N/2-1}\left(\sum_{n=0}^\infty \lambda_n c_n \Big( P_n^{(-s,N/2-1)}  (t) - P_n^{(-s,N/2-1)}(1)\Big)\right)^p
P_k^{(-s,N/2-1)}(t)\,
dt,
\]
for $k = 0,1,\ldots$, where $Q_k$ is the normalisation constant defined by
\[
Q_k = \int_{-1}^1 (1-t)^{-s}(1+t)^{N/2-1}\big[P_k^{(-s,N/2-1)}(t)\big]^2dt
=\frac{2^{N/2-s}}{2k+N/2-s}\frac{\Gamma(k+1-s)\Gamma(k+N/2)}{k!\Gamma(k+N/2-s)}.
\]
In practice, the series is truncated with finite number of coefficients
$\mathbf{c} = (c_0,c_1,\ldots,c_K)$, leading to a system of $K+2$ algebraic equations
for the variables $\tilde{\mathbf{c}} =  (\mathbf{c},a) = (c_0,c_1,\ldots,c_K,a)$:
the first $K+1$ equations take the form $c_k = F_k(\tilde{\mathbf{c}})$ with $F_k(\tilde{\mathbf{c}})$ defined as
\[
\frac{a}{2^sQ_k} \int_{-1}^1
(1+t)^{N/2-1}\left(\sum_{n=0}^K \lambda_n c_n \Big( P_n^{(-s,N/2-1)}  (t) - P_n^{(-s,N/2-1)}(1)\Big)\right)^p
P_k^{(-s,N/2-1)}(t)\,
dt,
\]
for $k=0,1,\ldots,K$, and the last equation is given by the normalisation $1 = u_{\mathcal{C},\delta}(0)-u_{\mathcal{C},\delta}(R_{\mathcal{C},\delta})$, i.e.,
\begin{equation}\label{eq:normal}
1 = \sum_{n=0}^K \lambda_n c_n \Big( P_n^{(-s,N/2-1)}  (-1) - P_n^{(-s,N/2-1)}(1)\Big).
\end{equation}
This system of $K+2$ equation is denoted as $\mathbf{G}(\tilde{\mathbf{c}}) = 0$.

For $p=1$, the system of algebraic equation can be treated as an eigenvalue problem,
where $a$ plays the role of an eigenvalue and the entries of the associated eigenvectors
are exactly the expansion coefficients $\mathbf{c} = (c_0,c_1,\cdots,c_K)$. Therefore, the solution can be obtained
by standard numerical linear algebra packages.
For  $p \in (0,1)$, the coefficients can be obtained using the
fixed point iteration $\mathbf{c}^{(m+1)} = \mathbf{F}(\mathbf{c}^{m},a^{(m)})$ by taking the
first $K+1$ equations in $\mathbf{G}(\tilde{\mathbf{c}}) = 0$, and $a^{(m+1)}$ is chosen such that
the normalisation in Eq.~\eqref{eq:normal} is satisfied.
This fixed point iteration converges for a wide range of initial conditions, for instance with $c_k=0$ for all $k$
except that $c_1 > 0$.
The numerical solutions in one dimension with $p=0.5$ and various values of $s$ are shown in Figure \ref{fig:1dsmallp},
together with its fractional Laplacian $\rho$. For fixed $s=1/2$, the numerical solutions for different values of $p$
in two dimension are shown in Figure \ref{fig:2dsmallp}, where $\rho$ is converging to a characteristic function.
\begin{figure}[htp]
	\begin{center}
		\includegraphics[totalheight=0.26\textheight]{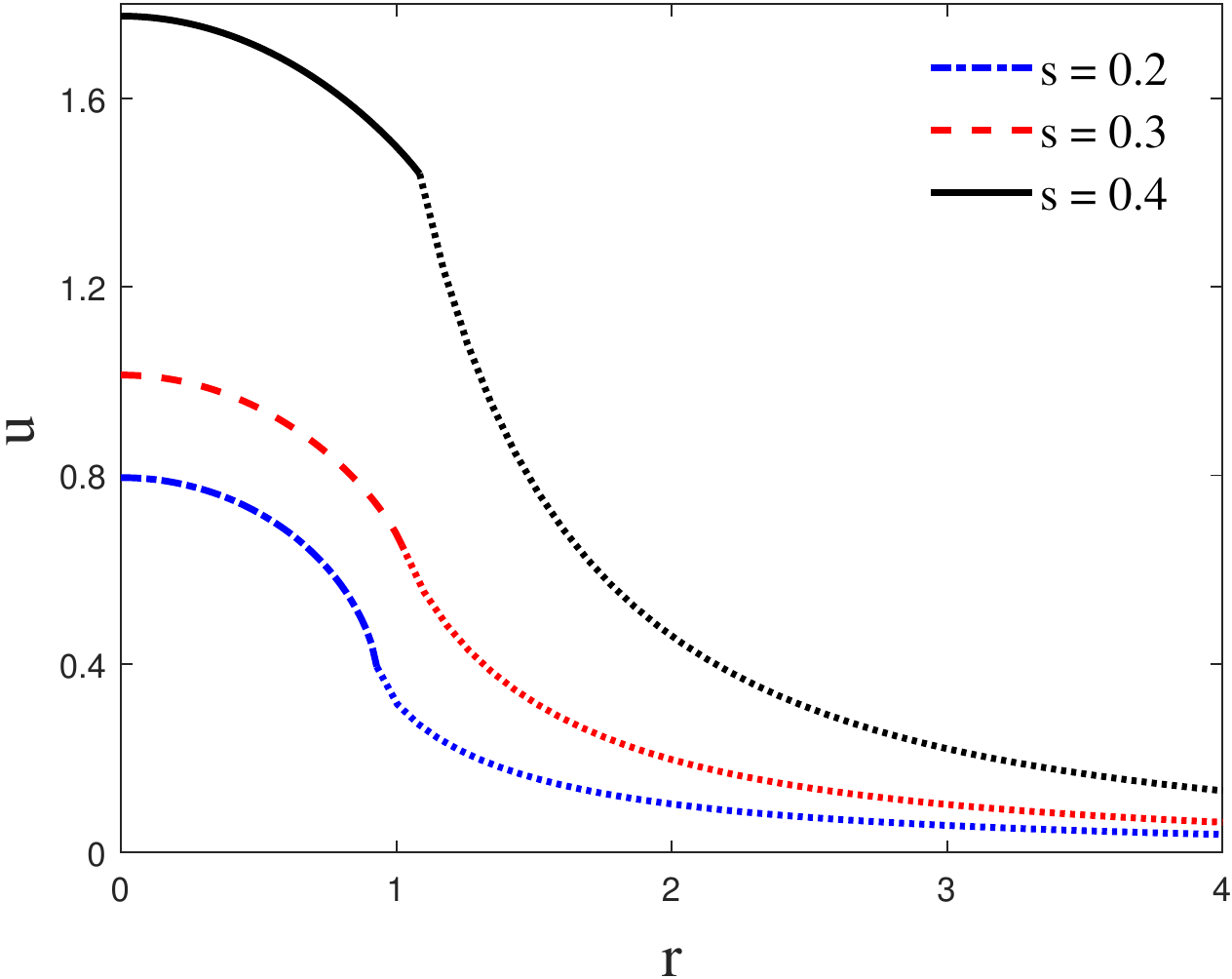}$~~~$		
		\includegraphics[totalheight=0.26\textheight]{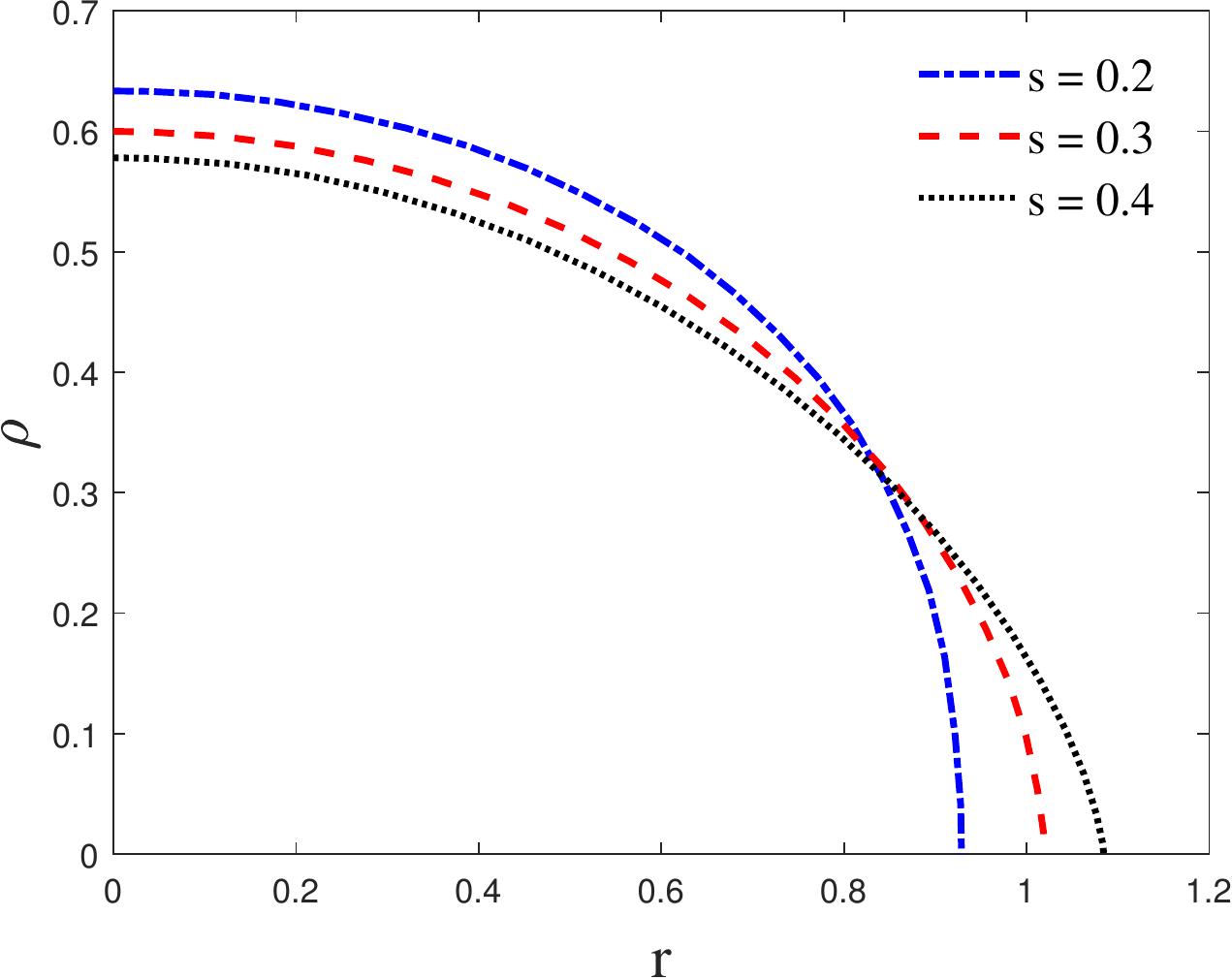}
	\end{center}
	\caption{The solution $u$ (left) and its fractional Laplacian $\rho$ (right) for $p=0.5$ and different $s$
		in one dimension.}
	\label{fig:1dsmallp}
\end{figure}

\begin{figure}[htp]
	\begin{center}
		\includegraphics[totalheight=0.25\textheight]{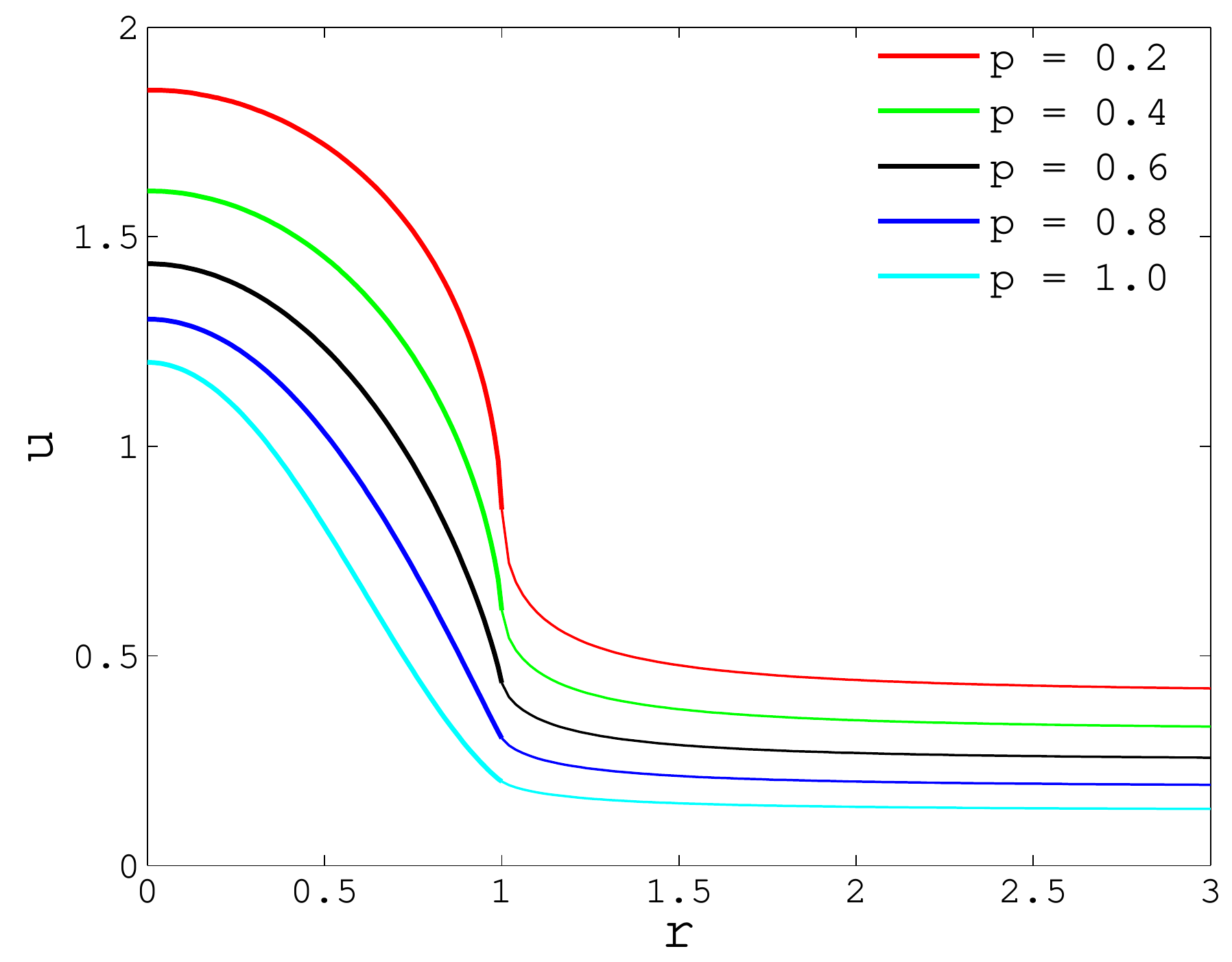}$~~~$		
		\includegraphics[totalheight=0.25\textheight]{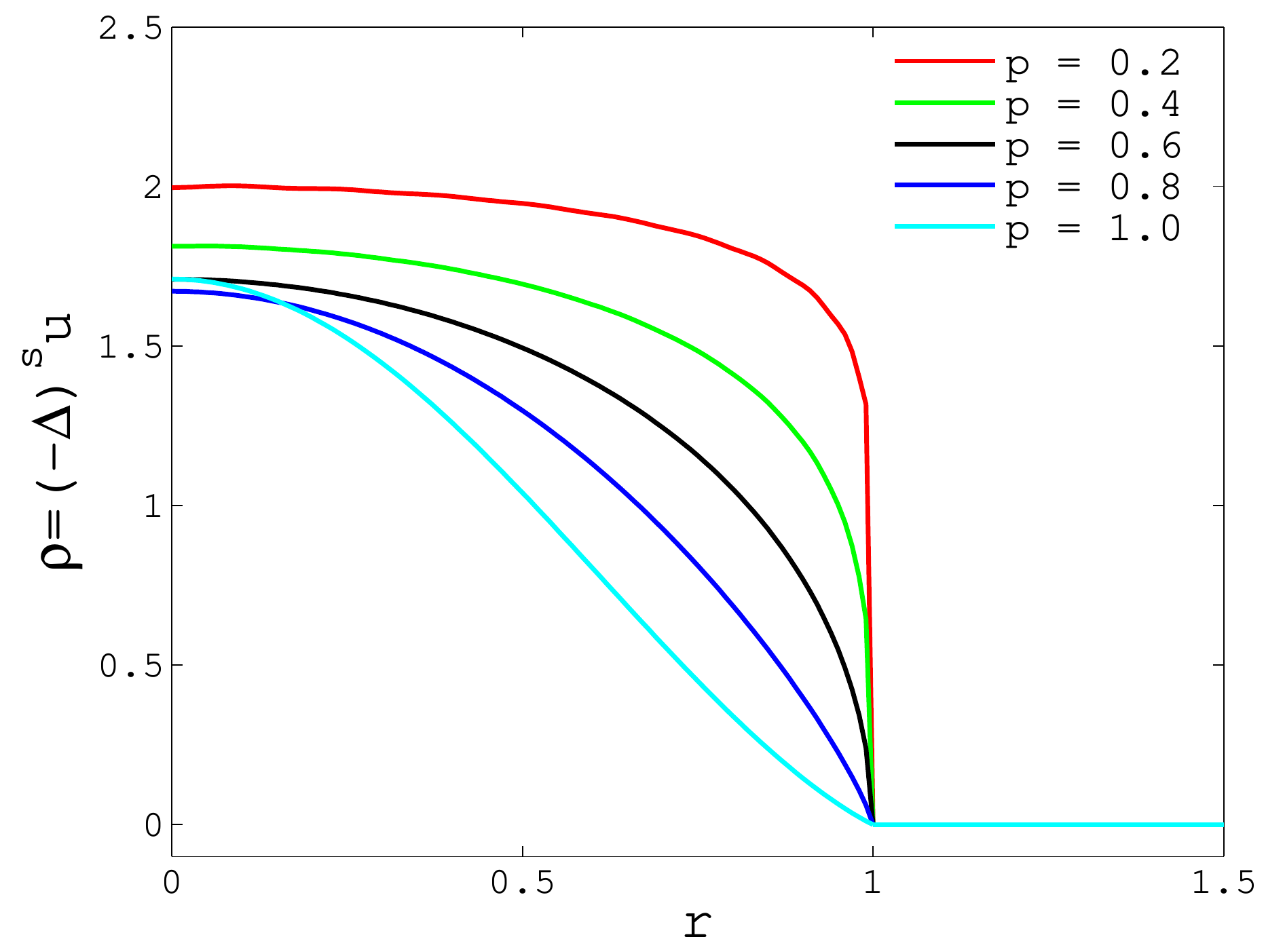}
	\end{center}
	\caption{The solution $u$ (left) and its fractional Laplacian $\rho$ (right) for different $p$ not larger than $1$
		in two dimensions with $s=0.5$.}
	\label{fig:2dsmallp}
\end{figure}

However, for the case $p>1$ of our interest, the above fixed point iteration
does not seem to converge, and  Newton's method for nonlinear equations is applied, i.e.,
\[
\tilde{\mathbf{c}}^{(m+1)} = \tilde{\mathbf{c}}^{(m)}
- \big({\boldsymbol{\partial}}{\mathbf{G}}(\tilde{\mathbf{c}}^{(m)})\big)^{-1}
{\mathbf{G}}(\tilde{\mathbf{c}}^{(m)}),
\]
where ${\boldsymbol{\partial}}{\mathbf{G}}(\tilde{\mathbf{c}}^{(m)})$
is the Jacobian matrix of ${\mathbf{G}}(\tilde{\mathbf{c}})$. Since a good initial guess
is essential for the convergence of the Newton's method, the solution at any $p > 1$
is continued from the case $p = 1$:  the numerical solution is computed first for $p = 1$, and
then the exponents $p$ is increased by a small amount, until the desired exponent is reached.
Numerical experiments indicate that the algorithm always converges with an  increment of $\Delta p =0.1$.
The radial solutions in dimension two for $p = 1.0, 1.2, 1.4$ and $1.6$ (with $s = 0.5$) is shown in Figure~\ref{fig:diffp}.

\begin{figure}[htp]
	\begin{center}
		\includegraphics[totalheight=0.26\textheight]{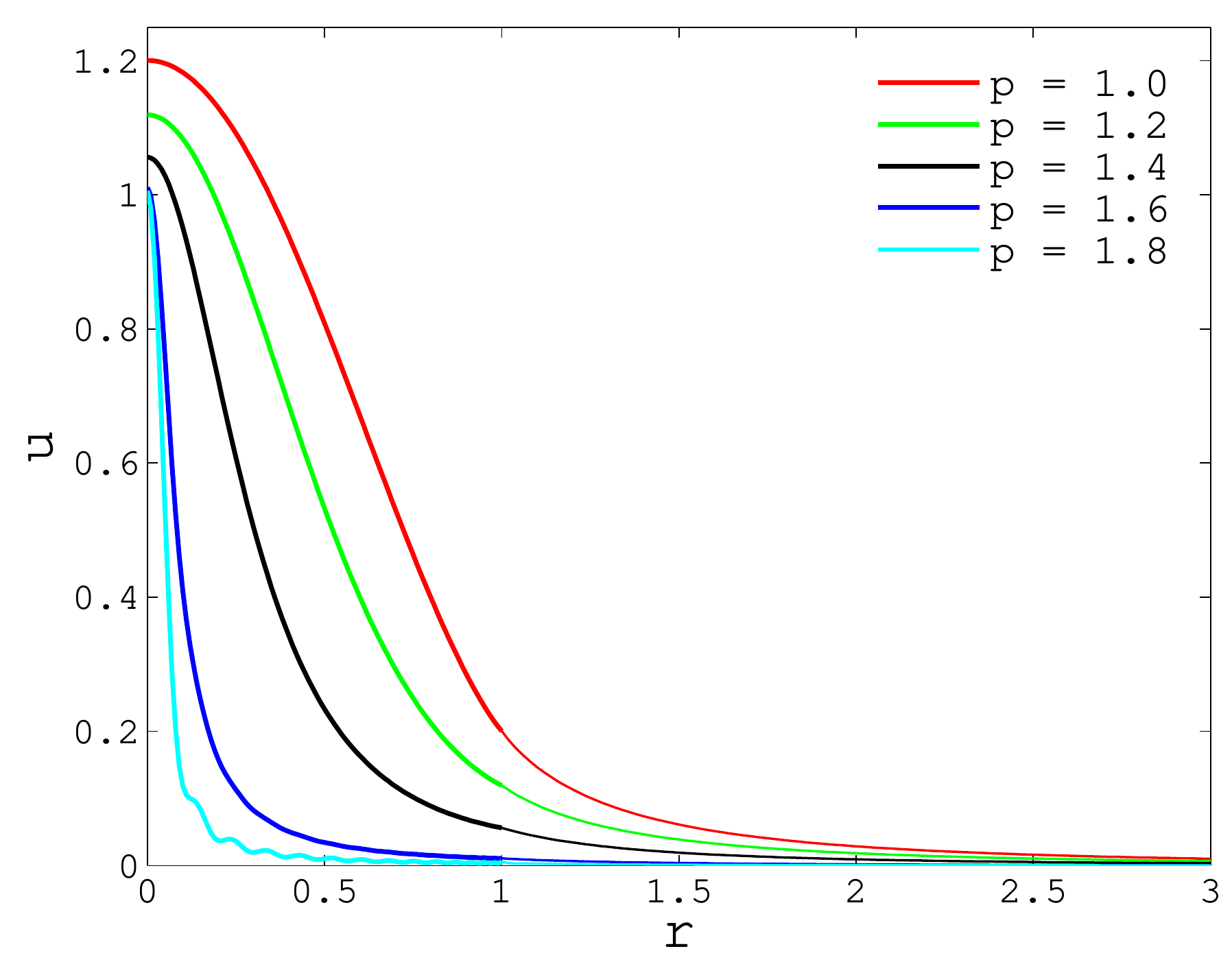}$~~~$		
		\includegraphics[totalheight=0.26\textheight]{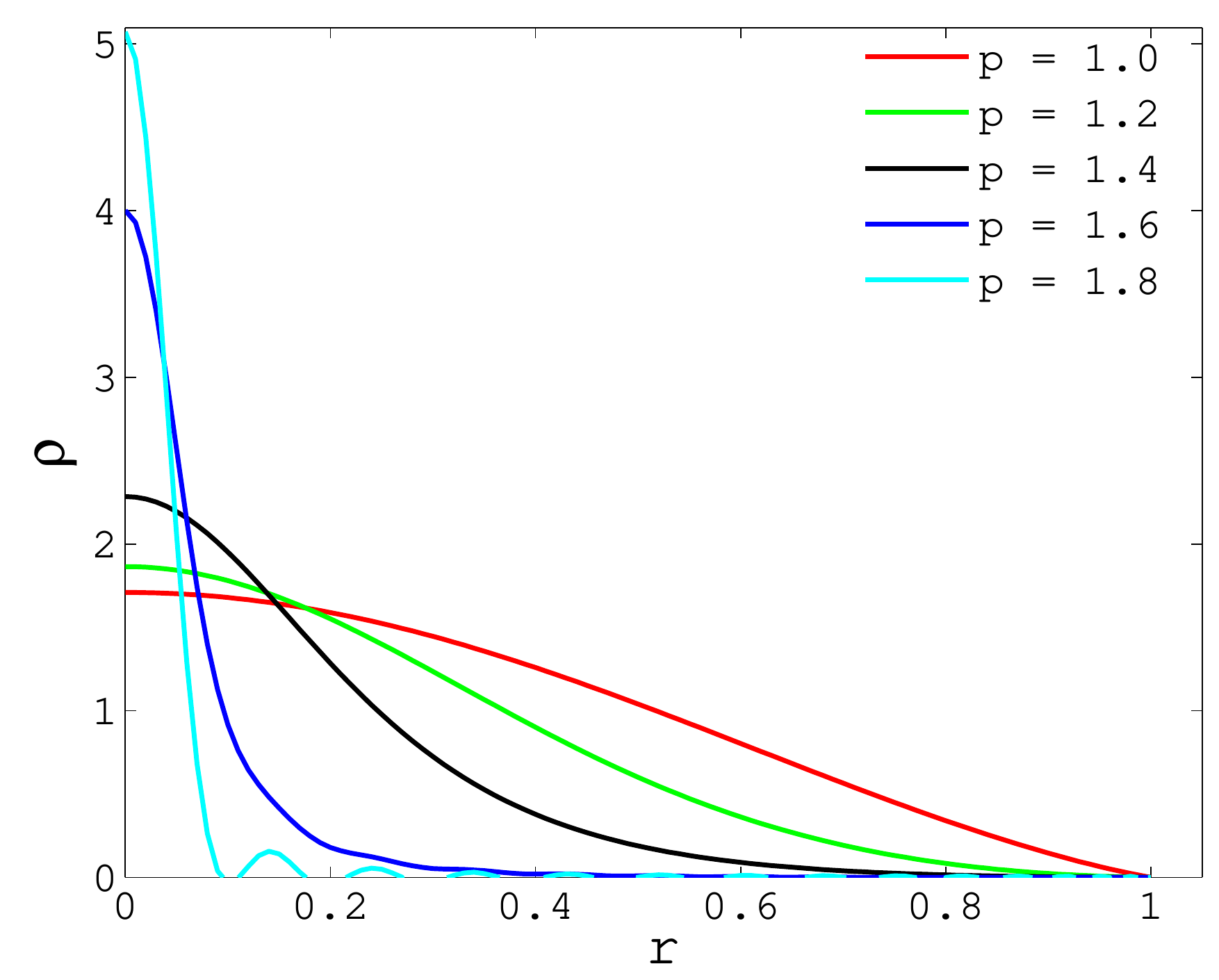}
	\end{center}
	\caption{The radial solution $u$ (left) and its fractional Laplacian $\rho$ (right) in dimension two  for $p = 1.0, 1.2, 1.4, 1.6$ and $1.8$, with $s = 0.5$.}
		\label{fig:diffp}
\end{figure}

However, as the values of $p$ approach its upper limit $(N+2s)/(N-2s)$, the solution $u$
becomes more concentrated near the origin, and the coefficients $c_n$ in \eqref{eq:uexpn}
decays slower and slower, as shown in Figure \ref{fig:coefdecay} in dimension two for
different exponents $p$ with $s=1/2$.
As a result, the number of coefficients $K$ has to be larger and larger in order to resolve the solution faithfully,
otherwise artificial oscillation could appear as for the case $p=1.8$ in Figure \ref{fig:diffp}, with
the slow decay of the coefficients as the exponent $p$ increases shown in Figure~\ref{fig:coefdecay}.

%\begin{figure}
%	\begin{center}
%		\includegraphics[totalheight=0.26\textheight]{p0sol}
%		\includegraphics[totalheight=0.26\textheight]{p1sol}
%	\end{center}
%	\caption{The plot of the solutions for $s=1/4$ with $p = 2/5$ (left) and $p = 1$.}
% \end{figure}

\begin{figure}[htp]
	\begin{center}
		\includegraphics[totalheight=0.28\textheight]{}
	\end{center}
	\caption{The decay of the coefficients $c_n$ of $\rho$ for $p=1$, $p=1.5$ and $p=2.0$ respectively, for
		$s=0.5$ in  dimension two.}
		\label{fig:coefdecay}
\end{figure}

\appendix
\section{Riesz potential of the weighted Jacobi polynomials}
\label{app:riesz}
Here we give a brief derivation of the expressions in \eqref{eq:riesz} about the Riesz potential of the weighted Jacobi polynomials
$(1-|x|^2)^{-s}P_n^{(-s,N/2-1)}  (2|x|^2-1)$ restricted on the unit ball.
This relation can be established essentially by reversing the sign of $s$
as for the fractional Laplacian of $(1-|x|^2)^{s}P_n^{(s,N/2-1)}  (2|x|^2-1)$ in~\cite[Theorem 3]{MR3640641},
so that the Riesz potential can be represented as the inverse Mellin transform
\begin{equation}\label{eq:mellin}
\frac{(-1)^n2^{-2s}\Gamma(1+n-s)}{n!}\frac{1}{2\pi i}\int_{\mathscr{C}} \frac{\Gamma(\tau)\Gamma(\frac{N}{2}-s+n-\tau)}{\Gamma(\frac{N}{2}-\tau)\Gamma(1+n+\tau)}|x|^{-2\tau}d\tau,
\end{equation}
where $\mathscr{C}$ is a contour from $\sigma-i\infty$ to $\sigma+i\infty$ with $0 < \sigma < N/2-s+n$. If $|x|<1$, the
contour integral is reduced to the sum of residues around the poles of $\Gamma(\tau)$, leading to
\begin{multline*}
\frac{(-1)^n2^{-2s}\Gamma(1+n-s)}{n!}\sum_{k=0}^n \frac{(-1)^k}{k!}\frac{\Gamma(N/2-s+n+k)}{\Gamma(N/2+k)\Gamma(1+n-k)}|x|^{2k} \cr
=\lambda_n(-1)^n
\frac{\Gamma(N/2+n)}{n!\Gamma(N/2)} {}_2F_1(-n,N/2+n-s;N/2;|x|^2)
=\lambda_nP_n^{(-s,N/2-1)}(2|x|^2-1)
\end{multline*}
using the equivalent definition $P_n^{(a,b)}(z) = (-1)^n \frac{\Gamma(1+b+n)}{n!\Gamma(1+b)} {}_2F_1(-n,1+a+b+n;1+b; (1+z)/2)$
for Jacobi polynomials. For $|x|>1$, the contour integral \eqref{eq:mellin} is evaluated by summing the
residues around the poles of $\Gamma(\frac{N}{2}-s+n-\tau)$, leading to
\begin{align*}
&\quad \frac{(-1)^n2^{-2s}\Gamma(1+n-s)}{n!} \sum_{k=0}^\infty \frac{(-1)^k}{k!}\frac{\Gamma(N/2+n+k-s)}
{
	\Gamma(s-n-k)\Gamma(N/2+2n+1-s+k)
}|x|^{-N-2n-2k+2s} \cr
&= \frac{2^{-2s}\Gamma(1+n-s)\sin \pi s}{n!\pi} |x|^{-N-2n+2s}
\sum_{k=0}^\infty \frac{\Gamma(N/2+n+k-s) \Gamma(1+n-s+k)}
{
	\Gamma(N/2+2n+1-s+k)k!
}|x|^{-2k}  \cr
&= \lambda_n \mu_n |x|^{-N-2n+2s}{ }_2F_1\left( 1-s+n,
\frac{N}{2}+n-s; 1+2n+\frac{N}{2}-s;|x|^{-2}\right).
\end{align*}

\noindent\textbf{Acknowledgements.}
The authors wish to warmly thank Y. Sire, X. Cabr\'{e}, J. Dolbeault, N. Ikoma and L. Montoro for the fruitful discussions and valuable suggestions.
This work has been partially supported by GNAMPA of the Italian INdAM (National Institute of High Mathematics). H.C. has received funding from the European Research Council under the Grant Agreement No 721675.
M.d.M. Gonz\'alez is supported by the Spanish government grant  MTM2017-85757-P.  E.M. acknowledges support from the MIUR-PRIN  project  No 2017TEXA3H and from the INdAM-GNAMPA 2019 project   {\it ``Trasporto ottimo per dinamiche con interazione''}. B.V. acknowledges support from the `Programma
triennale della Ricerca dell'Universit\`{a} degli Studi di Napoli ``Parthenope'' - Sostegno alla ricerca individuale 2015-2017'' and the INDAM-GNAMPA 2019 project   {\it ``Trasporto ottimo per dinamiche con interazione''}.

\bibliographystyle{plain}
%\bibliography{unique}

\Addresses

 \end{document}